\documentclass[
11pt,  reqno]{amsart}
\usepackage[margin=1.2in,marginparwidth=1.5cm, marginparsep=0.5cm]{geometry}
\pdfoutput=1

\usepackage[implicit=true]{hyperref}

\usepackage{todonotes}

\usepackage{booktabs} 
\usepackage{microtype}
\usepackage{amssymb}
\usepackage{mathrsfs}
\usepackage[most]{tcolorbox}
\usepackage{soul}

\usepackage{footmisc}

\usepackage{upgreek}
\usepackage{dsfont}

\usepackage{color}

\usepackage{xsavebox}

\usepackage{cases}

\allowdisplaybreaks[1]

\definecolor{gr}{rgb}   {0.,   0.69,   0.23 }
\definecolor{bl}{rgb}   {0.,   0.5,   1. }
\definecolor{mg}{rgb}   {0.85,  0.,    0.85}
\definecolor{yl}{rgb}   {0.8,  0.7,   0.}
\definecolor{or}{rgb}  {0.7,0.2,0.2}

\newcommand{\Sym}{\textup{\texttt{Sym}}}

\usepackage{tikz}
\usepackage{marginnote}
\usepackage{scalerel} 

\usetikzlibrary{shapes.misc}
\usetikzlibrary{shapes.symbols}
\usetikzlibrary{decorations}
\usetikzlibrary{decorations.markings}

\usepackage{tikz}

\usetikzlibrary{shapes.misc}
\usetikzlibrary{shapes.symbols}
\usetikzlibrary{shapes.geometric}

\tikzset{
	dot/.style={circle,fill=black,draw=black,inner sep=0pt,minimum size=1mm},
	>=stealth,
	}

\tikzset{
	ddot/.style={circle,fill=white,draw=black,inner sep=0pt,minimum size=1mm},
	>=stealth,
	}

\tikzset{decision/.style={ 
        draw,
        diamond,
        aspect=1.5
    }}

\tikzset{decision/.style={ 
        draw,
        square,
        aspect=1.5
    }}

   \tikzset{square/.style
={regular polygon, regular polygon sides=4,  fill=white,draw=black,inner sep=0pt,minimum size=1.5mm},
	>=stealth,
	}

\tikzset{dia2/.style
={diamond,fill=white,draw=black,inner sep=0pt,minimum size=1mm},
	>=stealth,
	}

\tikzset{dia/.style
={star,fill=black,draw=black,inner sep=0pt,minimum size=1.5mm},
	>=stealth,
	}

\makeatletter
\def\DeclareSymbol#1#2#3{\expandafter\gdef\csname MH@symb@#1\endcsname{\tikz[baseline=#2,scale=0.25]{#3}}}
\def\<#1>{\csname MH@symb@#1\endcsname}
\makeatother

\DeclareSymbol{X}{-2.4}{\node[dot] {};}
\DeclareSymbol{1}{0}{\draw[white] (-.4,0) -- (.4,0); \draw (0,0)  -- (0,1.2) node[dot] {};}
\DeclareSymbol{2}{0}{\draw (-0.5,1.2) node[dot] {} -- (0,0) -- (0.5,1.2) node[dot] {};}

\DeclareSymbol{4'}{0}{\draw (0,0) node[dia]{} -- (0,1.2) node[square] {};
  }

\DeclareSymbol{4}{0}{\draw (0,0) node[dot]{} -- (0,1.2) node[square] {};
  }

\DeclareSymbol{1}{-2.7}
 {
  \draw (0,0) node[dot]{};}

\DeclareSymbol{1'}{-2.7}
 {\draw (0,0) node[ddot]{};}

\DeclareSymbol{1''}{-2.7}
 {\draw (0,0) node[dia]{};}

 \DeclareSymbol{4''}{-2.7}
 {\draw (0,0) node[square]{};}

\DeclareSymbol{3}{0}
 {\draw (0,0) node[dot]{} -- (0,1.2) node[ddot] {};
 \draw (-1.2,0) node[dot] {} -- (0,1.2)node[ddot] {} -- (1.2,0) node[dot] {};
 }

\DeclareSymbol{3'}{0}
 {\draw (0,0) node[dia]{} -- (0,1.2) node[ddot] {};
 \draw (-1.2,0) node[dia] {} -- (0,1.2)node[ddot] {} -- (1.2,0) node[dia] {};}

\DeclareSymbol{31}{-3}{
\draw (0,-1)node[dot] {} -- (0,0) node[ddot] {}-- (0.9, -1) node[dot] {};
\draw (0,-1)node[dot] {} -- (0,0) node[ddot] {}-- (-0.9, -1) node[dot] {};
\draw (0,0)node[ddot] {} -- (1.3,1) node[ddot] {}-- (1.3, 0) node[dot] {};
\draw  (1.3,1) node[ddot] {}-- (2.6, 0) node[dot] {};
}

\DeclareSymbol{32}{-3}{
\draw (0,-1)node[dot] {} -- (0,0) node[ddot] {}-- (0.9, -1) node[dot] {};
\draw (0,-1)node[dot] {} -- (0,0) node[ddot] {}-- (-0.9, -1) node[dot] {};
\draw (0,0)node[ddot] {} -- (0,1) node[ddot] {}-- (0.9, 0) node[dot] {};
\draw (0,0)node[ddot] {} -- (0,1) node[ddot] {}-- (-0.9, 0) node[dot] {};
}

\DeclareSymbol{33}{-3}{
\draw (2.6,-1)node[dot] {} -- (2.6,0) node[ddot] {}-- (3.5, -1) node[dot] {};
\draw (2.6,-1)node[dot] {} -- (2.6,0) node[ddot] {}-- (1.7, -1) node[dot] {};
\draw (0,0)node[dot] {} -- (1.3,1) node[ddot] {}-- (1.3, 0) node[dot] {};
\draw  (1.3,1) node[ddot] {}-- (2.6, 0) node[ddot] {};
}

\DeclareSymbol{3''}{0}
 {\draw (0,0) node[dia]{} -- (0,1.2) node[ddot] {};
 \draw (-1.2,0) node[dot] {} -- (0,1.2)node[ddot] {} -- (1.2,0) node[dia] {};}

\DeclareSymbol{3'''}{0}
 {\draw (0,0) node[dot]{} -- (0,1.2) node[ddot] {};
 \draw (-1.2,0) node[dot] {} -- (0,1.2)node[ddot] {} -- (1.2,0) node[dia] {};}

\DeclareSymbol{31'}{-3}{
\draw (0,-1.2)node[dia] {} -- (0,0) node[ddot] {}-- (0.9, -1.2) node[dia] {};
\draw (0,-1.2)node[dia] {} -- (0,0) node[ddot] {}-- (-0.9, -1.2) node[dia] {};
\draw (0,0)node[ddot] {} -- (1.3,1.2) node[ddot] {}-- (1.3, 0) node[dia] {};
\draw  (1.3,1.2) node[ddot] {}-- (2.6, 0) node[dia] {};
}

\DeclareSymbol{32'}{-3}{
\draw (0,-1.2)node[dia] {} -- (0,0) node[ddot] {}-- (0.9, -1.2) node[dia] {};
\draw (0,-1.2)node[dia] {} -- (0,0) node[ddot] {}-- (-0.9, -1.2) node[dia] {};
\draw (0,0)node[ddot] {} -- (0,1.2) node[ddot] {}-- (0.9, 0) node[dia] {};
\draw (0,0)node[ddot] {} -- (0,1.2) node[ddot] {}-- (-0.9, 0) node[dot] {};
}

\DeclareSymbol{33'}{-3}{
\draw (2.6,-1.2)node[dia] {} -- (2.6,0) node[ddot] {}-- (3.5, -1.2) node[dia] {};
\draw (2.6,-1.2)node[dia] {} -- (2.6,0) node[ddot] {}-- (1.7, -1.2) node[dia] {};
\draw (0,0)node[dot] {} -- (1.3,1.2) node[ddot] {}-- (1.3, 0) node[dot] {};
\draw  (1.3,1.2) node[ddot] {}-- (2.6, 0) node[ddot] {};
}

\DeclareSymbol{523}{-3}{
\draw (0,0)node[dot] {} -- (0,1) node[ddot] {};
\draw (0,1)node[ddot] {} -- (1.3,0) node[square] {}-- (1.3, -1) node[dot] {};
\draw (0,1)node[ddot] {} -- (-1.3,0) node[ddot] {}-- (-2.2, -1) node[dot] {};
\draw  (-1.3,0) node[ddot] {}-- (-1.3, -1) node[dot] {};
\draw  (-1.3,0) node[ddot] {}-- (-0.4, -1) node[dot] {};
}

\DeclareSymbol{311'''}{-3}{
\draw (0,0)node[dia] {} -- (0,1) node[ddot] {};
\draw (0,1)node[ddot] {} -- (-1.3,0) node[square] {}-- (-1.3, -1.1) node[dia] {};
\draw (0,1)node[ddot] {} -- (1.3,0) node[dia] {};
}

\DeclareSymbol{311}{-3}{
\draw (0,0)node[dot] {} -- (0,1) node[ddot] {};
\draw (0,1)node[ddot] {} -- (-1.3,0) node[square] {}-- (-1.3, -1.1) node[dot] {};
\draw (0,1)node[ddot] {} -- (1.3,0) node[dot] {};
}

\DeclareSymbol{61}{-3}{
\draw (0,1)node[dot] {} -- (0,0) node[square] {} -- (0,-1) node[dot]{};
\draw (0,1)node[ddot] {} -- (-1.3,0) node[square] {}-- (-1.3, -1) node[dot] {};
\draw (0,1)node[ddot] {} -- (1.3,0) node[dot] {};
}

\DeclareSymbol{62}{-3}{
\draw (0,1)node[dot] {} -- (-1.3,0) node[square] {} -- (-1.3,-1) node[dot]{};
\draw (0,1)node[ddot] {} -- (0, 0) node[dot] {};
\draw (0,1)node[ddot] {} -- (1.3,0) node[square] {} -- (1.3, -1) node[dot]{};
}

\DeclareSymbol{63}{-3}{
\draw (0,1)node[dot] {} -- (0,0) node[square] {} -- (0,-1) node[dot]{};
\draw (0,1)node[ddot] {} -- (-1.3, 0) node[dot] {};
\draw (0,1)node[ddot] {} -- (1.3,0) node[square] {} -- (1.3, -1) node[dot]{};
}

\DeclareSymbol{34}{-3}{
\draw (0,1)node[ddot] {} -- (0,0) node[square] {} -- (0,-1) node[dot]{};
\draw (0,1)node[ddot] {} -- (-1.3, 0) node[square] {} -- (-1.3, -1) node[dot]{};
\draw (0,1)node[ddot] {} -- (1.3,0) node[square] {} -- (1.3, -1) node[dot]{};
}

\DeclareSymbol{441'}{-3}{
\draw (0,1)node[square] {} -- (0,0) node[square] {}-- (0, -1) node[dia] {};
}

\DeclareSymbol{441}{-3}{
\draw (0,1)node[square] {} -- (0,0) node[square] {}-- (0, -1) node[dot] {};
}

\DeclareSymbol{413'''}{-3}{
\draw (0,1)node[square] {} -- (0,0) node[ddot] {}-- (0, -1) node[dia] {};
\draw (0,0)node[ddot] {} -- (1.3,-1) node[dia] {};
\draw (0,0)node[ddot] {} -- (-1.3,-1) node[dia] {};
}

\DeclareSymbol{413}{-3}{
\draw (0,1)node[square] {} -- (0,0) node[ddot] {}-- (0, -1) node[dot] {};
\draw (0,0)node[ddot] {} -- (1.3,-1) node[dot] {};
\draw (0,0)node[ddot] {} -- (-1.3,-1) node[dot] {};
}

 \DeclareSymbol{141''}{-3}{
\draw (0,1)node[ddot] {} -- (0,0) node[square] {}-- (0, -1) node[dia] {};
\draw (0,1)node[ddot] {} -- (-1.3,0) node[dot] {};
\draw (0,1)node[ddot] {} -- (1.3,0) node[dia] {};
}

 \DeclareSymbol{141}{-3}{
\draw (0,1)node[ddot] {} -- (0,0) node[square] {}-- (0, -1) node[dot] {};
\draw (0,1)node[ddot] {} -- (-1.3,0) node[dot] {};
\draw (0,1)node[ddot] {} -- (1.3,0) node[dot] {};
}

 \DeclareSymbol{114'}{-3}{
\draw (0,1)node[ddot] {} -- (0,0) node[dot] {};
\draw (0,1)node[ddot] {} -- (-1.3,0) node[dot] {};
\draw (0,1)node[ddot] {} -- (1.3,0) node[square] {} -- (1.3, -1) node[dia] {};
}

 \DeclareSymbol{114}{-3}{
\draw (0,1)node[ddot] {} -- (0,0) node[dot] {};
\draw (0,1)node[ddot] {} -- (-1.3,0) node[dot] {};
\draw (0,1)node[ddot] {} -- (1.3,0) node[square] {} -- (1.3, -1) node[dot] {};
}

 \DeclareSymbol{81}{-3}{
\draw (0,1)node[ddot] {} -- (0,0) node[square] {} -- (0,-1) node[dot]{};
\draw (0,1)node[ddot] {} -- (-1.3,0) node[ddot] {} -- (-2, -1) node[dot]{};
\draw (0,1)node[ddot] {} -- (-1.3,0) node[ddot] {} -- (-1.3, -1) node[dot]{};
\draw (0,1)node[ddot] {} -- (-1.3,0) node[ddot] {} -- (-0.6, -1) node[dot]{};
\draw (0,1)node[ddot] {} -- (1.3,0) node[square] {} -- (1.3, -1) node[dot] {};
}

 \DeclareSymbol{82}{-3}{
\draw (0,1)node[ddot] {} -- (-1.3,0) node[square] {} -- (-1.3, -1) node[dot]{};
\draw (0,1)node[ddot] {} -- (0,0) node[ddot] {} -- (-0.7, -1) node[dot]{};
\draw (0,1)node[ddot] {} -- (0,0) node[ddot] {} -- (0, -1) node[dot]{};
\draw (0,1)node[ddot] {} -- (0,0) node[ddot] {} -- (0.7, -1) node[dot]{};
\draw (0,1)node[ddot] {} -- (1.3, 0) node[dot] {};
}

 \DeclareSymbol{83}{-3}{
\draw (0,1)node[ddot] {} -- (-1.3,0) node[square] {} -- (-1.3,-1) node[dot]{};
\draw (0,1)node[ddot] {} -- (1.3,0) node[ddot] {} -- (2, -1) node[dot]{};
\draw (0,1)node[ddot] {} -- (1.3,0) node[ddot] {} -- (1.3, -1) node[dot]{};
\draw (0,1)node[ddot] {} -- (1.3,0) node[ddot] {} -- (0.6, -1) node[dot]{};
\draw (0,1)node[ddot] {} -- (0,0) node[dot] {} ;
}

 \DeclareSymbol{84}{-3}{
\draw (0,1)node[ddot] {} -- (-1.3,0) node[ddot] {} -- (-2, -1) node[dot]{};
\draw (0,1)node[ddot] {} -- (-1.3,0) node[ddot] {} -- (-1.3, -1) node[dot]{};
\draw (0,1)node[ddot] {} -- (-1.3,0) node[ddot] {} -- (-0.6, -1) node[dot]{};
\draw (0,1)node[ddot] {} -- (0,0) node[dot] {} ;
\draw (0,1)node[ddot] {} -- (1.3,0) node[square] {} --(1.3,-1) node[dot]{};
}

 \DeclareSymbol{85}{-3}{
\draw (0,1)node[ddot] {} -- (0,0) node[ddot] {} -- (-0.7, -1) node[dot]{};
\draw (0,1)node[ddot] {} -- (0,0) node[ddot] {} -- (0, -1) node[dot]{};
\draw (0,1)node[ddot] {} -- (0,0) node[ddot] {} -- (0.7, -1) node[dot]{};
\draw (0,1)node[ddot] {} -- (-1.3,0) node[dot] {} ;
\draw (0,1)node[ddot] {} -- (1.3,0) node[square] {} --(1.3,-1) node[dot]{};
}

 \DeclareSymbol{86}{-3}{
\draw (0,1)node[ddot] {} -- (1.3,0) node[ddot] {} -- (2, -1) node[dot]{};
\draw (0,1)node[ddot] {} -- (1.3,0) node[ddot] {} -- (1.3, -1) node[dot]{};
\draw (0,1)node[ddot] {} -- (1.3,0) node[ddot] {} -- (0.6, -1) node[dot]{};
\draw (0,1)node[ddot] {} -- (-1.3,0) node[dot] {} ;
\draw (0,1)node[ddot] {} -- (0,0) node[square] {} --(0,-1) node[dot]{};
}

\DeclareSymbol{91}{-3}{
\draw (0,1)node[ddot] {} -- (-1.3,0) node[ddot] {} -- (-2, -1) node[dot]{};
\draw (0,1)node[ddot] {} -- (-1.3,0) node[ddot] {} -- (-1.3, -1) node[dot]{};
\draw (0,1)node[ddot] {} -- (-1.3,0) node[ddot] {} -- (-0.6, -1) node[dot]{};
\draw (0,1)node[ddot] {} -- (0,0) node[square] {} --(0,-1) node[dot]{};
\draw (0,1)node[ddot] {} -- (1.3,0) node[square] {} --(1.3,-1) node[dot]{};
}

\DeclareSymbol{92}{-3}{
\draw (0,1)node[ddot] {} -- (0,0) node[ddot] {} -- (-0.7, -1) node[dot]{};
\draw (0,1)node[ddot] {} -- (0,0) node[ddot] {} -- (0, -1) node[dot]{};
\draw (0,1)node[ddot] {} -- (0,0) node[ddot] {} -- (0.7, -1) node[dot]{};
\draw (0,1)node[ddot] {} -- (-1.3,0) node[square] {} --(-1.3,-1) node[dot]{};
\draw (0,1)node[ddot] {} -- (1.3,0) node[square] {} --(1.3,-1) node[dot]{};
}

\DeclareSymbol{93}{-3}{
\draw (0,1)node[ddot] {} -- (1.3,0) node[ddot] {} -- (2, -1) node[dot]{};
\draw (0,1)node[ddot] {} -- (1.3,0) node[ddot] {} -- (1.3, -1) node[dot]{};
\draw (0,1)node[ddot] {} -- (1.3,0) node[ddot] {} -- (0.6, -1) node[dot]{};
\draw (0,1)node[ddot] {} -- (0,0) node[square] {} --(0,-1) node[dot]{};
\draw (0,1)node[ddot] {} -- (-1.3,0) node[square] {} --(-1.3,-1) node[dot]{};
}

\DeclareSymbol{101}{-3}{
\draw (0,1)node[ddot] {} -- (1.3,0) node[ddot] {} -- (2, -1) node[dot]{};
\draw (0,1)node[ddot] {} -- (1.3,0) node[ddot] {} -- (1.3, -1) node[dot]{};
\draw (0,1)node[ddot] {} -- (1.3,0) node[ddot] {} -- (0.6, -1) node[dot]{};
\draw (0,1)node[ddot] {} -- (-1.3,0) node[ddot] {} -- (-2, -1) node[dot]{};
\draw (0,1)node[ddot] {} -- (-1.3,0) node[ddot] {} -- (-1.3, -1) node[dot]{};
\draw (0,1)node[ddot] {} -- (-1.3,0) node[ddot] {} -- (-0.6, -1) node[dot]{};
\draw (0,1)node[ddot] {} -- (0,0) node[dot] {} ;
}

\DeclareSymbol{102}{-3}{
\draw (0,1)node[ddot] {} -- (1.8,0) node[ddot] {} -- (2.4, -1) node[dot]{};
\draw (0,1)node[ddot] {} -- (1.8,0) node[ddot] {} -- (1.8, -1) node[dot]{};
\draw (0,1)node[ddot] {} -- (1.8,0) node[ddot] {} -- (1.2, -1) node[dot]{};
\draw (0,1)node[ddot] {} -- (0,0) node[ddot] {} -- (-0.6, -1) node[dot]{};
\draw (0,1)node[ddot] {} -- (0,0) node[ddot] {} -- (0, -1) node[dot]{};
\draw (0,1)node[ddot] {} -- (0,0) node[ddot] {} -- (0.6, -1) node[dot]{};
\draw (0,1)node[ddot] {} -- (-1.5,0) node[dot] {} ;
}

\DeclareSymbol{103}{-3}{
\draw (0,1)node[ddot] {} -- (-1.8,0) node[ddot] {} -- (-2.4, -1) node[dot]{};
\draw (0,1)node[ddot] {} -- (-1.8,0) node[ddot] {} -- (-1.8, -1) node[dot]{};
\draw (0,1)node[ddot] {} -- (-1.8,0) node[ddot] {} -- (-1.2, -1) node[dot]{};
\draw (0,1)node[ddot] {} -- (0,0) node[ddot] {} -- (-0.6, -1) node[dot]{};
\draw (0,1)node[ddot] {} -- (0,0) node[ddot] {} -- (0, -1) node[dot]{};
\draw (0,1)node[ddot] {} -- (0,0) node[ddot] {} -- (0.6, -1) node[dot]{};
\draw (0,1)node[ddot] {} -- (1.5,0) node[dot] {} ;
}

\DeclareSymbol{111}{-3}{
\draw (0,1)node[ddot] {} -- (-1.8,0) node[ddot] {} -- (-2.4, -1) node[dot]{};
\draw (0,1)node[ddot] {} -- (-1.8,0) node[ddot] {} -- (-1.8, -1) node[dot]{};
\draw (0,1)node[ddot] {} -- (-1.8,0) node[ddot] {} -- (-1.2, -1) node[dot]{};
\draw (0,1)node[ddot] {} -- (0,0) node[ddot] {} -- (-0.6, -1) node[dot]{};
\draw (0,1)node[ddot] {} -- (0,0) node[ddot] {} -- (0, -1) node[dot]{};
\draw (0,1)node[ddot] {} -- (0,0) node[ddot] {} -- (0.6, -1) node[dot]{};
\draw (0,1)node[ddot] {} -- (1.5,0) node[square] {}--(1.5, -1)  node[dot]{}  ;
}

\DeclareSymbol{112}{-3}{
\draw (0,1)node[ddot] {} -- (1.8,0) node[ddot] {} -- (2.4, -1) node[dot]{};
\draw (0,1)node[ddot] {} -- (1.8,0) node[ddot] {} -- (1.8, -1) node[dot]{};
\draw (0,1)node[ddot] {} -- (1.8,0) node[ddot] {} -- (1.2, -1) node[dot]{};
\draw (0,1)node[ddot] {} -- (0,0) node[ddot] {} -- (-0.6, -1) node[dot]{};
\draw (0,1)node[ddot] {} -- (0,0) node[ddot] {} -- (0, -1) node[dot]{};
\draw (0,1)node[ddot] {} -- (0,0) node[ddot] {} -- (0.6, -1) node[dot]{};
\draw (0,1)node[ddot] {} -- (-1.5,0) node[square] {}--(-1.5, -1)  node[dot]{}  ;
}

\DeclareSymbol{113}{-3}{
\draw (0,1)node[ddot] {} -- (1.3,0) node[ddot] {} -- (2, -1) node[dot]{};
\draw (0,1)node[ddot] {} -- (1.3,0) node[ddot] {} -- (1.3, -1) node[dot]{};
\draw (0,1)node[ddot] {} -- (1.3,0) node[ddot] {} -- (0.6, -1) node[dot]{};
\draw (0,1)node[ddot] {} -- (-1.3,0) node[ddot] {} -- (-2, -1) node[dot]{};
\draw (0,1)node[ddot] {} -- (-1.3,0) node[ddot] {} -- (-1.3, -1) node[dot]{};
\draw (0,1)node[ddot] {} -- (-1.3,0) node[ddot] {} -- (-0.6, -1) node[dot]{};
\draw (0,1)node[ddot] {} -- (0,0) node[square] {}--(0, -1)  node[dot]{}  ;
}

\DeclareSymbol{7}{-3}{
\draw (0,1)node[ddot] {} -- (-1.8,0) node[ddot] {} -- (-2.4, -1) node[dot]{};
\draw (0,1)node[ddot] {} -- (-1.8,0) node[ddot] {} -- (-1.8, -1) node[dot]{};
\draw (0,1)node[ddot] {} -- (-1.8,0) node[ddot] {} -- (-1.2, -1) node[dot]{};
\draw (0,1)node[ddot] {} -- (0,0) node[ddot] {} -- (-0.6, -1) node[dot]{};
\draw (0,1)node[ddot] {} -- (0,0) node[ddot] {} -- (0, -1) node[dot]{};
\draw (0,1)node[ddot] {} -- (0,0) node[ddot] {} -- (0.6, -1) node[dot]{};
\draw (0,1)node[ddot] {} -- (1.8,0) node[ddot] {} -- (2.4, -1) node[dot]{};
\draw (0,1)node[ddot] {} -- (1.8,0) node[ddot] {} -- (1.8, -1) node[dot]{};
\draw (0,1)node[ddot] {} -- (1.8,0) node[ddot] {} -- (1.2, -1) node[dot]{};
}

\newtheorem{theorem}{Theorem} [section]

\newtheorem{lemma}[theorem]{Lemma}
\newtheorem{proposition}[theorem]{Proposition}
\newtheorem{remark}[theorem]{Remark}

\newtheorem{definition}[theorem]{Definition}

\DeclareMathOperator*{\supp}{supp}
\DeclareMathOperator{\med}{med}

\DeclareMathOperator{\Id}{Id}

\DeclareMathOperator{\HS}{HS}

\newcommand{\noi}{\noindent}
\newcommand{\Z}{\mathbb{Z}}
\newcommand{\R}{\mathbb{R}}
\newcommand{\bbC}{\mathbb{C}}
\newcommand{\T}{\mathbb{T}}
\newcommand{\bul}{\bullet}

\let\Re=\undefined\DeclareMathOperator*{\Re}{Re}
\let\Im=\undefined\DeclareMathOperator*{\Im}{Im}

\let\P= \undefined
\newcommand{\P}{\mathbf{P}}

\newcommand{\E}{\mathbb{E}}

\renewcommand{\L}{\mathcal{L}}

\newcommand{\F}{\mathcal{F}}

\newcommand{\tf}{\mathfrak{t}}

\newcommand{\hf}{\mathfrak{h}}

\newcommand{\al}{\alpha}
\newcommand{\be}{\beta}
\newcommand{\dl}{\delta}
\newcommand{\updl}{\updelta}

\newcommand{\nb}{\nabla}

\newcommand{\scrH}{\mathscr{H}}
\newcommand{\Hs}{\mathscr{H}}

\newcommand{\Dl}{\Delta}
\newcommand{\eps}{\varepsilon}
\newcommand{\kk}{\kappa}
\newcommand{\g}{\gamma}
\newcommand{\G}{\Gamma}
\newcommand{\bG}{\mathbf{\Gamma}}
\newcommand{\ld}{\lambda}
\newcommand{\Ld}{\Lambda}
\newcommand{\s}{\sigma}

\newcommand{\zb}{\mathbf{z}}

\newcommand{\ub}{\mathbf{u}}

\newcommand{\LOP}{\mathcal{L}}

\newcommand{\CRP}{\mathcal{D}}

\newcommand{\vb}{\mathbf{v}}
\newcommand{\wb}{\mathbf{w}}

\newcommand{\bbX}{\mathbb{X}}
\newcommand{\vbbX}{\vec{\mathbb{X}}}

\def\doublestroke#1{\pdfliteral{1 Tr .35 w}#1\pdfliteral{0 Tr 0 w}}

\newcommand{\Taa}{\doublestroke{\Theta}}

\newcommand{\Ta}{\Theta}

\newcommand{\Si}{\Sigma}
\newcommand{\ft}{\widehat}

\newcommand{\Ft}{{\mathcal{F}}}
\newcommand{\wt}{\widetilde}
\newcommand{\cj}{\overline}
\newcommand{\dx}{\partial_x}
\newcommand{\dt}{\partial_t}
\newcommand{\dd}{\partial}

\newcommand{\embeds}{\hookrightarrow}

\newcommand{\ta}{\theta}

\renewcommand{\l}{\ell}
\renewcommand{\o}{\omega}

\newcommand{\Om}{\Omega}

\newcommand{\les}{\lesssim}
\newcommand{\ges}{\gtrsim}

\newcommand{\jb}[1]
{\langle #1 \rangle}

\renewcommand{\b}{\beta}
\newcommand{\ind}{\mathbf 1}

\renewcommand{\S}{\mathcal{S}}

\newcommand{\ff}{\mathfrak{g}}

\newcommand{\N}{\mathbb{N}}

\newcommand{\ze}{\zeta}

\renewcommand{\H}{\mathcal{H}}

\newtheorem*{ackno}{Acknowledgements}

\newcommand{\I}{\mathcal{I}}

\newcommand{\If}{\mathfrak{I}}

\newcommand{\RR}{\mathcal{R}}

\newcommand{\C}{\mathcal{C}}
\numberwithin{equation}{section}
\numberwithin{theorem}{section}

\newcommand{\Q}{\mathbb{Q}}
\newcommand{\PP}{\mathbb{P}}

\newcommand{\V}{\mathcal{V}}

\newcommand{\U}{\mathcal{U}}

\newcommand{\NN}{\mathcal{N}}
\newcommand{\D}{\mathcal{D}}

\newcommand{\too}{\longrightarrow}

\newcommand{\Xc}{\mathcal{X}}
\newcommand{\Yc}{\mathcal{Y}}

\newcommand{\Zc}{\mathcal{P}}

\usepackage{comment}

\newcommand{\pPsi}{\mathbf{\Psi}}

\newcommand{\XX}{\mathbf{X}}

\newcommand{\vXX}{\vec{\mathbf{X}}}

\newcommand{\ito}{\textup{Ito}}

\makeatletter
\@namedef{subjclassname@2020}{%
  \textup{2020} Mathematics Subject Classification}
\makeatother

\begin{document}
\baselineskip = 14pt

\title[Fourier restriction norm method adapted to controlled paths]
{Fourier restriction norm method adapted to controlled paths: stochastic wave equations}

\author[A.~Chapouto, J.~Li, and T.~Oh]
{Andreia Chapouto, Jiawei Li, and Tadahiro Oh}

\address{
Andreia Chapouto, School of Mathematics\\
The University of Edinburgh\\
and The Maxwell Institute for the Mathematical Sciences\\
James Clerk Maxwell Building\\
The King's Buildings\\
Peter Guthrie Tait Road\\
Edinburgh\\
EH9 3FD\\
 United Kingdom\\
and
CNRS, Laboratoire de math\'ematiques de Versailles, UVSQ, Universit\'e Paris-Saclay, CNRS, 45 avenue des 
\'Etats-Unis, 78035 Versailles Cedex, France, 
and School of Mathematics, Monash University, VIC 3800, Australia}

\email{andreia.chapouto@monash.edu}

\address{
Jiawei Li, School of Mathematics\\
The University of Edinburgh\\
and The Maxwell Institute for the Mathematical Sciences\\
James Clerk Maxwell Building\\
The King's Buildings\\
Peter Guthrie Tait Road\\
Edinburgh\\
EH9 3FD\\
 United Kingdom}

\email{jiawei.li@ed.ac.uk}

\address{
Tadahiro Oh, 
School of Mathematics\\
The University of Edinburgh\\
and The Maxwell Institute for the Mathematical Sciences\\
James Clerk Maxwell Building\\
The King's Buildings\\
Peter Guthrie Tait Road\\
Edinburgh\\
EH9 3FD\\
 United Kingdom\\
and School of Mathematics and Statistics, Beijing Institute of Technology, Beijing 100081, China}


\email{hiro.oh@ed.ac.uk}

\subjclass[2020]
{35L71, 35R60, 60H15, 60L20, 60L50}

\keywords{stochastic nonlinear wave equation;
multiplicative noise;
pathwise well-posedness; 
Fourier restriction norm method; sewing lemma; rough path;
Young integral}

\begin{abstract}

We investigate the pathwise well-posedness issue of the stochastic nonlinear wave equation (SNLW) with a multiplicative noise. While the Ito solution theory (= random field solution theory) was established  in the 
'80s, its pathwise well-posedness has remained  a challenging open problem for over forty years.
By building a unified framework for the Fourier restriction norm method adapted to the $U^p$- and $V^p$-spaces, due to Koch and Tataru (2007), and the Young\,/\,rough integration theory via the sewing lemma and controlled paths due to Gubinelli (2004)
along with the random tensor estimate 
for multiple stochastic integrals with respect to (fractional) Brownian motions,   
we establish pathwise local well-posedness of SNLW in optimal regularity ranges. In particular, in the one-dimensional case with a white-in-time noise, our result covers the case of an almost space-time white noise, which is optimal within the framework of one-parameter rough paths.

\end{abstract}

%
\maketitle
%

 \tableofcontents

\section{Introduction}

\subsection{Stochastic nonlinear wave equation} 
We consider the Cauchy problem for the following
stochastic nonlinear wave equation  (SNLW) with a multiplicative noise on $\T^d = (\R/(2\pi\Z))^d$:

\noi
\begin{equation}\label{SNLW}
\begin{cases}
\dt^2 u +(1-\Dl )u  \pm u^k = u\Phi \zeta
\\
(u, \dt u) \vert_{t=0} = (\phi_0, \phi_1), 
\end{cases}
\end{equation}
where  $k\ge2$ is an integer
 and $\Phi$ is a  Hilbert-Schmidt operator from $L^2(\T^d)$ to $H^\s(\T^d)$ for some $\s\in\R$
 such  that\footnote{More precisely, $\Phi W^\be(t)$ has spatial regularity $\s$, 
 where $W^\be$ is as in \eqref{W0}.} 
 the noise $\Phi\zeta$ has spatial regularity $\s$. 
 Here, 
  $\zeta$ 
  denotes a fractional\,/\,white-in-time and white-in-space noise on $\R_+\times \T^d$.
  Informally, one can think of the fractional-in-time noise $\zeta$ as
\begin{align*}
\text{``} \zeta  = \jb{\dt}^{-\ta} \xi \text{''},
\end{align*}

\noi
for some $0\le\ta<\frac12$, where $\xi$ is a Gaussian space-time white noise on $\R_+ \times \T^d$, with covariance formally given by
\begin{align}
\label{white}
\E\big[ \xi(t_1,x_1) \xi  (t_2,x_2)\big] = \dl(t_1-t_2) \dl(x_1-x_2)
\end{align}

\noi
for $t_1,t_2 \in\R_+$ and $x_1,x_2 \in\T^d$, where $\dl$ denotes the Dirac delta function.
See \eqref{zeta} for a precise meaning of the noise $\ze$.

Our main goal in this paper is to study {\it pathwise} 
well-posedness issues for \eqref{SNLW}.
We say that $u$ is a solution to \eqref{SNLW} if it 
satisfies  the following 
Duhamel formulation (= mild formulation):
\begin{align}
\label{mild0}
u(t) = \dt S(t) \phi_0 + S(t) \phi_1 \mp \int_0^t S(t-t') u^k(t') \, dt' + \Psi(u)(t),
\end{align}
where $S(t)$ denotes the linear wave propagator, given by 
\begin{align}
\label{S0}
S(t)  = \frac{\sin(t\jb{\nabla})}{\jb{\nabla}}.
\end{align}

\noi
Here, 
 $\jb{\nabla} = \sqrt{1-\Dl}$ is  the Fourier multiplier operator with symbol $\jb{n} = (1+|n|^2)^\frac 12$.
The term $\Psi(u) = \Psi^\be(u)$  in~\eqref{mild0} is often called the stochastic convolution, which accounts for the effect of the stochastic forcing in \eqref{SNLW},
and is formally given by 
\begin{align}
\label{psi1}
\Psi(u)(t) 
= \int_0^t S(t-t') \big[ u(t') \Phi d  W^\be (t') \big].
\end{align}
Here,
$W^\be $ denotes the cylindrical  process on $L^2(\T^d)$,  given by\footnote{By convention, we endow
$\T^d$ with the normalized Lebesgue measure $  (2\pi)^{-d}dx$
such that we do not need to carry factors involving $2\pi$.
With a slight abuse of notation, we simply use $dx$ to denote
the normalized Lebesgue measure on $\T^d$ in the following.}
\begin{align}
\label{W0}
W^\be (t) = \sum_{n\in\Z^d} B_n(t) e_n,
\end{align}

\noi
where $e_n(x) = e^{in\cdot x}$  and $\{B_n\}_{n\in\Z^d}$ denotes
 a family of independent complex-valued fractional Brownian motions 
with the Hurst parameter $\frac 12 \le \be < 1$
 on a fixed probability space $(\Om, \PP, \F)$,  conditioned that 
\begin{align}
\label{real0}
B_{-n} = \cj {B_n}, \quad n\in \Z^d.
\end{align}

\noi
See Subsection~\ref{SUBSEC:FBM} for a review on  fractional Brownian motions
and stochastic integrals with respect to the family $\{B_n\}_{n\in\Z^d}$.
With this notation, 
we interpret the noise $\zeta$ in \eqref{SNLW} as the (distributional) temporal  derivative of $W^\be $:
\begin{align}
\label{zeta}
\zeta  = \dt  W^\be . 
\end{align}

When $\be =\frac12$, 
the family $\{B_n\}_{n\in\Z^d}$ corresponds to  a family of independent Brownian motions, satisfying~\eqref{real0}, and 
the noise $\ze$ in \eqref{SNLW} reduces
to a space-time white noise $\xi$.
In this case,  
since the work~\cite{Walsh86}, 
there have been intensive research activities
on SNLW with (more general) multiplicative noises from the viewpoint of
the Ito solution theory and 
the random field solution theory;
see, for example, \cite{CN88, CN88sing, MilletSanz, 
Mueller97, DF98,  MSS99,  Dalang99, PZ00, MM01, BO07, DSS09,  
On1, On2, 
BR, BO13, Mueller2, DNZ, MSS21,  BR22, HOO}.
See also 
 \cite{Dalang09, Sanz1, DZ14, DSS26} for further references therein.
See 
\cite{Ba12, BJQS15, LHW22, CDST25} 
for results in the fractional-in-time case.
Here, the Ito solution theory and 
the random field solution theory
refer to approaches
where 
the stochastic convolution $\Psi(u)$ in~\eqref{psi1} 
(possibly with $u$ replaced by a more general function $F(u)$) is constructed
as a (suitable) limit in $L^2(\Om)$
via stochastic calculus.
We also point out that 
 the study on \eqref{SNLW}
goes further back in the literature
due to its direct connection to 
stochastic integration
with respect to a multi-parameter white noise;
see, for example, 
\cite{Cai1, WZ, Cai2, Cai3, Hajek, FN, Norris}.

Our main interest in this paper
is to study  the question of pathwise well-posedness
of SNLW~\eqref{SNLW}.
Informally, we say that 
SNLW~\eqref{SNLW} is pathwise locally well-posed
(given $(\phi_0, \phi_1)$ and $\Phi$)
if there exists a set $\Si \subset \Om$ with $\PP(\Si) = 1$
such that, for each $\o \in \Si$, 
there exists a solution $u^\o$ to the Duhamel formulation \eqref{mild0}
on a time interval $[0, T]$
for some almost surely positive stopping time $T$, 
where the 
stochastic convolution $\Psi^\o(u^\o)$ in~\eqref{psi1}
is constructed in the pathwise manner (namely, 
using information of $W^{\be, \o}$ in \eqref{W0}
and the unknown $u^\o$ for this particular $\o \in \Si$, 
not as a limit in $L^2(\Om)$).
Here, we intentionally remained vague
on relevant function spaces, etc.;
see Subsection \ref{SUBSEC:1.3}
for a further discussion.
The difficulty of pathwise well-posedness theory
as compared to the Ito solution theory
can be seen even at 
the level of 
ordinary\,/\,stochastic differential equations:
\begin{align}
\dt f = f \dt g
\label{OD1}
\end{align}

\noi
where $f$ is an unknown and $g$ is a given function.
A solution to \eqref{OD1} is formally given 
by 
\begin{align}
f(t) = f(0) + I(f, g)(t) := f(0) + \int_0^t f(t') \dt g(t') dt', 
\label{OD2}
\end{align}

\noi
where the integral $I(f, g)$ is well defined if $g$ (and hence $f$)
is sufficiently regular.
Suppose that  $g$ is a Brownian motion $B$.
The second order term in an iterative scheme for \eqref{OD2}
contains
$I(B, B) = \int_0^t B(t') dB(t')$.
It is, however, known that there exists no separable Banach space $X \subset C([0, 1])$
such that 
(i)~sample paths of a Brownian motion 
belong to $X$, almost surely, and
(ii) the bilinear operator  $I$ in \eqref{OD2}, a priori defined
for smooth functions, extends 
to a continuous map on $X\times X$ into $C([0, 1])$;
see \cite{Lyons91}
and 
\cite[Proposition 1.1]{FH20}.
This shows that when $g = B$, 
\eqref{OD1} is ill-posed in the pathwise sense, 
if we were to take only $f(0)$ and $B$ as given data.
See the next subsection for a further discussion.
We point out that one important consequence
of the  pathwise solution theory
is that it allows for almost sure convergence of 
various approximation schemes
such as  mollification, piecewise linear approximation, and many other
approximations.
See, for example, \cite{GT}, \cite[Sections 10.3, 12.2, 13.3, and 17.7]{FV10}, 
and \cite[Sections 8.7 and 9.2]{FH20}.

Regarding pathwise well-posedness of SNLW
with multiplicative noises, 
there are very few results in the literature (see, for example,  \cite{QST07, CDST23}), 
limited to the fractional-in-time case
with the Hurst parameter $\be > \frac 12$
(namely, the so-called Young case). 
In particular, 
pathwise well-posedness of 
SNLW~\eqref{SNLW} with a multiplicative white-in-time noise
has remained 
a challenging open problem
for the last forty years
since the work \cite{Walsh86}.
In this paper, 
we establish the first pathwise local well-posedness
result for SNLW \eqref{SNLW}
with a multiplicative white-in-time noise
(Theorem~\ref{THM:2}), 
as well as
a new general 
 pathwise local well-posedness
result in the fractional-in-time case
(Theorem \ref{THM:1}), 
thus answering the open question of forty years.
Our approach is based on 
building a unified framework for

\smallskip
\begin{itemize}
\item[(i)]
 the Fourier restriction norm method,
independently introduced by Bourgain \cite{BO93}
and by Klainerman and Machedon \cite{KM}
and further developed by Koch and Tataru 
\cite{KT07}
in the setting of the $V^p$-spaces of functions
of bounded $p$-variation
and 
their  preduals (the so-called $U^p$-spaces), 
and

\smallskip
\item[(ii)] 
the Young\,/\,rough integration theory via the sewing lemma and controlled paths due to Gubinelli \cite{G04}, subsequent to a seminal work \cite{Lyons98}
by Lyons on rough path theory, 
\end{itemize}

\smallskip

\noi
where
 the random tensor estimate
for multiple stochastic integrals with respect to (fractional) Brownian motions  
(Lemma \ref{LEM:RTE})
 plays a crucial role in (ii). 
We note that our pathwise well-posedness results 
are essentially sharp in terms of spatial regularity.
See Remark \ref{REM:ill2}\,(i).
Furthermore, 
 in the one-dimensional case with a white-in-time noise, our result covers the case of an almost space-time white noise, which is optimal within the framework of one-parameter rough paths;
 see Remark \ref{REM:opt}.

\begin{remark}\label{REM:foc} \rm
In the literature, the wave equation often appears with the linear operator $\dt^2 -\Dl$, while the linear operator $\dt^2 +1 - \Dl$ in \eqref{SNLW} corresponds to the Klein-Gordon equation. 
We note that the same pathwise local well-posedness results with inessential modifications hold for 
both equations, 
while the Klein-Gordon version does not require separate treatment 
at the zeroth spatial frequency.
For this reason, 
we choose to work with  the Klein-Gordon version  \eqref{SNLW}
in this paper, and we simply refer to it as 
the stochastic nonlinear wave
equation.

Our main results (Theorems \ref{THM:1} and \ref{THM:2})
are on pathwise {\it local} well-posedness
of SNLW~\eqref{SNLW}.
As such, the defocusing\,/\,focusing nature of the equation
does not play any role in our analysis, 
and thus we simply work with the ``focusing'' case (namely, with $-u^k$
in \eqref{SNLW}) in the following.

\end{remark}

\subsection{Overview} 

The study of {\it random dispersive PDEs}, 
 broadly interpreted
with random initial data and\,/\,or  stochastic forcing, 
was initiated 
in seminal works \cite{BO94, BO96}
by Bourgain 
in  the context of  invariant Gibbs measures
for nonlinear Schr\"odinger equations (NLS).
This study lying at the intersection of dispersive PDEs
and stochastic analysis has attracted much attention and has been studied intensively;
see, for example, 
\cite{BT1,Oh0,  CO, BT3, BOP2, Poc, GKO1,  OTh2, 
OTz, GKOT22,   
Bring0,   OOT1, ORT23, Bring, 
OOT2, 
STzX, OTWZ,  FT, Zine, LLOT, LLO}.
See also surveys \cite{BOP4, Tzv1}.
In particular, over the last five years, 
we have witnessed several breakthrough
results
\cite{GKO2, DNY2, DNY22, 
BDNY24}, 
introducing novel tools and ideas such as paracontrolled
calculus in the dispersive setting, 
random averaging operators, 
and the theory of random tensors.
We point out that 
many works in the literature focused
on probabilistic well-posedness with random initial data and\,/\,or
{\it additive} noises, 
where the difficulty comes from 
{\it  spatial} roughness
 in making sense of products and nonlinearities. 
In contrast, 
the main difficulty 
of the pathwise well-posedness
of SNLW~\eqref{SNLW} with a {\it multiplicative} noise
comes from {\it temporal} roughness
in making sense of the stochastic convolution $\Psi(u)$
in \eqref{psi1} in the pathwise manner.

\medskip

Focusing only on temporal regularity, 
let us revisit the 
 integral
$I= I(f, g)$ in \eqref{OD2}:
\begin{align}
\label{inc1}
I (t) =  \int_0^t f(t') d  g (t')  = \int_0^t f(t') \dt  g (t') dt'.
\end{align}

%
\noi
From the differential calculus viewpoint, 
$I$ is given as the unique solution to 
the ODE: $\dt I = f \dt g$ with $I(0) = 0$.
%
We can also interpret  the integral $I$ from the 
incremental viewpoint (namely, 
moving from derivatives to finite increments).
Namely, we define the integral $I$ in~\eqref{inc1} 
as
the  (unique) function, satisfying 
\begin{align}
I(t) - I(r) = f(r)(g(t) - g(r)) + R_{t, r}
 \quad \text{with \, $I(0) = 0$}
\label{inc2}
\end{align}

\noi
for any $0 \le r \le t \le 1$, where the error term $R$
satisfies 
\begin{align}
R_{t, r} = o(|t- r|)
\label{inc3}
\end{align}

\noi
as $|t-r| \to 0$, 
uniformly
 in $0 \le r \le t \le 1$.
It is clear that this property
is satisfied by the usual integral 
for  $f$ and $g$ with sufficient regularity.
We note that, while the error term $R$ is not unique,  
the relation \eqref{inc2}  with \eqref{inc3} 
uniquely determines $I$, which is 
indeed given as the unique limit of the Riemann-Stieltjes type sums:
\begin{align*}
I(t) = \lim_{|\Pi([0,t])|\to 0} 
\sum_{j=0}^{k-1} f(t_{j+1})\big( g(t_j) - g(t_{j+1})\big).
\end{align*}

\noi
See \eqref{con3}.
Here,  the limit is taken over any partition $\Pi([0,t])$ of $[0,t]$
of the form 
$\Pi([0,t])= \{0 = t_{k} < \dots < t_1 <  t_0 = t\}$
whose mesh size $|\Pi([0,t])|  = \sup_j |t_j - t_{j+1}|$ goes to $0$.
Namely, the integral $I = I(f, g)$
 is the only function whose increments match the 
 ``germ''  $f (r)(g(t) - g(r))$ modulo a negligible
error.

In the setting of H\"older continuous functions, 
the integral $I = I(f, g)$ in \eqref{inc1}
can be understood as 
the so-called \textit{Young integral} \cite{Y36}
if $f \in C^{\al_1}$ and 
 $g \in C^{\al_2}$
 with $\al_1 + \al_2 > 1$.
When $\al_1 = \al_2 = \al$, this condition reduces to $\al > \frac 12$.
In a seminal work 
 \cite{Lyons98}, Lyons introduced the theory of rough paths, 
which allows us to interpret the integral $I = I(f, g)$ in~\eqref{inc1} 
as
 a \textit{rough integral}  when $0 < \al \le \frac 12$.
In a subsequent breakthrough work 
\cite{G04}, 
Gubinelli introduced the notion of controlled paths, 
which provided a convenient framework to make sense of
a rough integral $I = I(f, g)$ via 
 the so-called sewing lemma (Lemma \ref{LEM:sew}).\footnote{Indeed, 
 the relation \eqref{inc2} states that $f$ is controlled by $g$;
 see Definition \ref{DEF:RP}\,(ii).}
 The theory of rough paths has been applied
 widely in the  study of 
 pathwise well-posedness of stochastic differential equations driven by rough noises such as 
 a Brownian motion
 and 
 pathwise well-posedness of  
stochastic parabolic PDEs; see, for example,  \cite{GLT, CF, GT10, CFO, H11, DGT, HW, OW, HB}.
See also  the  monographs  \cite{LQ02, FV10, FH20}
for further discussions on the subject.
We also point out that the theory of rough paths
has led to the development of 
 the theory of regularity structures by Hairer \cite{H14}
and paracontrolled distributions by Gubinelli, Imkeller, and Perkowski \cite{GIP}.

Let us come back to our problem.
One of the main tasks is to provide a pathwise meaning
to the stochastic convolution $\Psi(u)$ in \eqref{psi1}.
For now, we ignore the linear wave propagator $S(t - t')$ in \eqref{psi1}
and consider the integral:
\begin{align}
\label{inc4}
I(u)(t) 
= \int_0^t  u(t') \Phi d  W^\be (t').
\end{align}

\noi
Given $\Phi \in \HS(L^2(\T^d);H^\s(\T^d))$, 
it is easy to see 
from  \eqref{W0}
that 
 $\Phi W^\be  \in C^{\al}(\R_+;H^\s(\T^d))$ 
for any $0 < \al < \be$,  
 almost surely.
By ignoring spatial regularities, 
if $u$ has the same temporal regularity $\al$
as  $\Phi W^\be$, 
then we can make sense of $I(u)$ in \eqref{inc4}
as (i)~a Young integral for  $\frac 12 < \be < 1$, 
and (ii)~a rough integral for $\be = \frac 12$
(by imposing an appropriate controlled structure on $u$).
For this reason, 
we will refer to 

\smallskip
\begin{itemize}
\item the fractional-in-time case with 
the Hurst parameter $\frac 12 < \be < 1$
as the {\it Young case}, and 

\smallskip

\item the white-in-time case with 
the Hurst parameter $\be = \frac 12$
as the {\it rough case}

\end{itemize}

\noi
in the following.

The stochastic convolution $\Psi(u)$ in \eqref{psi1}
comes with the linear wave propagator $S(t - t')$
as compared to \eqref{inc4}.
In the usual study of dispersive PDEs, 
dispersion induces oscillations
providing certain cancellation (say, via integration by parts).
However, the integrator $dW^\be (t')$ in \eqref{psi1} is of negative regularity, 
and as a result, 
the linear wave propagator $S(t - t')$ in~\eqref{psi1} is an enemy in our problem, 
preventing tools such as integration by parts.
We circumvent this issue by working with the so-called
interaction representation (see \eqref{int})
in the vectorial formulation, 
and construct the stochastic convolution 
$\Psi(u)$  (at the level of the interaction representation)
as a Young\,/\,rough integral, depending on the value of the Hurst parameter $\be$.

In order to handle the nonlinear Duhamel integral term 
(= the third term on the right-hand side of \eqref{mild0})
for $ d\ge 2$, 
we make use of dispersion in the form of the Strichartz estimates
(Lemma \ref{LEM:Slin}).
However, the usual Strichartz spaces 
(not capturing temporal regularity)
or the $X^{s, b}$-spaces in the Fourier restriction norm method
are not compatible with the pathwise construction 
of the stochastic convolution $\Psi(u)$
as a Young\,/\,rough integral.
We build a unified framework
for treating both the nonlinear Duhamel integral term
and the stochastic convolution $\Psi(u)$
by working with 
 the refined Fourier restriction norm method
 adapted to the $U^p$- and $V^p$-spaces, 
 introduced by 
 Koch and Tataru 
\cite{KT07}; 
see Subsection \ref{SUBSEC:Up}
for the definitions and basic properties
of these function spaces.
In the rough case ($\be = \frac 12$), 
we reduce~\eqref{mild0}
to system \eqref{bd9a}-\eqref{bd9b}
for three unknowns.
We note that the $U^p$-$V^{p'}$ duality 
allows us to make sense of the integral
$I = I(f, g)$ in \eqref{inc1}
for 
$f \in U^p$ and $g \in V^{p'}$
even at  the endpoint 
$\frac 1p + \frac 1 {p'} =  1$, 
where the Young integration theory (in the $V^p$-setting)
fails, 
and such duality plays an important
role
in studying the nonlinear Duhamel integral term; 
see~\eqref{lin2} in Lemma~\ref{LEM:lin}.

\subsection{Main results}
\label{SUBSEC:1.3}

In this subsection, we state our main results
on pathwise local well-posedness of SNLW \eqref{SNLW}.

Let us first recall the 
 notion of a critical regularity associated with the (deterministic) nonlinear wave equation (NLW):
\begin{equation}\label{NLW}
\dt^2 u  -\Dl u \pm u^k = 0
\end{equation}

\noi
by separately considering the cases  $d \ge 2$ and  $d= 1$.

\smallskip

\noi
$\bul$ {\bf $\pmb{d\ge 2}$.}
Let us first consider the case $d\ge2$. 
When posed on $\R^d$, \eqref{NLW} enjoys the following scaling symmetry; 
  if $u$ is a solution to  \eqref{NLW} with initial data $(\phi_0,\phi_1)$, then 
the scaled function $u^\ld(t,x)= \ld^{- \frac{2}{k-1}}u( \ld^{-1}t, \ld^{-1} x)$ 
is also a solution to 
 \eqref{NLW} with the scaled initial data $(\phi_0^\ld(x), \phi_1^\ld(x))
 = \big(\ld^{-\frac{2}{k-1}}\phi_0(\ld^{-1}x),\ld^{-\frac{2}{k-1}-1} \phi_1(\ld^{-1}x)\big)$. 
 This scaling symmetry induces the following scaling critical Sobolev index: 
 \begin{align}
 s_{\rm scaling} = s_{\rm scaling}(d,k) =  \frac{d}{2} - \frac{2}{k-1}
 \label{scaling} 
 \end{align} 
 
 \noi
 such that the homogeneous Sobolev $\dot H^s(\R^d) \times \dot H^{s-1}(\R^d)$-norm
 remains invariant 
 for the scaled initial data 
  $(\phi_0^\ld, \phi_1^\ld)$.
 It is known that 
 \eqref{NLW} also enjoys the  conformal symmetry (= Lorentzian invariance;  
 see \cite[(1.4)]{CCT}), 
 which induces another critical regularity:
 \begin{align}
 s_{\rm conf} = s_{\rm conf}(d,k) = \frac{d+1}{4} - \frac{1}{k-1}.
 \label{conf} 
 \end{align} 
 
 \noi
 See \cite{LS95, Sogge, CCT} for further discussions. 
We note that the heuristics related to the
scaling and conformal symmetries
also holds on $\T^d$ due to the finite speed of propagation.

This discussion leads us to 
 define the following critical Sobolev index $s_{\rm crit} = s_{\rm crit}(d, k)$:
\begin{equation}\label{crit1}
s_{\rm crit} = \max (s_{\rm scaling}, s_{\rm conf}, 0) 
= \max \bigg( \frac{d}{2} - \frac{2}{k-1}, \frac{d+1}{4} - \frac{1}{k-1}, 0 \bigg).
\end{equation}

\noi
The last restriction of $s_{\rm crit} \ge 0$ comes from 
the need to make sense of the nonlinearity $u^k$ in~\eqref{NLW}.
Note that we have $s_\text{scaling} <  s_\text{conf}$ in the following cases:
\begin{align}
(d, k) \in 
\mathcal{A}_0
: = \big\{
 (2, 2), (2, 3), (2, 4), (3, 2), (4, 2)
\big\}.
\label{crit2x}
\end{align}

\smallskip

\noi
$\bul$ {\bf $\pmb{d = 1}$.}
In this case, the wave equation is not dispersive.
As a consequence, there are no Strichartz estimates available, 
and Sobolev's inequality is  the only available tool. 
This leads to a new critical regularity $s_{\rm sob} = \frac12-\frac1k$, 
where $s_{\rm sob}$ is the lowest regularity such that 
 the embedding $H^{s_{\rm sob}}(\T) \subset L^k(\T)$ holds. Hence, 
 by noting that $s_{\rm sob} > s_{\rm conf} > s_{\rm scaling}$, 
 we define the critical Sobolev index  in one-dimension by setting
\begin{align}
\label{crit3}
s_{\rm crit} = s_{\rm sob} = \frac12 - \frac1k. 
\end{align}

\smallskip

The following  local well-posedness result is known for NLW \eqref{NLW}.

\begin{proposition}\label{PROP:LWP}
Let  $d \ge 1$.  Given  an integer  $k \geq 2$, 
 let $s_\textup{crit}$ be as in
\eqref{crit1} when $d \ge 2$
and~\eqref{crit3} when $d = 1$.
Suppose that one of the following conditions holds{\rm:}
\begin{align}
\begin{split}
 \textup{(i) }& 
s >  s_\textup{crit}, 
\ \text{ when  $(d,k) \in \{(2,2), (2,3), (3,2)\}$, \  or}\\
 \textup{(ii) } & 
 s \ge   s_\textup{crit}, 
\ \text{ otherwise}.
\end{split}
\label{reg1}
\end{align}

\noi
Then, NLW \eqref{NLW}
is  locally well-posed in 
$\H^s(\T^d) := H^s(\T^d) \times H^{s-1}(\T^d)$.

\end{proposition}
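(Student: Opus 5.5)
We prove the proposition by a contraction argument for the Duhamel formulation
\[
u(t) = \dt S(t)\phi_0 + S(t)\phi_1 \mp \int_0^t S(t-t') u^k(t')\,dt'
\]
on a short interval $[0,T]$, treating the cases $d=1$ and $d\ge 2$ separately. Since the statement is essentially known, the plan mirrors the standard literature (Lindblad--Sogge for $\R^d$, transferred to $\T^d$ by finite speed of propagation).

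\emph{Case $d=1$.} We work in $C([0,T];H^s(\T))$ with $s\ge s_{\rm sob}=\frac12-\frac1k$; the case $s\ge \frac12$ is easier, since $H^s$ is then essentially an algebra. The propagator $S(t)$ gains one derivative, so it suffices to bound $\|u^k\|_{H^{s-1}}$ by $\|u\|_{H^s}^k$. Because $s-1<0$, duality and Sobolev give $L^{r}(\T)\subset H^{s-1}(\T)$ for $\frac1r = \frac12 + (1-s)$, capped at $r=1$. Sobolev's embedding $H^s\subset L^{k}$ at $s=s_{\rm sob}$ (more precisely $H^s\subset L^{q}$ with $\frac1q=\frac12-s$) together with H\"older yields $\|u^k\|_{L^{q/k}}\le \|u\|_{L^q}^k$. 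Then $\frac kq = \frac k2 - ks$, and the requirement $\frac kq\le \frac32-s$ is equivalent to $s\ge \frac{k-3}{2(k-1)}$, which is at most $\frac12-\frac1k$. So the endpoint $s=s_{\rm crit}$ is included. The time factor $T$ from the Duhamel integral gives smallness and hence a contraction; difference estimates follow in the same way.

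\emph{Case $d\ge2$.} Here we use the Strichartz estimates for the wave (Klein--Gordon) equation on $\T^d$, which hold locally in time by finite speed of propagation and the Euclidean estimates, as recorded in Lemma~\ref{LEM:Slin}. We pick a wave-admissible pair $(q,r)$ at regularity $s$ with $\frac1q+\frac dr=\frac d2-s$ and $\frac2q+\frac{d-1}{r}\le\frac{d-1}2$, and a dual pair $(\tilde q',\tilde r')$ such that
\[
\|u^k\|_{L^{\tilde q'}_tL^{\tilde r'}_x}\le T^\theta\|u\|_{L^q_tL^r_x}^k .
\]
Both the scaling and the conformal constraints on this exponent choice are exactly what yields $s\ge\max(s_{\rm scaling},s_{\rm conf})$. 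We then run the contraction in $C_tH^s\cap L^q_tL^r_x$. For $s$ above $\frac12$, or for derivative bounds, we use fractional Leibniz rules; for $s=0$-type cases, H\"older alone suffices. At the critical endpoint, where $\theta=0$, smallness comes instead from shrinking $T$ so that $\|S(t)(\phi_0,\phi_1)\|_{L^q_TL^r_x}$ is small, using dominated convergence. This yields local well-posedness, with $T$ depending on the profile of the data.

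\emph{Main obstacle.} The hardest step is the exponent bookkeeping in the conformal regime $(d,k)\in\mathcal A_0$, and in particular identifying why the endpoint fails for $(2,2),(2,3),(3,2)$: there the needed pair is the forbidden endpoint Strichartz estimate, e.g.\ $L^4_tL^\infty_x$ for $d=2$ or $L^2_tL^\infty_x$ for $d=3$. In these three cases we therefore only claim $s>s_{\rm crit}$, and we use a slightly non-endpoint pair with an $\eps$ loss, which is absorbed by the extra regularity. I would first tabulate, for each case in \eqref{reg1}, an explicit admissible pair and check the inequalities, citing \cite{LS95, Sogge} for the remaining verification.
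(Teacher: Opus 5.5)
Your overall route is the paper's: a contraction argument, with Sobolev/duality in $d=1$ (cf.\ \eqref{1d2}) and, in $d\ge2$, Strichartz estimates with an admissible pair $(q,r)$ and a dual admissible pair $(\tilde q,\tilde r)$ satisfying $q\ge k\tilde q$, $r\ge k\tilde r$ (Proposition~\ref{PROP:Str2}). At the scaling-critical endpoint smallness comes from the free evolution in the Strichartz norm, as in the proof of Proposition~\ref{PROP:LWPx}. Two steps do not hold up as written, however.

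In $d=1$, you cap the duality embedding at $L^1\subset H^{s-1}$ for $s<\frac12$. The requirement is then $L^{q/k}\subset L^1$, i.e.\ $\frac kq\le 1$, not $\frac kq\le\frac32-s$. The threshold $\frac{k-3}{2(k-1)}<s_{\rm sob}$ you obtain is therefore spurious: below $s_{\rm sob}$ one has $q/k<1$, and $L^{q/k}$ does not embed into $H^{s-1}$. The correct condition $k(\frac12-s)\le 1$ is exactly $s\ge\frac12-\frac1k$. The endpoint is included because at $s=s_{\rm sob}$ one has $q=k$ and $L^1(\T)\subset H^{s-1}(\T)$, since $1-s=\frac12+\frac1k>\frac12$.

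In $d\ge2$, the substance of the proof is that suitable pairs exist exactly when $s\ge s_{\rm crit}$, and you only assert it. Moreover, the mechanism you describe cannot reach $s\ge 1$. Dual admissible pairs in the sense of Definition~\ref{DEF:admiss} exist only for $s<1$, because the conjugate pair must be $(1-s)$-admissible; this is why Proposition~\ref{PROP:Str2} and Lemma~\ref{LEM:max1} are restricted to $s<1$.

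The statement includes the endpoint $s=s_{\rm crit}=s_{\rm scaling}\ge 1$, for instance $d\ge 6$, $d=3$ with $k\ge5$, or $d=4,5$ with $k\ge 3$. There, ``use fractional Leibniz rules'' is not yet an argument. What is needed is the following:
\begin{itemize}
\item place $\jb{\nabla}^{s-1}(u^k)$ in $L^1_TL^2_x$ via the energy estimate and apply the Leibniz rule;
\item use the $1$-admissible pair $(q_1,r_1)=\big(k,\frac{2dk}{(d-2)k-2}\big)$ for the differentiated factor;
\item use Sobolev $W^{s-1,r_1}\subset L^{r_2}$ with $r_2=\frac{dk(k-1)}{k+1}$ for the remaining factors.
\end{itemize}
These exponents close exactly at $s=s_{\rm scaling}$, with $\theta=0$; see \eqref{nonl3c}--\eqref{nonl2b}. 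When $s_{\rm crit}<1\le s$, one instead takes an $s_0$-admissible pair with $s_0\in(s_{\rm crit},1)$ and puts $\jb{\nabla}^{s-s_0}$ on the nonlinearity, which gives $\theta>0$.

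Finally, $\theta=0$ occurs only at $s=s_{\rm scaling}$. At the conformal endpoint $s=s_{\rm conf}>s_{\rm scaling}$, i.e.\ the cases $(2,4)$ and $(4,2)$, the optimal pair satisfies $q>k\tilde q$, so a positive power of $T$ is still available there.
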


Proposition \ref{PROP:LWP} follows 
from a standard contraction argument, 
using 
Sobolev's inequality for $d = 1$
and the Strichartz estimates (Lemma \ref{LEM:Slin})
for $d\ge 2$.
See the proof of~Proposition~\ref{PROP:LWPx} 
on local well-posedness of SNLW~\eqref{SNLW}
in the Ito sense, 
where we establish the nonlinear estimates.
In Section \ref{SEC:nonlin}, 
we revisit
the nonlinear estimates necessary 
for proving 
Proposition~\ref{PROP:LWP}
in the context of the Fourier restriction norm method 
adapted to the $U^p$- and $V^p$-spaces;
see Subsection \ref{SUBSEC:Up}
for the definitions of these spaces.

\begin{remark}\label{REM:ill}\rm

In the focusing case (with $-u^k$ in \eqref{NLW}), 
Proposition \ref{PROP:LWP} 
is essentially sharp;\footnote{Namely, except 
at the critical case 
$s = s_\text{crit} = \max(s_\text{conf}, 0)$
when
$(d,k) \in \{(2,2), (2,3), (3,2)\}$.}
see \cite{LS95, CCT, OOTz, FO}
for the known ill-posedness results.
See also \cite[Exercise 3.64 and 3.67]{TAO}.
In the defocusing case
(with $+u^k$ in \eqref{NLW}), 
the 
question 
of local well-posedness of NLW~\eqref{NLW}
still remains open 
in the range (i)~$\max (s_\text{scaling}, 0) \le  s \le  \max(s_\text{conf}, 0)$
for  $(d,k) \in \{(2,2), (2,3), (3,2)\}$
and (ii)~$s_\text{scaling} \le  s <   s_\text{conf}$
for  $(d,k) \in \{(2,4),  (4,2)\}$.

\end{remark}

We are now ready to state our main results.
In order to simplify the presentation,  we only consider the case when 
$\Phi$ is spatially homogeneous. 
Namely, 
$\Phi$ is a Fourier multiplier operator
given by 
\begin{align}
\Phi (e_n) = \phi_n e_n   \quad \text{with} \ \ \phi_{-n} = \cj{\phi_n}.
\label{phi1}
\end{align}

\noi
Under this assumption, we have 
\begin{align}
\| \Phi \|_{\HS(L^2; H^\s)}= \|\jb{n}^\s\phi_n\|_{\l^2_n}.
\label{phi1a}
\end{align}

\noi
We leave the general case to interested readers.

%
%
%
%
%
%
%
%

\medskip

\noi
\underline{\bf Part 1: Young case.}
Our first main result is pathwise local 
well-posedness of SNLW \eqref{SNLW}
in the Young case
(namely, 
the  fractional-in-time case with the Hurst parameter $\frac 12 < \be < 1$).

\begin{theorem}[Young case]
\label{THM:1}
Let  $d \ge 1$ and $\frac 12 < \be < 1$.
Let $\zeta  = \dt  W^\be$ be as in \eqref{zeta}, 
where $W^\be$ is the cylindrical process defined in \eqref{W0}.
  Given  an integer  $k \geq 2$, 
suppose that one of the conditions 
in \eqref{reg1} holds{\rm :}
\begin{align*}
 \textup{(i) }& 
s >  s_\textup{crit}, 
\ \text{ when  $(d,k) \in \{(2,2), (2,3), (3,2)\}$, \  or}\\
 \textup{(ii) } & 
 s \ge   s_\textup{crit}, 
\ \text{ otherwise}, 
\end{align*}

\noi
where  $s_\textup{crit}$ is as in
\eqref{crit1} when $d \ge 2$
and~\eqref{crit3} when $d = 1$.
Furthermore, suppose that 
$ \s \in \R$ satisfies 
\begin{align}
\label{sigma0}
\s>\max (-s, s-1).
\end{align}

\noi
Then, 
given  $\Phi \in \HS(L^2(\T^d); H^\s(\T^d))$, 
satisfying~\eqref{phi1}, 
 SNLW \eqref{SNLW}
  with a multiplicative fractional-in-time noise
{\rm (}with the Hurst parameter $\frac 12 < \be < 1${\rm)}  
is pathwise locally well-posed in $\H^s(\T^d)$.

\end{theorem}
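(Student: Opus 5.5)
We prove Theorem~\ref{THM:1} by a fixed point argument for the Duhamel formulation \eqref{mild0} in a space built from the $V^p$-spaces adapted to the Klein-Gordon flow. Then the nonlinear Duhamel term and the stochastic convolution $\Psi(u)$ are controlled in the same norm, with $\Psi(u)$ defined pathwise as a Young integral. First we pass to the vectorial formulation: set $\mathbf u = (u, \dt u)$, or equivalently $u_\pm = \dt u \mp i\jb{\nabla} u$, and work with the interaction representation $v(t) = e^{\mp it\jb{\nabla}} u_\pm(t)$. At this level the stochastic convolution becomes a genuine Stieltjes-type integral
\[
\int_0^t e^{\mp it'\jb{\nabla}}\big[u(t')\,\Phi\, dW^\be(t')\big],
\]
with no oscillatory propagator outside. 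In Fourier variables its $n$-th mode reads $\sum_{n_1+n_2=n}\int_0^t e^{\mp it'\jb{n}} \widehat u(t',n_1)\,\phi_{n_2}\,dB_{n_2}(t')$. The solution space is the intersection of $V^p_{\pm}H^s$ (for some $p>2$ close to $2$), which carries $p$-variation temporal regularity, with the Strichartz-type norms needed for $d\ge 2$. By the embedding $V^p\subset L^\infty$ this space embeds into $C_t\H^s$.

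Second, we treat the nonlinear Duhamel term. We take as given the nonlinear estimates of Section~\ref{SEC:nonlin} (the $U^p$/$V^p$ version of the estimates behind Proposition~\ref{PROP:LWP}). Via the $U^{p}$-$V^{p'}$ duality (Lemma~\ref{LEM:lin}, \eqref{lin2}) and the Strichartz estimates (Lemma~\ref{LEM:Slin}), they give
\[
\Big\| \int_0^t S(t-t')u^k(t')\,dt'\Big\|_{V^p H^s} \les T^\theta \|u\|^k
\]
under \eqref{reg1}, with Sobolev's inequality replacing Strichartz when $d=1$. Since the Duhamel operator maps into $U^2\subset V^p$, this part follows directly from the deterministic theory.

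Third, and this is the main step, we construct $\Psi(u)$ pathwise. A.s., $e^{\mp it\jb{n}}$-modulated increments of $B_{n}$ lie in $C^{\al}$ for every $\al<\be$, uniformly summable against $\jb{n}^\s\phi_n$. We would verify this with a Kolmogorov/Gaussian hypercontractivity argument giving a random constant in $\ell^2$-weighted form. Given $v\in V^p$ with $\frac1p+\al>1$, possible since $\be>\frac12$ and $p$ is close to $2$, the sewing lemma (Lemma~\ref{LEM:sew}) applied to the germ $v(r)(\Phi W^\be(t)-\Phi W^\be(r))$ defines the Young integral. It yields a bound in $V^{p}_t H^s$, with a gain $T^{\al-\varepsilon}$, by the product estimate
\[
\|fg\|_{H^{s}} \les \|f\|_{H^s}\|g\|_{H^{\s}}, \qquad \s>\max(-s,s-1).
\]
Here the target is really $H^{s-1}$ for the second component, because $S(t)$ gains one derivative; hence we need the product from $H^s\times H^\s$ into $H^{s-1}$. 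This is exactly where \eqref{sigma0} enters: $\s>s-1$ handles high-frequency $u$ against low-frequency noise, and $\s>-s$ ensures the product is defined when one factor has negative regularity.

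Finally, both maps are Lipschitz in $u$ with small constants on $[0,T]$ for a random stopping time $T>0$. A contraction in a ball of the solution space then gives existence, uniqueness and continuous dependence. We would also check that the pathwise integral agrees with the smooth-noise limit, which justifies calling the result pathwise well-posedness. The main obstacle is the third step, specifically the frequency-dependent phase $e^{\mp it'\jb{n}}$ inside the integrator. We would try first to place it on the controlled integrand side, using that $e^{\mp it\jb{n_1}}$ multiplies $v$ and that the products of phases yield the phase of $n_2$ via $\jb{n}\le\jb{n_1}+\jb{n_2}$. If that loses derivatives, we would instead prove a random H\"older bound for the modulated processes $\int e^{\mp it'\jb{n_2}}dB_{n_2}$ directly, uniformly in $n_2$ with a logarithmic loss.
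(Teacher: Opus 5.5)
There is a genuine gap in your third step, which is the core of the theorem: you have no mechanism that bounds the stochastic term in the full range \eqref{sigma0}.

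\emph{The phase structure.} In the interaction representation the noise does not act as a modulated $H^\s$-valued path multiplying $\vec\ub$. The germ has to be $\vXX_{t,r}(\vec\ub_r)$, where $\vXX_{t,r}$ is the random operator \eqref{introX}. Label the noise frequency $n_1$ and the input frequency $n_2$ (the reverse of your labeling), with $n=n_1+n_2$. Then the kernel of $\XX^{(1,1)}_{t,r}$ is $-\jb{n}^{-1}\phi_{n_1}\int_r^t\sin(t'\jb{n})\cos(t'\jb{n_2})\,dB_{n_1}(t')$ (see \eqref{XX0b}). So the oscillation $e^{\pm it'(\jb{n}\pm\jb{n_2})}$ depends on the output and input frequencies, not on $n_1$. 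Your germ $v(r)\big(\Phi W^\be(t)-\Phi W^\be(r)\big)$ ignores this, and neither fallback repairs it. Moving $e^{it'\jb{n_2}}$ onto the integrand costs $\jb{n_2}^{\al}$, $\al>\frac12$, of temporal regularity, and the phase $e^{\mp it'\jb{n}}$ still has nowhere to go. Modulating $B_{n_1}$ by its own frequency is simply the wrong phase.

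\emph{The product estimate.} More seriously, the estimate $\|fg\|_{H^{s-1}}\les\|f\|_{H^s}\|g\|_{H^\s}$ is false under \eqref{sigma0} when $d\ge2$; it also requires $\s\ge\frac d2-1$. For example, take $d=2$, $s=\frac12$ (allowed for $k=5$), $\s=-\frac14$, $A_N=\{n\in\Z^2: N\le|n|\le2N\}$, $\ft f=N^{-3/2}\ind_{A_N}$, and $\ft g=N^{-3/4}\ind_{A_N}$. Then $\|f\|_{H^{1/2}}\sim\|g\|_{H^{-1/4}}\sim1$. But $\widehat{fg}(m)\gtrsim N^{-1/4}$ for $|m|\le N/4$, hence $\|fg\|_{H^{-1/2}}\gtrsim N^{1/4}$. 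These coefficients are nonnegative. So the same loss defeats any argument that bounds the entries of the random matrix pathwise in absolute value and then applies a deterministic product or Schur-type bound, even uniformly in the modulation frequency up to logarithms. Your route can therefore at best cover $d=1$.

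The missing idea is the random tensor estimate (Lemma~\ref{LEM:RTE}). It bounds the operator norm of the random matrix directly and gains from the independence of $\{B_{n_1}\}$.
\begin{itemize}
\item The paper writes the kernel of $\jb{\nabla}^{s+1-i}\XX^{(i,j)}_{t,r}\jb{\nabla}^{-s-1+j}$ as $I_1[\hf_{nn_2}(n_1)\ff^{(i,j)}_{nn_2}(t')]$ (see \eqref{XX1a}).
\item It bounds the time factor in $\scrH^\be_{t'}$ by $(1\vee T)^{\be-\al+\eps}|t-r|^\al$, uniformly in $(n,n_2)$, treating the near-resonant case $|\jb n-\jb{n_2}|<1$ separately.
\item On dyadic blocks it controls the spatial part through $\|\hf^{\mathbf N}\|_{n_1n_2\to n}$ and $\|\hf^{\mathbf N}\|_{n_2\to nn_1}$. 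Both are $\les N^{s-1}N_1^{-\s}N_2^{-s}\|\Phi\|_{\HS(L^2;H^\s)}$, with no $\frac d2$ loss, up to a small power of the Hilbert--Schmidt norm.
\item Summing the blocks requires exactly $s+\s>0$ and $\s>s-1$, i.e.\ \eqref{sigma0}.
\item Kolmogorov's criterion (Lemma~\ref{LEM:kolm}) then gives $\vXX\in C^\al_{2,T}\L(\H^s)$ almost surely, with $\updl\vXX=0$.
\item Lemma~\ref{LEM:intY} builds $\I^{\vXX}(\vec\ub)\in\V^{1/\al}_T\H^s\subset U^2_T\H^s$ from the operator-valued germ.
\end{itemize}

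A smaller point: at $s=s_{\textup{crit}}=s_{\textup{scaling}}$ the nonlinear estimate has $\theta=0$. So your claim of small Lipschitz constants for small $T$ fails for large data. The paper closes the contraction using the smallness of $\P_N^\perp(\phi_0,\phi_1)$ and working in the set \eqref{X6} (proof of Proposition~\ref{PROP:LWP1}).
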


In the following, we discuss our strategy and 
explain a meaning of pathwise solutions
constructed in Theorem \ref{THM:1}.
A natural attempt 
would be 
to try to construct the stochastic convolution $\Psi(u)$
in \eqref{psi1}
as a Young integral, 
for which 
we need to make use of 
 the temporal regularity of the unknown $u$ in  \eqref{mild0}.
However, 
due to the presence of the linear wave propagator  $S(t)$ 
in~\eqref{mild0}, 
we expect positive temporal regularity 
for $u$ only at the expense of spatial regularity.
To bypass this issue, 
we will instead work with the so-called \textit{interaction representation}, which is morally given by ``$S(t)^{-1} u(t)$"; see \eqref{int} for a precise definition.

In order to introduce this change of 
unknowns, 
 we work with the following vectorial formulation of \eqref{mild0} for 
 a vector $\vec u  = (u,v)$ with $v=\dt u$:
\begin{align}
\label{mild1}
\begin{split}
\vec u(t) = 
\begin{pmatrix}
u(t) \\
v(t)
\end{pmatrix}  
& = \S(t)
\begin{pmatrix}
\phi_0\\
\phi_1
\end{pmatrix}
 + \int_0^{t} \S(t-t')
\begin{pmatrix}
0\\
u^k(t')
\end{pmatrix}
dt'\\
& \quad +
\int_0^t \S(t-t')
\bigg[
\begin{pmatrix}
0 \\
u(t')
\end{pmatrix} \Phi dW^\be (t')\bigg],
\end{split}
\end{align}

\noi
where  $\S(t)$ is the matrix-valued
linear wave propagator, given by 
\begin{align}
\label{Sdef}
\begin{split}
\S(t) 
 =
\begin{pmatrix}
\dt S(t) & S(t)
\\
\dt^2 S(t) & \dt S(t)
\end{pmatrix}
=
\begin{pmatrix}
\cos(t\jb{\nabla}) & \frac{\sin(t\jb{\nabla})}{\jb{\nabla}} \\
- \jb{\nabla} \sin(t\jb{\nabla}) & \cos(t\jb{\nabla})
\end{pmatrix}.
\end{split}
\end{align}

\noi
We recall that $\S(t)$ is unitary on 
 $\H^s(\T^d) = H^s(\T^d) \times H^{s-1}(\T^d)$ (see \eqref{Hs})
with $\S(t)^{-1} = \S(-t)$.
We then define the  
 \textit{interaction representation} $\vec\ub = (\ub,\vb)$ 
 of $\vec u  = (u,v)$ by 
\begin{align}
\label{int}
\vec\ub(t) = \S(t)^{-1} \vec u(t) = \S(-t) \vec u(t).
\end{align}

\noi
The interaction representation \eqref{int} has played a crucial role in the study of dispersive PDEs, 
in particular,    in the context of the Fourier restriction norm method
\cite{BO93, KM, KT07} and the normal form method \cite{BIT, GuoKO, OSTz, OW1}.
For example, (an equivalent version of) the $X^{s, b}$-norm 
(also known as the hyperbolic Sobolev norm)
in the Fourier restriction norm method 
can be expressed as the bi-parameter space-time Sobolev norm
of the interaction representation $\vec \ub = (\ub, \vb)$:
\begin{align}
\|\vec u \|_{X^{s, b}}
= \big\| \jb{\dt}^b (\jb{\nb_x}^s  \ub, \jb{\nb_x}^{s-1}  \vb)  \big\|_{L^2_{t, x}\times L^2_{t, x}}.
\label{Xsb}
\end{align}

In terms of the interaction representation $\vec \ub$, 
\eqref{mild1} is expressed as 
\noi
\begin{align}\label{mild2}
\vec\ub(t)
=
\begin{pmatrix}
\phi_0
\\
\phi_1
\end{pmatrix}
+ \int_0^t \S(-t')
\begin{pmatrix}
0 \\
( \pi_1 [ \S(t') \vec\ub(t') ] )^k
\end{pmatrix}
dt'
+
\vec{\pPsi} (\vec\ub) (t), 
\end{align}
where $\pi_1 \vec\ub$ denotes the projection onto the first component of the vector $\vec\ub$,  
and $\vec{\pPsi} (\vec\ub) = \vec{\pPsi}^\be (\vec\ub) $ 
is formally given by 
\begin{align}
\label{psi2}
\vec{\pPsi} (\vec\ub) (t)  
= \int_0^t
\S(-t')
\bigg[\begin{pmatrix}
0
\\
\pi_1 [ \S(t') \vec\ub(t') ]
\end{pmatrix}
\Phi dW^\be (t')\bigg]
.
\end{align}

\noi
From \eqref{psi1} with \eqref{Sdef}, we
have 
\begin{align}
\begin{split}
\vec{\pPsi} (\vec\ub)(t)
& =
\begin{pmatrix}
\displaystyle \int_0^t \frac{- \sin(t'\jb{\nabla}) }{\jb{\nabla}} 
\Big[\big( \cos(t'\jb{\nabla}) \ub(t')  + \frac{\sin(t'\jb{\nabla})}{\jb{\nabla}} \vb(t') \big)
 \Phi d W^{\be} (t') \Big] \rule[-5mm]{0pt}{0pt}\\
\displaystyle
\int_0^t \cos(t'\jb{\nabla}) 
\Big[  \big( \cos(t'\jb{\nabla}) \ub(t') + \frac{\sin(t'\jb{\nabla})}{\jb{\nabla}} \vb(t') \big)  \Phi d W^\be (t')   \Big]
\end{pmatrix}.
\end{split}
\label{psi3}
\end{align}

\noi
We 
 provide a pathwise meaning to the stochastic term
$\vec{\pPsi} (\vec\ub)$
by interpreting it as a {\it Young integral}.
Here, the main tools are 
 the sewing lemma (Lemma \ref{LEM:sew})
due to Gubinelli \cite{G04}
and the random tensor estimate (Lemma \ref{LEM:RTE})
for (multiple) stochastic integrals with respect to fractional Brownian motions, 
which we develop in Subsections \ref{SUBSEC:FBM}
and \ref{SUBSEC:RTE}.

For this purpose, we introduce an 
operator-valued 
two-parameter process 
 $\vXX$ (called a driver) by setting
\begin{align}
\label{introX}
\begin{split}
\vXX_{t,r} (\vec f) 
& = \int_r^t
\S(-t')
\bigg[\begin{pmatrix}
0
\\
\pi_1 [ \S(t') \vec f ]
\end{pmatrix}
\Phi dW^\be (t')\bigg]\\
& =
\begin{pmatrix}
\displaystyle \int_r^t \frac{- \sin(t'\jb{\nabla}) }{\jb{\nabla}} 
\Big[\big( \cos(t'\jb{\nabla}) f_1  + \frac{\sin(t'\jb{\nabla})}{\jb{\nabla}} f_2  \big)
 \Phi d W^{\be} (t') \Big] \rule[-5mm]{0pt}{0pt}\\
\displaystyle
\int_r^t \cos(t'\jb{\nabla}) 
\Big[  \big( \cos(t'\jb{\nabla}) f_1 + \frac{\sin(t'\jb{\nabla})}{\jb{\nabla}}f_2 \big)  \Phi d W^\be (t')   \Big]
\end{pmatrix}
\end{split}
\end{align}

\noi
for $t \ge r \ge 0$ and   a pair  $\vec f = (f_1,f_2)$ of functions $f_1$ and $f_2$ on $\T^d$, 
where we simply replaced $ \ub (t')$ and $ \vb (t')$ in \eqref{psi3}
by $f_1$ and $f_2$, respectively.
See also \eqref{XX0}, \eqref{XX0a}, and \eqref{XX0b}.
Note that the stochastic term 
$\vec{\pPsi} (\vec\ub)$ in \eqref{psi2}
is then formally given by 
\begin{align}
\vec{\pPsi} (\vec\ub)(t) 
= \vXX_{t, 0} (\vec \ub_\bul), 
\label{bd1x}
\end{align}

\noi
where $\bul$ denotes the variable of integration (in time).
{\it Suppose} that there exists $\al > \frac 12$ such that 
\begin{align}
\| \vXX_{t,r}\|_{\H^s \to \H^s} \le C_\o |t-r|^\al
\label{bd1}
\end{align}

\noi
for any $0 \le r \le t \le T$ (with some $T > 0$), 
where $C_\o$ is an almost surely finite random constant.
Then, 
a standard Young integration theory for H\"older continuous functions, 
based on the sewing lemma (see \cite{G04, GT10, FH20}), 
allows us to construct 
$\vec{\pPsi} (\vec\ub)$
for $\vec\ub \in C^\al([0, T]; \H^s(\T^d))$, in the pathwise manner, 
as the Young integral $\I^{\vXX}(\vec \ub)$
of  $\vec\ub$
with the Young driver $\vXX$, 
given as the unique limit of 
Riemann-Stieltjes type sums; see \eqref{intY1f}.

This approach, however, leaves an issue
in studying the nonlinear problem \eqref{mild2}, 
where we need to make sense of {\it both} 
the nonlinear  term in \eqref{mild2}:
\begin{align}
\vec{\NN}({\vec{\ub}})(t) 
=  \int_0^t \S(-t')
\begin{pmatrix}
0 \\
( \pi_1 [ \S(t') \vec\ub(t') ] )^k
\end{pmatrix}
dt'
\label{bd2}
\end{align}

\noi
and the stochastic term $\vec{\pPsi} (\vec\ub)$
in the pathwise manner.
The Strichartz spaces (see  \eqref{it1}, 
\eqref{it3}, and \eqref{it4} in 
Subsection~\ref{SUBSEC:ito1})
for proving Proposition \ref{PROP:LWP}
are not suitable for this purpose since they do not provide
any positive temporal regularity, 
causing an issue in making sense of the stochastic term 
$\vec{\pPsi} (\vec\ub)$ as a Young integral via the sewing lemma.
As for the $X^{s, b}$-norm in~\eqref{Xsb}, 
while we may employ the sewing lemma
in the Sobolev (and Besov) scale 
in \cite{FS}, 
the $X^{s, b}$-space is not suitable
to treat the scaling critical problem (namely, $s = s_\text{scaling}$), 
which would require $b = \frac12$, but we have $X^{s, \frac 12}\not \subset C(\R; \H^s(\T^d))$
and the transference principle fails for $b \le \frac 12$.
Moreover, in the rough case
(namely, the white-in-time case with the Hurst parameter $\be = \frac 12$), 
the Sobolev  scale
sewing lemma   \cite{FS} requires $q > 2$
for the $L^q_t$-integrability, 
and thus the $X^{s, b}$-norm is not suitable even in this sense.

Next, 
consider  the following variant of the $X^{s, b}$-norm:
\begin{align}
\|\vec u \|_{\C^\al_{\Box, T} \H^s_x}
= \|  \vec \ub \|_{\C^\al_T \H^s_x}, 
\notag 
\end{align}

\noi
used by Gubinelli in \cite{G12}, 
where $\C^\al([0, T]) = C^\al([0, T]) \cap L^\infty([0, T])$
denotes the Lipschitz space (see \eqref{Lip1}).
On the one hand, 
the $\C^\al_T \H^s_x$-norm allows us to construct the stochastic 
term $\vec{\pPsi} (\vec\ub)$ in \eqref{psi2}
as a Young integral via the sewing lemma.
On the other hand, 
the Lipschitz space\footnote{More precisely, the H\"older semi-norm.}
scales much worse than $L^\infty$
and, as a result, it is not possible to treat the nonlinear 
term
$\vec{\NN}({\vec{\ub}})$
in \eqref{bd2} 
at a low regularity (in particular, $s$ close to the scaling critical regularity 
$s_\text{scaling}$).

We overcome this issue by 
employing
the refined Fourier restriction norm method
adapted to the $V^p$-spaces of functions
of bounded $p$-variation
and 
their  preduals (the so-called $U^p$-spaces), 
introduced by Koch and Tataru
with their collaborators \cite{KT05, KT07, HHK09, HTT11}.
See Subsection~\ref{SUBSEC:Up}
for the definitions and basic properties
of these spaces.
Moreover,  these spaces enjoy the following properties:

\smallskip

\begin{itemize}
\item[(i)]
The $U^p$- and $V^p$-spaces
scale like the $L^\infty$-space, 
and thus the 
$U^p_{\Box} (\R; \H^{s}(\T^d))$-space
 defined in \eqref{Dl1}
is scaling-invariant\footnote{Strictly speaking, 
we would need to replace the nonhomogeneous Sobolev space 
$\H^{s}(\T^d)$ by its homogeneous counterpart.}
for $s = s_\text{scaling}$, allowing us to treat 
the nonlinear  term 
$\vec{\NN}({\vec{\ub}})$ in \eqref{bd2}
even at 
the scaling critical 
case (for $s$ satisfying~\eqref{reg1}) for $d \ge 2$ via the Strichartz estimates and the transference principle.
See Section \ref{SEC:nonlin}
for deterministic analysis on 
the nonlinear  term 
$\vec{\NN}({\vec{\ub}})$ in~\eqref{bd2}.
We note that the $U^p$-$V^{p'}$ duality 
plays a crucial role  in (the proof of) the linear estimate \eqref{lin2}
of Lemma \ref{LEM:lin}.

\smallskip

\item[(ii)] 
The $V^p$-spaces
are also adapted to the construction of 
Young (and rough) integrals; see \cite{FV10, DGHT19}.
In Section \ref{SEC:int}, 
we present a review on 
the  Young\,/\,rough integration theory for functions of bounded $p$-variation, 
 based on the sewing lemma 
(Lemma \ref{LEM:sew}), 
in particular for  readers in dispersive PDEs
who may not be familiar with the subject.
We pay particular attention to 
make notations and spaces
introduced in Subsection~\ref{SUBSEC:Up}
(for PDE analysis) 
and Section \ref{SEC:int} 
(for rough analysis) compatible.

\end{itemize}

\smallskip

\noi
Furthermore, 

\begin{itemize}
\item[(iii)]
In Subsections \ref{SUBSEC:FBM}
and \ref{SUBSEC:RTE}, 
we set up notations and establish 
 the random tensor estimate (Lemma \ref{LEM:RTE})
for (multiple) stochastic integrals with respect to fractional Brownian motions, 
which we used in Section \ref{SEC:ops}
to show that the bound \eqref{bd1} on 
the random driver $\vXX$ indeed holds
almost surely, provided that \eqref{sigma0} holds.

\end{itemize}

\smallskip

\noi
The points (i), (ii), and (iii) 
allow us to interpret 
the integral equation \eqref{mild2}
for the interaction representation $\vec \ub = \S(-t) \vec u(t)$
as
the following {\it Young differential equation} (YDE):
\begin{align}
\vec{\ub}(t)= (\phi_0,\phi_1) + 
\vec{\NN}({\vec{\ub}})(t) 
+  
\I^{\vXX^\o}(\vec\ub)(t)
\label{bd3}
\end{align}

\noi
for almost every $\o \in \Om$, 
where 
$\vec{\NN}({\vec{\ub}})$ is as in \eqref{bd2}
and 
$\I^{\vXX^\o}(\vec\ub)$
denotes the Young integral of 
$\vec\ub$ with the Young driver $\vXX^\o$.
In Section \ref{SEC:young}, 
we prove pathwise local well-posedness
of the YDE \eqref{bd3}
and present a proof of Theorem \ref{THM:1}.

\medskip


\noi
\underline{\bf Part 2: Rough case.}
Next, we state our  main result on pathwise local 
well-posedness of SNLW \eqref{SNLW}
in the rough case
(namely, the white-in-time case with the Hurst parameter $\be = \frac 12$).

\begin{theorem}[rough case]
\label{THM:2}

Let  $d \ge 1$ and $\be = \frac 12$. 
Let $\zeta  = \xi = \dt  W^\frac 12 $ be as in \eqref{zeta}, 
where 
$\xi$ is the space-time white noise, satisfying~\eqref{white}, 
and 
$W^\frac 12 $ is the $L^2$-cylindrical Wiener process  in \eqref{W0}.
  Given  an integer  $k \geq 2$, 
suppose that one of the conditions 
in \eqref{reg1} holds{\rm :}
\begin{align*}
 \textup{(i) }& 
s >  s_\textup{crit}, 
\ \text{ when  $(d,k) \in \{(2,2), (2,3), (3,2)\}$, \  or}\\
 \textup{(ii) } & 
 s \ge   s_\textup{crit}, 
\ \text{ otherwise}, 
\end{align*}

\noi
where  $s_\textup{crit}$ is as in
\eqref{crit1} when $d \ge 2$
and~\eqref{crit3} when $d = 1$.
Furthermore, suppose that 
$ \s \in \R$ satisfies \eqref{sigma0}{\rm :}
\begin{align*}
\s>\max (-s, s-1).
\end{align*}

\noi
Then, 
given  $\Phi \in \HS(L^2(\T^d); H^\s(\T^d))$, 
satisfying~\eqref{phi1}, 
 SNLW \eqref{SNLW}
 with a multiplicative white-in-time noise
is pathwise locally well-posed in $\H^s(\T^d)$.

\end{theorem}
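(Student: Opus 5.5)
We work throughout with the interaction representation $\vec\ub = \S(-t)\vec u(t)$ from \eqref{int}. Since $\be = \frac12$, the driver $\vXX$ in \eqref{introX} is only H\"older of order $\al < \frac12$ (in $p$-variation, finite for some $2<p<3$). A Young integral therefore no longer suffices, and we reformulate \eqref{mild2} as a \emph{rough differential equation}. The first step is to construct the second-order driver
\begin{align*}
\XX^{(2)}_{t,r}(\vec f) = \int_r^t \S(-t')\bigg[\begin{pmatrix}0\\ \pi_1[\S(t') \vXX_{t',r}(\vec f)]\end{pmatrix}\Phi\, dW^{\frac12}(t')\bigg],
\end{align*}
defined as an iterated Ito (or Stratonovich) integral, i.e.\ a double stochastic integral with respect to $\{B_n\}$. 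We then show that the pair $(\vXX,\XX^{(2)})$ satisfies Chen's relation $\XX^{(2)}_{t,r} - \XX^{(2)}_{t,u} - \XX^{(2)}_{u,r} = \vXX_{t,u}\vXX_{u,r}$, which holds algebraically, together with the operator bounds
\begin{align*}
\|\vXX_{t,r}\|_{\H^s\to\H^s}\le C_\o|t-r|^{\al},\qquad \|\XX^{(2)}_{t,r}\|_{\H^s\to\H^s}\le C_\o|t-r|^{2\al}
\end{align*}
for some $\frac13<\al<\frac12$. The bounds are proved via the random tensor estimate (Lemma \ref{LEM:RTE}) applied to the kernels of first and second order chaos: moment bounds on $\|\XX^{(2)}_{t,r}\|$ of the form $|t-r|^{1}$ times a deterministic tensor norm, upgraded by Kolmogorov/GRR. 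Condition \eqref{sigma0}, $\s>\max(-s,s-1)$, is exactly what makes the operator-norm tensor sums converge. The product $(\S(t')\vec f)\cdot \Phi \zeta$ maps $H^s$ to $H^{s-1}$ (first component after $\jb\nb^{-1}$), and this requires $\s > -s$ and $\s > s-1$. The same mechanism governs both levels, since the second-order tensor is a composition of two such operators with mixed time integration.

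Next we set up the solution space. Following the $U^p/V^p$ framework (Subsection \ref{SUBSEC:Up}), we seek $\vec\ub = \vec\ub' $-controlled paths: $\vec\ub_{t}-\vec\ub_r = \vXX_{t,r}(\vec\ub_r) + R_{t,r}$, with Gubinelli derivative identified with $\vec\ub$ itself (since the noise is linear in $u$), and remainder $R$ of finite $p/2$-variation. The nonlinear term $\vec\NN(\vec\ub)$ has better time regularity, so it is absorbed into the remainder. We reduce \eqref{mild2} to a system of three unknowns: $\vec\ub$ in $U^2_\Box$-type spaces for the nonlinear estimate, $\vec\ub$ in $V^p$ for the rough integral, and the remainder. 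The rough integral is defined through the sewing lemma (Lemma \ref{LEM:sew}) applied to the germ $\Xi_{t,r}=\vXX_{t,r}\vec\ub_r+\XX^{(2)}_{t,r}\vec\ub_r$. The sewing lemma gives $\|\I(\vec\ub)\|_{V^p}\lesssim C_\o(\|\vec\ub\|_{V^p}+\|R\|_{V^{p/2}})$ with a gain of a small power of $T$ (or, for critical $s$, smallness by restricting $\|\vXX\|$ on short intervals, which are continuous in time).

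The nonlinear term is handled by the deterministic multilinear estimates of Section \ref{SEC:nonlin}: Strichartz estimates combined with transference, and the $U^p$--$V^{p'}$ duality in Lemma \ref{LEM:lin}, which bound $\vec\NN$ in $U^2_\Box\H^s$ by $k$ copies of $V^2_\Box\H^s$ norms. For this we must embed the controlled path into $V^2$-type spaces. This embedding is the main obstacle: the rough integral naturally lives in $V^p$ with $p>2$, while the critical nonlinear estimates need $V^2$ or $U^2$. I would handle it by interpolating between the two sides. On the nonlinear side I would use the $V^p\subset U^q$ type embedding with a logarithmic loss, together with slack from noncritical $s$. For the critical endpoint I would instead place the stochastic term only in the nonlinear estimate via a mixed $V^p$/Strichartz bound, using that the rough integral has spatial regularity $\H^s$ uniformly and that the Strichartz estimate holds for $V^p$ functions with $p$ finite and a short-time factor.

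Finally, a contraction argument in a ball of the combined norm on $[0,T]$ yields a solution. Here $T$ is a stopping time chosen so that the rough-path norms of $(\vXX,\XX^{(2)})$ on $[0,T]$ and the linear evolution part are small. Continuous dependence on data and on the rough driver follows from the local Lipschitz property of the sewing map. This gives pathwise local well-posedness, with $\vec u=\S(t)\vec\ub$.
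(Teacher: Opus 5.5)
Your overall architecture (second-order driver as an iterated Ito integral, Chen's relation, moment bounds via Lemma~\ref{LEM:RTE} and Kolmogorov, the controlled ansatz with $\vec\ub'=\vec\ub$, sewing, contraction in $U^p/V^p$ spaces) matches the paper's, and you correctly locate the main obstacle. However, your proposed resolution does not work. The obstruction arises only in case \eqref{bad1}: $d\ge 6$, $k=2$, $s=s_\textup{crit}\ge 1$. In all other cases Proposition~\ref{PROP:nonlin2} accepts factors in $U^q_T\H^s$ for some $q>2$. In case \eqref{bad1} the only usable pair is $(q_1,r_1)=(2,\frac{2d}{d-3})$ for $\jb{\nabla}^{s-1}u$, so the nonlinear estimate needs the unknown in $U^2_T\H^s$ (Remark~\ref{REM:nonlin5}). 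A white-in-time rough integral is generically not in $U^2_T\H^s$, since Brownian paths have infinite $2$-variation. In particular, your ``three unknowns'' are one and the same $\vec\ub$ measured in $U^2$ and in $V^p$, which is inconsistent. None of your fixes closes this:
\begin{itemize}
\item There is no embedding of $V^p$, $p>2$, into $U^2$, with or without logarithmic loss.
\item There is no slack in $s$, since $s=s_\textup{crit}$ is included in the theorem.
\item Transference for a $V^p\subset U^q$ ($q>p>2$) function with only $\H^s$ regularity gives Strichartz bounds only for $1$-admissible pairs $(q,r)$ with $r<\frac{2d}{d-3}$. H\"older in time buys a power of $T$ but not the spatial integrability $L^{r_1}_x$, and at the scaling-critical level nothing compensates for that loss.
\end{itemize}

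Three ingredients are missing.
\begin{itemize}
\item \emph{Spatial smoothing of the drivers.} The \emph{strict} inequality in \eqref{sigma0} lets Proposition~\ref{PROP:driver2} bound $(\vXX,\vbbX)$ from $\H^s$ into $\H^{s+\eps}$, using \eqref{YG0}, \eqref{RG0}, \eqref{RG1} with $s_0=s+\eps$. You only claim $\H^s\to\H^s$.
\item \emph{The splitting} $\vec\ub=\vec\vb+\vec\wb$ of \eqref{exp1}--\eqref{RR1}. Here $\vec\vb=\I^{\vXX,\vbbX}(\vec\vb+\vec\wb)\in V^{1/\al}_T\H^{s+\eps}$ is rough in time but smoother in space. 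The other piece $\vec\wb=(\phi_0,\phi_1)+\vec\NN(\vec\vb+\vec\wb)\in V^{1/\g}_T\H^{s}\subset U^2_T\H^s$ is smoother in time. The controlled structure is imposed on the sum.
\item \emph{The bilinear estimate} \eqref{nonl000} of Proposition~\ref{PROP:nonlin3}. It puts a $\vec\vb$-factor in $U^{q_*}_T\H^{s+\eps}$ with $\frac1{q_*}=\frac12-\eps$, because $(q_*,r_1)$ is $(1+\eps)$-admissible, and it gains $T^\eps$. The $\vec\wb\,\vec\wb$ term is controlled by high-frequency smallness of the data, \eqref{RRBK2}--\eqref{RRBK}.
\end{itemize}
That last device is needed at every scaling-critical $s$, not only in \eqref{bad1}. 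In $U^p_\Box$-type norms the linear part has size $\|(\phi_0,\phi_1)\|_{\H^s}$ on every interval, so it does not become small as $T\to0$, contrary to what you assume.

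Separately, your remainder of $\frac p2$-variation ($\g=2\al$), into which you absorb $\updl\vec\NN$, is too restrictive. It forces the Duhamel term into $V^{q}_T\H^s$ with $q<\frac32$. The dual Strichartz argument only gives $U^p$ for $p>\wt q$, and at $s=s_\textup{crit}$ one has $\wt q=\frac32$ for $(d,k)=(5,2)$ and $\wt q=\frac{20}{11}$ for $(d,k)=(4,2)$. The paper decouples $\g$ from $2\al$: it takes $\g$ just above $\frac12$ and requires only $\al+\g>1$ (footnote in Definition~\ref{DEF:RP}\,(ii)).
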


In the following, we discuss  a meaning of pathwise solutions
constructed in Theorem \ref{THM:2}.
In the rough case (= the white-in-time case
with the Hurst parameter $\be = \frac 12$), 
the bound \eqref{bd1} on the random driver $\vXX$
holds
only for $\al < \frac 12$.
By noting that 
 $\Phi W^\frac 12  $ is $\al$-H\"older continuous in time
 for any $\al < \frac 12$, 
we expect from \eqref{mild2}
that   
 $\vec\ub$  also has temporal regularity\footnote{In this informal discussion, 
``regularity'' refers to the H\"older regularity (in time).  For our actual application, 
it means the $V^\frac 1\al$-regularity.  See \eqref{reg9}.
} $\al < \frac 12$.
Hence, due to the deficiency of (temporal) regularities
(namely,  $\al + \al <  1$), 
the Young integral $\I^{\vXX}(\vec \ub)$, 
formally given in \eqref{bd1x}, 
does not make sense in this case, 
and thus we need to resort to rough path theory, 
introduced in a seminal work by Lyons
\cite{Lyons98}, 
and controlled paths, introduced by  Gubinelli \cite{G04};
 see also 
\cite{LCL07, FV10, FH20}
for textbook accounts on rough path theory.

Define the second order 
operator-valued 
two-parameter process 
$\vbbX$ by setting
\begin{align}
\label{bX0}
\vbbX_{t, r} = 
\vXX_{t,r}\circ \vXX_{\bullet, r}, 
\end{align}

\noi
where 
 $\bul$ denoting the variable of integration
 and $\circ$ denotes the composition of
 matrix-valued operators.
See \eqref{bbX0} and \eqref{bbX0a}.

Fix $\frac 13 < \al < \frac 12$.
Suppose that we have 
\begin{align}
\| \vbbX_{t,r}\|_{\H^s \to \H^s} \le C_\o |t-r|^{2\al}
\label{bd4}
\end{align}

\noi
for any $0 \le r \le t \le T$, 
where $C_\o$ is an almost surely finite random constant.
Then, together with the bound \eqref{bd1}, \eqref{introX}, 
and \eqref{bX0}, 
one can check  that the pair $(\vXX, \vbbX)$ is an 
operator-valued $\al$-H\"older rough path (see \cite[Definition 2.1]{FH20})
and thus is an operator-valued $\frac 1\al$-variation rough path
(see Definition \ref{DEF:RP}\,(i)).

In view of \eqref{mild2}, we now impose
the following controlled structure on $\vec \ub$
(namely, $\vec \ub$ is controlled by $\vXX$ in the sense of 
Definition \ref{DEF:RP}\,(ii)):
\begin{align}
\label{bd5}
\vec\ub(t) - \vec\ub(r)  = \vXX_{t,r} (\vec\ub'(r)) + \vec R^{\vXX, \vec\ub}_{t,r},
\end{align}

\noi
for $t\ge r\ge 0$, 
where $\vec\ub'$ is  the so-called Gubinelli derivative of $\vec\ub$ and $\vec R^{\vXX,  \vec\ub}$ is a smoother (in time) two-parameter process with temporal regularity $\g > \frac 12 \, (> \al)$;
see \eqref{Ho2} in the H\"older regularity context
and \eqref{V2def} with \eqref{reg9} in the $p$-variation context.
Then, from~\eqref{bd5} with \eqref{bd1}, 
we expect that 
$\vec \ub$ has regularity $\al$.
Hence, 
if $0 < \al < \frac 12 < \g < 1$ satisfies
\begin{align}
\min ( \al + \g, \, 3\al )>1, 
\notag 
\end{align}

\noi
a standard rough integration theory for H\"older continuous functions, 
based on the sewing lemma (see \cite{G04, GT10, FH20}), 
allows us to construct 
the stochastic term $\vec{\pPsi} (\vec\ub)$ in \eqref{psi2},  in the pathwise manner, 
as the {\it rough integral} $\I^{\vXX, \vbbX}(\vec \ub)$
of  $\vec\ub$
with the rough driver $(\vXX, \vbbX)$, 
given as the unique limit of 
Riemann-Stieltjes type sums; see \eqref{int2d}.
See Subsection \ref{SUBSEC:sewR}
for a review on the construction of rough integrals.

This discussion leads
us to 
 interpret 
the integral equation \eqref{mild2}
for the interaction representation $\vec \ub = \S(-t) \vec u(t)$
as
the following {\it rough differential equation} (RDE):\footnote{In this pathwise discussion, 
we work with
a fixed realization 
 $(\vXX^\o, \vbbX^\o)$ (for almost every $\o \in \Om$).
 For simplicity of notation, however, we drop the superscript $\o$ in the following.}

\begin{align}
\vec{\ub}(t) &= (\phi_0,\phi_1) + 
\vec{\NN}({\vec{\ub}})(t) 
+ \I^{\vXX, \vbbX}(\vec\ub)(t), 
\label{bd7}
\end{align}

\noi
where $\vec{\NN}({\vec{\ub}}) $
is the nonlinear  term in \eqref{bd2}.
In most cases, 
we can in fact study the system \eqref{bd7} and \eqref{bd5}
for two unknowns $\vec \ub$  and $\vec \ub'$, 
using the unified framework
(namely, using the $U^p$- and $V^p$-spaces)
developed in the Young case described above.\footnote{We 
note that we need
to take both $\al$ and $\g$ to be sufficiently close to $\frac 12$
to prove local well-posedness of the 
system \eqref{bd7} and \eqref{bd5} near or at the critical regularity $s_\text{crit}$.}

Unfortunately, 
this approach fails in the 
 following case:
\begin{align}
\label{bd8}
d \ge 6, \quad
k=2, \quad
\text{and}
\quad
s=s_{\text{crit}} \ge 1 ,
\end{align}
where $s_{\text{crit}}$ is as in \eqref{crit1}.
In view of the 
linear estimate \eqref{lin3}
of Lemma~\ref{LEM:lin}, 
we need to estimate $u^2$ in $L^1_t$.
Namely, the only useful Strichartz
admissible pair $(q, r)$
in this case 
must come with $q = 2$.
In view of the  transference principle (Lemma \ref{LEM:STR}), 
we need to impose
$\vec\ub \in U^{2}_T \H^s(\T^d)$.
On the other hand, 
assuming that 
the rough integral $\I^{\vXX, \vbbX}(\vec\ub)$ 
described above 
makes sense, 
we expect 
$\vec\ub \in U^{q }_T \H^s(\T^d)\setminus U^2_T \H^s(\T^d)$
for $q= \frac 1\al >2$, 
which implies that 
$\vec\ub$ does {\it not} have sufficient temporal regularity 
to treat the nonlinear  term 
$\vec{\NN}({\vec{\ub}})$
in \eqref{bd2}
via the Strichartz estimates.

We overcome  this issue by imposing a further structure on the unknown
$\vec \ub$. 
More precisely, we write $\vec \ub$ as  
\begin{align}
\vec\ub = \vec\vb + \vec\wb, 
\label{exp0}
\end{align} 

\noi
where $\vec\vb$,  $\vec\wb$, 
and 
the Gubinelli derivative $\vec\ub'$ (of $\vec \ub  =\vec\vb + \vec\wb$)
satisfy the following RDE-PDE system:
\begin{align}
\begin{split}
\vec\vb & = \I^{\vXX, \vbbX}(\vec\vb + \vec\wb),  \\
\vec\wb(t)& 
 = (\phi_0, \phi_1) + \vec\NN(\vec\vb + \vec\wb)(t),
\end{split}
\label{bd9a}
\end{align}

\noi
and
\begin{align}
(\vec \vb(t) + \vec \wb(t))
& - (\vec \vb(r) + \vec \wb(r))
  = \vXX_{t,r} (\vec\ub' (r))   + \vec{R}^{\vXX,  \vec\vb+ \vec \wb}_{t,r}
\label{bd9b}
\end{align}

\noi
\noi
for $0 \le r\le t \le T$.
Here, 
$\vec{R}^{\vXX, \vec\vb+ \vec \wb}$
denotes a smoother (in time)
 two-parameter process
 with temporal regularity $\g > \frac 12 \, (> \al)$
 and 
$\I^{\vXX, \vbbX}(\vec\vb + \vec\wb)$
denotes the rough integral of $\vec\vb + \vec\wb$
with the rough  driver $(\vXX, \vbbX)$.
Furthermore, 
we impose the following regularity conditions on $\vec \vb$ 
and $\vec \wb$: 
\smallskip
\begin{itemize}
\item $\vec \vb$ is rougher in time  but smoother in space, 
namely,   $\vec \vb \in U^\frac 1\al \H^{s+\eps}(\T^d)$
for some small $\eps > 0$, 
\smallskip
\item $\vec \wb$ is smoother in time but rougher in space, 
namely,   $\vec \wb \in U^2 \H^{s}(\T^d)$.

\end{itemize}

\noi
We verify these regularity conditions 
by 
establishing that  the rough  driver $(\vXX, \vbbX)$
exhibits spatial smoothing
(see \eqref{AS2})
and 
performing a contraction argument
to the RDE-PDE system \eqref{bd9a}-\eqref{bd9b}.
A key ingredient here is the improved
nonlinear estimates; see \eqref{nonl000} in Proposition \ref{PROP:nonlin3}.
See also Remark \ref{REM:nonlin5}.
In Section \ref{SEC:rough}, 
we prove pathwise local well-posedness
of the RDE-PDE system \eqref{bd9a}-\eqref{bd9b}; 
see Proposition \ref{PROP:LWP2}.
We note that 
it is crucial to take both $\al < \frac 12$ and 
$\g > \frac12$
to be  sufficiently close to $\frac 12$.

As a consequence of a contraction argument, 
a solution $(\vec \vb, \vec \wb, \vec \ub')$
satisfies
the compatibility condition 
$ \vec \ub' = \vec \vb+ \vec \wb = \vec \ub$, where the second equality follows from \eqref{exp0}.
Moreover, from~\eqref{exp0} and \eqref{bd9a}, 
we see that 
our pathwise solution 
$\vec\ub = \vec\vb + \vec\wb$ satisfies the RDE \eqref{bd7}, 
where we interpret the stochastic term $\vec{\pPsi} (\vec\ub)$
in \eqref{psi2}
as the rough integral 
$\I^{\vXX, \vbbX}(\vec\ub)$. 
We note that uniqueness
of the solution 
$\vec\ub$
follows from the uniqueness of the solution 
$(\vec \vb, \vec \wb, \vec \ub')$
to the RDE-PDE system 
\eqref{bd9a}-\eqref{bd9b}
via the identifications $\vec\ub = \vec\vb + \vec\wb$ 
and $\vec\ub = \vec\ub'$.
See 
Section \ref{SEC:rough}
for a proof of Theorem \ref{THM:2}.

 \medskip

\subsection{Remarks \& comments}

In this subsection, we state remarks and comments.

\begin{remark} \rm
(i)
The framework  introduced in this paper is general and can be
adapted to study pathwise well-posedness of other stochastic dispersive PDEs
with multiplicative noises.
See \cite{CGLLO2,  CLOO}.
In the case of SNLW \eqref{SNLW}, 
the smoothing coming from the 
linear wave propagator $S(t)$ in \eqref{S0}
in fact plays a crucial role
in establishing \eqref{bd1} and \eqref{bd4}
on the random drivers $\vXX$ and $\vbbX$;
see Propositions \ref{PROP:driver1} and \ref{PROP:driver2}.
For  the  stochastic NLS with a multiplicative white-in-time noise, 
the bounds
\eqref{bd1} and \eqref{bd4} do not hold
for the associated first order and second order drivers
$\XX$ and $\bbX$.
Such bounds hold only with a slight loss in spatial regularity.
Namely, temporal roughness
induces   a loss in spatial regularity, causing
a substantial difficulty as compared to SNLW
studied in this paper.
We will address this issue in forthcoming works
\cite{CLO2, CLOZ}.

\smallskip

\noi
(ii) In \cite{GT10}, 
Gubinelli and Tindel developed
the convolution Young\,/\,rough integration theory
and proved pathwise local well-posedness
of the stochastic heat equation (SHE)  with a multiplicative noise, 
where the operator norms  of the random drivers
were estimated by studying their Hilbert-Schmidt norms.
In \cite{SLO}, 
the second and third authors with Y.\,Shao 
applied the random tensor estimate
(Lemma \ref{LEM:RTE})
for multiple stochastic integrals with respect to fractional Brownian motions
to study pathwise local well-posedness of 
SHE and improved the results in \cite{GT10}
in both the Young and rough cases.
See \cite{SLO} for a further discussion.

\smallskip

\noi
(iii) In the study  of dispersive PDEs, 
the sewing lemma was used in several contexts.
In \cite{G12}, Gubinelli developed
nonlinear rough paths (see also \cite{GT10, DGT, NX}) and used the sewing lemma, 
as a replacement of the Fourier restriction norm method, 
to study local well-posedness of  the deterministic KdV equation.
See also \cite{OShao}.
In   \cite{CG1, CGLLO1, GGLO}, 
the authors studied 
well-posedness issues of modulated dispersive PDEs
by constructing the nonlinear Duhamel integral terms
as a nonlinear Young integral via the sewing lemma.

\end{remark}

\begin{remark}\label{REM:ill2}\rm
(i) 
In view of Remark \ref{REM:ill}
on the deterministic NLW \eqref{NLW}, 
the spatial regularity condition 
\eqref{reg1}
imposed in Theorems \ref{THM:1} and \ref{THM:2}
is essentially sharp.
See also Remark \ref{REM:opt}.

\smallskip

\noi
(ii) 
In this paper, we considered SNLW \eqref{SNLW}
with an integer $k \ge 2$ for simplicity of presentation.
It is possible to adapt our analysis to the nonlinearity 
of the form 
$|u|^{k-1} u$ for a non-integer value of $k > 1$.
In doing so, one needs to 
impose the compatibility condition $k > \lfloor s\rfloor + 1$ when $k$ is not an odd integer
to ensure smoothness of the nonlinearity.
See~\cite{CCT} for a further discussion.

\smallskip

\noi
(iii) 
While  we 
chose to work with 
a noise of the form 
$u \Phi \ze$
in this paper,
our approach also extends to treat a more general noise
of the form $F(u) \Phi \ze$
for a bounded  continuous function~$F$
with a suitable regularity.
A required modification is straightforward, 
and thus we omit details.

It is of interest to study pathwise local well-posedness
of SNLW \eqref{SNLW}
with 
a noise of the form $u^\l\Phi \ze $ for an integer $\l \ge 2$.
In the white-in-time case, 
one needs to employ 
 nonlinear rough path theory developed in 
\cite{G12, GT10}; see also \cite{OShao}.
We mention  a recent work \cite{CLOO}, 
where we
studied pathwise local well-posedness
of the stochastic KdV equation with 
a bilinear noise 
$u^2 \Phi \ze $ by making use of multilinear dispersion on the noise term.

\end{remark}

\begin{remark}\rm 
(i) In the rough case (= white-in-time case with $\be = \frac 12$), we introduced 
 the second order driver $\vbbX$
  in \eqref{bX0}.
In view of \eqref{bbX0a} with \eqref{XX0a}, 
each component of the matrix-valued operator
$\vbbX$ is given by 
an iterated stochastic integral.
In this paper, we interpreted it
as an iterated Wiener-Ito integral.
In Appendix \ref{SEC:ito}, 
we show that our pathwise solutions constructed 
in Theorem \ref{THM:2} 
agree
with Ito solutions to SNLW \eqref{SNLW}, 
where we interpret
the stochastic convolution $\Psi(u)$ in \eqref{psi1}
as an Ito integral; see Proposition \ref{PROP:ito}.
Theorem \ref{THM:2} also holds
if we interpret
the pair $(\vXX, \vbbX)$ as a geometric rough path;
see \cite[Section 2.2]{FH20}.
In this case, pathwise solutions agree
with Stratonovich solutions to SNLW \eqref{SNLW}, 
where we interpret
the equation \eqref{SNLW}
in the  Stratonovich sense.
See \cite[Chapter 5]{FH20} for a further discussion.

\medskip

\noi
(ii)
Let  $\be = \frac 12$.
In the defocusing case (with $+ u^k$, $k \in 2\N+1$, in \eqref{SNLW}), 
in view of the agreement with Ito solutions, 
pathwise local-in-time solutions to SNLW \eqref{SNLW}
constructed in Theorem \ref{THM:2} with $s = 1$
can be extended globally in time
in the energy (sub)critical regime (namely, when $s_\text{scaling} \le1$, 
where $s_\text{scaling}$ is as in \eqref{scaling}).
While it is not explicitly written in the literature, 
one can adapt the argument by 
de Bouard and Debussche \cite{DD1, DD2}, 
developed in the context of stochastic NLS, 
and establish an a priori $\H^1$-bound
on Ito solutions to SNLW~\eqref{SNLW}
via the energy conservation for the deterministic NLW \eqref{NLW}, 
Ito's lemma, 
and the Burkholder-Davis-Gundy inequality.
In the energy-subcritical case ($s_\text{scaling} <1$), 
this allows us to 
 conclude global well-posedness
in $\H^1(\T^d)$.
One may also adapt the argument in 
\cite{CLOx} via the $I$-method in the stochastic setting
to extend
global well-posedness
to some  $s < 1$.
See 
\cite{ALPZ}
for  the energy-critical case ($s_\text{scaling} =1$).
In the energy-supercritical case 
($s_\text{scaling} > 1$), 
we do not expect global well-posedness in general
in view of a recent work 
\cite{SWZ}
on the energy-supercritical defocusing NLW.

It is of interest to investigate 
the issue of global well-posedness
of SNLW \eqref{SNLW}
(with the defocusing nonlinearity $+u^k$, $k \in 2\N+1$)
in the Young case.

\end{remark}

\begin{remark}\rm
(i) In the rough case, 
the decomposition \eqref{exp0}
is needed to treat the case~\eqref{bd8}.
In other situations, 
Theorem \ref{THM:2}
follows from  directly working on $\vec \ub$ (and 
its Gubinelli derivative $\vec \ub'$)
and studying the RDE \eqref{bd7} coupled with the controlled structure \eqref{bd5}.
For simplicity of presentation, however, 
we present a unified approach and work on the RDE-PDE system
\eqref{bd9a}-\eqref{bd9b}.

\smallskip

\noi
(ii)
Let us mention
and compare several works, where 
a decomposition of an unknown into two unknowns
as in \eqref{exp0} plays a crucial role.
In \cite{GKO2}, paracontrolled calculus, originally introduced
for stochastic parabolic PDEs \cite{GIP, CC, MW1}, 
was adapted to study SNLW with an additive noise.
After the second order expansion via the explicit stochastic terms (see \cite[(1.13)]{GKO2}), 
the remainder part $v$ was decomposed into $v = X+ Y$, 
where $X$ is rougher-in-space but structured, 
and $Y$ is smoother-in-space;
see \cite[(1.25) and  (1.26)]{GKO2}.
In this work, the fact that the $X$-equation
has a structure played an important role
in deriving the final system 
\cite[(1.36)]{GKO2}.
See \cite{OOT1, Bring, OOT2, BDNY24}
for  subsequent works in this direction.
In \cite{ORW21}, 
the authors studied the hyperbolic Liouville equation
(SNLW with an additive space-time white noise
and an exponential nonlinearity), 
and the remainder part $v$ (after the first order expansion
as in \cite{McK, BO96, DD})
was decomposed into $v = X+ Y$, 
where, once again,  $X$ is rougher-in-space but structured, 
and $Y$ is smoother-in-space;
see \cite[(1.54) and (1.56)]{ORW21}.
In this work, the sign-definite structure of the rougher part
$X$ played a crucial role.
We also mention the work~\cite{DNY1}, 
where the (deterministic) derivative NLS was studied
(with deterministic initial data).
In this work, the equation for the (gauged) unknown $v$
was decomposed into a system for $v$
and the additional part $w$
where~$w$ is smoother in time; 
see \cite[(4.16) and (4.17)]{DNY1}.
See also \cite{Cha}.

In the current paper, the reduction of \eqref{mild2}
to the RDE-PDE system \eqref{bd9a}-\eqref{bd9b}
was done in two steps.
In the first step, we imposed
 the controlled structure \eqref{bd5}
 and reduced~\eqref{mild2}
 into a system \eqref{bd7} and \eqref{bd5}
 for two unknowns $\vec \ub$ and $\vec \ub'$
 (or we can view this as a system for two 
 unknowns 
 $\vec \ub'$ and $\vec R^{\vXX, \vec\ub}$, 
 where the latter is smoother in time).
 We note that this first step can be replaced
 by a paracontrolled ansatz (in time).
 In the second step, 
 we applied 
the decomposition \eqref{exp0}
and arrived at 
 the RDE-PDE system \eqref{bd9a}-\eqref{bd9b}
for three unknowns
$\vec \vb$, 
$\vec \wb$,  and $\vec \ub'$.
One notable difference
with the works \cite{GKO2, ORW21, DNY1}
is that 
 $\vec \vb$ is rougher in time  but smoother in space, 
while  $\vec \wb$ is smoother in time but rougher in space.
Namely, each part has a (small) gain of regularity in one of the temporal
or spatial direction.

\end{remark}

\begin{remark}\label{REM:opt}\rm

Note that the condition \eqref{sigma0}  implies $\s > - \frac 12$.
In particular, when $d = 1$, 
Theorem \ref{THM:2} in the rough case (with $\be = \frac 12$) 
covers  the case of an almost space-time white noise
(for example, when $\Phi = \jb{\dx}^{-\eps}$ for any $\eps > 0$).
We note that this is optimal 
 within the framework of one-parameter rough paths.

Let us briefly describe 
the main difficulty of SNLW with a multiplicative space-time white noise, 
posed on the one-dimensional torus $\T$:
\begin{align}
\dt^2 u +(1-\dx^2 )u  \pm u^k = u \xi.
\label{SNLW2}
\end{align}

\noi
See \cite{Walsh86, Oh} for the Ito solution theory.
See also Remark \ref{REM:white1}.
The space-time white noise $\xi$
has regularity $- \frac 12 - \eps$
in both the temporal and spatial directions, 
from which we expect that 
the unknown 
has regularity $\frac 12 - \eps$
in both the temporal and spatial directions
(which is precisely the regularity 
of the stochastic convolution $\Psi(u)$ 
in~\eqref{psi1} with $u \equiv 1$).
Namely,
we have 
 the deficiency of regularities
in {\it both} the temporal and spatial directions, 
and thus the usual one-parameter rough path analysis is not sufficient.
See~\cite{CG}, where a partial progress was made in the bi-parameter setting.
We will address this issue in a forthcoming work.
We mention 
a recent  work 
\cite{BLS24}, 
where such bi-parameter analysis was developed in
the study of the one-dimensional wave maps equation.
See also 
 \cite{HOO}
 for a further discussion on \eqref{SNLW2}.

\end{remark}

\begin{remark}\rm

In this paper,  we only consider $\be = \frac 12$
as  the rough case.
As long as we can extend the bounds
\eqref{bd1} and \eqref{bd4}
to lower values of $\al$ (with $\al = \be - \eps$ for any small $\eps > 0$), 
the rough integration theory based on the second order rough path\footnote{We regard the Young driver $\vXX$ 
as the first order rough path.}
($\vXX, \vbbX)$ in Subsection~\ref{SUBSEC:sewR}
allows us to construct the rough integral $\I^{\vXX, \vbbX}(\vec \ub)$ 
in the pathwise manner, provided that  $\be > \frac 13$.
We note that as $\be$ decreases, 
higher spatial regularity is needed for  pathwise local well-posedness
of SNLW \eqref{SNLW}
(in order to handle 
the nonlinear  term in~\eqref{bd2}).
We will address this issue in a future  work.
Lastly, we mention 
the works \cite{Deya, Deya2, Deya3} 
on SNLW, forced by an additive fractional-in-time noise
with the Hurst parameter $\be < \frac 12$.

\end{remark}

\subsection{Organization}

In Section \ref{SEC:2}, 
we introduce basic notations and  function spaces, 
including spaces for multi-parameter functions (Subsection \ref{SUBSEC:2.2})
and the $U^p$- and $V^p$-spaces (Subsection~\ref{SUBSEC:Up}).
We then provide a brief review on 
multiple stochastic integrals with respect to fractional Brownian motions
(Subsection \ref{SUBSEC:FBM})
and establish the random tensor estimate (Lemma \ref{LEM:RTE})
in this setting.
We also recall Kolmogorov's continuity criterion 
for rough paths (Lemma \ref{LEM:kolm}).
In Section \ref{SEC:int}, 
we go over
 Young\,/\,rough integration theory for functions of bounded $p$-variation, 
 based on the sewing lemma 
(Lemma \ref{LEM:sew}).
The materials presented in Section \ref{SEC:int} may be standard by
now, but we decided to include them in an accessible manner for readers in dispersive PDEs
who may not be familiar with the subject.
In Section \ref{SEC:ops}, 
we study regularity properties
of  the random operators 
$\vXX$ and $\vbbX$, 
 defined in \eqref{introX} and~\eqref{bX0}, respectively, 
 using the random tensor estimate.
In Section \ref{SEC:nonlin}, 
we establish (deterministic) nonlinear estimates
in the $U^p$-setting, in particular
using the Strichartz estimates for $d \ge 2$.
In Section~\ref{SEC:young}, 
by reducing~\eqref{mild2}
to the YDE~\eqref{duhamelY}, 
we prove pathwise local well-posedness
of SNLW \eqref{SNLW} in the Young case
(Theorem~\ref{THM:1}).
In Section \ref{SEC:rough}, 
by reducing \eqref{mild2}
to the RDE-PDE system  \eqref{bd9a}-\eqref{bd9b}, 
we prove pathwise local well-posedness
of SNLW \eqref{SNLW} in the rough case
(Theorem~\ref{THM:2}). 
 In 
Appendix~\ref{SEC:ito}, 
after going over the Ito solution theory (see Proposition~\ref{PROP:LWPx} and 
Remark~\ref{REM:white1}), 
we show that, in the white-in-time case (= rough case),  our pathwise solutions agree
with Ito solutions.
In Appendix
\ref{SEC:B}, 
we provide a brief discussion 
on an optimizer for the local-in-time $V^p$-norm
defined in \eqref{local}.
In Appendix \ref{SEC:sew}, we present a
proof of the sewing lemma (Lemma \ref{LEM:sew})
adapted to our setting.
In Appendix \ref{SEC:D}, 
we go over the Burkholder-Davis-Gundy
inequality on Banach spaces, which we used in 
Lemma \ref{LEM:stoconv1}
on the construction of the stochastic convolution 
as an Ito integral.


\section{Notations and preliminaries}
\label{SEC:2}

\subsection{Basic notations}

We use $A \les B$ to denote an estimate of the form $A \le CB$ for some constant $C>0$. We write $A\sim B$ if $A \les B$ and $B \les A$, and $A \ll B$ if $A\le c B$ for some small constant $c> 0$.
We may write  $\les_{\al}$ 
to emphasize the dependence on an external parameter $\al$.
We often use $C>0$ to denote various constants, which may vary line by line, 
and use $C_\al$
to denote the  dependence on an external parameter $\al$.
We use  $\al+$ and $\al-$ 
to denote $\al+\eps$ and $\al-\eps$ for arbitrarily small $\eps>0$, respectively.

Given $a, b \in \R$, we set 
\begin{align}
a \land b = \min(a,b)
\qquad \text{and} \qquad a \vee b = \max(a,b).
\label{ord2}
\end{align}

Given a time dependent function $u$, we often use the short-hand notation $u_t = u(t)$, which is common practice in stochastic analysis. Similarly, for a function parametrized by $n$-variables 
$(t_1,\ldots, t_n)$, we write $u_{t_1, \ldots, t_n} = u(t_1, \ldots, t_n)$.

We use $\vec f$ to denote a vector $\vec f = (f_1, f_2)$.
Given a $C^1$-function $f$, 
we set $\vec f = (f, \dt f)$.
We may use 
column vectors and row vectors interchangeably 
with the understanding that 
they all represent column vectors.
Given  a vector $\vec{f} = (f_1,f_2)$, we use $\pi_1$ and $\pi_2$ to denote the projection onto 
its first and second  components, namely
\begin{align}
\pi_1 \vec f = f_1 \qquad \text{and} \qquad \pi_2 \vec f = f_2.
\label{vec1}
\end{align}

Given $N\in \N$, we define $\P_N$ as the Dirichlet projector onto frequencies $\{n\in\Z^d: \ |n| \le N\}$:
\begin{align*}
\P_N f (x) = \sum_{\substack{n\in\Z^d \\ |n| \le N} } \ft{f}(n) e^{i n\cdot x},
\end{align*}
and we set $\P_N^\perp = \Id - \P_N$.
With a slight abuse of notation, 
we also use  $\P_N$ and $\P_N^\perp$ to
denote frequency projectors  acting on vectors 
$\vec f = (f_1, f_2)$ as follows:
\begin{align}
\begin{split}
\P_N \vec{f}  = (\P_N f_1, \P_N  f_2)
\qquad \text{and}\qquad
\P_N^\perp \vec{f}  = (\P^\perp_N f_1, \P^\perp_N f_2).
\end{split}
\notag 
\end{align}

Given $n_{j_1}, \dots, n_{j_k} \in \Z$, 
we set 
\begin{align}
n_{j_1 \cdots j_k} = 
n_{j_1}+ \cdots+ n_{j_k}.
\label{ord3}
\end{align}

\noi
For example, 
$n_{123} = n_1 + n_2 + n_3$.

Given a dyadic number $N \ge 1$, 
we use the notation $|n| \sim N$
to denote a dyadic spatial frequency
restriction via the Littlewood-Paley projector
(which we omit for notational convenience)
with the understanding that $|n| \sim N$ implies 
$|n| \les N$ when $N = 1$.
Given dyadic numbers $N, N_1, N_2\ge 1$, 
we use $N_{\max}$, $N_{\med}$, and $N_{\min}$
to denote their decreasing rearrangement:
\begin{align}
N_{\max} \ge N_{\med} \ge N_{\min}.
\label{ord1}
\end{align}

When writing the norm of a space-time function, we usually use the short-hand notation
 such as $L^q_I L^r_x = L^q(I; L^r(\T^d))$ for a time interval $I\subset\R$ and $L^q_T L^r_x = L^q([0,T]; L^r(\T^d))$.

Let $X, Y, Z$ be Banach spaces. We denote by $\LOP(X;Y)$ the space of bounded linear operators from $X$ to $Y$, equipped with the operator norm:
\begin{align*}
\| T \|_{\LOP(X;Y)} = \sup_{\|f\|_{X} = 1} \| T f \|_{Y}.
\end{align*}

\noi
When $X=Y$, we simply set  $\LOP(X) = \LOP(X;X)$.
Given $T_1 \in \L(X; Y)$
and $T_2 \in \L(Y; Z)$, 
we use $T_2 T_1$ to denote the composition
$T_2 \circ T_1$ of the operators $T_1 $ and $T_2$.

For $s\in\R$, $1\le p \le \infty$,  we define the Sobolev spaces $H^s(\T^d)$, 
$W^{s,p}(\T^d)$, and $\H^s(\T^d) = H^s(\T^d) \times H^{s-1}(\T^d)$ via the norms:
\begin{align}
\| f \|_{H^s(\T^d) } & = 
\| \jb{\nb}^s f \|_{L^2(\T^d)}
= \| \jb{n}^s \ft{f}(n) \|_{\l^2_n(\Z^d)} , 
\notag \\
\| f\|_{W^{s,p}(\T^d)} &  = 
\| \jb{\nb}^s f \|_{L^p(\T^d)}
= 
\| \Ft^{-1}(\jb{n}^s \ft{f}(n)) \|_{L^p(\T^d)}, 
\notag \\
\| \vec{f} \|_{\H^s(\T^d)} 
& = 
\Big(\| \pi_1 \vec{f} \|_{H^s(\T^d)}^2 + \| \pi_2 \vec{f} \|_{H^{s-1}(\T^d)}^2\Big)^\frac 12 ,
\label{Hs}
\end{align}
where $\jb{n} = (1+|n|^2)^{\frac12}$ and $\Ft^{-1}$ denotes the inverse Fourier transform. 
We note that $W^{s,2}(\T^d) = H^s(\T^d)$. 
We use 
\[(\H^s(\T^d))^* = H^{-s}(\T^d) \times H^{-s+1}(\T^d)\]
to denote the dual space of $\H^s(\T^d)$.

\medskip

We conclude this subsection by recalling
some basic deterministic estimates.
We first recall the  fractional Leibniz rule; see \cite[Proposition~2]{BOZ}.
See also \cite[Lemma 3.4]{GKO1}.

\begin{lemma}[fractional Leibniz rule]
\label{LEM:leibniz}

Let $d\ge1$, $s \ge 0$, 
$1<p_j, q_j \le \infty$,  
and $1\le r \le \infty$ with  $\frac{1}{p_j} + \frac{1}{q_j} = \frac{1}{r}$,  $j=1,2$. Then, we have
\begin{align*}
\| \jb{\nabla}^s (fg) \|_{L^r(\T^d)} & \les \| f \|_{L^{p_1}(\T^d)} \| \jb{\nabla}^s g \|_{L^{q_1}(\T^d) } +  \|\jb{\nabla}^s  f \|_{L^{p_2}(\T^d)} \| g \|_{L^{q_2}(\T^d) } .
\end{align*}

\end{lemma}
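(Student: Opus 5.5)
The fractional Leibniz rule on $\T^d$ is stated with a citation, so the most economical route is a Littlewood--Paley paraproduct argument. I would first dispose of the case $s=0$, where the estimate is just H\"older's inequality. For $s>0$, I would decompose $fg$ using inhomogeneous Littlewood--Paley projectors $\mathbf{P}_M$ (with $M \ge 1$ dyadic) as
\[
fg = \sum_{M_1 \ll M_2} \mathbf{P}_{M_1} f \, \mathbf{P}_{M_2} g + \sum_{M_2 \ll M_1} \mathbf{P}_{M_1} f \, \mathbf{P}_{M_2} g + \sum_{M_1 \sim M_2} \mathbf{P}_{M_1} f \, \mathbf{P}_{M_2} g,
\]
that is, as low--high, high--low and high--high paraproducts. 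The low--high part will be estimated by the first term on the right-hand side of the lemma, with exponents $(p_1,q_1)$. The high--low and high--high parts will be estimated by the second term, with exponents $(p_2,q_2)$; the high--high part can go to either term, and I put it with the second.

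For the low--high piece, the output frequency is $\sim M_2$, so $\jb{\nabla}^s$ acting on it behaves like $M_2^s$. I would write the piece as $\sum_{M_2} (\mathbf{P}_{\ll M_2} f)\,\mathbf{P}_{M_2} g$ and use the Littlewood--Paley square function in $L^r$ together with the bound $\sup_{M}|\mathbf{P}_{\ll M} f| \les \mathcal{M} f$, where $\mathcal{M}$ is the Hardy--Littlewood maximal function. This gives
\[
\bigg\| \Big(\sum_{M_2} M_2^{2s} |\mathbf{P}_{\ll M_2} f|^2 |\mathbf{P}_{M_2} g|^2\Big)^{\frac12} \bigg\|_{L^r} \le \|\mathcal{M} f\|_{L^{p_1}} \bigg\| \Big(\sum_{M_2} |M_2^s \mathbf{P}_{M_2} g|^2\Big)^{\frac12}\bigg\|_{L^{q_1}} \les \|f\|_{L^{p_1}} \|\jb{\nabla}^s g\|_{L^{q_1}}.
\]
These steps use the maximal inequality for $p_1>1$ (including $p_1=\infty$), Hölder's inequality, and the square-function characterization of $W^{s,q_1}$ for $1<q_1<\infty$. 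The high--low piece is symmetric. For the high--high piece, the output frequency is only $\les M_1$, so I would bound $\jb{\nabla}^s\mathbf{P}_{M}$ applied to each dyadic block and sum using $s>0$. The resulting geometric factor $(M/M_1)^s$ is summable over the blocks with $M_1 \gtrsim M$.

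The main obstacle is the endpoint exponents. The lemma allows $r=1$ and $q_j=\infty$ (equivalently $p_j=r$), and in both cases the square-function characterization fails. Following \cite{BOZ}, I would avoid square functions there. For $q_1=\infty$, I would write $\jb{\nabla}^s(\mathbf{P}_{\ll M} f\,\mathbf{P}_M g) = \jb{\nabla}^s\wt{\mathbf{P}}_M(\cdots)$ and represent the operator as a bilinear Coifman--Meyer multiplier with symbol $\jb{n_1+n_2}^s\jb{n_2}^{-s}$ localized to $|n_1|\ll|n_2|$. I would then use the transference of Coifman--Meyer bounds from $\R^d$ to $\T^d$ (or periodize the kernel directly), which covers the range $1\le r\le\infty$ with $1<p_j,q_j\le\infty$ when one input is paired as above. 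Concretely, I would verify the Mikhlin-type symbol estimates and quote the Coifman--Meyer theorem on the torus, which handles all the cases at once. The remaining task, checking that the high--high symbol also satisfies these estimates for $s\ge0$, is routine.
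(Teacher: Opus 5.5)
The paper gives no proof of this lemma; it only cites \cite[Proposition~2]{BOZ} (see also \cite[Lemma~3.4]{GKO1}). Your paraproduct argument is the standard one and works in the interior range $1<r<\infty$, $1<p_j,q_j<\infty$. One small omission there: moving $\jb{\nabla}^s$ inside the square function needs the Fefferman--Stein vector-valued maximal inequality, not only the scalar one. At the endpoints with $r<\infty$, the low--high and high--low pieces are indeed Coifman--Meyer operators.

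The first genuine gap is your claim that checking the Coifman--Meyer estimates for the high--high symbol is routine. They fail. On $|\xi|\sim|\eta|$, the symbol $\jb{\xi+\eta}^s\jb{\xi}^{-s}$ is not smooth at scale $|\xi|$ near the antidiagonal. Since $\partial_{z_1}^2\jb{z}^s|_{z=0}=s$, its second $\xi_1$-derivative at $\eta=-\xi$ equals $s\jb{\xi}^{-s}+O(|\xi|^{-2})$. For $0<s<2$ this is far larger than the required $O(|\xi|^{-2})$. So at $r=1$, where the $L^1$ square function is also unavailable, the high--high piece is not covered, and this is exactly the piece where $s>0$ matters.

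A fix for $r=1$ is to work in the local Hardy space $h^1(\T^d)$, whose norm dominates the $L^1$-norm and is characterized by the Littlewood--Paley square function. Since $F_{M_1}=\mathbf{P}_{M_1}f\,\wt{\mathbf{P}}_{M_1}g$ has Fourier support in a ball of radius $\sim M_1$, a Nikol'skii-type argument gives $|\mathbf{P}_M\jb{\nabla}^sF_{M_1}|\les M^s(M_1/M)^{d(\frac1t-1)}\mathcal{M}(|F_{M_1}|^t)^{\frac1t}$ for $0<t<1$. Choosing $d(\frac1t-1)<s$ makes the sum over $M\les M_1$ geometric, and the Fefferman--Stein inequality in $L^{\frac1t}(\ell^{\frac2t})$ finishes the estimate. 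This is the standard treatment of the diagonal term in the Euclidean Kato--Ponce inequality. When $p_2=\infty$ and $1<r<\infty$, the high--high piece is simpler: use $\sup_{M_1}\|M_1^s\mathbf{P}_{M_1}f\|_{L^\infty}\les\|\jb{\nabla}^sf\|_{L^\infty}$ and put the square function on $g$.

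The second gap is $r=\infty$, which the lemma includes (with $p_j=q_j=\infty$). The Coifman--Meyer theorem only gives $L^\infty\times L^\infty\to\mathrm{BMO}$, so it cannot produce this case. Estimating the paraproduct pieces separately in $L^\infty$ does not work either: summing the high--high blocks by the triangle inequality only yields $\|f\|_{B^s_{\infty,1}}\|g\|_{L^\infty}$. This is the Bourgain--Li endpoint of the Kato--Ponce inequality. On $\R^d$ it requires a genuinely different argument that exploits cancellation between the pieces, and nothing in your outline addresses it or its periodic analogue. So even with the repair above, your proposal covers only $1\le r<\infty$.
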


\smallskip

We also recall an elementary calculus result. 
See, for example,  \cite[Lemma 4.2]{GTV97} for a proof.

\begin{lemma}\label{LEM:conv}

Let $\al\ge\be\ge0$ and $\al+\be>1$. Then,
\begin{align*}
\int_\R \frac{1}{\jb{\xi_1}^\al \jb{\xi - \xi_1}^\be} d\xi_1 \les \frac{1}{\jb{\xi}^{\g}},
\end{align*}
where $\g \in\R$ is given by
\begin{align*}
\g =
\begin{cases}
\be, &\text{if } \al > 1, \\
\be-, & \text{if } \al = 1, \\
\al+ \be -1 , &\text{if } \al <1.
\end{cases}
\end{align*}
\end{lemma}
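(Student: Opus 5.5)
We argue on the real line with $\xi$ fixed and, by symmetry ($\jb{\cdot}$ is even), assume $\xi \ge 0$; for $|\xi|\les 1$ the bound is just integrability, since $\al+\be>1$ gives $\int_\R \jb{\xi_1}^{-\al}\jb{\xi-\xi_1}^{-\be}d\xi_1 \les \int_\R \jb{\xi_1}^{-(\al+\be)}d\xi_1<\infty$ (use $\jb{\xi-\xi_1}\sim\jb{\xi_1}$ when $|\xi|\les1$). So we take $|\xi|\gg 1$ and split the $\xi_1$-line into three regions:
\[
A=\{|\xi_1|\le \tfrac12|\xi|\},\qquad B=\{|\xi-\xi_1|\le \tfrac12|\xi|\},\qquad C=\R\setminus(A\cup B).
\]

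On $A$ we have $\jb{\xi-\xi_1}\sim\jb{\xi}$, so the contribution is $\jb{\xi}^{-\be}\int_{|\xi_1|\le|\xi|/2}\jb{\xi_1}^{-\al}d\xi_1$. The last integral is $O(1)$ if $\al>1$, $O(\log\jb{\xi})$ if $\al=1$, and $O(\jb{\xi}^{1-\al})$ if $\al<1$, giving exactly $\jb{\xi}^{-\be}$, $\jb{\xi}^{-\be+}$, and $\jb{\xi}^{-(\al+\be-1)}$, respectively. On $B$ we have $\jb{\xi_1}\sim\jb{\xi}$, so the contribution is $\jb{\xi}^{-\al}\int_{|\eta|\le|\xi|/2}\jb{\eta}^{-\be}d\eta$. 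This is $\jb{\xi}^{-\al}$ (if $\be>1$), $\jb{\xi}^{-\al}\log\jb{\xi}$ (if $\be=1$), or $\jb{\xi}^{-(\al+\be-1)}$ (if $\be<1$). Since $\al\ge\be$, in each case this is bounded by the claimed bound: if $\al>1$ then $\jb{\xi}^{-\al}\le\jb{\xi}^{-\be}$, and when $\be=1$ we have $\al\ge1$ so the log is absorbed either into $\jb{\xi}^{-\al}\log\jb{\xi}\les \jb{\xi}^{-\be}$ for $\al>1$ or into the $\be-$ loss for $\al=1$; if $\al\le 1$ then $\be<1$ unless $\al=\be=1$, which is covered by the $\be-$ case.

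On $C$ we have $|\xi_1|>|\xi|/2$ and $|\xi-\xi_1|>|\xi|/2$, and moreover $|\xi-\xi_1|\sim|\xi_1|$ there (since $|\xi-\xi_1|\le |\xi|+|\xi_1|\le 3|\xi_1|$ and symmetrically). Hence the contribution is $\les\int_{|\xi_1|>|\xi|/2}\jb{\xi_1}^{-(\al+\be)}d\xi_1\sim\jb{\xi}^{1-\al-\be}$, using $\al+\be>1$. This must finally be compared with the target: $1-\al-\be\le-\be$ exactly when $\al\ge 1$, so for $\al>1$ it is dominated by $\jb{\xi}^{-\be}$, for $\al=1$ it equals $\jb{\xi}^{-\be}$, and for $\al<1$ it is precisely $\jb{\xi}^{-(\al+\be-1)}$.

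Summing the three regions yields the lemma. There is no real obstacle; the only point requiring care is the bookkeeping of the borderline cases $\al=1$ or $\be=1$, where logarithms appear and must be absorbed into the $\be-$ loss (using $\al\ge\be$ to ensure the region $B$ never produces a worse logarithm than region $A$).
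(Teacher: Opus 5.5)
Your proof is correct. The split into $|\xi_1|\le\frac12|\xi|$, $|\xi-\xi_1|\le\frac12|\xi|$, and the complement (where $\jb{\xi_1}\sim\jb{\xi-\xi_1}\gtrsim\jb{\xi}$) is the standard elementary argument; the paper does not prove the lemma itself but cites \cite[Lemma 4.2]{GTV97}. The one case you left implicit is region $B$ with $\al>1>\be$, where $\jb{\xi}^{-(\al+\be-1)}\le\jb{\xi}^{-\be}$ holds because $\al>1$.
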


\subsection{Increments}
\label{SUBSEC:2.2}

 Let $X$ be a Banach space and $T>0$.
 For 
 $n\in\N$, we define the $n$-simplex: 
\begin{align*}
\Dl_{n, T} &:= \big\{ (t_1, \ldots, t_n) \in [0,T]^n: \ t_i \ge  t_j \text{ for } i < j\big\}.
\end{align*}

\noi We denote by $C_{n,T} X$ the space of continuous functions 
$f$
from $\Dl_{n,T}$ to $X$ such that 
\begin{align}
f_{t_1, \dots, t_n} = 0
\label{vani1}
\end{align}

\noi
whenever $t_j = t_{j+1}$ for some $j = 1, \dots, n-1$.
When $n=1$, we may write $ C_T X$ and equip  the space  with the norm:
\begin{align*}
\| f \|_{C_T X} := \| f\|_{L^\infty_T X } = \sup_{0\le t < T } \| f(t)\|_{X}.
\end{align*}

\noi
Given $f\in C_{n,T}X$ and $(t_1, \ldots, t_n)$, we often use the short-hand notation
$f_{t_1, \ldots, t_n} = f(t_1, \ldots, t_n)$.
\noi
We also introduce the coboundary operator 
$\updl: C_{n,T} X \to C_{n+1,T} X$
by setting \begin{align*}
(\updl f)_{t_1, \ldots, t_{n+1}} 
= \sum_{k=1}^{n+1} (-1)^{k} f_{t_1, \ldots,  t_{k-1}, t_{k+1}, \ldots, t_{n+1}}
\end{align*}

\noi
for $f\in  C_{n,T} X$ and $(t_1,\ldots, t_{n+1}) \in \Dl_{n+1,T}$.
For example, given  $f\in C_{1,T} X$ and $g\in C_{2,T} X$, we have
\begin{align}
\label{updl23}
(\updl f)_{t,r} & = f_t - f_r \qquad \text{and} \qquad
(\updl g)_{t_1,t_2,t_3}  = g_{t_1,t_3} - g_{t_1,t_2} - g_{t_2,t_3}
\end{align}

\noi
for  $(t,r) \in \Dl_{2,T}$ and $(t_1,t_2,t_3) \in \Dl_{3,T}$.
Recall from   \cite[Lemma 2.1]{GT10}  that
 the following cochain complex is exact:
\begin{align}
0 \too X \to C_{1,T} X \overset{\updl}{\too} C_{2,T} X \overset{\updl}{\too} 
C_{3,T} X \overset{\updl}{\too} \cdots.
\label{chain1}
\end{align}
In particular, $\updl\circ \updl =0$ and if $f \in C_{n,T} X$ with $\updl f =0$, then there exists (non-unique) $g\in C_{n-1, T}X$ such that $f= \updl g$.

\smallskip

Given $0<  \al \le 1$, we denote by $C^\al_{T} X = C^{\al}([0,T];X)$ the space of $\al$-H\"older continuous functions with values in $X$, endowed with the semi-norm:
\begin{align*}
\| f\|_{C^\al_{T} X} = \sup_{(t,r)\in\Dl_{2,T}} \frac{ \| (\updl f)_{t,r} \|_{X}}{|t-r|^\al}.
\end{align*}
We also  define the Lipschitz space\footnote{Also known 
as the H\"older-Zygmund space or the H\"older-Besov space.}  $\C_T^\al X = \C^\al([0,T]; X)$ via the norm:
\begin{align}
\| f \|_{\C^\al_{T} X} = \| f\|_{L^\infty_T X} + \| f \|_{C^\al_{T} X}.
\label{Lip1}
\end{align}

\noi
For $n = 2, 3$, we  define the spaces $C^\al_{n,T} X\subset C_{n,T} X$ equipped with the following H\"older-type norms:
\begin{align}
\begin{split}
\| g\|_{C^\al_{2,T} X} & = \sup_{(t,r) \in \Dl_{2,T}} \frac{\|g_{t,r} \|_{X} }{|t-r|^{\al}}, \\
\| h\|_{C^\al_{3,T}X } & = \inf_{0<\mu<\al} \sup_{(t_1,t_2,t_3) \in \Dl_{3,T}} \frac{\| h_{t_1,t_2,t_3}\|_{X}}{|t_1-t_2|^{\mu} |t_2-t_3|^{\al-\mu}}.
\end{split}
\label{Ho2}
\end{align}

\subsection{$U^p$- and $V^p$-spaces and related estimates}
\label{SUBSEC:Up}

In this subsection, we review the definitions and basic properties of the $U^p$- and $V^p$-spaces, developed by Tataru, Koch, and their collaborators \cite{KT05, KT07, HHK09, HTT11} in the context of dispersive PDEs.
The $V^p$-space denotes the space of functions of bounded $p$-variation, whose introduction dates back to the classical work of Wiener \cite{Wiener1924}, and whose restriction to continuous functions is often denoted by $C^{p-\textup{var}}$ in the rough path literature; see, for example,  \cite{FV10}.
The $U^p$-space, introduced in \cite{KT05, KT07}, is the predual of $V^{p'}$, where $p'$
denotes the H\"older conjugate of $p$.
We refer readers to \cite{HHK09, HTT11, Koch14, KT18} for detailed proofs of the relevant properties of these function spaces.

In the following, let $X$ be a Banach space.
Let $\Zc$ denote the collection of finite partitions $\{t_k\}_{k=0}^K$ of $\R$
of the form:
\begin{align}
 -\infty<t_K < t_{K-1} < \cdots< t_0 \le \infty.
 \label{U1}
\end{align}

\noi
 If $t_0 = \infty$, we use the convention that $u(t_0) = u(\infty) :=0$ for any function $u :\R \to X$. Note that the value of the limit $\lim_{t\to \infty} u(t)$ need not be 0.
Lastly, we use $\ind_I:\R\to\R$ to denote the sharp characteristic function on a set $I\subset \R$.

\begin{definition}
\rm
\label{DEF:Up}

Let $1 \le p <\infty$.

\smallskip
\noi{\rm(i)} 
We define a $U^p$-atom to be a step  function $a: \R\to X$ of the form
\begin{align*}
a = \sum_{k=0}^{K-1} \phi_{k} \ind_{[t_{k+1}, t_{k})}
\end{align*}

\noi
where $\{t_k\}_{k=0}^K \subset \Zc$ and $\{\phi_k\}_{k=0}^{K-1}\subset X$ 
satisfy $\sum_{k=0}^{K-1} \|\phi_k\|^p_X =1 $.

\medskip
\noi{\rm(ii)} We define the $p$-atomic space $U^p(\R; X)$ 
by 
\begin{align}
\begin{split}
U^p(\R; X) = \bigg\{u: \R \to X: \, 
& u = \sum_{j=1}^\infty \ld_j a_j\\
& \text{for $U^p$-atoms $\{a_j\}_{j=1}^\infty$, 
$\{\ld_j\}_{j=1}^\infty \in  \l^1(\N)$}\bigg\} 
\end{split}
\notag 
\end{align}

\noi
endowed with the norm:
\begin{align}
\begin{split}
\| u \|_{{U}^p(\R; X)}  =  
\inf \bigg\{ \|\ld\|_{\l^1}: \, 
& u = \sum_{j=1}^\infty \ld_j a_j\\
&  \text{for $U^p$-atoms $\{a_j\}_{j=1}^\infty$, 
$\{\ld_j\}_{j=1}^\infty \in  \l^1(\N)$}\bigg\},  
\end{split}
\label{U1a}
\end{align}

\noi
where the infimum is taken over all possible representations for $u$.

\medskip
\noi{\rm(iii)} We define ${V}^p(\R;X)$ as the space of functions of finite $p$-variation $u:\R \to X$ for which $\lim_{t\to\pm\infty}u(t)$ exist and  $u(\infty):=0$, 
endowed with the norm:
\begin{align}
\| u\|_{{V}^p(\R;X)} = \sup_{\{t_k\}_{k=0}^K \in \Zc} 
\bigg( \sum_{k=0}^{K-1} \| u_{t_{k}} - u_{t_{k+1}} \|^p_X \bigg)^\frac1p.
\label{U2}
\end{align}

\noi
We remark
that the functions in $V^p(\R; X)$ have left- and right-limits everywhere, and at most countably many
discontinuities.

We also define $V^p_{\rm rc}(\R;X)$ as the closed subspace of 
 right-continuous functions in $V^p(\R; X)$ such that $\lim_{t\to-\infty} u(t) =0$.

\medskip

\noi(iv) Let $s_1,s_2\in\R$. We define $U^p_{\Box}\big(\R; H^{s_1}(\T^d)\times H^{s_2}(\T^d)\big)$ and $V^p_{\Box}\big(\R; H^{s_1}(\T^d)\times H^{s_2}(\T^d)\big)$ to be the spaces of functions $\vec{u}, \vec{v}: \R \to H^{s_1}(\T^d)\times H^{s_2}(\T^d)$ such that the following norms are finite:
\begin{align}
\begin{split}
\| \vec{u} \|_{{U}^p_{\Box} (\R; H^{s_1}_x\times H^{s_2}_x)} & = \| \S(-t) \vec{u}\, 
\|_{{U}^p(\R; H^{s_1}_x\times H^{s_2}_x)}, \\
 \| \vec{v} \|_{{V}^p_{\Box} (\R; H^{s_1}_x\times H^{s_2}_x)}  &= \| \S(t)^\tf\,  \vec{v} \,
 \|_{{V}^p(\R; H^{s_1}_x\times H^{s_2}_x)},
\end{split}
\label{Dl1}
\end{align}

\noi
where $\S(t)$ is the matrix-valued linear wave propagator
 in \eqref{Sdef} and $\S(t)^\tf$ denotes its matrix transpose.
We emphasize that the latter space appears as a ``dual'' space;
see Lemma \ref{LEM:lin} and Subsection \ref{SUBSEC:nonl2}.

\end{definition}

Given an interval  $I\subset \R$, 
we define the time restriction norms by setting
\begin{align}
\begin{split}
\| u \|_{U^p_I X} 
& = 
\| u \|_{U^p(I; X)}
= \inf \big\{ \| \wt{u} \|_{U^p(\R;X)} : \ \wt{u}\vert_{I} = u \big\}, \\
\| u \|_{V^p_I X } &=
\| u \|_{V^p(I; X) } 
= 
 \inf \big\{ \| \wt{u} \|_{V^p(\R;X)} : \ \wt{u}\vert_{I} = u \big\}.
\end{split}
\label{local}
\end{align}

\noi
We also define 
the time restriction spaces
$U^p_{\Box, I}  H^{s_1}_x\times H^{s_2}_x$
and $V^p_{\Box, I}  H^{s_1}_x\times H^{s_2}_x$
in an analogous manner.
When $I=[0,T]$ for some $T>0$, we use the short-hand notation 
$U^p_T X$, $V^p_T X$, 
$U^p_{\Box, T} H^{s_1}_x\times H^{s_2}_x$, 
and $V^p_{\Box, T}  H^{s_1}_x\times H^{s_2}_x$.
Recall from \cite[Lemma A.1]{BOP2} that the infimum in the ${U}^p_T X$-norm is attained at 
$\ind_{[0,T]} \cdot u$. 
See Appendix \ref{SEC:B}
on an optimizer for the 
 infimum in the $V^p_T X$-norm.

In the following, we often  work with the closed subspaces 
$ \U^p_T X =\U^p([0,T];X) $ and 
$ \V^p_T X = \V^p([0,T]; X)$ 
of continuous
functions:
\begin{align}
\begin{split}
\U^p_{T} X  & : = U^p_T X \cap C_{T}X,\\
 \V^p_{T} X &: = V^p_T X \cap  C_{T} X.
\end{split}
\label{local2}
\end{align}

\begin{remark}\rm

\noi(i)
We note that the spaces $U^p(\R; X)$, $V^p(\R;X)$, and $V^p_{\rm rc}(\R;X)$ are Banach spaces. 
See, for example,  \cite[Propositions 2.2 and 2.4]{HHK09}. Moreover, the following embeddings hold for $1 \le p <q < \infty$:
\begin{align}
U^p(\R; X) \embeds V^p_{\rm rc}(\R;X) \embeds U^q(\R; X) \embeds L^\infty (\R;X).
\label{Linfty}
\end{align}

\noi
Analogous embeddings hold for the adapted spaces $U^p_\Box(\R; \H^s(\T^d))$ 
and $V^p_\Box( \R;\H^s(\T^d))$.

\smallskip
\noi(ii) Let $0<\al <  1$. 
If $u_{t_0}=0$ for some $t_0 \in [0, T]$, 
then we have 
\begin{align}
\| u \|_{V^{\frac1\al }_{T} X} \les T^\al \| u \|_{C^{\al}_{T} X} .
\notag 
\end{align}

\noi
Note that we need the condition 
$u_{t_0}=0$ due to the boundary condition $u(\infty) = 0$ imposed
by convention (for the $U^p$-$V^{p'}$ duality; see \cite[Theorem 2.8]{HHK09}).

\end{remark}

We now state a  transference result which 
allows us to transfer 
 estimates on linear solutions
to those for 
space-time functions
with $U^q$-regularity in time;
see, for example,  Lemma~\ref{LEM:STR}.
Such transference results
play a fundamental role 
 in applications of the Fourier restriction norm method
in studying dispersive PDEs;  
see, for example, 
\cite[Proposition 3.7]{KS}
and
\cite[Lemma 2.9]{TAO}.
The following lemma  follows from a simple modification of
the multilinear result in \cite[Proposition 2.19]{HHK09}, 
and thus we omit details.

\begin{lemma}[transference]\label{LEM:trf}
Let $s_1, s_2\in \R$ and $T_1,T_2\in \LOP( H^{s_1}(\T^d)\times H^{s_2}(\T^d); 
L^1_\textup{loc}(\T^d)\times L^1_\textup{loc}(\T^d))$ such that
\begin{align*}
\| T_1(\S(t)\vec{\phi}) \|_{L^q_t(\R; L^r_x \times L^r_x)} & \les \| \vec{\phi} \|_{
H^{s_1}\times H^{s_2}},  \\
\| T_2(\S(-t) ^\tf \, \vec{\psi}) \|_{L^q_t(\R; L^r_x \times L^r_x) } & \les \| \vec{\psi} \|_{H^{s_1}\times H^{s_2}},
\end{align*}

\noi
hold for some $1\le q < \infty$ and $1 \le  r\le \infty$, 
where $\S(t)$ is the matrix-valued linear wave propagator
 in \eqref{Sdef} and $\S(t)^\tf$ denotes its matrix transpose.
 Then, we have
\begin{align*}
\| T_1 (\vec{u}) \|_{L^q_t(\R; L^r_x \times L^r_x)} & \les
 \| \vec{u} \|_{U^q_\Box(\R;H^{s_1}_x\times H^{s_2}_x)}
=  \| \S(-t) \vec{u} \|_{U^q(\R;H^{s_1}_x\times H^{s_2}_x)}, \\
\| T_2(\vec{v}) \|_{L^q_t(\R; L^r_x \times L^r_x)} & \les
 \| \S(t)^\tf\, \vec{v} \|_{U^q(\R; H^{s_1}_x\times H^{s_2}_x)}.
\end{align*}

\end{lemma}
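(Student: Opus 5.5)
The plan is the standard atomic-decomposition argument, done for $T_1$; the $T_2$ bound follows verbatim with $\S(-t)^\tf$ in place of $\S(t)$. By Definition~\ref{DEF:Up}\,(ii) and the triangle inequality in $L^q_t(\R; L^r_x\times L^r_x)$ (a Banach space since $1\le q<\infty$, $1 \le r \le \infty$), it suffices to prove
\[
\|T_1(\vec u)\|_{L^q_t L^r_x} \les 1
\qquad \text{whenever } \S(-t)\vec u = a \text{ is a } U^q\text{-atom}.
\]
Indeed, for a general $\vec u$ we write $\S(-t)\vec u = \sum_j \ld_j a_j$ with $\sum_j|\ld_j|$ close to $\|\vec u\|_{U^q_\Box}$, so that $\vec u = \sum_j \ld_j \S(t) a_j$. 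Since $U^q\embeds L^\infty_t$, this series converges in $H^{s_1}\times H^{s_2}$ pointwise in $t$. Boundedness of $T_1$ into $L^1_{\rm loc}\times L^1_{\rm loc}$ then lets us pass $T_1$ through the sum pointwise in $t$, and Fatou's lemma (or Minkowski) gives the claimed bound.

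For an atom $a=\sum_{k=0}^{K-1}\phi_k\ind_{[t_{k+1},t_k)}$ with $\sum_k\|\phi_k\|^q_{H^{s_1}\times H^{s_2}}=1$, the intervals are disjoint, so
\[
T_1(\S(t)a(t)) = \sum_k \ind_{[t_{k+1},t_k)}(t)\, T_1(\S(t)\phi_k),
\]
where $T_1$ acts only in $x$. Hence
\[
\|T_1(\S(t)a)\|_{L^q_tL^r_x}^q = \sum_k \|\ind_{[t_{k+1},t_k)}T_1(\S(t)\phi_k)\|^q_{L^q_tL^r_x} \le \sum_k \|T_1(\S(t)\phi_k)\|^q_{L^q_t(\R;L^r_x)} \les \sum_k\|\phi_k\|^q = 1
\]
by the hypothesis. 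The key point, and the only place $q<\infty$ and the matching exponent $q$ of $U^q$ are used, is that the $L^q_t$-norm of the disjointly supported pieces adds in the $q$-th power, exactly matching the $\l^q$ normalization of the atom.

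The only step needing care is the justification in the atomic-sum reduction: measurability of $t\mapsto T_1(\vec u(t))$ and the interchange of $T_1$ with the infinite sum. I would handle this by noting that partial sums converge in $L^\infty_t(H^{s_1}\times H^{s_2})$ (absolutely, since $\|a_j\|_{L^\infty_t}\le1$), so the images converge in $L^1_{\rm loc}$ for each $t$, and then applying Fatou's lemma to the $L^q_tL^r_x$ norms of the partial sums, which are uniformly bounded by $C\sum_j|\ld_j|$. For $T_2$, the identical argument applies to atoms of $\S(t)^\tf\vec v$, since $(\S(t)^\tf)^{-1}=\S(-t)^\tf$.
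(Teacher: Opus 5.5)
Your proposal is correct and is the argument the paper intends. The paper omits the proof and refers to the linear case of \cite[Proposition~2.19]{HHK09}, which is exactly your atom-by-atom bound (disjoint time intervals plus the $\ell^q$ normalization of the atom), followed by summation over an $\ell^1$ atomic decomposition; your Minkowski/Fatou remark handles the routine limiting step. One point is implicit both in your argument and in the statement itself: $\S(t)$ (resp.\ $\S(t)^\tf$) must act boundedly on $H^{s_1}\times H^{s_2}$ for the series manipulations to make sense, and this holds in the cases actually used, namely $\H^{s}$ (resp.\ $(\H^{s})^*$).
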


We conclude this subsection 
by presenting basic linear estimates.

\begin{lemma}[linear estimates]
\label{LEM:lin}

Let $s\in\R$, $1 < p <\infty$, and $0<T\le \infty$. Then, we have
\begin{align}
\|  \vec{\phi} \|_{U^p_T \H^s_x} & \le \|\vec{\phi} \|_{\H^s},
\label{lin1}
\\
\bigg\| \int_0^t \S(-t') \vec{F}(t') dt' \bigg\|_{U^p_T \H^s_x}
& \le \sup_{ \|  \vec{v}\|_{V^{p'}_{\Box, T} (\H^s_x)^*} =1  }
\bigg| \int_0^T \int_{\T^d} \vec{F}(t)\cdot  \vec{v}(t)\,  dx dt \bigg| ,
\label{lin2}
\\
\bigg\| \int_0^t \S(-t') \vec{F}(t') dt' \bigg\|_{{U}^p_T \H^s_x} & \les  \| \vec{F} \|_{L^1_T \H^{s}_x},
\label{lin3}
\end{align}

\noi
where $\S(t)$ is the matrix-valued linear wave propagator
 in \eqref{Sdef},  $\S(t)^\tf$ denotes its matrix transpose, 
and 
$V^{p'}_{\Box, T} (\H^s_x)^*$ is as in \eqref{Dl1} and \eqref{local}
with 
$(\H^{s}(\T^d))^* = H^{-s}(\T^d) \times H^{1-s}(\T^d)$.

\end{lemma}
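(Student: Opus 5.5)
We prove the three estimates in order, using only the definitions of the $U^p$- and $V^p$-spaces, the unitarity of $\S(t)$ on $\H^s(\T^d)$, and the $U^p$-$V^{p'}$ duality of \cite[Theorem 2.8]{HHK09}.

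\emph{Proof of \eqref{lin1}.} We view $\vec\phi$ as the constant function $t\mapsto \vec\phi$ on $[0,T]$. By \cite[Lemma A.1]{BOP2}, the infimum defining the local $U^p_T$-norm is attained by its extension $\ind_{[0,T]}\vec\phi$. If $T<\infty$, write
\[
\ind_{[0,T]}\vec\phi=\|\vec\phi\|_{\H^s}\cdot a, \qquad a=\frac{\vec\phi}{\|\vec\phi\|_{\H^s}}\,\ind_{[0,T)},
\]
using the partition $t_1=0<t_0=T$ and the convention that the value at the endpoint is irrelevant for the restriction. Since $a$ is a single $U^p$-atom, we get $\|\vec\phi\|_{U^p_T\H^s_x}\le\|\vec\phi\|_{\H^s}$. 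If $T=\infty$, take the partition $t_1=0<t_0=\infty$ instead; this is again a single atom.

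\emph{Proof of \eqref{lin2}.} Set $\vec U(t)=\int_0^t\S(-t')\vec F(t')\,dt'$. This function vanishes at $t=0$ and is continuous. We apply the duality statement of \cite[Theorem 2.8 and Proposition 2.10]{HHK09} for $U^p(\R;X)$ with $X=\H^s$, whose dual pairing is realized through the bilinear form
\[
B(u,v)=\int u'\cdot v\,dt
\]
for absolutely continuous $u$. This gives
\[
\|\vec U\|_{U^p_T\H^s}\le\sup_{\|\vec w\|_{V^{p'}_T(\H^s)^*}=1}\Big|\int_0^T\big\langle \S(-t)\vec F(t),\vec w(t)\big\rangle\,dt\Big|,
\]
where $(\H^s)^*$ is identified with $H^{-s}\times H^{1-s}$ via the $L^2$ pairing. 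We then move $\S(-t)$ onto $\vec w$: its $L^2$-adjoint is $\S(-t)^\tf$. Setting $\vec v=\S(-t)^\tf\vec w$, we have $\S(t)^\tf\vec v=\vec w$, so $\|\vec v\|_{V^{p'}_{\Box,T}(\H^s)^*}=\|\vec w\|_{V^{p'}_T(\H^s)^*}$ by \eqref{Dl1}. This yields \eqref{lin2}.

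The main obstacle is the time-restriction bookkeeping: matching the local norms \eqref{local} with the global duality statement, the convention $u(\infty)=0$, and ensuring that the constant is exactly $1$. We handle it by extending $\vec F$ by zero outside $[0,T]$. The extended primitive $\vec U$ is then constant for $t\ge T$. Restricting to $[0,T]$ does not increase the $U^p$-norm, and any $V^{p'}$-function on $[0,T]$ can be extended to $\R$ with the same norm by the definition of the infimum.

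\emph{Proof of \eqref{lin3}.} We deduce this from \eqref{lin2}. By Hölder's inequality in $x$ and the duality between $\H^s$ and $(\H^s)^*$,
\[
\Big|\int_0^T\!\!\int\vec F\cdot\vec v\,dx\,dt\Big|\le\|\vec F\|_{L^1_T\H^s}\,\sup_t\|\vec v(t)\|_{(\H^s)^*}.
\]
The pointwise bound gives $\sup_t\|\S(t)^\tf\vec v(t)\|_{(\H^s)^*}\le\|\S(t)^\tf\vec v\|_{V^{p'}_T}$, since $u(\infty)=0$ and hence $\|u(t)\|\le\|u\|_{V^{p'}}$. Moreover $\S(t)^\tf$ is unitary on $(\H^s)^*$ up to the harmless equivalence of norms coming from the weights $\jb{n}^{\pm1}$, which swap under transposition. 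Hence $\sup_t\|\vec v(t)\|_{(\H^s)^*}\lesssim\|\vec v\|_{V^{p'}_{\Box,T}(\H^s)^*}=1$, which gives \eqref{lin3}.
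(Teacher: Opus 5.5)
Your proof is correct and follows the paper's route. You get \eqref{lin1} from the atomic definition of $U^p$. You get \eqref{lin2} from the $U^p$--$V^{p'}$ duality, in the form $\|u\|_{U^p}=\sup|\int\langle u',v\rangle\,dt|$ for absolutely continuous $u$ vanishing near $-\infty$, together with $\S(-t)^*=\S(-t)^\tf$ and \eqref{Dl1}. You get \eqref{lin3} by pairing and using $\sup_t\|\vec v(t)\|_{(\H^s)^*}\le\|\vec v\|_{V^{p'}_{\Box,T}(\H^s_x)^*}$; here $\S(t)^\tf$ is in fact an exact isometry of $(\H^s)^*$, so no norm-equivalence hedge is needed.

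One small correction to \eqref{lin1} for $T<\infty$: $\vec\phi\,\ind_{[0,T)}$ vanishes at $t=T$, so it does not restrict to the constant function on the closed interval $[0,T]$. Use instead the single atom $\|\vec\phi\|_{\H^s}^{-1}\vec\phi\,\ind_{[0,\infty)}$, which is your $T=\infty$ choice and works for every $T\in(0,\infty]$.
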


The first estimate  \eqref{lin1} follows from 
 \eqref{local}, the fact that the infimum 
for the time restriction norm
(see  \eqref{local})
on the left-hand side of \eqref{lin1}
 is attained at $\ind_{[0,T]} \cdot \vec{\phi}$ (see \cite[Lemma A.1]{BOP2}), and 
the 
atomic structure of the $U^p$-space in 
Definition~\ref{DEF:Up}.
The second estimate~\eqref{lin2} follows
from 
 the duality relation of $U^p(\R; \H^s(\T^d))$ and $V^{p'}(\R; (\H^s(\T^d))^*)$ 
 (see 
 \cite[Theorem 2.8]{HHK09}
and 
\cite[Section 4.2]{Koch14})
 and the fact that $\S(-t)^\tf$ is the adjoint of $\S(-t)$
 along with the definition 
 \eqref{Dl1}
 of the space $V^{p'}_{\Box} (\R; (\H^s_x)^*)$;
 see also Remark \ref{REM:local}.
See, for example, the proof of \cite[Proposition 2.11]{HTT11}.
The third estimate \eqref{lin3} also follows
from 
 the duality relation of $U^p(\R; \H^s(\T^d))$ and $V^{p'}(\R; (\H^s(\T^d))^*)$. 
A more direct way to see \eqref{lin1} and~\eqref{lin3}
may be to use the embedding \eqref{Linfty}
and estimate the $V^1$-norm 
with the boundary condition $u (\infty) = 0$
by using 
 \eqref{ctrl0a}.

\subsection{Fractional Brownian motion and multiple stochastic integrals}
\label{SUBSEC:FBM}

In this subsection, we recall the definition and basic properties of fractional Brownian motions and of Wiener integrals  with respect to fractional Brownian motions. See \cite[Chapter 5]{Nualart06} 
for 
further details.  See also \cite{BHOZ, Nualart13}.  

We first recall the definition of a fractional Brownian motion, 
introduced in 
 \cite{Kolm} and further developed in  \cite{MVN68}.

\begin{definition}\rm
\label{DEF:fBM}
Let $0 < \be < 1$. 
A (real-valued) fractional Brownian motion $\{B(t)\}_{t \in \R_+}$ with 
the Hurst parameter $\be$ is a centered Gaussian process with covariance given~by
\begin{align*}
 \E[B(t_1) B(t_2)]
 =  \frac12 \big( t_1^{2\be} + t_2^{2\be} - |t_1-t_2|^{2\be}\big) .
\end{align*}

\noi
When $\be=\frac12$, this process corresponds to a standard Brownian motion. 
A complex-valued fractional Brownian motion $\{ B(t) \}_{t \in \R_+}$ with 
the Hurst parameter $\be$ is a complex-valued centered Gaussian process such that 
 $\{ \sqrt 2\Re B(t)\}_{t \in \R_+} $ and 
$\{ \sqrt 2 \Im B(t)\}_{t \in \R_+}$ are independent real-valued fractional Brownian motions with the Hurst parameter $\be$.
Thus, we have 
\begin{align*}
\E\big[|B(t_1)- B(t_2)|^2\big]
= |t_1-t_2|^{2\be}.
\end{align*}

\end{definition}

In the following, we set-up a real-valued isonormal framework
but the discussion below can be easily adapted to the complex-valued setting
(by dropping the conditions \eqref{fBM1} and~\eqref{fBM2}).
See also \cite{OTh1}
for a related discussion in the complex-valued setting.

We first introduce the following partition of $\Z^d$ (see \cite{OP1}):
\begin{align*}
\Z^d = (\Z^d)_+  \cup (\Z^d)_- \cup \{0\}^d ,
\end{align*}
where
\begin{align*}
(\Z^d)_+ &
= \bigcup_{k=0}^{d-1} \Z^k \times \Z_+ \times \{0\}^{d-k-1}
\quad \text{and}
\quad
(\Z^d)_-   = - (\Z^d)_+,
\end{align*}
and $\{0\}^d$ denotes the origin in $\Z^d$. We also set $(\Z^d)_{+,0} = (\Z^d)_{+} \cup \{0\}^d$.

Given $0 < \be < 1$, 
let $\{B_n\}_{n\in \Z^d}$ be a family of 
mutually independent  complex-valued fractional Brownian motions with 
the Hurst parameter $\be$ 
conditioned that\footnote{By convention, we take  $B_0$ to be  a real-valued fractional Brownian motion.} 
\begin{align}
B_{-n} = \cj{B_n},\quad n \in \Z^d.
\label{fBM1}
\end{align}

\noi
Then,  $\{B_0\}\cup \{\sqrt 2\Re B_n, \sqrt 2 \Im B_n \}_{n\in (\Z^d)_{+}}$ 
forms a family of  mutually independent real-valued fractional Brownian motions
with the same Hurst parameter $\be$. 
In the following, we 
define stochastic integrals with respect to the family $\{B_n\}_{n\in\Z^d}$ of 
complex-valued fractional Brownian motions
by decomposing them into  stochastic integrals with respect to the family 
$\{\Re B_n, \Im B_n\}_{n\in (\Z^d)_{+,0}} $ of real-valued (scaled) fractional Brownian motions.

In defining stochastic integrals with respect to the family 
$\{B_n\}_{n\in\Z^d}$,  we need the following {\it real} Hilbert space.
For $\frac 12 < \be < 1$, 
let $\scrH^\be(\R_+)$ be the completion of 
linear combinations of 
(real-valued) step functions on $\R_+$ under the following norm:\footnote{By restricting our attention to functions $f$ on $\R$ with $\supp(f) \subset \R_+$, \eqref{BM0} defines a norm, not a semi-norm.}
\begin{align}
\label{BM0}
\begin{split}
\|f\|_{ \scrH^\be (\R_+)}^2 
& = \be(2\be-1)
\int_0 ^\infty \int_0^\infty f(t) f(t') |t-t'|^{2\be -2 } dtdt' \\
& = \be (2\be-1) \int_0^\infty f(t) \, \mathfrak{I}_{2\be-1}(f)(t) dt \\
& = C_\be  \int_0^\infty  |\tau|^{1-2\be} |\ft{f}(\tau)|^2 d\tau \\
& = C_\be\|f\|_{\dot H^{\frac 12 - \be}(\R_+)}^2
\end{split}
\end{align}

\noi
for a function $f$ supported on $\R_+$.
Here, $\mathfrak{I}_{2\be-1}$ denotes the Riesz potential of order $2\be-1 > 0$.
Given $k\in\N$, we also define $\scrH^\be(\R^k_+)$ as the completion of 
linear combinations of products of step functions in $t_j \in \R_+$, $j=1, \ldots, k$,  under the norm:
\begin{align}
\begin{split}
\|f\|_{\scrH^\be(\R_+^k)}^2
&   = \be^k(2\be-1)^k
\int_{\R_+^{2k}}
f(\vec t)
f(\vec t')
\prod_{j = 1}^k |t_j-t_j'|^{2\be-2} dt_jdt_j' \\
&
= C_\be^k
\bigg\| \prod_{j = 1}^k |\dd_{t_j}|^{\frac 12 - \be}f\bigg\|_{L^2(\R^k_+)}^2
\end{split}
\label{BM0a}
\end{align}

\noi
for a function $f$ supported on $\R_+^k$, 
where
$\vec t = (t_1, \dots, t_k)$
and $\vec t' = (t_1', \dots, t_k')$.
When $\be=\frac12$, we set 
\begin{align}
\scrH^\frac12 (\R^k_+) = L^2(\R^k_+).
\label{BM0x}
\end{align}

%

We say that a sequence $f = \{f_n\}_{n \in \Z^d}$ of complex-valued functions
$f_n$ on $\R_+$ belongs to 
$ \l^2(\Z^d;\scrH^\be(\R_+))$, 
if we have 
\begin{align}
f_{-n} = \cj{f_n}, \quad  n \in \Z^d,
\label{fBM2} 
\end{align}

\noi
and 
$\Re f_n, \Im f_n \in \scrH^\be(\R_+)$
(note from \eqref{fBM2}  that $\Im f_0 = 0$)
such that 
\begin{align*}
\|f\|_{\l^2(\Z^d;\Hs^\be(\R_+))}
:\! & = \bigg(\sum_{n \in \Z^d} \| f_n\|_{\Hs^\be(\R_+)}^2\bigg)^\frac 12\\
& = \bigg(\sum_{n \in \Z^d} \|\Re  f_n\|_{\Hs^\be(\R_+)}^2
+ \sum_{n \in \Z^d} \|\Im  f_n\|_{\Hs^\be(\R_+)}^2\bigg)^\frac 12 < \infty.
\end{align*}

\noi
Then, we define the Wiener integral  of 
 $f = \{f_n\}_{n \in \Z^d} \in \l^2(\Z^d;\scrH^\be(\R_+))$
 with respect to $\{B_n\}_{n\in\Z^d}$
 with the Hurst parameter $\frac 12 \le \be < 1$
 by setting
\begin{align}
\begin{split}
I_1[f]  
& =  \sum_{n\in \Z^d}J_n^{(r)}(f_n)
+ \sum_{n\in \Z^d}J_n^{(i)}(f_n)\\
& = \sum_{n\in \Z^d} \int _0^\infty f_n(t) d \Re B_n(t)
+ i 
 \sum_{n\in \Z^d}
\int _0^\infty f_n(t) d \Im B_n(t).
\end{split}
\label{BM1}
\end{align}

\noi
Note that $I_1[f]$ is real-valued
in view of the conditions \eqref{fBM1} and \eqref{fBM2}.
In \eqref{BM1},  each summand $J_n^{(r)}(f_n)$ 
or $J_n^{(i)}(f_n)$ is  understood as a Wiener integral;
namely, $\{J_0^{(r)}(f_0)\}\cup \{\sqrt 2J_n^{(r)}(f_n), \sqrt 2 J_n^{(i)}(f_n)\}_{n \in (\Z^d)_{+}}$ is a family of independent mean-zero Gaussian random variables
with variance $ \|f_n\|_{\Hs^\be}^2$.
In particular, the map $I_1$ is an isometry from $\l^2(\Z^d; \scrH^\be(\R_+))$ 
into $L^2(\Om, \s(I_1), \PP)$.
Here, 
 $\s(I_1)$ denotes the $\s$-algebra generated by the process $I_1 = \{ I_1[f]: \, f \in \l^2(\Z^d; \scrH^\be(\R_+)) \}$, 
where  we view $I_1$ as a process
indexed by  $f \in \l^2(\Z^d;\Hs^\be(\R_+))$.
 
  The process $I_1$ is known as an isonormal Gaussian process associated with the Hilbert space $\l^2(\Z^d;\scrH^\be(\R_+))$, satisfying
\begin{align}
\notag
\E\big[I_1[f] {I_1[g]} \big] = \jb{f,g}_{\l^2_n \scrH^\be_t}
\end{align}

\noi
for  $f,g\in \l^2(\Z^d;\scrH^\be(\R_+))$; see \cite[Definition 1.1.1]{Nualart06}.
See \cite[Chapter 5]{Nualart06} and \cite[Section 2.1]{BHOZ} for further details on the construction of the Wiener integral $I_1$.

Given $k\in\N\cup\{0\}$, we define the $k$th homogeneous Wiener chaos $\H_k$ as the closed linear subspace of $L^2(\Omega)$ generated by
\begin{align*}
\big\{ H_k(I_1[f]) : \, f\in \l^2(\Z^d; \scrH^\be(\R_+)), \, \|f\|_{\l^2_n\scrH^\be_t} = 1   \big\},
\end{align*}
where $H_k$ denotes the $k$th Hermite polynomial defined via the generating function:
\begin{align*}
e^{tx - \frac12 t^2} = \sum_{k=0}^\infty \frac{t^k}{k!} H_k(x).
\end{align*}

\noi
For readers' convenience, we write the first four Hermite polynomials:
\begin{align*}
H_0(x) = 1,
\quad H_1(x) = x,
\quad H_2(x) = x^2-1 ,
\quad  \text{and}
\quad H_3(x) =  x^3 - 3x.
\end{align*}

\noi
We also set 
\begin{align}
\H_{\le k} = \bigoplus_{j = 0}^k \H_j.
\label{chaos1}
\end{align}

\noi
Note that for any $f\in \l^2 (\Z^d; \scrH^\be (\R_+))$, the stochastic integral $I_1[f]$ is an element of the first homogeneous Wiener chaos~$\H_1$.
When $k\neq j$, 
the spaces $\H_k$ and $\H_j$
are orthogonal in $L^2(\Om)$.
We recall the   Wiener-Ito decomposition
for 
the real-valued Hilbert space $L^2(\Omega, \s(I_1), \PP)$:
\begin{align*}
L^2(\Omega, \s(I_1), \PP) = \bigoplus_{k=0}^\infty \H_k.
\end{align*}

\noi
where the right-hand side is  an orthogonal sum of the subspaces $\H_k$.

We now state the Wiener chaos estimate
for elements in the homogeneous Wiener chaos $ \H_k$, 
 which follows from the hypercontractivity of the Ornstein-Uhlenbeck semigroup due to Nelson \cite{Nelson2}. 
For its  proof, see, for example,  \cite[Theorem I.22]{Simon} and \cite[Corollary 2.8.14]{NP12}.

\begin{lemma}[Wiener chaos estimate]
\label{LEM:WCE}
Let $k\in\N$. Given any  $1 \le p <\infty$ and $F \in \H_{\le k}$, we have
\begin{align*}
\| F \|_{L^p(\Omega)} \le p^{\frac k 2 } \| F \|_{L^2(\Omega)}.
\end{align*}

\end{lemma}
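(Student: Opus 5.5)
The estimate is the standard hypercontractivity bound, and I would reduce it to the case of a single homogeneous chaos and then to the one-dimensional Gaussian Ornstein--Uhlenbeck semigroup. First, for $1\le p\le 2$ it is trivial: $\|F\|_{L^p(\Om)}\le \|F\|_{L^2(\Om)}\le p^{\frac k2}\|F\|_{L^2(\Om)}$ by H\"older's inequality on the probability space (note $p^{\frac k2}\ge 1$). So I focus on $p>2$.

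For $p>2$, I would invoke Nelson's hypercontractivity for the Ornstein--Uhlenbeck semigroup $P_t$ associated with the isonormal Gaussian process $I_1$ on $\l^2(\Z^d;\scrH^\be(\R_+))$: $\|P_t G\|_{L^p(\Om)}\le \|G\|_{L^2(\Om)}$ whenever $e^{2t}\ge p-1$. Since $P_t$ acts on the $j$th homogeneous chaos $\H_j$ as multiplication by $e^{-jt}$ (Mehler's formula, reducing via the Hermite polynomial generating function to $H_j(I_1[f])$ with $\|f\|=1$), for $F=\sum_{j\le k}F_j\in\H_{\le k}$ I would set $G=\sum_{j\le k}e^{jt}F_j$, so that $P_tG=F$. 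Then
\begin{align*}
\|F\|_{L^p(\Om)}\le \|G\|_{L^2(\Om)}=\Big(\sum_{j\le k}e^{2jt}\|F_j\|_{L^2(\Om)}^2\Big)^{\frac12}\le e^{kt}\|F\|_{L^2(\Om)},
\end{align*}
using orthogonality of the chaoses. Choosing $e^{2t}=p-1$ yields the bound $(p-1)^{\frac k2}\le p^{\frac k2}$.

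The only genuine input, and the main obstacle if one insisted on proving it from scratch, is Nelson's hypercontractivity theorem itself; I would simply cite it (e.g.\ via Gross's logarithmic Sobolev inequality, whose tensorization passes from one Gaussian to the infinite-dimensional isonormal setting by approximating with finitely many independent Gaussians $\{J^{(r)}_n,J^{(i)}_n\}$ and a density argument in $L^2(\Om,\s(I_1),\PP)$). The complex-valued structure causes no issue since, as set up above, everything is expressed through the real family $\{\Re B_n,\Im B_n\}_{n\in(\Z^d)_{+,0}}$, and the bound for complex-valued $F$ follows by applying it to real and imaginary parts.
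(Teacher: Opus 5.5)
Your proof is correct and follows the same route as the paper: the paper gives no proof of its own and attributes the lemma to Nelson's hypercontractivity of the Ornstein--Uhlenbeck semigroup, citing Simon and Nourdin--Peccati. Your derivation — $P_t$ acts as $e^{-jt}$ on $\H_j$, and you choose $e^{2t}=p-1$ — is the standard argument behind those references, and it even gives the sharper constant $(p-1)^{\frac k2}$.

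One small correction concerns only your closing aside about complex-valued $F$. Splitting $F$ into real and imaginary parts costs a factor of $\sqrt 2$. To keep the constant, use $|P_tG|\le P_t|G|$ to extend hypercontractivity to complex-valued $G$ directly. In any case, the statement concerns the real space $\H_{\le k}$.
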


Next,  we introduce multiple stochastic integrals with respect to
 the family 
  $\{B_n\}_{n\in\Z^d}$ of 
 fractional Brownian motions  with the Hurst parameter $\frac 12 \le \be < 1$, 
 satisfying \eqref{fBM1}. 
Given  $f\in \l^2( (\Z^d)^{k} ; \Hs^\be(\R_+^{k})  )$, where 
$\Hs^\be(\R_+^{k})$ 
is as in  \eqref{BM0a}
and \eqref{BM0x},  
 we define its symmetrization by
\begin{align}
\label{sym}
\Sym(f) (z_1, \ldots, z_k) & = \frac{1}{k!} \sum_{\s\in \S_k} f(z_{\s(1)},  \ldots, z_{\s(k)}),
\end{align}
where $z_j = (t_j, n_j)$, $j=1, \ldots,k$,  and the sum is taken over all permutations $\s$ in the symmetric group $\S_k$ on $\{1, \ldots, k\}$.
We denote by  
$\big(\l^2( (\Z^d)^{k} ; \Hs^\be(\R_+^{k})  )\big)^{ \Sym}$  the subspace of symmetric functions in 
$\l^2( (\Z^d)^{k} ; \Hs^\be(\R_+^{k})  )$.

We now introduce the notion of a multiple Wiener integral $I_k$.

\begin{definition}\rm
\label{DEF:MSI}
Let $k\in\N$. The $k$th multiple Wiener integral $I_k$ is an
isometry (up to a constant factor; see \eqref{BM2b})
from 
$\big(\l^2( (\Z^d)^{k} ; \Hs^\be(\R_+^{k})  )\big)^\Sym$
 into the homogeneous Wiener chaos $\H_k$, uniquely determined by
\begin{align*}
I_k\big[ \Sym(h_1^{\otimes k_1} \otimes \cdots \otimes h_m^{\otimes k_m})  \big] = \prod_{j=1}^m H_{k_j} (I_1[h_j]),
\end{align*}

\noi
for any orthogonal elements $h_1, \ldots, h_m \in \l^2(\Z^d; \scrH^\be(\R_+))$ and $k_1, \ldots,k_m \in \N\cup\{0\}$ such that $k_1+\cdots + k_m = k$,
where $H_{k}$ denotes the $k$th Hermite polynomial and $I_1$ is as in \eqref{BM1}.

\end{definition}

The multiple stochastic integral $I_k$ satisfies the following properties:
\begin{itemize}
\item The stochastic integral of (possibly non-symmetric) $f\in 
\l^2( (\Z^d)^{k} ; \Hs^\be(\R_+^{k})  )$
 is defined as 
 \begin{align}
 I_k[f] = I_k [ \Sym(f)].
 \label{sym1}
 \end{align}

\item Given  $f\in \l^2( (\Z^d)^{k} ; \Hs^\be(\R_+^{k})  )$, 
 and $g\in \l^2( (\Z^d)^{\l} ; \Hs^\be(\R_+^{\l})  )$
 for some $k, \l \in \N$,  
 we have
\begin{align}
\label{BM2b}
\E \big[ I_k[f] I_\l[g] \big] = k!\cdot  \ind_{k=\l}
\cdot  \jb{\Sym(f), \Sym(g)}_{\l^2_{n_1, \dots, n_k}  \scrH^\be_{t_1, \dots, t_k}}, 
\end{align}

\noi
where
$\l^2_{n_1, \dots, n_k} \Hs^\be_{t_1, \dots, t_k}$
is a short-hand notation for 
$\l^2( (\Z^d)^{k} ; \Hs^\be(\R_+^{k})  )$.

\item When $\be=\frac{1}{2}$, the multiple Wiener integral agrees with the iterated Wiener-Ito integral with respect to the family $\{B_n\}_{n\in\Z^d}$ of 
mutually independent complex-valued  Brownian motions. 
Moreover, if $f$ is symmetric, then we have
\begin{align*}
I_k[f]
&
=
k! \sum_{n_1, \ldots, n_k \in \Z^d} 
\int_{\Dl_k} f(z_1, \dots, z_k) dB_{n_k}(t_k ) \cdots dB_{n_1}(t_1),
\end{align*}
\noi
where 
$\Dl_k
= \big\{ (t_1, \ldots, t_k) \in \R_+^k: \ t_i > t_j 
\text{ for } i < j\big\}$
 and $z_j = (t_j, n_j)$.
Here,  the iterated integral on the right-hand side is understood as an iterated 
Wiener-Ito integral; see \cite[p.\,23]{Nualart06}.

\item From Jensen's inequality with \eqref{sym}, we have the following estimate:
\begin{align}
\label{Jensen}
\| \Sym(f) \|_{\l^2_{n_1, \dots, n_k}  \scrH^\be_{t_1, \dots, t_k}}
\le 
 \| f \|_{\l^2_{n_1, \dots, n_k}  \scrH^\be_{t_1, \dots, t_k}}
\end{align}

\noi
for $f\in \l^2( (\Z^d)^{k} ; \Hs^\be(\R_+^{k})  )$.
See \cite[(B.2)]{OWZ}.

\end{itemize}

We conclude this subsection by stating a product formula for iterated stochastic integrals.
See \cite[Proposition 1.1.3]{Nualart06}.

\begin{lemma}
\label{LEM:prod}
Given  $  k,\l \in \N\cup\{0\}$,  let $f \in \big(\l^2( (\Z^d)^{k} ; \Hs^\be(\R_+^{k})  )\big)^{\Sym}$
and $g\in 
\big(\l^2( (\Z^d)^{\l} ; \Hs^\be(\R_+^{\l})  )\big)^{\Sym}$.
Then, we have
\begin{align}
\notag
I_k[f ] I_\l [g] = \sum_{r=0}^{\min(k, \l)} \binom{k}{r} \binom{\l}{r} r!
\cdot I_{ k + \l - 2 r} ( f \otimes_r g),
\end{align}

\noi
where $\otimes_r$ denotes the $r$-contraction of $f$ and $g$, defined by
\begin{align}
\label{contr}
\begin{split}
& (f \otimes_r g ) (z_1, \dots, z_{k+\l-2r})\\
& \quad =
c_\be^r 
 \sum_{m_1, \ldots, m_r \in \Z^d} \int_{\R_+^{2r}} 
f(z_1, \ldots, z_{k-r}, s_1, m_1, \ldots, s_r, m_r) \\
& \hphantom{XXXXXXXX}
  \times g(z_{k-r+1}, \ldots, z_{k+\l-2r}, 
s_1', - m_1, \ldots, s_r', - m_r) \\
& \hphantom{XXXXXXXX}
\times \prod_{j=1}^r |s_j - s_j' |^{2\be-2 } ds_j ds_j'
\end{split}
\end{align}

\noi
for $\frac 12 <   \be < 1$
with an obvious modification when $\be = \frac 12$, 
where 
 $z_j = (t_j,n_j)$ for $j=1,\ldots, k+\l -2r$
and  $c_\be = \be(2\be-1)$.

\end{lemma}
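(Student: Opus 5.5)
The product formula (Lemma~\ref{LEM:prod}) will be reduced to the classical real-valued statement \cite[Proposition~1.1.3]{Nualart06} for a general isonormal Gaussian process. We then identify the abstract contraction with the concrete formula \eqref{contr}.

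First we set up the abstract framework. Let $\mathfrak{H} = \l^2(\Z^d;\Hs^\be(\R_+))$, the real Hilbert space of sequences satisfying \eqref{fBM2}. The map $I_1$ in \eqref{BM1} is an isonormal Gaussian process on $\mathfrak{H}$, and Definition~\ref{DEF:MSI} defines $I_k$ exactly as the multiple Wiener integral on $\mathfrak{H}^{\odot k}$ in \cite[Section~1.1.2]{Nualart06}. The tensor power $\mathfrak{H}^{\otimes k}$ is identified with $\l^2((\Z^d)^k;\Hs^\be(\R_+^k))$ via \eqref{BM0a}. For symmetric $f\in\mathfrak{H}^{\odot k}$ and $g\in\mathfrak{H}^{\odot \l}$, the abstract theorem gives
\[
I_k[f]\,I_\l[g]=\sum_{r=0}^{k\wedge\l} r!\binom{k}{r}\binom{\l}{r}\, I_{k+\l-2r}\big(f\otimes_r g\big).
\]
Here $f\otimes_r g$ is the contraction of $r$ indices using the inner product of $\mathfrak{H}$. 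The standard proof by induction on elementary tensors $h^{\otimes k}$ via Hermite-polynomial identities, extended by density and the isometry \eqref{BM2b}, applies verbatim. So the only real work is computing that inner product.

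Second, we compute the contraction on elementary tensors. Take $f = f_1\otimes\cdots\otimes f_k$ and $g=g_1\otimes\cdots\otimes g_\l$ with $f_j,g_j\in\mathfrak{H}$. Then $f\otimes_r g$ pairs the last $r$ factors of $f$ with the last $r$ factors of $g$ through $\jb{\cdot,\cdot}_{\mathfrak{H}}$. The key point is to write this real inner product in the complex coordinates. Splitting into $\Re$ and $\Im$ parts and using \eqref{fBM2}, we get
\[
\jb{a,b}_{\mathfrak{H}}=\sum_{n}\big(\jb{\Re a_n,\Re b_n}_{\Hs^\be}+\jb{\Im a_n,\Im b_n}_{\Hs^\be}\big)=\sum_n \jb{a_n,\cj{b_n}}_{\Hs^\be}=\sum_m \jb{a_m, b_{-m}}_{\Hs^\be},
\]
where $\jb{\cdot,\cdot}_{\Hs^\be}$ is extended bilinearly. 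By \eqref{BM0} this equals
\[
c_\be\sum_m\int_{\R_+^2} a_m(s)\,b_{-m}(s')\,|s-s'|^{2\be-2}\,ds\,ds',
\]
which produces exactly the pairing $m_j\leftrightarrow -m_j$ and the kernel in \eqref{contr}. When $\be=\frac12$, \eqref{BM0x} replaces the kernel by the $L^2$ pairing, i.e.\ $s_j=s_j'$. Taking multilinear combinations gives \eqref{contr} on the algebraic tensor product.

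Third, we extend by density. The right-hand side of \eqref{contr} satisfies $\|f\otimes_r g\|\le\|f\|\,\|g\|$ in the $\mathfrak{H}$-tensor norms, by Cauchy--Schwarz in the contracted variables. Elementary tensors are dense, so the identification holds for all $f,g$. Both sides of the product formula are continuous from $\mathfrak{H}^{\odot k}\times\mathfrak{H}^{\odot\l}$ into $L^1(\Om)$: the left side by Cauchy--Schwarz and \eqref{BM2b}, the right side by the contraction bound together with \eqref{Jensen} and \eqref{BM2b}. The formula therefore passes to the limit.

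The main obstacle is the bookkeeping in the second step: checking that the real isonormal structure, built from $\{\Re B_n,\Im B_n\}_{n\in(\Z^d)_{+,0}}$ with the $\sqrt2$ normalizations, really yields the complex bilinear pairing $\sum_m a_m b_{-m}$ with constant exactly $c_\be$ and no stray factor of $2$. I would verify this first on $r=1$, $k=\l=1$, by computing $\E[I_1[f]I_1[g]]$ directly from the variances stated after \eqref{BM1}.
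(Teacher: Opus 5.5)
Your proposal is correct and follows the same route as the paper, which simply invokes \cite[Proposition~1.1.3]{Nualart06} for the isonormal Gaussian process $I_1$ on the real Hilbert space $\l^2(\Z^d;\Hs^\be(\R_+))$ and gives no further argument. Your added check is what justifies the concrete form of \eqref{contr}: under the reality constraint \eqref{fBM2}, the real inner product equals the bilinear pairing $\sum_m \jb{a_m, b_{-m}}_{\Hs^\be}$, because the imaginary cross terms cancel between $n$ and $-n$ and the $\sqrt2$ normalizations leave no stray factor of $2$. This accounts for the sign flip $m_j\mapsto -m_j$ and the constant $c_\be^r$, and your density step correctly handles the fact that \eqref{contr} is only formal for general elements of $\Hs^\be$ when $\be>\frac12$.
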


We note that the $r$-contraction of $f$ and $g$ is not necessarily symmetric.

\subsection{Random tensor estimate}
\label{SUBSEC:RTE}

In this subsection, we provide a basic definition of tensors and 
present a random tensor estimate 
adapted to multiple stochastic integrals
with respect to fractional Brownian motions
with the Hurst parameters $\frac 12 \le \be < 1$
(Lemma~\ref{LEM:RTE}).
 See \cite[Sections 2 and 4]{DNY22}, \cite[Section 4]{Bring},  \cite[Appendix C]{OWZ}, and \cite[Section 2]{OW25} for further discussions.

 Let $A$ denote a finite index set and $n_A$ 
 and $t_A$ denote the tuples $\{n_j\}_{ j\in A}$
and   $\{t_j\}_{ j\in A}$.
  We  use the short-hand notation:
\begin{align}
z_A = (t_A, n_A) \in (\R \times \Z^d)^{A}
\qquad \text{and} \qquad f_{z_A} = f(z_A) = f(t_A,n_A).
\label{short1}
\end{align}

\begin{definition}\rm
\label{DEF:tensor}

Let $A$ be a finite index set.

\smallskip

\noi{\rm(i)} A tensor $h = h_{n_A}$ is a function from $(\Z^d)^A$ to $\bbC$ with input variables $n_A$. 

\smallskip

\noi{\rm(ii)} Let $(B,C)$ be a partition of $A$.
We define the norms $\|\cdot\|_{n_A}$ and $\|\cdot\|_{n_B \to n_C}$ by
\begin{align*}
\| h \|_{n_A} & = \| h\|_{\l^2_{n_A}} = \bigg( \sum_{n_A} |h_{n_A}|^2 \bigg)^\frac12, \\
\| h\|_{n_B \to n_C} & =  \bigg( \sup_{\|f\|_{n_B} = 1} \sum_{n_C} \Big| \sum_{n_B} h_{n_A} f_{n_B} \Big|^2 \bigg)^\frac12,
\end{align*}
where we used the short-hand notation $\sum_{n_Z} = \sum_{n_Z \in (\Z^d)^Z}$ for a finite index set $Z$.
\end{definition}

Note that, by duality, we have
$\|h\|_{n_B \to n_C} = \| h\|_{n_C \to n_B}$ for any tensor $h=h_{n_A}$. 
If $B=\varnothing$ or $C=\varnothing$, then we have $\|h\|_{n_B \to n_C} = \|h\|_{n_A}$. 
Moreover, by duality, we can rewrite the norm $\| \cdot \|_{n_B \to n_C}$ as 
\begin{align}\label{op-norm-dual}
\| h \|_{n_B \to n_C} = \sup_{\|f\|_{n_B} = \|g\|_{n_C} = 1 }
\bigg| \sum_{n_B,n_C} h_{n_A} f_{n_B} g_{n_C} \bigg|.
\end{align}

\noi
In particular, from \eqref{op-norm-dual} and Cauchy-Schwarz's inequality, we have
\begin{align}
\| h \|_{n_B \to n_C} 
\le \| h \|_{n_B, n_C} .
\label{HS2x}
\end{align}

Before proceeding further, 
let us  rewrite the contraction in \eqref{contr}, 
using the tensor notation.
Given  finite index sets, 
 $A$ and $D$, 
 let  $f\in (\l^2_{n_A} \scrH^\be_{t_A})^{\Sym}$ and 
 $g\in( \l^2_{n_D} \scrH^\be_{t_D})^{\Sym}$.
Given  $0 \le r \le \min(|A|, |D|)$, 
let $(A_1,A_2)$ and  $(D_1,D_2)$ be partitions of $A$ and $D$, respectively, 
such that $|A_2| = |D_2| = r$.
Then, define 
 an index set $B$ with $|B| = |A| + |D| -2r$
 by setting $B = (B_1, B_2) := (A_1, D_1)$, 
 where $(B_1, B_2)$ forms a partition of $B$.
Then, with $z_A = (z_{A_1}, z_{A_2}) 
= (z_{B_1}, z_{A_2}) $
and
 $z_D = (z_{D_1}, z_{D_2}) 
= (z_{B_2}, z_{D_2}) $, 
we can write the $r$-contraction of $f$ and $g$
in~\eqref{contr}  as 
\begin{align}
\label{contr2}
\begin{split}
(f \otimes_r g) (z_B)
&
= (f_{z_A} \otimes_r g_{z_D}) (z_B )
\\
 & = \jb{  f \vert_{z_{A_1} = z_{B_1}, z_{A_2} = z_C}   , g \vert_{z_{D_1}  = z_{B_2} , z_{D_2} = z_C}    }_{\l^2_{n_C} \scrH^\be_{t_C}} \\
 &= \jb{  f_{z_{B_1} z_{C}} , g_{z_{B_2} \wt z_C} }_{\l^2_{n_C} \scrH^\be_{t_C}}
,
\end{split}
\end{align}

\noi
for an index set $C$ with $|C|= r$, 
where $\wt z_C = (t_C, - n_C)$.
Here, 
 the inner product on $\l^2_{n_C} \scrH^\be_{t_C}$
is given by 
\begin{align*}
& \jb{  f_{ z_{C}} , g_{ z_C} }_{ \l^2_{n_C}\scrH^\be_{t_C}}
\\
&\quad 
=
c_\be^{|C|} 
\sum_{n_C}
\int_{\R_+^{2|C|}} f( t_C, n_C) g( t'_C, n_C) \prod_{j \in C} |t_j-t_j'|^{2\be-2} dt_C 
dt_C' 
\end{align*}

\noi
for $\frac 12 < \be < 1$
with an obvious modification when $\be = \frac 12$, 
where  
 $c_\be = \be(2\be-1)$
 is as in \eqref{contr}.

\medskip

We  now state the random tensor estimate
for multiple stochastic integrals, 
which plays a crucial role in 
studying 
 the mapping properties of the random  operators $\vXX$ and $\vbbX$ in~\eqref{introX} and \eqref{bX0}, respectively, 
 in Section \ref{SEC:ops}.
The random tensor estimate  
  was first introduced in a seminal work \cite{DNY22}  by  Deng, Nahmod, and Yue
   in the context of  random variables; see also \cite{Bring, OW25}. 
 See   
 \cite{BO96} for a precursor of the random tensor estimate.
In \cite{OWZ}, it was extended to the case of  multiple Wiener-Ito integrals
(with the Hurst parameter $\be = \frac 12$).
Here, we further extend  this result to treat multiple stochastic integrals with respect to fractional Brownian motions with Hurst parameter $\frac 12 < \be < 1$, 
by closely following the argument in \cite{DNY22, OWZ}.

\begin{lemma}
\label{LEM:RTE}

Fix $\frac 12 \le \be < 1$ and $A$ a finite index set with $|A|=k \ge 1 $.
Given a  tensor $\hf=\hf_{bc n_A}$ with 
 $(b,c) \in (\Z^d)^{m}$ for some integer $m\ge2$
and $n_A \in (\Z^d)^{A}$ 
satisfying
\begin{align*}
\hf_{-b, -c, -n_A} = \cj{\hf_{bc n_A}}, 
\quad 
(b,c) \in (\Z^d)^{m}, \ 
n_A \in (\Z^d)^{A}, 
\end{align*}

\noi
 define the random tensor $H = H_{bc}$ given by
\begin{align}
H_{bc} =   I_k \big[ \hf_{bcn_A} \ff_{bc} (t_A, n_A) \big]
\label{Z1-new}
\end{align}

\noi
for $\ff\in \l^\infty_{bc n_A}( (\Z^d)^{k+m} ; \Hs^\be(\R_+^{k})  )$, satisfying
\begin{align*}
\ff_{-b, -c, -n_A} = \cj{\ff_{bc n_A}}, 
\quad 
(b,c) \in (\Z^d)^{m}, \ 
n_A \in (\Z^d)^{A}, 
\end{align*}

\noi
where 
$\Hs^\be(\R_+^{k})$ 
is as in  \eqref{BM0a} and \eqref{BM0x}
and  $I_k$ denotes the $k$th multiple stochastic integral associated with an isonormal process over $\l^2(\Z^d; \scrH^\be(\R_+))$ defined  in Subsection~\ref{SUBSEC:FBM}.
Then, given any $\theta>0$ and $1 \le p <\infty$, we have
\begin{align}\label{Z1a-new}
\begin{aligned}
\big\| \| H_{bc} \|_{b \to c} \big\|_{L^p(\Om)}
&
\les p^\frac k2
\big\| \hf_{bcn_A} \times \|\ff_{bc}(t_A, n_A)\|_{\scrH^\be_{t_A}} \big\|_{\l^2_{bcn_A}}^{\theta}\\
 &\quad \times
 \Big( \max_{(B,C)} \big\|    \hf_{bcn_A} 
 \times \|\ff_{bc}(t_A, n_A)\|_{\scrH^\be_{t_A}} \big\|_{b n_B \to c n_C} \Big)^{1-\theta},
\end{aligned}
\end{align}

\noi
where the maximum is taken over  all partitions $(B,C)$ of $A$.

\end{lemma}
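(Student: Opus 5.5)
We will follow the moment method of \cite{DNY22, OWZ}: reduce the operator norm to a trace of a high power of $HH^*$, compute its expectation via the product formula (Lemma \ref{LEM:prod}), and bound each contraction pattern by the tensor norms on the right of \eqref{Z1a-new}. By Lemma \ref{LEM:WCE} it suffices to bound a high moment, since $\|H\|_{b\to c}$ is dominated by the $2q$-th root of a Schatten norm. The main case to handle is when $\hf$ is supported on a finite set of $(b,c)$, with a constant independent of its size up to $N^{\eps}$-type losses absorbed by the interpolation parameter $\theta$. We then remove finiteness by a limiting argument, using truncation $|b|,|c|\le N$ and monotone convergence.

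\emph{Step 1 (reduction to a trace).} For an integer $q\ge1$ we have
\[
\|H\|_{b\to c}^{2q}\le \operatorname{tr}\big((HH^*)^q\big)=\sum_{b_1,c_1,\dots,b_q,c_q}\prod_{j=1}^q H_{b_jc_j}\cj{H_{b_{j+1}c_j}},
\]
with $b_{q+1}=b_1$. Each $H_{bc}$ lies in the $k$th chaos, so the product lies in $\H_{\le 2qk}$. Taking expectations and applying Lemma \ref{LEM:prod} repeatedly (equivalently, a Wick/diagram expansion), $\E\operatorname{tr}((HH^*)^q)$ becomes a sum over complete pairings of the $2qk$ variables $z=(t,n)$ among the $2q$ factors. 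Each pairing is an $\l^2_n\Hs^\be_t$ contraction as in \eqref{contr2}, and the conjugation symmetry $\hf_{-b,-c,-n_A}=\cj{\hf_{bcn_A}}$ turns the pairings $(n,-n)$ into genuine inner products. There are at most $C^{qk}(qk)!$ pairings.

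\emph{Step 2 (bounding each pairing).} For $\be>\frac12$ the time contractions carry the kernel $|s-s'|^{2\be-2}$. This kernel is positive definite, so by Cauchy--Schwarz in the Hilbert space $\Hs^\be(\R_+^k)$ we may bound each pairing by replacing $\ff_{bc}(t_A,n_A)$ with its norm $\|\ff_{bc}(\cdot,n_A)\|_{\Hs^\be_{t_A}}$. This reduces matters to the deterministic tensor $\mathfrak{h}_{bcn_A}\|\ff_{bc}\|_{\Hs^\be_{t_A}}$, which is exactly the $\be=\frac12$ / random-variable setting. We expect this step to be the main obstacle. Cauchy--Schwarz must be applied to the whole multilinear contraction network rather than factor by factor, since the time variables are entangled across factors. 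We would first try to rewrite the network as an inner product of tensor products in $\bigotimes\Hs^\be$ and then use the fact that the $\Hs^\be_t$-valued tensor network is dominated by the scalar network of norms. If that fails, we would instead use the $L^2$ representation \eqref{BM0a} with $|\dd_t|^{\frac12-\be}$, which converts everything to the white-noise case treated in \cite{OWZ}.

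\emph{Step 3 (deterministic tensor bound and conclusion).} For a scalar tensor $h$, each pairing diagram is estimated by the tensor-merging argument of \cite[Section 4]{DNY22}. We cut the cyclic diagram to obtain at most $2q$ operator norms $\|h\|_{bn_B\to cn_C}$, choosing for each factor the partition $(B,C)$ of $A$ given by which of its variables are paired forward or backward, times a bounded number of Hilbert--Schmidt norms $\|h\|_{\l^2_{bcn_A}}$ at the cut points. This gives
\[
\E\operatorname{tr}((HH^*)^q)\le C^{q}(qk)!\,\|h\|_{\l^2}^{2}\,\max_{(B,C)}\|h\|_{bn_B\to cn_C}^{2q-2}.
\]
Taking $q\sim\theta^{-1}$, we take the $2q$-th root, combine with Lemma \ref{LEM:WCE}, interpolate, and use \eqref{HS2x}. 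This yields \eqref{Z1a-new} with the factor $p^{k/2}$ and an implicit constant depending on $\theta$ and $k$.
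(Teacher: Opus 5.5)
Your route is genuinely different from the paper's. Apart from the final moment step, and a caveat that applies equally to the paper, it works.

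\textbf{Comparison.} The paper never expands an expectation into pairings. It proves by induction on $j$ that the kernel of $T_j$ (an alternating product of $T$ and $T^*$) is a finite sum of terms $I_\l[y_{bc}]$ with $\l\le kj$ and
$\|y\|_{\l^2\Hs^\be}\les \|\hf\,\|\ff\|\|_{\l^2}\,(\max_{(B,C)}\|\cdot\|_{bn_B\to cn_C})^{j-1}$.
At each step it attaches one more factor $H$ via the product formula (Lemma~\ref{LEM:prod}). Each $r$-contraction is then bounded by Cauchy--Schwarz in $\Hs^\be_{t_C}$ together with the elementary estimate $\|\sum_{b',n_C}Y\,G\|_{\l^2}\le\|Y\|_{\l^2}\|G\|_{b'n_C\to cn_{B_2}}$. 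Finally it uses $\|H\|_{b\to c}^{2m}\le\|(TT^*)^m\|_{\HS}$, Minkowski, Lemma~\ref{LEM:WCE} on each entry of $\RR_{2m}\in\H_{\le 2mk}$, and the isometry \eqref{BM2b}.

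Since $\sum_{b,b'}\E|(\RR_{2m})_{bb'}|^2=\E\operatorname{tr}((TT^*)^{2m})$, the paper controls exactly your quantity $\E\operatorname{tr}((HH^*)^q)$ with $q=2m$. Its induction is the tensor-merging estimate unfolded one tensor at a time. It therefore needs neither the diagram formula (complete pairings with no self-contractions) nor a merging lemma for networks whose edges join non-adjacent factors. Your version makes that combinatorics explicit instead. Your Step~2 is not a real obstacle: a full contraction of $\Hs^\be$-valued tensors is bounded by the product of their norms, by expanding in an orthonormal basis of $\Hs^\be$ and applying the same merging bound. Ordering the factors along the cycle does give exactly two Hilbert--Schmidt norms, and every middle factor has $b$ and $c$ on opposite sides, as you say.

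\textbf{The gap is the final step.} $\|H\|_{b\to c}$ does not lie in a finite chaos. For $F=\operatorname{tr}((HH^*)^q)\in\H_{\le 2qk}$ you only control $\E F=\|F\|_{L^1(\Om)}$, whereas Lemma~\ref{LEM:WCE} passes from $L^2(\Om)$ to $L^p(\Om)$. So with $q\sim\ta^{-1}$ fixed, ``combine with Lemma~\ref{LEM:WCE}'' does not yield $p^{\frac k2}$ for $p>2q$. There are two fixes.
\begin{enumerate}
\item Do what the paper does. Take $q=2m$, bound $\|H\|^{2m}_{b\to c}$ by the Hilbert--Schmidt norm of $(HH^*)^m$, and apply Minkowski for $p\ge 4m$ and Lemma~\ref{LEM:WCE} entrywise. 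What remains is exactly $\E\operatorname{tr}((HH^*)^{2m})$.
\item First prove $\|F\|_{L^2(\Om)}\le 4^{n}\|F\|_{L^1(\Om)}$ for $F\in\H_{\le n}$, from Lemma~\ref{LEM:WCE} and $\|F\|_{L^2}\le\|F\|_{L^1}^{1/3}\|F\|_{L^4}^{2/3}$. Then apply it to $F\ge0$.
\end{enumerate}
Also drop the remark about ``$N^\eps$-type losses absorbed by $\ta$''. The truncation limit needs a constant uniform in the support, and your pairing count does provide one.

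\textbf{Caveat (shared with the paper).} Step~2 does not reduce matters to $\hf_{bcn_A}\|\ff_{bc}\|_{\Hs^\be_{t_A}}$. It reduces them to $|\hf_{bcn_A}|\,\|\ff_{bc}\|_{\Hs^\be_{t_A}}$: once the time network is bounded by norms, the remaining sum over $(b,c,n)$ can only be estimated with absolute values. The paper's proof has the same feature, since $\|\hf\ff\|_{\Hs^\be_t}=|\hf|\,\|\ff\|_{\Hs^\be_t}$ in \eqref{pf_DNY4-new}. This is harmless for Section~\ref{SEC:ops}, where the operator norms are estimated with absolute values anyway.

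The signed version of \eqref{Z1a-new} is actually false. Take $k=1$ and $\hf_{bcn_1}=\ind_{n_1=0}\,a_{bc}$, where $a$ is a real even $N\times N$ matrix with entries $\pm N^{-1/2}$ and $\|a\|_{b\to c}=O(1)$ (built from a Hadamard matrix). Take $\ff_{bc}=\operatorname{sgn}(a_{bc})\ind_{[0,1]}(t)$. Then $H=|a|\,B_0(1)$, so $\|H\|_{b\to c}=\sqrt N\,|B_0(1)|$, while the right-hand side of \eqref{Z1a-new} is $O(p^{\frac12}N^{\ta/2})$.
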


Before proceeding further, let us remark that, as pointed out in  
 \cite[Remark 6.6]{Bring2}, 
the random tensor estimate 
is closely related to operator bounds for structured random matrices
and  can be proven, using 
the 
non-commutative 
Khintchine  inequality 
\cite[Theorem~3.2]{vH}.
See \cite{Bring2, Kaneshiro}
for a further discussion.

\begin{proof}[Proof of Lemma \ref{LEM:RTE}]

The case $\be=\frac12$ was treated  in \cite[Lemma C.3]{OWZ}.
The general case 
 $\frac 12 < \be < 1$ follows
 from  a straightforward modification of 
 the proof of \cite[Lemma~C.3]{OWZ}, 
based on  a higher order version of Bourgain's $TT^*$-argument \cite{BO96}
via an induction argument  originally introduced in~\cite{DNY22}, 
where the main modification comes 
from the change in the underlying Hilbert space.
In the fractional-in-time case ($\frac 12 < \be < 1$), 
we need to work with 
 $\l^2((\Z^d)^k;\scrH^\be(\R_+^k))$ 
 associated with multiple stochastic integrals  with respect to fractional Brownian motions of 
 the Hurst parameter $\be$.
For the sake of completeness and for readers' convenience, 
we include a proof.

Fix $\frac 12 < \be < 1$.
Define  the linear operator $T$ with kernel $H_{bc}$:
\begin{align}
(Tg)_b  = \sum_{c} H_{bc} g_c, \quad g \in \l^2_c.
\label{Z2}
\end{align}

\noi
For  $j \in \N$,  we define the operator $T_j$ by
\begin{align*}
T_j =
\begin{cases}
(TT^*)^{ \frac j  2 }, & \text{if $j$ is even}, \\
(TT^*)^{\frac{j-1}{2} } T , & \text{if $j$ is odd},
\end{cases}
\end{align*}

\noi
where $T^*$ denotes the adjoint of $T$
with kernel $H^*_{bc} = H_{cb}$.

We start by showing that the kernel of $T_j$ is given by a linear combination of terms 
$H^{(j)}$
of the~form:
\begin{align}
H^{(j)}_{bc} = I_\l \big[y_{bc}(z_D)\big]
\label{Z1a}
\end{align}
for some finite index set $D$ with $|D| = \l \le kj$, 
where 
 $y_{bc}(z_D)$ satisfies the following bound:
\begin{align}
\begin{split}
 \| y_{bc}(n_D, t_D) \|_{\l^2_{bcn_D} \scrH^\be_{t_D}}
&
\les 
 \big\|  \hf_{bcn_A}  \times
 \| \ff_{bc}(z_A)\|_{\scrH^\be_{t_A}} \big\|_{\l^2_{bcn_A}}\\
&\quad \times 
\Big(  \max_{(B,C)} \big\| \hf_{bcn_A} \times \|\ff_{bc}(z_A)\|_{\scrH^\be_{t_A}}
      \big\|_{b n_B \to c n_C} \Big)^{j-1}, 
 \end{split}
\label{Z1b}
\end{align}

\noi
where $z_D = (t_D, n_D)$ as in \eqref{short1} 
and the maximum is taken over  all partitions $(B,C)$ of $A$.
Here, the implicit constant depends on $k$, $j$, and $\l$
and grows in $j$ (and $\l$).
This, however, does not cause an issue since, 
given small $\ta > 0$ in \eqref{Z1a-new}, 
we fix $j = j(\ta) \gg1$.

Let $j=1$.
From \eqref{Z1-new}
with \eqref{sym1}, 
 we have only one term of the form \eqref{Z1a} with $D=A$, $\l=k$, and 
 $y_{bc}(z_D) = \Sym(\hf_{bc n_A} \ff_{bc}(z_A))$, 
 where $\Sym$ is as in \eqref{sym}.
In this case,   the estimate~\eqref{Z1b}  follows from~\eqref{Jensen}.
Note from \eqref{BM2b} with \eqref{Z1-new}
and \eqref{Z2}
that 
\begin{align*}
\| \Sym(\hf_{bc n_A} \ff_{bc}(z_A)) \|_{\l^2_{bcn_A} \scrH^\be_{t_A}}
\sim_k \big\|\|H_{bc} \|_{\l^2_{bc}}\big\|_{L^2(\Om)}
=  \big\|\|T \|_{\HS}\big\|_{L^2(\Om)}.
\end{align*}

\noi
We also note that the first factor on the right-hand side
of \eqref{Z1a-new} (and \eqref{Z1b}) essentially represents
the Hilbert-Schmidt norm of $T$ (modulo symmetrization).

Given an integer $j \ge 2$, 
assume that \eqref{Z1a} and \eqref{Z1b}
hold for $T_{j - 1}$.
We only consider the case when $j$ is odd.
When $j$ is even, the claim follows from a similar consideration.
Noting that  $T_{j} = T_{j-1}T$, 
it follows from \eqref{Z1a}, \eqref{Z1-new}, 
\eqref{sym1}, and 
Lemma \ref{LEM:prod}
that the kernel of $T_j$ is given by
a linear combination of terms $H^{(j)}$ of the form:
\begin{align}\label{pf-DNY-aux}
\begin{aligned}
H^{(j)}_{bc} & = \sum_{b'} H^{(j-1)}_{bb'} H_{b'c}   \\
&= \sum_{b'} I_{\ell} \big[ y_{bb'}(z_D) \big] \cdot 
 I_k \big[ \hf_{b'c n_A} \ff_{b'c }(z_A) \big]\\
& =  \sum_{r=0}^{\min(k,\ell)} \binom{k}{r}  \binom{\ell}{r}  
 r!\cdot 
I_{k + \ell - 2r}
 \bigg[ \sum_{b'} \big( \Sym(y_{bb'}) \otimes_r  \Sym(\hf_{b'c} \ff_{b'c}) \big) \bigg] 
 \end{aligned}
\end{align}

\noi 
for a finite index set $D$ with $|D| = \l \le k(j-1)$. 
Hence, it remains to show that the integrand 
of the stochastic integral $I_{k + \ell - 2r}$ in \eqref{pf-DNY-aux}
satisfies \eqref{Z1b}
for each $r = 0, 1, \dots, \min(k,\l)$.

In view of \eqref{Jensen}, 
 we drop $\Sym$ in the following.
Fix $0\le r \le \min(k, \l)$. 
From \eqref{contr2} with~\eqref{short1}, 
we have
\begin{align}
\label{Z1cc}
\big( y_{bb'}  \otimes_r \hf_{b'c} \ff_{b'c}  \big)(z_B) 
= \big\langle y_{bb'} (z_{B_1}, z_C) ,
\, \hf_{b'c n_{B_2} \wt n_C } \ff_{b'c} (z_{B_2} , \wt z_C) 
\big\rangle_{\l^2_{n_C} \scrH^\be_{t_C}},
\end{align}
where 
$\wt n_C = -n_C$, 
$\wt z_C = (t_C, - n_C)$, 
and 
$B_1$, $B_2$, and $C$ are pairwise disjoint sets, with $|B_1| = \l-r$, $|B_2| = k-r$, and $|C| = r$, $B = B_1 \sqcup B_2$ such that  (after a suitable relabelling), we have
\begin{align}
\label{Z1c}
D  = B_1 \sqcup C  \quad \text{and} \quad  A = B_2 \sqcup C.
\end{align}

\noi
Here, $\sqcup$ denotes a disjoint union.
Then, from
\eqref{Z1cc}, 
Cauchy-Schwarz's inequality (in $\scrH^\be_{t_C}$), 
Minkowski's inequality, 
and the identification in~\eqref{Z1c},
we have
\begin{align}
& \Big\|   \sum_{b'} \big( y_{bb'} \otimes_r \hf_{b'c} \ff_{b'c} \big)(t_B, n_B) \Big\|_{\ell^2_{bc n_B} \scrH^\be_{t_B}}
\notag \\
& \quad 
= \Big\|   \sum_{b'}  \big\langle y_{bb'}(z_{B_1}, z_{C}), 
\, \hf_{b'c  n_{B_2} \wt  n_C } \ff_{b'c}(z_{B_2}, \wt z_{C}) \big\rangle_{\l^2_{n_C} \scrH^\be_{t_C} } \Big\|_{\ell^2_{bc n_B} \scrH^\be_{t_B}} 
\notag\\
& \quad 
  \les
 \Big\|  \sum_{b', n_C}
\| y_{bb'}(z_{B_1}, z_{C})\|_{\scrH^\be_{t_{B_1}t_{C}}} 
\notag\\
& \hphantom{LXXXX}  \times
\|\hf_{b'c n_{B_2}\wt n_C} \ff_{b'c}(z_{B_2},\wt z_C  )\|_{\scrH^\be_{t_{B_2}t_C}}
   \Big\|_{\ell^2_{bc n_{B}} } 
\label{pf_DNY4-new}
   \\
& \quad 
 \le
  \|y_{bb'}(z_{B_1}, z_C)
 \|_{\ell^2_{bb'n_{B_1} n_C}\scrH^\be_{t_{B_1}t_C} } 
 \notag\\
& \quad \quad  \times \Big\| \| \hf_{b'c n_{B_2} \wt n_C} \ff_{b'c}(z_{B_2}, \wt z_C)\|_{\scrH^\be_{t_{B_2}t_C}} \Big\|_{b' n_C \to cn_{B_2}}
\notag \\
& \quad 
\le
    \|y_{bb'}(z_D ) \|_{\ell^2_{bb'n_D} \scrH^\be_{t_D}}
\Big\| \hf_{b'cn_A} \times  \|\ff_{b'c}(z_A )\|_{\scrH^\be_{t_A}} \Big\|_{b' n_C \to cn_{A\setminus C}} 
\notag\\
& \quad 
 \leq \|y_{bb'}(z_D ) \|_{\ell^2_{bb'n_D} \scrH^\be_{t_D}}
\times  \max_{(B,C)}\Big\|  \hf_{b'cn_A}  
\times   \|\ff_{b'c}(z_A )\|_{\scrH^\be_{t_A}} \Big\|_{b' n_B \to cn_{ C}}, 
\notag
\end{align}

\noi
where, in the second step, 
we also used the following fact (which immediately follows from~\eqref{BM0a}):
\[ \| f_1(t_{B_1}) f_2(t_{B_2})\|_{\scrH^\be_{t_B}}
= \prod_{j = 1}^2 \| f_j(t_{B_j}) \|_{\scrH^\be_{t_{B_j}}}.\]

\noi
Hence, 
 \eqref{Z1b} for $T_j$ 
 follows from \eqref{pf_DNY4-new} 
 and the inductive hypothesis
 on $T_{j - 1}$.

We now prove \eqref{Z1a-new}. 
Given 
 $m\in\N$, consider $T_{2m} = (TT^*)^m$ with kernel $\RR_{2m}$
 of the form: 
\begin{align}
\label{Z2a}
(\RR_{2m})_{bb'} = \sum_{j=1}^{J} I_{ 2  \l_j} \big[ y_{bb'}^{(j)} ( z_{D^{(j)}} ) \big],
\end{align}
for some $J\ge 1$, $0 \le \l_j \le m k$, 
$|D^{(j)} | = 2\l_j$, and 
$y_{bb'}^{(j)}$ satisfying \eqref{Z1b}.
Note that $\RR_{2m} \in \H_{\le 2mk}$, 
where $\H_{\le 2mk}$ is as in \eqref{chaos1}.
Then, given any finite  $p \ge 4m$, 
it follows from 
 Minkowski's integral inequality and the Wiener chaos estimate (Lemma~\ref{LEM:WCE})
 that 
\begin{align}
\label{Z2aab}
\begin{split}
 \big\| \| H_{bc} \|_{b \to c} \big\|_{L^p(\Om)}
& = \big\| \| H_{bc} \|_{b \to c}^{2m} \big\|_{L^{\frac{p}{2m}}(\Om)}^{\frac{1}{2m}}
 = \big\| \| T \|_{\ell^2_b \to \ell^2_c }^{2m} \big\|_{L^{\frac{p}{2m}}(\Om)} ^{\frac{1}{2m}} \\
&  =  \big\| \| (T T^*)^m\|_{\ell^2_{b'} \to \ell^2_{b} }\big\|_{L^{\frac{p}{2m}}(\Om)} ^{\frac{1}{2m}}
 \le \big\| \| (\mathcal{R}_{2m})_{bb'} \|_{\l^2_{bb'}} \big\|_{L^{\frac{p}{2m}}(\Om)} ^{\frac{1}{2m}} \\
& \le  p^{\frac k 2} \big\| \| (\mathcal{R}_{2m})_{bb'} \|_{L^{2}(\Om)} \big\|_{\l^2_{bb'}} ^{\frac{1}{2m}}.
\end{split}
\end{align}

\noi
Hence, 
from 
\eqref{Z2aab}, \eqref{Z2a}, 
\eqref{BM2b}, and \eqref{Z1b}, we obtain
\begin{align}
\begin{split}
&  \big\| \| H_{bc} \|_{b \to c} \big\|_{L^p(\Om)} 
  \les p^{\frac k 2}
\bigg( \sum_{j = 1}^J
 \Big\| \big\| I_{ 2 \l_j} \big[y_{bb'}^{(j)}(z_{D^{(j)}})\big] \big\|_{L^{2}(\Om)} \Big\|_{\l^2_{bb'}}
 \bigg)^{\frac{1}{2m}}  \\
&  \quad 
\sim p^{\frac k 2}
\bigg( \sum_{j = 1}^J
 \| y_{bb'}^{(j)}(z_{D^{(j)}}) \|_{\l^2_{bb' n_{D^{(j)}}} \scrH^\be _{t_{D^{(j)}}}} \bigg)^{\frac{1}{2m}} \\
&  \quad 
 \les p^{\frac k 2} 
\big\|  \hf_{bcn_A}  \times \| \ff_{bc}(t_A, n_A)\|_{\scrH^\be_{t_A}} \big\|_{\l^2_{bcn_A}}^{\frac{1}{2m}}\\
&  \quad \quad 
 \times 
\Big(  \max_{(B,C)} \big\| \hf_{bcn_A} \times \|\ff_{bc}(t_A, n_A)\|_{\scrH^\be_{t_A}}
      \big\|_{b n_B \to c n_C} \Big)^{1-\frac{1}{2m}} 
\end{split}
\label{Z2ax}
\end{align}

\noi
where the maximum is over all partitions $(B,C)$ of $A$.
Therefore, given $\ta > 0$,  the bound~\eqref{Z1a-new}
follows from 
choosing $m \gg 1$ such that $\frac 1{2m} \le \ta$, 
\eqref{Z2ax}, and 
\eqref{HS2x}.
\end{proof}

\subsection{Kolmogorov's continuity criterion}

We conclude the section by recalling a version of Kolmogorov's continuity criterion
for a pair of two-parameter 
operator-valued stochastic processes $(\XX, \bbX)$ satisfying 
Chen's relation:
\begin{align}
\label{KCC0}
\begin{split}
(\updl \XX)_{t_1,t_2,t_3}& =0, \\
(\updl \bbX)_{t_1,t_2,t_3} &  = \XX_{t_1,t_2} \circ \XX_{t_2,t_3}
\end{split}
\end{align}

\noi
for $t_1\ge t_2\ge t_3\ge 0$.
See Definition \ref{DEF:RP}\,(i).
Since 
a straightforward modification of the proof of 
\cite[Theorem~3.1]{FH20} yields the following lemma, 
we omit its proof.
Furthermore, 
as pointed out on \cite[p.\,41]{FH20}, 
the usual Kolmogorov continuity criterion 
for $\XX$ is contained in the proof of 
\cite[Theorem~3.1]{FH20}
by ignoring all considerations related to the second-order
process $\bbX$.
Namely, if the first conditions in \eqref{KCC0}
and \eqref{KCC1} (and in 
\eqref{KCC0}
and \eqref{KCC3})
hold, 
then the first bound in \eqref{KCC2}
(and in \eqref{KCC4}, respectively)
holds.

\begin{lemma}\label{LEM:kolm}

Let $X, Y$ be  Banach spaces with $Y\subset X$ and $T>0$.
Let $(\XX, \bbX)
\in  C_{2,T}\L(X; Y)\times C_{2,T}\L(X; Y)$ be a pair of two-parameter stochastic processes, 
satisfying
\eqref{KCC0}
 for  $(t_1,t_2,t_3)\in\Dl_{3,T}$.
Suppose that there exist $M_1, M_2>0$, $p \ge 1$, and $\frac 1p < \g \le 1$
such that
\begin{align}
\begin{split}
\big\| \|  \XX_{t,r} \|_{\L(X; Y)} \big\|_{L^p(\Om)} & \le M_1  |t-r|^\g, \\ 
\big\| \|  \bbX_{t,r} \|_{\L(X; Y)} \big\|_{L^\frac{p}{2}(\Om)} 
&  \le M_2  |t-r|^{2\g }
\end{split}
\label{KCC1}
\end{align}

\noi
for any $(t, r) \in \Dl_{2, T}$.
Then, for any $0< \al<\g -\frac{1}{p}$, there exists
a constant $C_{\g ,\al, T} >0$
 such that
\begin{align}
\begin{split}
\big\| \|\XX\|_{C^\al_{2,T} \L(X; Y)} \big\|_{L^p(\Om)} 
&  \le C_{\g ,\al, T} M_1, \\
\big\| \|\bbX\|_{C^{2\al}_{2,T} \L(X; Y)} \big\|_{L^\frac p2(\Om)}
&   \le C_{\g ,\al, T}
(M_1^2 +M_2).
\end{split}
\label{KCC2}
\end{align}

\noi
In particular,
suppose that  given $0 < \g \le 1$,  there exist $M, \ta_1, \ta_2 > 0$ such that
\begin{align}
\begin{split}
\big\| \|  \XX_{t,r} \|_{\L(X; Y)} \big\|_{L^p(\Om)} 
& \le M p^{\ta_1} |t-r|^\g\\
\big\| \|  \bbX_{t,r} \|_{\L(X; Y)} \big\|_{L^\frac p2(\Om)} 
& \le M p^{\ta_2} |t-r|^{2\g }
\end{split}
\label{KCC3}
\end{align}

\noi
for any  finite $p > \frac1\g$
and $(t, r) \in \Dl_{2, T}$.
Then, 
 for any $0<\al<\g -\frac{1}{p}$, there exists
a constant $C_{\g ,\al, T} >0$ such that
\begin{align}
\begin{split}
\big\| \|\XX\|_{C^\al_{2,T} \L(X; Y)} \big\|_{L^p(\Om)} 
& \le C_{\g ,\al, T} Mp^{\ta_1}\\
\big\| \|\bbX\|_{C^{2\al}_{2,T} \L(X; Y)} \big\|_{L^\frac p2(\Om)} 
& \le C_{\g ,\al, T} M
(p^{2\ta_1} + p^{\ta_2}).
\end{split}
\label{KCC4}
\end{align}

\noi
Consequently, there exists a version of $\XX$ and $\bbX$ such that 
$(\XX, \bbX) \in C^\al_{2,T}\L(X; Y)\times C^{2\al}_{2,T} \L(X; Y)$,  almost surely.
\end{lemma}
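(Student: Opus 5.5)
The plan is to adapt the proof of \cite[Theorem~3.1]{FH20} to operator-valued processes, replacing absolute values by the norm of $\L(X;Y)$ throughout. That argument only uses the triangle inequality, Chen's relation \eqref{KCC0}, and moment bounds, so it carries over verbatim. We work on dyadic points. Let $D_n=\{jT2^{-n}:0\le j\le 2^n\}$ and set
\[
K_n=\max_{t\in D_n,\,t+T2^{-n}\le T}\|\XX_{t+T2^{-n},t}\|_{\L(X;Y)},\qquad
\mathbb K_n=\max_{t\in D_n,\,t+T2^{-n}\le T}\|\bbX_{t+T2^{-n},t}\|_{\L(X;Y)}.
\]
Bound the maximum by the $\ell^p$-sum over the $2^n$ intervals. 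Using \eqref{KCC1}, this gives
\[
\E[K_n^p]\le 2^n M_1^p (T2^{-n})^{\g p},\qquad \E[\mathbb K_n^{p/2}]\le 2^nM_2^{p/2}(T2^{-n})^{\g p}.
\]
Consequently $\|\sum_n (2^{n}/T)^{\al p}K_n^p\|_{L^1(\Om)}\les M_1^p$ and $\|\sum_n (2^{n}/T)^{\al p}\mathbb K_n^{p/2}\|_{L^1(\Om)}\les M_2^{p/2}$. Both series converge because $\al<\g-\frac1p$, i.e.\ $(\al-\g)p+1<0$. We then define the random constants $K_\al=\sup_n K_n/|T2^{-n}|^{\al}$ and $\mathbb K_{\al}=\sup_n\mathbb K_n/|T2^{-n}|^{2\al}$. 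By the series bounds, these satisfy $\|K_\al\|_{L^p}\les M_1$ and $\|\mathbb K_\al\|_{L^{p/2}}\les M_2$.

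Next comes the chaining step for dyadic $s<t$. Pick $m$ with $T2^{-m-1}<t-s\le T2^{-m}$ and decompose $[s,t]$ into at most two dyadic intervals of each generation $n\ge m+1$, as usual. For $\XX$, additivity $\updl\XX=0$ gives
\[
\|\XX_{t,s}\|\le 2\sum_{n>m}K_n\les K_\al|t-s|^\al.
\]
For $\bbX$, we iterate Chen's relation $\bbX_{t,s}=\bbX_{t,u}+\bbX_{u,s}+\XX_{t,u}\circ\XX_{u,s}$ along the same decomposition. Each cross term is a composition of operators and is bounded by a product of operator norms. This needs $\XX$ to map into a space where it can be composed; we use $Y\subset X$, so that $\XX_{t,u}\in\L(X;Y)\subset\L(X;X)$ composes, with the constant of the embedding. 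We obtain
\[
\|\bbX_{t,s}\|\les (\mathbb K_\al+K_\al^2)|t-s|^{2\al}.
\]
Taking the $L^{p/2}(\Om)$-norm and using $\|K_\al^2\|_{L^{p/2}}=\|K_\al\|_{L^p}^2$ then yields \eqref{KCC2} on dyadics. The bookkeeping of the cross terms is the main (mild) obstacle. We handle it exactly as in \cite{FH20}: by induction on the level, the cross term at level $n$ is bounded by $K_n\sum_{n'>n}K_{n'}$, which sums to $K_\al^2|t-s|^{2\al}$.

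To pass from dyadics to all of $\Dl_{2,T}$, we use continuity. Since $(\XX,\bbX)\in C_{2,T}\L(X;Y)$ by hypothesis, and the dyadic H\"older bounds are uniform, they extend by density to all pairs $(t,r)$. If continuity is only in probability, we instead define the version as the unique H\"older extension from the dyadics.

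For \eqref{KCC4}, fix $p>\frac1\g$ with $\al<\g-\frac1p$ and apply \eqref{KCC2} with $M_1=Mp^{\ta_1}$ and $M_2=Mp^{\ta_2}$. This gives
\[
\|\|\bbX\|_{C^{2\al}}\|_{L^{p/2}}\le C(M^2p^{2\ta_1}+Mp^{\ta_2}),
\]
which is the stated form after absorbing $M$ into the constant; the constant depends only on $\g,\al,T$ since $\sum_n2^{n((\al-\g)p+1)}$ is bounded uniformly in $p$ bounded away from $1/(\g-\al)$. Finally, finiteness of these moments implies that the H\"older norms are almost surely finite, which gives the existence of the version $(\XX,\bbX)\in C^\al_{2,T}\L(X;Y)\times C^{2\al}_{2,T}\L(X;Y)$.
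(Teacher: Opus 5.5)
Your treatment of \eqref{KCC2} follows the paper, which simply defers to the proof of \cite[Theorem~3.1]{FH20}, and it is sound. The union bound over the $2^n$ dyadic intervals, summing $p$-th (resp.\ $\frac p2$-th) powers, which also covers $1\le p<2$ without Minkowski in $L^{p/2}$, chaining with at most two intervals per generation, and Chen's relation together with the norm $\|\iota\|$ of $Y\hookrightarrow X$ bound the cross terms by $\|\iota\|\,(2\sum_{n>m}K_n)^2\les K_\al^2|t-s|^{2\al}$. Continuity then extends the bounds off the dyadics. One small correction: in the FH20 ordering the second factor $\XX_{\tau_i,s}$ of a cross term involves all generations $n'>m$, not only finer ones. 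This does not change the bound.

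The step that fails is the derivation of \eqref{KCC4}. Substituting $M_1=Mp^{\ta_1}$ and $M_2=Mp^{\ta_2}$ into \eqref{KCC2} gives $C_{\g,\al,T}(M^2p^{2\ta_1}+Mp^{\ta_2})$. You cannot ``absorb $M$ into the constant'': $C_{\g,\al,T}$ may not depend on $M$, and $M^2\le M$ only when $M\le 1$. Rescaling does not help, since $\XX\mapsto c\XX$ forces $\bbX\mapsto c^2\bbX$ by Chen's relation. The quadratic term comes from your chaining itself, through $\|K_\al^2\|_{L^{p/2}(\Om)}=\|K_\al\|_{L^p(\Om)}^2$. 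So what you have actually proved is \eqref{KCC4} with $M$ replaced by $\max(M,M^2)$, i.e.\ with right-hand side $C_{\g,\al,T}(M^2p^{2\ta_1}+Mp^{\ta_2})$. A bound linear in $M$ would require controlling the cross terms $\XX\circ\XX$ linearly in $M$, and your argument does not do that. This is harmless for the paper: Proposition~\ref{PROP:driver2} uses the bound in the form \eqref{KCC2}, with $M_1\sim p^{\frac12}\|\Phi\|_{\HS(L^2;H^\s)}$ and $M_2\sim p\|\Phi\|_{\HS(L^2;H^\s)}^2$, which gives exactly \eqref{ZG2a}. Still, you should state what you prove rather than claim the literal form.

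Separately, your constant is uniform only for $p$ bounded away from $1/(\g-\al)$. To cover all admissible $p$, apply the hypothesis at exponent $2p$ and use $\|\cdot\|_{L^p(\Om)}\le\|\cdot\|_{L^{2p}(\Om)}$; then $\sum_n 2^{n(1-2(\g-\al)p)}\le 2$.
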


\section{Young and rough integration}
\label{SEC:int}

In this section,
we provide a review 
on  the construction of (linear) Young and rough integrals, based on the sewing lemma 
(Lemma \ref{LEM:sew})  and the theory of controlled  paths in~\cite{G04, GT10};
see also  \cite{FLP06, FH20}.
While the Young and rough integration theory developed in  \cite{G04, GT10} was based on 
 H\"older continuous functions, 
in view of our application to dispersive PDEs, 
we will instead 
go over 
the
 Young\,/\,rough integration theory for functions of bounded $p$-variation, 
 which is
 compatible with the Fourier restriction norm method
 adapted to the $U^p$- and $V^p$-spaces (see Subsection \ref{SUBSEC:Up}).
The materials presented in this section may be standard by
now, but we decided to include them in an accessible manner for readers in dispersive PDEs
who may not be familiar with the Young\,/\,rough integration theory.

In Subsection~\ref{SUBSEC:sew}, we recall the notion of controls and 
state a version of the sewing lemma adapted to the $V^p$-spaces. 
In Subsection~\ref{SUBSEC:sewY}, we go over 
the construction of Young integrals 
via the sewing lemma.
In Subsection \ref{SUBSEC:sewR}, 
by introducing the notion of controlled paths 
in the $p$-variation setting (Definition \ref{DEF:RP}), 
we go over the construction of rough integrals
for controlled paths.

\subsection{Sewing lemma}
\label{SUBSEC:sew}

We first recall the notion of a regular control
function and a suitable generalization of bounded $p$-variation functions in 
the $n$-parameter setting, $n=2,3$, adapted to our setting.
See, for example,  \cite[Subsection 2.1]{DGHT19}
for a presentation close to our setting.
See also
\cite{Lyons98, LQ02, LCL07, Koch14}
 and \cite[Subsection 1.2.1 and 5.1.1]{FV10} for a further 
discussion.

\begin{definition}\rm
\label{DEF:V23}
Let  $X$ be a Banach space and fix $T> 0$.

\smallskip

\noi{\rm(i)}
We call a continuous function $\o:\Dl_{2,T}\to \R_+$ a regular control function on $[0,T]$,  if it is super-additive, namely, we have
\begin{equation}
\o(t_1,t_2) + \o(t_2,t_3) \le \o(t_1,t_3)
\label{con1}
\end{equation}

\noi
 for any $(t_1,t_2,t_3) \in \Dl_{3,T}$.
 For simplicity, we will refer to regular control functions
 as  {\it controls} in the following.
Note from \eqref{con1} and the non-negativity of $\o$ that $\o(t, t) = 0$ for any $t \in [0, T]$
and 
\begin{equation}
\max\big(\o(t_1,t_2) ,   \o(t_2,t_3)\big) \le \o(t_1,t_3)
\label{con1b}
\end{equation}

\noi
 for any $(t_1,t_2,t_3) \in \Dl_{3,T}$.
We also recall the following fact from 
\cite[Exercise~1.8 and 1.9]{FV10}:
\begin{align}
\begin{split}
& \text{given two controls
 $\o_1$ and $\o_2$,
 the function $\o_1^a \o_2^b$}\\ 
& \hphantom{XXXX}\text{
is also a control
for any 
$a, b \ge 0$ with $a + b \ge1$.
}
\end{split}
\label{con1a}
\end{align}

\smallskip
\noi{\rm(ii)}
Let $ 1\le p < \infty$.
Given  $u \in \V^{p}_T X $ (see \eqref{local2}), 
we define 
$\o^{(1)}_{X, p}(u):\Dl_{2,T} \to \R_+$ 
by setting
\begin{align}
\label{ctrl0}
\o_{X, p}^{(1)}(u;t, r) =  
 \sup_{\Pi([r,t])}
 \sum_{j=0}^{k-1} \|u_{t_{j}} - u_{t_{j+1}} \|_X^p
\end{align}

\noi
for any $(t,r) \in \Dl_{2,T}$, where 
 the supremum is taken over  partitions
 $\Pi([r,t])$  
 of  $[r,t]$\textup{:} 
\begin{align}
\Pi ([r,t]) = \{r = t_k < \dots < t_1 <  t_0 = t\}.
\label{part1}
\end{align}

\noi
Then,  $\o^{(1)}_{X, p}(u)$ is a control.
See 
\cite[Propositions 1.12 and 5.8]{FV10}.

\smallskip
\noi{\rm(iii)} 
Let $0 <  p<\infty$.
Given an interval $I = [r, t]\subset \R_+$, we denote by $ \V^{p}_2(I ; X)$ the set of 
two-parameter continuous   functions $u$ from $\Dl_{2,I} = \{ (t_1,t_2)\in I^2: \, t_1\ge  t_2 \}$ to $X$, 
vanishing on the diagonal: 
 $u_{t, t} = 0$ for any $t \in I$, 
such that 
 the following (quasi-)norm is finite:
\begin{align}
\label{V2def}
\| u \|_{\V_2^{p }(I; X) }
 = \sup_{\Pi(I)}
\bigg(\sum_{j=0}^{k-1} \| u_{t_{j}, t_{j+1}} \|^{p}_{X}\bigg)^\frac 1p , 
\end{align}

\noi
 where 
 the supremum is taken over  partitions
 $\Pi(I)$  of the interval $I = [r, t]$ of the form  \eqref{part1}. 
When $I = [0,T]$,  we use the 
following short-hand notation: 
\[\V^p_{2,T} X = \V^p_2([0,T]; X).\]

Let $1 \le p < \infty$.
Given  $u \in \V^{p}_{2, T} X $, 
we define 
$  \o^{(2)}_{X, p}(u):\Dl_{2,T} \to \R_+$ 
by 
\begin{align}
\label{ctrl1}
\o_{X, p}^{(2)}(u;t, r) 
: = \| u\|^p_{\V^p_2([r,t];X)}.
\end{align}

\noi
%
%
By adapting the proof of 
\cite[Proposition  5.8]{FV10}
to functions in $ \V^{p}_{2, T}X$, 
we see that 
$\o^{(2)}_{X, p}(u)$ is a control.
See also
\cite[Lemma 3.3.1] {LQ02}
under the assumption of Chen's relation
(but without assuming the vanishing on the diagonal: $u_{t, t}=0$).

\smallskip

Given $0 < p < \infty$,  we also define the space $\V_{3,T}^{p}X = \V^p_{3} ([0,T]; X)
\subset C_{3, T}X$ 
to be the class of  three-parameter continuous functions $u:\Dl_{3,T}\to X$, 
satisfying \eqref{vani1},  
such that 
there exists a  control  $\o$  in the sense of Part~(i),  satisfying 
\begin{align}
\|u_{t_1,t_2,t_3}\|_X^p \le \o(t_1,t_3)
\label{con2}
\end{align}

\noi
for any $(t_1,t_2,t_3) \in \Dl_{3,T}$.

%
%
%
%
%
%

Lastly, for $n = 2, 3$, 
we set 
\begin{align}
\V^{1-}_{n,T} X = \V^{1-}_{n}([0,T]; X)= \bigcup_{ 0<p<1} \V^{p}_{n,T} X.
\notag 
\end{align}

\end{definition}

\begin{remark}\label{REM:sew2}
\rm

Recall  the boundary condition $u(\infty) = 0$ 
imposed 
for the $V^p$-norm 
in Definition~\ref{DEF:Up}\,(iii).
Then, together with \eqref{local}, we have  
\begin{align}
\|u\|_{L^\infty_I X} \le \| u\|_{V^p_IX}
\label{ctrl0b}
\end{align}

\noi
for any interval $I \subset \R$.
Then, 
it follows from \eqref{U2},  \eqref{ctrl0}, and 
\eqref{ctrl0b} with \eqref{U1} and~\eqref{part1}
(note that the latter partition include the endpoints
$r$ and $t$ of the interval $[r, t]$)
that 
\begin{align}
\| u \|^p_{V^p([r,t]; X )}\sim  \o_{X, p}^{(1)}(u;t, r) + \| u \|^p_{L^\infty([r, t]; X)}
\label{ctrl0a}
\end{align}

\noi
for any  $(t,r) \in \Dl_{2,T}$ and $u \in V^p_T X$,
where the time restriction norm on the left-hand side is as in 
\eqref{local}.
Here, the inequality $\les$ in \eqref{ctrl0a}
follows from 
consider the following extension of~$u$ onto $\R$:
\begin{align}
\label{ext1}
\wt u = \ind_{(-\infty, r)} \cdot u_r + \ind_{[r,t)} \cdot u + \ind_{[t, \infty)} \cdot u_t.
\end{align}

\noi
In Appendix \ref{SEC:B}, we
show that the extension $\wt u$ in \eqref{ext1}
indeed attains the infimum in \eqref{local}.



\smallskip

Let  $u \in \V^p_T X$.
Then, from \eqref{ctrl0} and \eqref{V2def} that 
\begin{align}
\o_{X, p}^{(1)}(u;T, 0)
&=
\| \updl u \|_{\V^p_{2,T} X}^p,
\label{ctrl000}
\end{align}

\noi
where 
 $\updl$ is as in  \eqref{updl23}.
Moreover, if $u_{t_0}=0$ for some $t_0 \in [0, T]$, then 
it follows  from~\eqref{ctrl0a} and \eqref{ctrl000} that 
\begin{align}
\| u\|_{V^p_T X}^p \les \| \updl u \|_{\V^p_{2,T} X}^p = \o_{X, p}^{(1)}(u;T, 0).
\label{ctrl000a}
\end{align}

Lastly, for $0 < \al < 1$, we note from
\eqref{V2def} and H\"older's inequality with  \eqref{Ho2} that 
\begin{align}
\|  u \|_{\V^\frac 1\al _{2,T} X} \le  T^\al \|u\|_{C^\al_{2,T} X}.
\label{con1x}
\end{align}

\end{remark}

\begin{remark}\label{REM:sew3}
\rm 
 Let $0 < p < 1$ and $I = [r, t]\subset \R_+$ be an interval.
Recall from \cite[Proposition~5.2]{FV10}
that  any function $u \in \V_T^pX$ is a constant function.
Here, the continuity assumption is important.
Similarly,  it follows from 
a slight modification of the proof of \cite[Proposition~5.2]{FV10} for functions in $\V_T^pX$
 that if $R \in \V^p_{2, I}X$, 
then we have 
\begin{align}
\lim_{|\Pi([r,t])| \to 0} \sum_{j=0}^{k-1} R_{t_j, t_{j+1}} = 0, 
\label{con3}
\end{align}

\noi
where the limit is taken over any partition $\Pi([r,t])$ of $[r,t]$
of the form \eqref{part1}
whose mesh size $|\Pi([r,t])|  = \sup_j |t_j - t_{j+1}|$ goes to $0$.

\end{remark}

Before stating the sewing lemma, 
we recall that the $p$-variation norm 
is closely related to the $p^{-1}$-H\"older norm.
From H\"older's inequality (see \eqref{con1x}), we have
\begin{align*}
\o_{X, p}^{(1)}(u;T, 0)^\frac 1p \le T^\frac 1p \|u \|_{C^{p^{-1}}_T X}.
\end{align*}

\noi
Furthermore, 
 $u$ is a function of bounded $p$-variation 
if and only if it is a reparametrization of a $p^{-1}$-H\"older continuous function;
see \cite[Propositions 1.21 and 5.14]{FV10}.
In Subsections~\ref{SUBSEC:sewY}
and~\ref{SUBSEC:sewR}, 
we briefly go over the construction of Young\,/\,rough integrals, 
using the sewing lemma (Lemma~\ref{LEM:sew}).
For this purpose, it is often more convenient
to state results
in terms of the reciprocal of $p$:
\begin{align}
\al = \frac 1p
\label{reg9}
\end{align}

\noi
rather than the summability index $p$ itself.
We also note that, in 
Section \ref{SEC:ops}, we study almost sure 
mapping  properties
of the random drivers
$\vXX$ and $\vbbX$, defined in \eqref{introX} and~\eqref{bX0}, respectively, 
in terms of H\"older regularity $\al$.

\medskip

We now present a version of the sewing lemma  adapted to the
current $p$-variation setting.
See, for example, 
\cite[Lemma 2.2]{DGHT19} 
for an analogous statement whose
proof is based on an 
adaptation of the proof of 
\cite[Lemma 4.2]{FH20}
in the H\"older regularity setting.
Our statement corresponds to 
\cite[Proposition~2.3 and  Corollaries 2.4 and 2.5]{GT10}
in the H\"older regularity setting; 
see also \cite{CGLLO1, CGLLO2}.


\begin{lemma}[sewing lemma]
\label{LEM:sew}
Let $X$ be a Banach space and fix $T>0$.
 Then, there exists a unique linear map {\rm (}called the \emph{sewing map}{\rm ):}
\begin{align*}
\Ld: \V^{1-}_{3,T} X \cap \Im \updl |_{C_{2,T} X} \too \V_{2, T}^{1-}X
\end{align*}
such that

\smallskip
\begin{enumerate}
\item[\textup{(i)}]
We have $\updl \Ld h  = h $ for each $h \in  \V^{1-}_{3,T} X \cap \Im \updl |_{C_{2,T} X}$.

\medskip
\item[\textup{(ii)}]
Given $\al>1$, let $h \in \V^{\frac 1\al}_{3,T} X
\cap \Im \updl |_{C_{2,T} X} $ with a control $\o$ such that
\begin{align*}
\| h_{t_1,t_2,t_3} \|_{X} \le \o(t_1,t_3)^\al
\end{align*}
for any $(t_1,t_2,t_3) \in \Dl_{3,T}$ as in \eqref{con2}. 
Then, there exists $C_\al>0$, depending only on $\al$, such that the following estimate holds{\rm :}
\begin{align}
\label{sew1}
\| (\Ld h)_{t,r} \|_{X} \le C_\al \o(t,r)^\al
\end{align}

\noi
for any $(t, r) \in \Dl_{2,T}$.

\medskip
\item[\textup{(iii)}] Given any $g \in C_{2,T}X$ with $\updl g\in \V^{\frac1\al}_{3,T} X$ for some $\al>1$, there exists a unique $f\in C([0, T];  X) $ {\rm (}up to an additive constant{\rm )} such that
\begin{align*}
\updl f = (\Id - \Ld \updl) g.
\end{align*}

\noi
In addition, we have
\begin{align}
\label{sew2}
(\updl f)_{t,r} = \lim_{|\Pi([r,t]) | \to 0} \sum_{j=0}^{k-1} g_{t_{j}, t_{j+1}}
\end{align}

\noi
for any $(t,r) \in \Dl_{2,T}$, 
where the limit is taken over any partition $\Pi([r,t])$ of $[r,t]$
of the form \eqref{part1}
whose mesh size $|\Pi([r,t])|  = \sup_j |t_j - t_{j+1}|$ goes to $0$.

\end{enumerate}

\end{lemma}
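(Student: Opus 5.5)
The plan is to follow the classical dyadic-partition argument of Gubinelli and Feyel--de La Pradelle, run with a control $\o$ in place of $|t-r|$. The key tool is Young's point-removal trick. I would prove (ii) first, since uniqueness, linearity and (iii) all follow from it. Fix $h\in \V^{\frac1\al}_{3,T}X\cap\Im\updl|_{C_{2,T}X}$ with $\al>1$ and control $\o$, and write $h=\updl g$ for some $g\in C_{2,T}X$.

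For a partition $\Pi=\Pi([r,t])$ as in \eqref{part1}, set $S_\Pi(g)=\sum_j g_{t_j,t_{j+1}}$. By \eqref{updl23}, removing an interior point $t_j$ changes $S_\Pi$ by exactly $h_{t_{j-1},t_j,t_{j+1}}$, whose norm is at most $\o(t_{j-1},t_{j+1})^\al$. Super-additivity \eqref{con1} gives $\sum_j \o(t_{j-1},t_{j+1})\le 2\o(t,r)$. Hence, if $\Pi$ has $k\ge2$ intervals, some interior point satisfies $\o(t_{j-1},t_{j+1})\le \frac{2}{k-1}\o(t,r)$. Removing such points one at a time down to the trivial partition $\{r,t\}$ yields the maximal inequality
\[\|S_\Pi(g)-g_{t,r}\|_X\le 2^\al\zeta(\al)\,\o(t,r)^\al.\]
This bound is uniform in $\Pi$.

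Next I would show that $S_\Pi(g)$ converges as the mesh $|\Pi|\to0$. Given two partitions, compare each with their common refinement. On each subinterval $[t_{j+1},t_j]$ of the coarser partition, apply the maximal inequality to get a discrepancy of at most $C\sum_j\o(t_j,t_{j+1})^\al\le C\,\max_j\o(t_j,t_{j+1})^{\al-1}\,\o(t,r)$. This tends to zero as the mesh tends to zero, because $\o$ is uniformly continuous on the compact set $\Dl_{2,T}$ and vanishes on the diagonal. Define $I_{t,r}$ to be this limit and set $(\Ld h)_{t,r}=g_{t,r}-I_{t,r}$.

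The following checks are routine.

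\emph{Well-definedness.} If $\updl g=\updl g'$, then $g-g'=\updl\phi$ by exactness of \eqref{chain1}, and Riemann sums of $\updl\phi$ telescope to $\phi_t-\phi_r$. So $\Ld h$ does not depend on the choice of $g$.

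\emph{Additivity of $I$.} Taking the limit along partitions that contain a fixed intermediate point gives $\updl I=0$. Hence $\updl\Ld h=\updl g=h$, which is (i).

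\emph{Bound and regularity.} Passing to the limit in the maximal inequality gives \eqref{sew1}. Consequently $\Ld h\in\V^{\frac1\al}_{2,T}X\subset\V^{1-}_{2,T}X$, since $\sum_j\o(t_j,t_{j+1})\le\o(T,0)$.

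\emph{Uniqueness of $\Ld$.} Suppose two maps satisfy (i). Their difference $D$ satisfies $\updl D=0$ and $D\in\V^{p}_{2,T}X$ for some $p<1$. Then $D_{t,r}=\sum_jD_{t_j,t_{j+1}}$ for every partition, and this tends to zero by \eqref{con3}. Linearity of $\Ld$ follows from the same uniqueness argument.

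For (iii), take $f_t=I_{t,0}$. Then $\updl f=I=(\Id-\Ld\updl)g$, and \eqref{sew2} holds by construction. Continuity of $f$ follows from the continuity of $g$ and \eqref{sew1}. Uniqueness up to an additive constant is immediate, since $\updl f$ is prescribed. The only mildly delicate point is the continuity of $\Ld h$ as a two-parameter function, which is required for membership in $C_{2,T}X$. It follows from $\Ld h=g-I$, the continuity of $g$, and the continuity of $I$, which in turn comes from \eqref{sew1} together with $\updl I=0$.

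I expect the main obstacle to be the convergence/Cauchy step in the control setting rather than the Hölder setting. There one must avoid any appeal to dyadic midpoints and rely solely on super-additivity and continuity of $\o$, as above. The point-removal argument handles this cleanly.
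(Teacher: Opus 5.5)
Your proposal is correct and follows essentially the same route as the paper's proof in Appendix~\ref{SEC:sew}. The steps match: Young's point-removal argument, using super-additivity of $\o$, gives the uniform maximal inequality; a Cauchy argument through common refinements, using uniform continuity of $\o$ and $\al>1$, defines the limit; and additivity, the bound \eqref{sew1}, continuity, and (iii) are then read off from the limiting Riemann sums. The only differences are cosmetic: you prove uniqueness directly by telescoping $D_{t,r}=\sum_j D_{t_j,t_{j+1}}$ and invoking \eqref{con3}, whereas the paper writes $M-\wt M=\updl f$ and uses that $\V^p_T X$ with $p<1$ contains only constants, and you derive continuity from $\updl I=0$ rather than from $\updl M=h$.
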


For the sake of completeness, we sketch a proof 
of Lemma \ref{LEM:sew}
in Appendix \ref{SEC:sew}, 
with a particular attention 
in showing 
continuity and vanishing on the diagonal
(as required in Definition \ref{DEF:V23}\,(ii)).
See also Remark~\ref{REM:GT}.

\subsection{Young integral}
\label{SUBSEC:sewY}

In this subsection, we briefly go over the construction of Young integrals 
in the current $p$-variation setting,
which follows from a 
slight modification of the presentations  in \cite[Section 3]{G04} and \cite[Subsection 2.3.1]{GT10}
for the H\"older regularity setting.

Let $X$ be a Banach space and fix $T > 0$.
Given  $\frac 12 <\al\le 1$, let  $\XX\in \V^{\frac1\al}_{2,T} \LOP(X)$ be a  driver, satisfying
\begin{align}
\label{Chen}
(\updl \XX)_{t_1,t_2,t_3} =0 
\end{align}

\noi
for any $(t_1,t_2,t_3) \in \Dl_{3,T}$.
In the following, we assume that the driver $\XX_{t,r}$ is given by an
integral operator over the time interval $[r,t]$;
 see, for example, \eqref{XX0a}.
Given $u \in \V^{\frac1{\al_0}}_T X$ for some $0 < \al_0 \le 1$, 
our goal is to construct 
 the Young integral $\I^{\XX}(u)$ of $u$ with  the driver $\XX$, 
 which is 
a (unique) function, 
formally given by 
\begin{align}
\label{y1aa}
\I^{\XX}(u )(t)  = \XX_{t,0} (u_{\bullet}).
\end{align}

\noi
Here,  $\bul$ denotes  the variable of integration.
Note that the expression on the right-hand side of~\eqref{y1aa}
is merely formal.

The increment of the Young integral $\I^{\XX}(u)$ 
in \eqref{y1aa}
is given by
\begin{align}
\label{y1a}
\big(\updl  \I^{\XX} ( u ) \big)_{t,r}  = \XX_{t,r} (u_{\bullet})
\end{align}

\noi
for $(t,r) \in\Dl_{2,T}$, where $\updl$ is as in \eqref{updl23}.
As mentioned above,  the right-hand side of \eqref{y1a} is merely formal.
By replacing $\bul$ with the left endpoint $r$, we have 
\begin{align}
\label{y1b}
\XX_{t,r}(u_\bul) = \XX_{t,r}(u_r) + R_{t,r} =: \Theta_{t,r} + R_{t,r}  
\end{align}

\noi
for some two-parameter function $R = R^{\XX, u}$.
Our goal is to find {\it one} such error term $R$ with sufficient regularity 
which will allow us to define the Young integral $\I^{\XX}(u)$
as the unique limit of Riemann-Stieltjes type sums;
see Remark \ref{REM:sew3}.

By applying the coboundary operator $\updl$  to 
both sides of \eqref{y1b} with \eqref{y1a} and the fact that $\updl^2= 0$, 
we have 
\begin{align}
(\updl R)_{t_1,t_2,t_3}
= -  (\updl \Theta)_{t_1,t_2,t_3}
 = \XX_{t_1,t_2} (\updl u)_{t_2,t_3}
\notag 
\end{align}

\noi
for  $(t_1,t_2,t_3) \in \Dl_{3,T}$, 
where we used 
 \eqref{Chen} at the second equality.
It follows from the regularity assumptions on $\XX$ and $u$
that 
$\updl R \in \V_{3,T}^{\frac{1}{\al+\al_0}} X$;
see the proof of Lemma \ref{LEM:intY}\,(i) below.
Hence, if $\al+\al_0 >1$, the sewing lemma (Lemma~\ref{LEM:sew}) 
allows us to define an error term~$R$ by the relation:
\begin{align}
\label{y1c}
R  = - \Ld \updl \Theta \in \V_{2,T}^{\frac{1}{\al+\al_0}} X,
\end{align}

\noi
where $\Ld$ denotes the sewing map.
Consequently,
by combining \eqref{y1aa}, \eqref{y1b}, and \eqref{y1c}, 
we define the Young integral $\I^{\XX}(u)$ 
by 
\begin{align}
 \I^{\XX} (u)(t) = \big[(\Id - \Ld\updl) \Theta\big]_{t, 0}.
\notag 
\end{align}

\noi
Note from \eqref{y1a}, \eqref{y1b}, and \eqref{y1c}
that the increment of $\I^\XX(u)$ is given by 
\begin{align}
\label{intY1aa}
\updl  \I^{\XX} (u) = (\Id - \Ld\updl) \Theta.
\end{align}

\noi
Therefore, from \eqref{intY1aa}
and 
Remark \ref{REM:sew3} with $\al + \al_0 > 1$
(which shows that the contribution from the error term $R=- \Ld \updl \Ta$ in \eqref{y1c}
vanishes in the limit), 
we see that 
the Young integral $\I^\XX(u)$
is 
indeed given as the unique limit of the Riemann-Stieltjes type sums:
\begin{align}
\label{intY1f}
\I^{\XX}(u)(t) = \lim_{|\Pi([0,t])| \to 0} \sum_{j=0}^{k-1} \Theta_{t_j, t_{j+1}}
=
\lim_{|\Pi([0,t])| \to 0} \sum_{j=0}^{k-1} \XX_{t_j, t_{j+1}} (u_{t_{j+1}} ), 
\end{align}

\noi
where the limit is in the sense of   Lemma \ref{LEM:sew}\,(iii).
We refer to 
a driver $\XX\in \V^{\frac1\al}_{2,T} \LOP(X)$, satisfying~\eqref{Chen}, 
as a {\it Young driver}.

\medskip

In the following lemma, 
we  summarize basic  properties of the Young integral $\I^{\XX}( u )$ obtained via the construction above.

\begin{lemma}[Young integral]
\label{LEM:intY}
Let $X$ be a Banach space.
Given
 $0<\al, \al_0\le1$ and  $T>0$,
let $\XX \in \V_{2,T}^{\frac1\al} \LOP(X)$
be a  driver, satisfying \eqref{Chen}, and $u \in \V^{\frac1{\al_0}}_T X$
{\rm (}see \eqref{local2}{\rm )}.
Suppose that 
 $\al + \al_0 >1$. 
 Then, the following statements hold{\rm :}

\smallskip
\noi\textup{(i)}
Let $\Theta$ be as in \eqref{y1b}{\rm :}
\begin{align}
\label{Ta}
\Theta_{t,r} = \XX_{t,r} (u_r)
\end{align}
for $(t,r) \in \Dl_{2,T}$.
Then, we have
\begin{align}
\Theta \in \V^{\frac1\al}_{2,T} X\qquad\text{and}\qquad  \updl\Theta \in \V^{\frac1{\al+\al_0}}_{3,T} X.
\label{Ta2}
\end{align}

\smallskip
\noi\textup{(ii)}
There exists a unique function $\I^\XX (u) \in \V_{T}^{\frac1\al} X$ 
with $\I^\XX(u)(0)=0$, satisfying~\eqref{intY1aa}
and~\eqref{intY1f},  such that
\begin{align}
\label{intY1a}
\| \updl \I^\XX(u) - \Theta \|_{\V^{\frac1{\al+\al_0}}_{2,T} X } & \les
\|\XX\|_{\V^{\frac1\al}_{2,T} \LOP(X)} \| u \|_{V^{\frac1{\al_0}}_{T} X},
 \\
\label{intY1b}
\| \I^\XX(u) \|_{V^{\frac1\al}_{T} X} 
& \les
 \| \XX\|_{\V^{\frac1\al}_{2,T} \LOP(X)}
 \|u \|_{V^{\frac1{\al_0}}_{T} X }.
\end{align}

\noi
We refer to 
 $\I^{\XX}(u)$ as the Young integral of $u$ with the Young driver $\XX$.

\smallskip
\noi\textup{(iii)} 
Given two drivers $\XX^{(j)} \in \V^{\frac1\al}_{2,T} \L(X)$ satisfying $\updl\XX^{(j)} =0 $, $j=1,2$, we have
\begin{align}
\| \I^{\XX^{(1)}}(u) - \I^{\XX^{(2)}}(u) \|_{V^{\frac1\al}_{T} X} &
\les
 \| \XX^{(1)} - \XX^{(2)}\|_{\V^{\frac1\al}_{2,T} \LOP(X)} \|u\|_{V^{\frac1{\al_0}}_{T} X}
\label{intY1c}
\end{align}

\noi
for any $u \in \V^{\frac1{\al_0}}_T X$,
where $\I^{\XX^{(j)}}(u)$ denotes the Young integral of $u$ with the Young driver $\XX^{(j)}$ constructed in Part \textup{(ii)}.

\end{lemma}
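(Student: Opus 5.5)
The plan is to build everything from controls and then apply the sewing lemma (Lemma~\ref{LEM:sew}). Set $\o_1(t,r) = \o^{(2)}_{\L(X),1/\al}(\XX;t,r)$, the control from \eqref{ctrl1} attached to the driver, and $\o_0(t,r) = \o^{(1)}_{X,1/\al_0}(u;t,r)$, the control from \eqref{ctrl0} attached to $u$. By definition,
\begin{align*}
\|\XX_{t,r}\|_{\L(X)} \le \o_1(t,r)^\al
\qquad \text{and} \qquad
\|(\updl u)_{t,r}\|_X \le \o_0(t,r)^{\al_0}.
\end{align*}

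\textbf{Part (i).} For the first claim in \eqref{Ta2}, note that $\|\Ta_{t,r}\|_X \le \|u\|_{L^\infty_T X}\,\o_1(t,r)^\al$. Superadditivity of $\o_1$ then gives $\|\Ta\|_{\V^{1/\al}_{2,T}X}\le \|\XX\|_{\V^{1/\al}_{2,T}\L(X)}\|u\|_{L^\infty_T X}$. Continuity and vanishing on the diagonal are inherited from $\XX$ and from the continuity of $u$. For the second claim, Chen's relation \eqref{Chen} gives
\[
(\updl\Ta)_{t_1,t_2,t_3} = -\XX_{t_1,t_2}(\updl u)_{t_2,t_3}.
\]
Its norm is at most $\o_1(t_1,t_2)^\al\o_0(t_2,t_3)^{\al_0}$. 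By \eqref{con1b} this is bounded by $\o(t_1,t_3)^{\al+\al_0}$, where
\[
\o := \o_1^{\frac{\al}{\al+\al_0}}\,\o_0^{\frac{\al_0}{\al+\al_0}}.
\]
The exponents sum to $1$, so $\o$ is a control by \eqref{con1a}. Hence $\updl\Ta\in\V^{1/(\al+\al_0)}_{3,T}X$. Moreover $\updl\Ta\in\Im\updl$ trivially.

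\textbf{Part (ii).} Since $\al+\al_0>1$, Lemma~\ref{LEM:sew}\,(iii) applied with $g=\Ta$ yields a unique $f$ with $f(0)=0$ and $\updl f = (\Id-\Ld\updl)\Ta$. Set $\I^\XX(u)=f$; this is \eqref{intY1aa}, and \eqref{sew2} gives \eqref{intY1f}. For \eqref{intY1a}, use \eqref{sew1}:
\[
\|(\Ld\updl\Ta)_{t,r}\|_X \le C\,\o(t,r)^{\al+\al_0}.
\]
Since $\o$ is superadditive, summing over any partition gives
\[
\|\Ld\updl\Ta\|_{\V^{1/(\al+\al_0)}_{2,T}X}\les \o(T,0)^{\al+\al_0}=\o_1(T,0)^\al\o_0(T,0)^{\al_0},
\]
which is at most $\|\XX\|\,\|u\|_{V^{1/\al_0}_T X}$ by \eqref{ctrl0a}.

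For \eqref{intY1b}, write $\updl\I^\XX(u)=\Ta-\Ld\updl\Ta$. Each increment is then bounded by
\[
\|u\|_{L^\infty}\o_1^\al + C\o^{\al+\al_0} \les \|u\|_{V^{1/\al_0}_T X}\,\o_1^\al + \ldots .
\]
Note that $\o^{\al+\al_0}\le \o_1(t,r)^\al\o_0(T,0)^{\al_0}$ by monotonicity of controls. So the $1/\al$-variation is controlled by $\o_1$ times constants. Because $\I^\XX(u)(0)=0$, \eqref{ctrl000a} gives the $V^{1/\al}_T$-bound. The same bound shows that $\I^\XX(u)$ is continuous, so it lies in $\V^{1/\al}_T X$. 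For uniqueness, take two functions satisfying \eqref{intY1a}. Their difference has increments in $\V^{1/(\al+\al_0)}_{2,T}$ with exponent $1/(\al+\al_0)<1$, so it is constant by Remark~\ref{REM:sew3}; the normalization at $0$ then forces equality.

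\textbf{Part (iii).} Apply linearity in the driver. The map $\XX\mapsto\Ta$ is linear, and so is $\Ld$. Hence $\I^{\XX^{(1)}}(u)-\I^{\XX^{(2)}}(u)=\I^{\XX^{(1)}-\XX^{(2)}}(u)$, where $\XX^{(1)}-\XX^{(2)}$ still satisfies \eqref{Chen}. Then \eqref{intY1b} applied to this difference driver gives \eqref{intY1c}.

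The main obstacle is the bookkeeping in part (ii): checking that the sewing output is continuous and vanishes on the diagonal, so that it lies in $\V_{2,T}$. That is exactly what the appendix version of Lemma~\ref{LEM:sew} supplies, so here it only needs to be invoked carefully.
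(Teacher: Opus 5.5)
Your proposal is correct and follows essentially the same route as the paper: the product control $\o=\o_1^{\al/(\al+\al_0)}\o_0^{\al_0/(\al+\al_0)}$ built from \eqref{ctrl1} and \eqref{ctrl0}, the identity $(\updl\Ta)_{t_1,t_2,t_3}=-\XX_{t_1,t_2}(\updl u)_{t_2,t_3}$, the sewing lemma with \eqref{sew1} for \eqref{intY1a}, \eqref{ctrl000a} for \eqref{intY1b}, and for (iii) rerunning the argument with $\wt\Ta_{t,r}=(\XX^{(1)}-\XX^{(2)})_{t,r}(u_r)$, which is exactly your linearity observation. The only cosmetic difference is that you bound the increments of $\I^\XX(u)$ pointwise by a multiple of $\o_1(t,r)^\al$ using monotonicity of $\o_0$, whereas the paper combines \eqref{intY1cc} with \eqref{intY1a}; both yield the same estimate.
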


\begin{proof}

The proof essentially follows from the strategy described in \cite[Section~3]{G04} and \cite[Subsection~2.3.1]{GT10} in the H\"older regularity setting.
We present some details for readers' convenience.

\smallskip

\noi
(i) From  \eqref{Ta} with \eqref{ctrl0b},  we have
\begin{align}
\label{intY1cc}
\begin{split}
\| \Theta_{t,r}\|_{ X} 
& = \|\XX_{t,r} (u_r) \|_{X} 
\le \| \XX_{t,r} \|_{\LOP(X)} \|u\|_{L^\infty_T X}\\
& \les  \| \XX_{t,r} \|_{\LOP(X)}  \|u \|_{V^{\frac1{\al_0}}_{T} X }
\end{split}
\end{align}

\noi
for $(t,r)\in\Dl_{2,T}$.
Hence, we conclude from \eqref{intY1cc}, 
$\XX \in \V^{\frac1\al}_{2,T} \LOP(X)$, 
and the continuity of $u$
that $\Theta \in \V^{\frac1\al}_{2,T} X$.

Given   $\XX \in \V^{\frac1\al}_{2, T}\LOP(X)$
and 
$u \in \V^{\frac1{\al_0}}_T X$, 
let 
$\o_\frac 1\al^{(2)}(\XX)
= \o_{\L(X), \frac 1\al}^{(2)}(\XX)$ and 
$\o_\frac 1{\al_0}^{(1)}(u) = \o_{X, \frac 1{\al_0}}^{(1)}(u) $
be as in 
\eqref{ctrl1} and~\eqref{ctrl0}, respectively.
Recall from Definition \ref{DEF:V23}\,(ii) and (iii) that 
$\o_\frac 1\al^{(2)}(\XX)$ and 
$\o_\frac 1{\al_0}^{(1)}(u) $ are controls.
Then,
it follows from 
\eqref{con1a}
that 
\begin{align}
\o : = \big(\o_\frac 1\al^{(2)}(\XX)\big)^\frac{\al}{\al + \al_0}
\big(\o_\frac 1{\al_0}^{(1)}(u)\big)^\frac{\al_0}{\al + \al_0}  
\label{con4}
\end{align}

\noi
is also a control.

A direct computation with \eqref{Ta} and \eqref{Chen} yields
\begin{align}
 (\updl \Theta)_{t_1,t_2,t_3}
 =-  \XX_{t_1,t_2} \big((\updl u)_{t_2,t_3}\big)
\label{con5}
\end{align}

\noi
for  $(t_1,t_2,t_3) \in\Dl_{3,T}$
and thus  we have 
\begin{align}
\label{intY2ab}
\begin{split}
\| (\updl \Theta)_{t_1,t_2,t_3} \|_{X}^\frac 1{\al + \al_0}
 & \le \| \XX_{t_1,t_2} \|_{\LOP(X)}^\frac 1{\al + \al_0} \| (\updl u )_{t_2,t_3}\|_{X}^\frac 1{\al + \al_0} \\
& \le \big( \o_\frac 1\al^{(2)}(\XX; t_1,t_2)\big)^\frac \al{\al + \al_0} 
\big(\o_\frac 1 {\al_0} ^{(1)}(u;t_2,t_3)\big)^\frac {\al_0}{\al + \al_0} \\
& \le 
\o(t_1,t_3)
\end{split}
\end{align}

\noi
for  $(t_1,t_2,t_3) \in \Dl_{3,T}$, 
where
we used  \eqref{con1b} and \eqref{con4}
in the last step.
Hence, from \eqref{con2}, \eqref{intY2ab}, 
and the continuity of $\XX$ and $u$, 
we  conclude that $\updl \Theta \in \V^{\frac{1}{\al+\al_0}}_{3,T} X$.

\medskip

\noi(ii) 
In view of  \eqref{Ta2} with $\al + \al_0 > 1$, 
we can apply the sewing lemma (Lemma~\ref{LEM:sew}) 
to define the error term $R = - \Ld \updl \Ta
\in \V_{2,T}^{\frac{1}{\al+\al_0}} X$ as in \eqref{y1c}.
Then,  together with \eqref{y1b}, 
we see that there exists a unique function $\I^\XX (u) \in  \V^{\frac1\al}_{T} X$
with $\I^\XX(u)(0) =0$, 
satisfying \eqref{intY1aa} and  \eqref{intY1f}.
The first bound 
\eqref{intY1a} follows from 
\eqref{V2def}, 
\eqref{intY1aa},  and
\eqref{sew1} in the sewing lemma (Lemma~\ref{LEM:sew}) 
with \eqref{intY2ab}, \eqref{con4}, and \eqref{ctrl0a}, 
while the second bound
\eqref{intY1b} follows from 
\eqref{ctrl000a}, 
\eqref{intY1a}, 
and~\eqref{intY1cc}.


\medskip

\noi(iii) 
The bound \eqref{intY1c} follows from a straightforward modification 
of the proof of \eqref{intY1b}
by replacing $\Ta$ in \eqref{Ta}
with
\begin{align*}
\wt \Ta_{t,r} = (\XX^{(1)} - \XX^{(2)})_{t,r} (u_r)
\end{align*}
for $(t,r) \in \Dl_{2,T}$.
We omit details.
\end{proof}

\subsection{Rough integral}
\label{SUBSEC:sewR}

In this subsection, 
we go over the construction of rough integrals in the current $p$-variation setting.
Our goal is to construct 
 an
integral $\I(u)$ of $u$,  whose increment on $[r,t]$ is given by 
$\XX_{t,r} (u_{\bullet})$ (see \eqref{y1a}), under the weaker assumption
 $\al + \al_0 \le 1$ (in the setting of Lemma \ref{LEM:intY}).
In this case, $\updl \Ta$ in \eqref{con5} does not have sufficient regularity
to apply the sewing lemma (Lemma~\ref{LEM:sew}).
In order to overcome this issue, 
we resort to  rough path theory, 
introduced in a seminal work by Lyons
\cite{Lyons98}, 
and controlled paths, introduced by Gubinelli \cite{G04}.
Our presentation closely follows
 \cite[Proposition~1]{G04} and  \cite[Subsection~4.3]{CGLLO2}
 in the H\"older regularity setting.

We first recall
the notion of 
rough paths and controlled  paths.

\begin{definition}
\label{DEF:RP}
\rm

Let $X, Y$ be  Banach spaces with $Y\subset X$ and $T>0$.

\smallskip

\noi\textup{(i)} Given $\frac13 < \al  \le \frac12 $,  we say that 
$(\XX, \bbX) \in \V^{\frac1\al }_{2,T} \LOP(X;Y) \times \V^{\frac{1}{2\al}}_{2,T} \LOP(X;Y)$ 
is a $\frac1\al$-variational rough path  with values in $\LOP(X;Y)$ if 
it satisfies  Chen's relation{\rm :}
\begin{align}
\begin{split}
(\updl \XX)_{t_1,t_2,t_3} &= 0, \\
(\updl \bbX)_{t_1,t_2,t_3} & = \XX_{t_1,t_2}  \XX_{t_2,t_3}
\end{split}
\label{Chen2}
\end{align}

\noi
for  $(t_1,t_2,t_3) \in\Dl_{3,T}$, 
where 
$ \XX_{t_1,t_2}  \XX_{t_2,t_3} = \XX_{t_1,t_2} \circ \XX_{t_2,t_3}$
is understood as  the usual composition of operators.

\smallskip
\noi\textup{(ii)}
Let $0< \al < \g \le 1$ and $\XX \in \V^{\frac1\al}_{2,T} \LOP(X)$ with $\updl \XX =0$.
We say that a function $u \in \V^{\frac1\al}_T X$
 is  controlled by $\XX$, with a Gubinelli derivative\footnote{A Gubinelli derivative is not uniquely determined
 in general. See \cite[Remark 4.7]{FH20}.} $u' \in \V^{\frac1\al}_T X$, if there exists a remainder
 term\footnote{In the literature (see, for example, \cite[Theorem 4.10]{FH20}),
the time regularity of the remainder $R^{\XX, u}$ is often taken as $\g=2\al$, which is too restrictive in our case. See the original paper \cite{G04}, 
where the weaker condition $\g> \al$ is also used.
}
$R^{\XX, u} \in \V^{\frac1\g}_{2,T}X$ such that
\begin{align}
\label{CRP}
(\updl u)_{t,r} = \XX_{t,r} (u'_r) + R^{\XX, u}_{t,r}
\end{align}
for $(t,r)\in\Dl_{2,T}$.
We denote  the space of controlled paths $(u, u')$ by 
$\D^{\al, \g}_{\XX,T}(X) = \D^{\al, \g}_{\XX}([0, T]; X),$ equipped with the norm:
\begin{align}
\label{CRPnorm}
\| (u,u') \|_{\D^{\al, \g}_{\XX, T}(X) }:= \| u(0)\|_{X} + \| u'(0) \|_{X} 
+ \frac14 \| u'  \|_{V^{\frac1\al }_{T} X} + \| R^{\XX, u} \|_{\V^{\frac1\g}_{2,T} X},
\end{align}

\noi
where the $V^{p}_T$- and $\V^{p}_{2,T}$-norms are as in 
\eqref{local} and \eqref{V2def}, respectively.

\end{definition}

\begin{remark} \label{REM:CRP2}\rm

(i)
From \eqref{CRP} and 
$\XX \in \V^{\frac1\al}_{2,T} \LOP(X)$
with \eqref{ctrl0a} and \eqref{ctrl000}, 
we have $u \in \V^{\frac1\al}_T X$.

\noi
(ii)
The factor $\frac14$ in front of the norm of $u'$ in \eqref{CRPnorm} 
is  needed  to close a contraction argument
in the rough case presented in Section \ref{SEC:rough}.
 See Remark~\ref{REM:CRPnorm} for further details.


\end{remark}

Let $X, Y$ be Banach spaces with $Y\subset X$, $\frac 13  <  \al \le \frac12$, and
$\XX
\in \V^{\frac1\al }_{2,T} \LOP(X;Y)$.
Define $\bbX$ by setting
\begin{align}
\bbX_{t,r} = 
\XX_{t,r} \circ \XX_{\bullet, r} 
\label{con6x}
\end{align}

\noi
for $(t,r) \in \Dl_{2,T}$.
In the following, we suppose that 
$(\XX,\bbX)\in \V^{\frac1\al}_{2,T} \LOP(X;Y) \times \V^{\frac{1}{2\al}}_{2,T} \LOP(X;Y)$
is a rough path as in Definition~\ref{DEF:RP}~(i). 
Given 
a controlled rough path $(u,u') \in \CRP^{\al,\g}_{\XX, T}(X)$
(for some $\g \in (\al, 1]$), satisfying \eqref{CRP}
for some remainder term $R^{\XX, u} \in \V^{\frac1\g}_{2,T}X$, 
our goal is to construct the rough integral
$\I^{\XX, \bbX}(u)$ of $u$ with the driver $(\XX, \bbX)$, 
satisfying 
$\I^{\XX, \bbX}(u)(0) = 0$, whose 
 increment is formally given by
\begin{align}
\big(\updl \I^{\XX, \bbX}(u)\big)_{t,r} = \XX_{t,r}(u_\bul)
\label{int2x}
\end{align}

\noi
for $(t,r) \in \Dl_{2,T}$.

By replacing $\bul$ with the left endpoint $r$
and substituting the controlled structure \eqref{CRP} with \eqref{con6x},  
we have
\begin{align}
\label{int2b}
\begin{split}
\XX_{t,r} (u_\bullet)
&
= \XX_{t,r} (u_r) + \XX_{t,r}\big( (\updl u)_{\bullet, r} \big)
\\
& = \XX_{t,r} (u_r) + \bbX_{t,r}(u'_r ) + {Q}_{t,r} \\
& =: \Taa_{t,r} + {Q}_{t,r},
\end{split}
\end{align}

\noi
for 
some two-parameter function $Q=Q^{\XX, u}$,
formally given by $Q _{t, r} = \XX_{t, r}(R^{\XX, u}_{\bul, r})$.

By applying the coboundary operator $\updl$  to 
both sides of \eqref{int2b} with \eqref{int2x} and the fact that $\updl^2= 0$, 
a direct computation with 
\eqref{Chen2} and \eqref{CRP} yields
\begin{align}
\label{int2bb}
\begin{split}
(\updl Q)_{t_1,t_2,t_3}
&
= - (\updl \Taa)_{t_1,t_2,t_3}
\\
&
= \XX_{t_1,t_2} (R^{\XX, u}_{t_2,t_3}) + \bbX_{t_1,t_2} \big( (\updl u')_{t_2,t_3} \big)
\end{split}
\end{align}

\noi
for $(t_1,t_2,t_3) \in\Dl_{3,T}$.
From the regularity assumptions on  $\XX$, $\bbX$, $u'$, and $R^{\XX, u}$, 
we see that  $\updl Q \in \V^{\frac1\kk}_{3,T} Y $, 
where $\kk = (\al+\g) \land 3\al$;
see the proof of Lemma \ref{LEM:intR}\,(i) below.
Hence, if 
$\kk  > 1$,
the sewing lemma (Lemma~\ref{LEM:sew}) 
allows us to define an error term~$Q$ by the relation:
\begin{align}
Q = - \Ld \updl \Taa \in \V^{\frac1\kk}_{2,T} Y,
\label{int2y}
\end{align}
where $\Ld$ denotes the sewing map.
Consequently,
by combining \eqref{int2x}, \eqref{int2b}, and \eqref{int2y}, 
we define the rough integral $\I^{\XX, \bbX}(u)$ of $u$ with the driver $(\XX, \bbX)$ 
by 
\begin{align}
 \I^{\XX, \bbX} (u)(t) = \big[(\Id - \Ld\updl) \Taa\big]_{t, 0}
\label{int2z}
\end{align}

\noi
whose increment is given by 
\begin{align}
\label{int2c}
\updl\I^{(\XX,\bbX)}(u) = (\Id - \Ld\updl) \Taa.
\end{align}

\noi
Therefore, 
 from \eqref{int2c}
and 
Remark \ref{REM:sew3} with $\kk > 1$
(which shows that the contribution from the error term $Q= - \Ld \updl \Taa$ in \eqref{int2y}
vanishes in the limit), 
we see that 
the rough integral $\I^{\XX, \bbX}(u)$
is 
indeed given as the unique limit of the Riemann-Stieltjes type sums:
\begin{align}
\label{int2d}
\begin{split}
\I^{\XX,\bbX} (u) (t)
& = \lim_{|\Pi([0,t])| \to 0} \sum_{j=0}^{k-1} \Taa_{t_{j}, t_{j+1}}\\
& =
\lim_{|\Pi([0,t])| \to 0} \sum_{j=0}^{k-1}
\Big( \XX_{t_{j}, t_{j+1}} (u_{t_{j+1}} )
+  \bbX_{t_{j}, t_{j+1}} (u_{t_{j+1}}' )
\Big), 
\end{split}
\end{align}

\noi
where the limit is in the sense of   Lemma \ref{LEM:sew}\,(iii).
We refer to 
a $\frac1\al$-variational rough path
 $(\XX,\bbX)  \in \V^{\frac1\al}_{2,T} \LOP(X;Y) \times \V^{\frac{1}{2\al}}_{2,T} \LOP(X;Y)$,  satisfying \eqref{Chen2}, 
 as a {\it rough driver}.

\medskip

In the following lemma, 
we  summarize basic  properties of the rough integral $\I^{\XX, \bbX}( u )$ obtained via the construction above.

\begin{lemma}
\label{LEM:intR}

Let $X, Y$ be Banach spaces with $Y \subset X$.
Given $\frac 13 <\al\le \frac 12 $, 
 $\al < \g \le 1$,  and $T>0$, 
let $(\XX,\bbX)  \in \V^{\frac1\al}_{2,T} \LOP(X;Y) \times \V^{\frac{1}{2\al}}_{2,T} \LOP(X;Y)$
be a $\frac1\al$-variational rough path, satisfying \eqref{Chen2}, 
and  $(u,u') \in \CRP^{\al, \g}_{\XX, T} (X)$ be a controlled  path,
satisfying \eqref{CRP} for some remainder term $R^{\XX, u} \in \V^{\frac1\g}_{2,T} X$,
as in  Definition~\ref{DEF:RP}.
 Suppose that
\begin{align}
\label{int2cond}
\kk : = ( \al + \g ) \land(3\al )>1.
\end{align}
Then, the following statements  hold{\rm :}

\smallskip

\noi\textup{(i)} Let $\Taa$ be as in \eqref{int2b}{\rm :}
\begin{align}
\label{intRY0}
\Taa_{t,r} = \XX_{t,r} (u_r )  + \bbX_{t,r} (u_r')
\end{align}
for $(t,r) \in \Dl_{2,T}$. 
Then, we have 
\begin{align}
\Taa \in \V^{\frac1\al}_{2,T} Y
\qquad \text{and}\qquad  \updl\Taa \in \V^{\frac1\kk}_{3,T} Y.
\label{int2e}
\end{align}

\smallskip

\noi\textup{(ii)}
There exists a unique function $\I^{\XX,\bbX}(u) \in \V^{\frac1\al}_{T} Y$ 
{\rm (}see \eqref{local2}{\rm )}
with $\I^{\XX,\bbX}(u)(0) =0$, 
satisfying~\eqref{int2c} and \eqref{int2d}, such that 
\begin{align}
\label{intR0aa}
\begin{split}
\| \updl \I^{\XX,\bbX}(u) - \Taa \|_{ \V^{\frac1\kk}_{2,T} Y} 
& \les \|\XX\|_{ \V^{\frac1\al}_{2,T} \LOP(X;Y)} \| R^{\XX, u} \|_{ \V^{\frac1\g}_{2,T} X}\\
& \quad + \| \bbX \|_{ \V^{\frac{1}{2\al}}_{2,T} \LOP(X;Y)} \|u '\|_{V^{\frac1\al}_{T} X}  ,
\end{split}
\\
\label{intR0ab}
\begin{split}
\| \I^{\XX,\bbX}(u) \|_{V^{\frac1\al}_{T} Y } 
& \les \|\XX\|_{ \V^{\frac1\al}_{2,T} \LOP(X;Y)} \Big(\|u\|_{L^\infty_T X} 
 +  \| {R}^{\XX, u} \|_{ \V^{\frac1\g}_{2,T} X} \Big)\\
& \quad+ \|\bbX\|_{ \V^{\frac{1}{2\al}}_{2,T} \LOP(X;Y)} \| u' \|_{V^\frac1\al_T X}.
\end{split}
\end{align}

\noi
We refer to 
 $\I^{\XX, \bbX}(u)$ as the rough integral of $u$ with the driver $(\XX, \bbX)$.

\smallskip

\noi\textup{(iii)} 
For  $j = 1, 2$, let 
 $(\XX^{(j)}, \bbX^{(j)}) \in \V^{\frac1\al}_{2,T} \LOP(X;Y) \times \V^{\frac{1}{2\al}}_{2,T} \LOP(X;Y)$
 be $\frac 1\al$-variational rough paths, 
 satisfying \eqref{Chen2},
 and 
 $(u^{(j)}, (u^{(j)})') \in \CRP^{\al, \g}_{\XX^{(j)}, T}(X)$ be 
 controlled paths, satisfying 
\begin{align}
\notag
(\updl u^{(j)})_{t,r} = \XX^{(j)}_{t,r} \big( ( u^{(j)})'_r \big) + R^{(j)}_{t,r}
\end{align}
for some remainder term $R^{(j)} = R^{\XX^{(j)}, u^{(j)}} \in \V^{\frac1\g}_{2,T} X$.
Then, we have 
\begin{align}
\label{intR0ac}
\begin{split}
\| & \I^{\XX^{(1)}, \bbX^{(1)}} (u^{(1)}) - \I^{\XX^{(2)}, \bbX^{(2)}} (u^{(2)}) \|_{V^{\frac1\al}_T Y}
\\
& \les \| \XX^{(1)} - \XX^{(2)} \|_{\V^{\frac1\al}_{2,T} \LOP(X;Y)}
\Big(
\| u^{(1)}  \|_{L^\infty_T X}
+\|R^{(1)} \|_{\V^{\frac1\g}_{2,T} X}\Big)
\\
&
\quad
+
\| \XX^{(2)} \|_{\V^{\frac1\al}_{2,T} \LOP(X;Y)}
\Big(\| u^{(1)} - u^{(2)} \|_{L^\infty_T X}
+\| R^{(1)} - R^{(2)} \|_{\V^{\frac1\g}_{2,T} X}\Big)
\\
&
\quad
+ \| \bbX^{(1)} - \bbX^{(2)} \|_{\V^{\frac{1}{2\al}}_{2,T} \LOP(X;Y)}
\| (u^{(1)} )' \|_{V^{\frac1\al}_{T} X}\\
& \quad
+
\|  \bbX^{(2)} \|_{\V^{\frac{1}{2\al}}_{2,T} \LOP(X;Y)}
\|
(u^{(1)})' - (u^{(2)})' \|_{V^{\frac1\al}_T X}, 
\end{split}
\end{align}

\noi
where $\I^{\XX^{(j)}, \bbX^{(j)}}(u^{(j)})$ denotes the rough integral of $u^{(j)}$ with the rough driver 
$(\XX^{(j)}, \bbX^{(j)})$ constructed in Part \textup{(ii)}.

\end{lemma}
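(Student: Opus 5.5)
The plan follows the proof of Lemma \ref{LEM:intY}, now with two controls, one for each term of $\updl\Taa$.

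\textbf{Part (i).} For the first claim in \eqref{int2e}, use $\|u_r\|_X,\|u'_r\|_X$ bounded by $V^{\frac1\al}_T$-norms via \eqref{ctrl0b}. This gives
\[
\|\Taa_{t,r}\|_Y \le \|\XX_{t,r}\|_{\LOP(X;Y)}\|u\|_{L^\infty_TX}+\|\bbX_{t,r}\|_{\LOP(X;Y)}\|u'\|_{L^\infty_TX}.
\]
Since $\frac1{2\al}<\frac1\al$, the $\V^{\frac1{2\al}}_2$-norm dominates the $\V^{\frac1\al}_2$-norm, so $\Taa\in\V^{\frac1\al}_{2,T}Y$.

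For $\updl\Taa$, we use identity \eqref{int2bb}, which follows from \eqref{Chen2} and \eqref{CRP} by direct computation:
\[
(\updl\Taa)_{t_1,t_2,t_3} = -\XX_{t_1,t_2}\bigl(R^{\XX,u}_{t_2,t_3}\bigr) - \bbX_{t_1,t_2}\bigl((\updl u')_{t_2,t_3}\bigr).
\]
Set
\[
\o_1=\bigl(\o^{(2)}_{\frac1\al}(\XX)\bigr)^{\frac{\al}{\al+\g}}\bigl(\o^{(2)}_{\frac1\g}(R^{\XX,u})\bigr)^{\frac{\g}{\al+\g}},\qquad
\o_2=\bigl(\o^{(2)}_{\frac1{2\al}}(\bbX)\bigr)^{\frac{2\al}{3\al}}\bigl(\o^{(1)}_{\frac1\al}(u')\bigr)^{\frac{\al}{3\al}}.
\]
Both are controls by \eqref{con1a}. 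Using \eqref{con1b}, we get
\[
\|\XX_{t_1,t_2}R_{t_2,t_3}\|_Y\le\o_1(t_1,t_3)^{\al+\g}\qquad\text{and}\qquad \|\bbX_{t_1,t_2}(\updl u')_{t_2,t_3}\|_Y\le\o_2(t_1,t_3)^{3\al}.
\]
Next, set $\o=\o_1^{(\al+\g)/\kk}+\o_2^{3\al/\kk}$. Since $(\al+\g)/\kk\ge1$ and $3\al/\kk\ge 1$, each term is a control, and so is the sum. Then $\|(\updl\Taa)_{t_1,t_2,t_3}\|_Y\le 2\,\o(t_1,t_3)^{\kk}$. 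After absorbing the constant, continuity gives $\updl\Taa\in\V^{\frac1\kk}_{3,T}Y$.

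\textbf{Part (ii).} Since $\kk>1$, apply the sewing lemma (Lemma \ref{LEM:sew}) to $h=\updl\Taa\in\Im\updl$. Define $Q=-\Ld\updl\Taa$ and $\I^{\XX,\bbX}(u)$ by \eqref{int2z}. Lemma \ref{LEM:sew}(iii) gives uniqueness and \eqref{int2d}, using Remark \ref{REM:sew3}. Then \eqref{sew1} gives $\|Q_{t,r}\|_Y\les \o(t,r)^\kk$. Summing over partitions and using superadditivity, $\|Q\|_{\V^{\frac1\kk}_2}\les\o(T,0)$. Writing $\o(T,0)$ back via Young's inequality for products (or by sewing each term separately, which is cleaner) gives \eqref{intR0aa}.

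Indeed, it is better to split $h=h_1+h_2$ according to the two terms above and sew each separately. This is linear, and $h_1,h_2$ are individually in $\Im\updl$ only up to a check. To avoid that check, we instead bound $\o_1(T,0)^{\al+\g}\le\|\XX\|\|R\|$ and $\o_2(T,0)^{3\al}\le\|\bbX\|\|u'\|$ directly, which follows from the $\V^p$ definitions.

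Then \eqref{intR0ab} follows from \eqref{ctrl000a} (using $\I(0)=0$) and the triangle inequality: $\updl\I=\Taa+Q$, the $\Taa$ bound of part (i), and $\V^{\frac1\kk}_2\subset\V^{\frac1\al}_2$.

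\textbf{Part (iii).} Apply the same argument to the difference. The difference $\Taa^{(1)}-\Taa^{(2)}$ is written telescopically, for example
\[
\XX^{(1)}u^{(1)}-\XX^{(2)}u^{(2)}=(\XX^{(1)}-\XX^{(2)})u^{(1)}+\XX^{(2)}(u^{(1)}-u^{(2)}).
\]
Its coboundary is likewise split via \eqref{int2bb} into four bilinear pieces, each matched to a term of \eqref{intR0ac}. Since the sewing map is linear and $\updl\Taa^{(1)}-\updl\Taa^{(2)}\in\Im\updl$, the same control argument applies, with one product control per piece.

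\textbf{Main obstacle.} This is the bookkeeping of controls: $\kk$ may be determined by either $\al+\g$ or $3\al$, so the two terms come with different exponents and have to be combined as $\o_1^{(\al+\g)/\kk}+\o_2^{3\al/\kk}$ before sewing. The rest is routine.
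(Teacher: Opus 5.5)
Your proposal is correct and is essentially the paper's proof. Your control $\o_1^{(\al+\g)/\kk}+\o_2^{3\al/\kk}$ coincides with the paper's $\o$ in \eqref{con6}; the sewing lemma is applied to $\updl\Taa$ as a whole; \eqref{intR0ab} comes from \eqref{ctrl000a} plus the bound on $\Taa$; and (iii) runs through the telescoped identities \eqref{con7}--\eqref{con8}.

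One slip to fix: summing \eqref{sew1} over a partition gives $\|Q\|_{\V^{1/\kk}_{2,T}Y}\les \o(T,0)^{\kk}\les \o_1(T,0)^{\al+\g}+\o_2(T,0)^{3\al}$, not $\les\o(T,0)$, and this is exactly what your bounds on $\o_1(T,0)^{\al+\g}$ and $\o_2(T,0)^{3\al}$ then control. You were also right not to sew the two pieces of $\updl\Taa$ separately: $\updl$ of either piece equals $\pm\XX_{t_1,t_2}\XX_{t_2,t_3}(\updl u')_{t_3,t_4}\neq 0$, so neither lies in $\Im\updl$ in general.
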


\begin{proof}

\noi(i)
From \eqref{intRY0} with \eqref{ctrl0b},  we have
\begin{align}
\label{intR1aa}
\begin{split}
\| \Taa_{t,r} \|_{Y}& \le \| \XX_{t,r} (u_r) \|_{Y} + \| \bbX_{t,r}(u'_r)\|_{Y} \\
&  \le \|\XX_{t,r}\|_{\LOP(X;Y)} \|u\|_{L^\infty_T X} + \| \bbX_{t,r} \|_{\LOP(X;Y)} \| u' \|_{L^\infty_T X}\\
&  \les \|\XX_{t,r}\|_{\LOP(X;Y)} 
\|u \|_{V^{\frac1\al}_{T} X }
+ \| \bbX_{t,r} \|_{\LOP(X;Y)} 
\|u' \|_{V^{\frac1\al}_{T} X }
\end{split}
\end{align}

\noi
for $(t,r) \in \Dl_{2,T}$.
Hence,  from \eqref{intR1aa}
with the regularities and continuity of 
$\XX$, $\bbX$, $u$, and $u'$, 
we conclude
that $\Taa \in \V^{\frac1\al}_{2,T} X$.

Let $\o^{(1)}_{X, p}$ and $\o^{(2)}_{X, p}$ be as in 
\eqref{ctrl0} and \eqref{ctrl1}, respectively.
Then,
it follows from 
\eqref{con1a} with \eqref{int2cond}
and the fact that a sum of controls is a control
that 
\begin{align}
\begin{split}
\o: \!
& = \big(\o^{(2)}_{\L(X;Y), \frac1\al} (\XX)\big)^\frac \al \kk
\big(\o^{(2)}_{X, \frac1\g} (R^{\XX, u})\big)^\frac \g \kk\\
& \quad  + \big(\o^{(2)}_{\L(X;Y), \frac1{2\al}} (\bbX)\big)^\frac {2\al} \kk
\big(\o^{(1)}_{X, \frac1\al} (u')\big)^\frac \al\kk
\end{split}
\label{con6}
\end{align}

\noi
is also a control.
From 
 \eqref{int2bb}, 
\eqref{con1b}, and \eqref{con6}, we have 
\begin{align}
\label{intR1ab}
\begin{split}
\| (\updl \Taa)_{t_1,t_2,t_3} \|_{Y}
&
\le \| \XX_{t_1,t_2} \|_{\LOP(X;Y)} \|R^{\XX, u}_{t_2,t_3} \|_{X} + \| \bbX_{t_1,t_2}\|_{\LOP(X;Y)} \| (\updl u' )_{t_2,t_3} \|_{X}
\\
&
\le \big(\o^{(2)}_{\L(X;Y),  \frac1\al} (\XX; t_1,t_2)\big)^\al 
\big(\o^{(2)}_{X, \frac1\g} (R^{\XX, u}; t_2,t_3)\big)^\g\\
& \quad + \big(\o^{(2)}_{\L(X;Y), \frac1{2\al}} (\bbX; t_1,t_2)\big)^{2\al }
\big(\o^{(1)}_{X, \frac1\al} (u'; t_2,t_3)\big)^\al\\
& \le\big(\o(t_1,t_3)\big)^{\kk}
\end{split}
\end{align}

\noi
for  $(t_1,t_2,t_3) \in \Dl_{3,T}$.
Hence, from \eqref{con2}, \eqref{intR1ab}, and 
the continuity of $\XX$, $\bbX$, $R^{\XX, u}$, and $u'$, we conclude that 
 $\updl\Taa \in \V^{\frac1\kk}_{3,T} Y$.

\medskip
\noi
(ii) 
In view of \eqref{int2e} with \eqref{int2cond}, 
we can apply the sewing lemma (Lemma \ref{LEM:sew})
to define the error term $Q = - \Ld \updl \Taa \in \V^{\frac1\kk}_{2,T} Y$
as in \eqref{int2y}.
Then,  together with \eqref{int2x} and \eqref{int2b}, 
we see that there exists a unique function $\I^{\XX, \bbX} (u) \in  \V^{\frac1\al}_{T} X$
with $\I^{\XX, \bbX}(u)(0) =0$, 
satisfying \eqref{int2c} and  \eqref{int2d}.
The first bound 
\eqref{intR0aa} follows from 
\eqref{V2def}, 
\eqref{int2c},  and
\eqref{sew1} in the sewing lemma (Lemma~\ref{LEM:sew}) 
with \eqref{intR1ab} and \eqref{con6}, 
while the second bound
\eqref{intR0ab} follows from 
\eqref{ctrl000a}, \eqref{int2c}, 
\eqref{intR0aa}
and \eqref{intR1aa}.

\medskip
\noi
(iii)
As in \eqref{intRY0}, set
 \[\Taa^{(j)}_{t,r} = \XX^{(j)}_{t,r} (u^{(j)}_r) +  \bbX^{(j)}_{t,r} \big( (u^{(j)}_r)' \big) \]
 
 \noi
 for  $(t,r) \in \Dl_{2,T}$, $j = 1, 2$.
 Then, 
  from \eqref{int2c}, we have
\begin{align}
\begin{split}
\big( \updl     &  \I^{(\XX^{(1)}, \bbX^{(1)})} (u^{(1)})\big)_{t,r}
-
\big(\updl \I^{(\XX^{(2)}, \bbX^{(2)})} (u^{(2)}) \big)_{t,r}
\\
&
=
(\XX^{(1)}_{t,r} - \XX^{(2)}_{t,r} ) (u_r^{(1)})
+ \XX^{(2)}_{t,r}  (u_r^{(1)} - u_r^{(2)})\\
&
\quad
+
(\bbX^{(1)}_{t,r} - \bbX^{(2)}_{t,r} ) \big( (u_r^{(1)})' \big)
+
\bbX^{(2)}_{t,r} \big( (u_r^{(1)} )' - (u_r^{(2)})' \big)\\
& \quad 
- \big[ \Ld (\updl \Taa^{(1)} -\updl  \Taa^{(2)}) \big]_{t,r}
\end{split}
\label{con7}
\end{align}

\noi
\noi
for  $(t,r) \in \Dl_{2,T}$, 
whereas it follows from \eqref{int2bb} that 
\begin{align}
\begin{split}
&
(\updl \Taa^{(1)})_{t_1,t_2,t_3} - (\updl \Taa^{(2)})_{t_1,t_2,t_3}
\\
&
=
- (\XX^{(1)}_{t_1,t_2} - \XX^{(2)}_{t_1,t_2} ) (R^{(1)}_{t_2,t_3})
-
\XX^{(2)}_{t_1,t_2} (R^{(1)}_{t_2,t_3} - R^{(2)}_{t_2,t_3})
\\
&
\quad
- (\bbX^{(1)}_{t_1,t_2} - \bbX^{(2)}_{t_1,t_2} )
\big( (\updl (u^{(1)})')_{t_2,t_3}\big)\\
& \quad 
-
\bbX^{(2)}_{t_1,t_2} \big( (\updl (u^{(1)})')_{t_2,t_3} - (\updl (u^{(2)})')_{t_2,t_3}\big)
\end{split}
\label{con8}
\end{align}

\noi
\noi
for  $(t_1,t_2,t_3) \in \Dl_{3,T}$.
The bound \eqref{intR0ac} follows from a straightforward
modification of \eqref{intR0aa} and \eqref{intR0ab} with 
\eqref{con7} and \eqref{con8}.
We omit details.
 \end{proof}

\section{Regularities of Young and rough drivers}
\label{SEC:ops}

One of the key steps in proving pathwise local well-posedness
of SNLW \eqref{SNLW}
(at the level of the vectorial integral formulation \eqref{mild2}
for the interaction representation $\vec \ub$ defined in~\eqref{int})
is to make sense of
 the stochastic term
$\vec{\pPsi} (\vec\ub) = \vec{\pPsi}^\be  (\vec\ub)$, formally defined 
in \eqref{psi2}, 
in the pathwise manner.
For this purpose, 
our main goal in this section is to  study almost sure 
mapping properties of  
 the random operators 
$\vXX$ and $\vbbX$, 
 defined in \eqref{introX} and~\eqref{bX0}, respectively.
See Propositions \ref{PROP:driver1} and \ref{PROP:driver2}, 
where the  random tensor estimate (Lemma \ref{LEM:RTE})
plays a crucial role in estimating the operator norms
of these random operators.

In the fractional-in-time case with the Hurst parameter $\frac 12 < \be < 1$, 
our goal is to prove that, given any $\frac 12 < \al < \be$,  
we have 
\begin{align}
\vXX \in C^{\al}_{2,T} \LOP(\H^s(\T^d))
\subset \V^{\frac 1\al}_{2,T} \LOP(\H^s(\T^d))
\label{AS1}
\end{align}

\noi
and $\updl \vXX = 0$, 
almost surely, under an appropriate assumption
on the relevant parameters.
See \eqref{con1x} for the latter inclusion
in \eqref{AS1}.
Then, 
we can apply
Lemma \ref{LEM:intY}
and 
 construct 
 the stochastic term
$\vec{\pPsi} (\vec\ub)$
in \eqref{psi2}
as the Young integral 
$\I^{\vXX}(\vec \ub) \in \V^{\frac 1\al}_T\H^{s}(\T^d)
\subset C([0, T]; \H^{s}(\T^d))$
with the Young driver $\vXX$
(for any given  $\vec \ub \in \V^{\frac 1\al}_T\H^{s}(\T^d)$)
in the pathwise manner.

In the white-in-time case with the Hurst parameter $\be = \frac 12$, 
our goal is to prove that, given any $  0 < \al < \be = \frac 12$,  
we have 
\begin{align}
\begin{split}
(\vXX, \vbbX )  
& \in C^{\al}_{2,T} \LOP(\H^s(\T^d);\H^{s_0}(\T^d))
\times 
C^{2\al}_{2,T} \LOP(\H^s(\T^d);\H^{s_0}(\T^d))\\
& \subset \V^{\frac 1\al}_{2,T} \LOP(\H^s(\T^d);\H^{s_0}(\T^d))
\times \V^{\frac 1{2\al}}_{2,T} \LOP(\H^s(\T^d);\H^{s_0}(\T^d))
\end{split}
\label{AS2}
\end{align}

\noi
for some $s_0 > s$
and $(\vXX, \vbbX )$  satisfies Chen's relation \eqref{Chen2}, 
almost surely, under an appropriate assumption
on the relevant parameters.
Given 
 $\frac 13< \al <  \frac 12 <  \g < 1$ with $\al + \g > 1$, 
 let 
  $(\vec \ub,\vec \ub') \in \CRP^{\al, \g}_{\vec \XX, T} (\H^s(\T^d))$ be a controlled  path,\footnote{As a consequence of a contraction argument, we will show that $\vec \ub' = \vec \ub$ in our application.
See Section~\ref{SEC:rough}.}
satisfying~\eqref{CRP}:
\begin{align*}
(\updl \vec \ub)_{t,r} = \vXX_{t,r} (\vec \ub '_r) + R^{\vXX, \vec \ub}_{t,r}
\end{align*}

\noi
 for some remainder term $R^{\vXX, \vec \ub} \in \V^{\frac1\g}_{2,T} \H^s(\T^d)$.
Then, 
once we prove \eqref{AS2}, 
we can apply
Lemma \ref{LEM:intR}
and 
 construct 
 the stochastic term
$\vec{\pPsi} (\vec\ub)$ in \eqref{psi2}
as the rough integral 
$\I^{\vXX, \vbbX}(\vec \ub) \in \V^{\frac 1\al}_T\H^{s_0}(\T^d)
\subset C([0, T]; \H^{s_0}(\T^d))$
with the rough driver $(\vXX, \vbbX)$
in the pathwise manner.

\medskip

Before proceeding further, we set some notations.
Given a pair $\vec f = (f_1,f_2)$ of functions on~$\T^d$, 
it follows from 
 \eqref{introX} that 
\begin{align}
\label{XX0}
\vXX_{t,r} (\vec f) & =
\begin{pmatrix}
\XX_{t,r}^{(1,1)}  & \XX_{t,r}^{(1,2)}  \\
\XX_{t,r}^{(2,1)} &  \XX_{t,r}^{(2,2)}
\end{pmatrix}
\vec f
=
\begin{pmatrix}
\XX_{t,r}^{(1,1)} (f_1) +  \XX_{t,r}^{(1,2)} (f_2)  \\
\XX_{t,r}^{(2,1)}(f_1)  +   \XX_{t,r}^{(2,2)} (f_2)
\end{pmatrix}
\end{align}

\noi
for $(t,r) \in \Dl_{2,T}$, 
where $\XX^{(i,j)}_{t,r}$, $i,j=1,2$, are given by 
\begin{align}
\label{XX0a}
\begin{split}
\XX^{(1,1)}_{t,r} (f)& = - \int_r^t \frac{\sin(t'\jb{\nabla})}{\jb{\nabla}} 
\big[\cos(t'\jb{\nabla}) f \cdot \Phi dW^\be (t') \big], \\
\XX^{(1,2)}_{t,r} (f)& = - \int_r^t \frac{\sin(t'\jb{\nabla})}{\jb{\nabla}} 
\bigg[ \frac{\sin(t'\jb{\nabla})}{\jb{\nabla}} f \cdot \Phi d W^\be (t') \bigg], \\
\XX^{(2,1)}_{t,r} (f)& =  \int_r^t \cos(t'\jb{\nabla}) 
\big[\cos(t'\jb{\nabla})
f \cdot \Phi d W^\be (t') \big], \\
\XX^{(2,2)}_{t,r}(f) &  =  \int_r^t \cos(t'\jb{\nabla}) 
\bigg[ \frac{\sin(t'\jb{\nabla})}{\jb{\nabla}} f \cdot \Phi d W^\be (t') \bigg]
\end{split}
\end{align}

\noi
for a function $f$ on $\T^d$, 
where $W^\be $ is the cylindrical process in \eqref{W0}.
For example, from~\eqref{XX0a} with \eqref{phi1}, 
we have 
\begin{align}
\XX^{(1,1)}_{t,r} (f)
= - \sum_{n \in \Z^d} e_n
\int_r^t  \sum_{n_1, n_2 \in \Z^d}
\ind_{n = n_{12}}
\frac{\sin(t'\jb{n})}{\jb{n}} 
\cos(t'\jb{n_2})\ft f(n_2) \phi_{n_1} dB_{n_1}(t'), 
\label{XX0b}
\end{align}

\noi
where $n_{12}  = n_1 + n_2$ is as in \eqref{ord3}.

From 
\eqref{bX0}
with \eqref{XX0}  we have 
\begin{align}
\label{bbX0}
\vbbX_{t,r} (\vec f)
=
\begin{pmatrix}
\bbX^{(1,1)}_{t, r} &  \bbX^{(1,2)}_{t, r} \\
\bbX^{(2,1)}_{t, r} & \bbX^{(2,2)}_{t,  r}
\end{pmatrix}
\vec f
=
\begin{pmatrix}
\bbX^{(1,1)}_{t, r} (f_1 ) +  \bbX^{(1,2)}_{t, r} (f_2 )\\
\bbX^{(2,1)}_{t, r} (f_1) +  \bbX^{(2,2)}_{t,  r} (f_2)
\end{pmatrix}
\end{align}

\noi
for $(t,r) \in \Dl_{2,T}$
and  a pair $\vec f = (f_1,f_2)$ of functions on~$\T^d$, 
where  $\bbX^{(i,j)}_{t,r}$, $i,j=1,2$, are given by
\begin{align}
\label{bbX0a}
\bbX^{(i,j)}_{t,r}
& = \XX^{(i,1)}_{t,r}\XX^{(1,j)}_{\bullet, r}  + \XX^{(i,2)}_{t,r}  \XX^{(2,j)}_{\bullet, r} , 
\end{align}

\noi
where $\XX^{(i,j)}$ is as in \eqref{XX0a}
 and $\bul$ denoting the variable of integration.

With this notation, we
view
 the Young driver $\vXX$ in~\eqref{XX0}
(with $\frac 12 < \be < 1$)
and 
the rough driver $(\vXX, \vbbX)$ 
(with $\be = \frac 12$)
as random operators given by 
(multiple) stochastic integrals with 
respect to (fractional) Brownian motions, 
as introduced in Subsection \ref{SUBSEC:FBM}.
We study their almost
sure mapping properties
by applying 
the  random tensor estimate (Lemma \ref{LEM:RTE}).
In Subsection~\ref{SUBSEC:driver1}, 
we consider the Young case, 
while we treat the rough case in 
Subsection~\ref{SUBSEC:driver2}.

\subsection{Young driver} 
\label{SUBSEC:driver1}

In this subsection, we study regularity properties of the Young driver 
 $\vXX$  in \eqref{XX0}
 (with the Hurst parameter   $\frac 12 < \be< 1$).

We recall the notation \eqref{ord2}:
$a \land b = \min(a,b)$
and  $a \vee b = \max(a,b)$, 
 which is frequently used in 
the remaining part of this section.

\begin{proposition}
\label{PROP:driver1}

Let $\frac 12 < \be< 1$
and 
$ \s \in \R$.
Given  $\Phi \in \HS(L^2(\T^d); H^\s(\T^d))$, 
satisfying~\eqref{phi1}, 
let  $\vXX$ be the first order driver in~\eqref{introX} 
with the Hurst parameter $\be$.
Suppose that $s, s_0, \s \in \R$ satisfy 
\begin{align}
\begin{split}
s + \s  > 0
\quad \text{and}\quad 
s_0  < \min \Big((s\wedge 0) + \s + 1, 
s+ (\s\wedge 0) + 1\Big).
\end{split}
\label{YG0}
\end{align}

\noi
 Then, given $\frac 12 < \al < \be$ and small $\eps > 0$ such that $\be + \eps < 1$, we have
\begin{align}
\big\| \|  \vXX_{t,r}  \|_{\LOP(\H^s; \H^{s_0})} \big\|_{L^p(\Om)} 
 & \les   p^\frac 12  (1\vee T)^{\be- \al + \eps}
\| \Phi\|_{\HS(L^2;H^\s)} 
  |t-r|^\al
\label{YG1}
\end{align}

\noi
for any  
 finite $p \ge 1$,  $T > 0$, and 
 $(t, r) \in \Dl_{2, T}$.
Consequently, 
given any $0 < \al < \be$, 
we have 
\begin{align}
\big\| \| \vXX\|_{C^{\al}_{2,T} \LOP(\H^s;\H^{s_0})} \big\|_{L^p(\Om)} 
\les_{\al, T} p^\frac 12 
 \| \Phi\|_{\HS(L^2;H^\s)}
\label{YG2}
\end{align}

\noi
for any finite $p \ge1$ and $T > 0$.
In particular, 
there exists a version of~$\vXX$
such that 
\[\vXX \in C^{\al}_{2,T} \LOP(\H^s(\T^d);\H^{s_0}(\T^d)), \]

\noi
almost surely.
Moreover,  $\vXX$ satisfies the following Chen's relation{\rm:}
\begin{align}
(\updl \vXX)_{t_1,t_2,t_3} =0
\label{YG3}
\end{align}

\noi
for any $(t_1,t_2,t_3) \in \Dl_{3,T}$, almost surely.

\end{proposition}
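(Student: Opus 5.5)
Each of the four entries $\XX^{(i,j)}_{t,r}$ in \eqref{XX0a} is a first-order multiple stochastic integral, so the plan is to bound their operator norms with the random tensor estimate (Lemma~\ref{LEM:RTE}) with $k=1$, and then pass to Hölder norms via Kolmogorov (Lemma~\ref{LEM:kolm}). Take $\XX^{(1,1)}$ as the model; the other three entries are handled the same way. By \eqref{XX0b}, the map $\ft f(n_2)\mapsto \widehat{\XX^{(1,1)}_{t,r}f}(n)$ has kernel
\[
H_{n n_2} = I_1\big[\ind_{n=n_{12}}\,\phi_{n_1}\,\ff_{n n_2}(t',n_1)\big],\qquad
\ff_{nn_2}(t',n_1)=\ind_{[r,t]}(t')\frac{\sin(t'\jb{n})}{\jb{n}}\cos(t'\jb{n_2}).
\]
The $\LOP(H^s;H^{s_0})$ norm of $\XX^{(1,1)}_{t,r}$ is then the $\l^2_{n_2}\to\l^2_n$ norm of $\jb{n}^{s_0}H_{nn_2}\jb{n_2}^{-s}$. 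Writing the trigonometric products as sums of exponentials $e^{\pm i t'(\jb{n}\pm\jb{n_2})}$ reduces everything to oscillatory factors.

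The key deterministic input is a bound on $\|\ind_{[r,t]}e^{it'\ld}\|_{\Hs^\be_{t'}}$ for $\frac12<\be<1$. Using \eqref{BM0}, i.e. the $\dot H^{\frac12-\be}$ norm, I expect
\[
\|\ind_{[r,t]}e^{it'\ld}\|_{\Hs^\be}\les \min\big(|t-r|^{\be},\ \jb{\ld}^{-(1-\be)}\big)\les |t-r|^{\al}\,\jb{\ld}^{-(\al-\be+1)+\eps}\cdot(1\vee T)^{\be-\al+\eps}
\]
by interpolation, where $\al-\be+1 = 1-(\be-\al)$. The $\eps$-loss comes in here, and requiring $\be+\eps<1$ keeps the exponent meaningful. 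Following the paper's normalization, I track only the resulting gain $\jb{n\pm n_2}$-type in $\jb{\,\jb{n}\pm\jb{n_2}\,}$ modestly. In fact it seems sufficient to use the crude bound $|t-r|^\al$ times the explicit $\jb{n}^{-1}$ smoothing from $S(t)$.

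Lemma~\ref{LEM:RTE} with $\ta$ small (or simply $\ta=0$ in its limit form, since $k=1$ and there are only the two partitions $B=\varnothing$ or $B=\{1\}$) bounds $\|\|H\|_{n_2\to n}\|_{L^p}$ by $p^{1/2}$ times $\max$ of $\|h\|_{n_2 n_1\to n}$ and $\|h\|_{n_2\to n n_1}$, where
\[
h_{nn_2n_1}=\ind_{n=n_{12}}\jb{n}^{s_0-1}\jb{n_2}^{-s}|\phi_{n_1}|\,|t-r|^\al
\]
up to the $T$-factor. The Hilbert–Schmidt factor appears only with power $\ta$, and I would absorb it since it is finite under \eqref{YG0}. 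Both norms are estimated by Schur/Cauchy–Schwarz using the constraint $n=n_1+n_2$.

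For $\|h\|_{n_2\to nn_1}$, fix $n_2$ and sum over $n_1$. This gives $\sup_{n_2}\jb{n_2}^{-2s}\sum_{n_1}\jb{n_1+n_2}^{2(s_0-1)}|\phi_{n_1}|^2$, which is bounded by $\|\Phi\|_{\HS}^2$ provided $\jb{n_1+n_2}^{s_0-1}\les \jb{n_2}^{s}\jb{n_1}^{\s}$. That holds when $s_0-1< s+(\s\wedge0)$ and $s_0-1<(s\wedge 0)+\s$, with strictness needed only when both conditions are saturated. The case analysis runs over $|n_1|\gg|n_2|$, $|n_2|\gg |n_1|$, and $|n_1|\sim|n_2|$, and is exactly \eqref{YG0}. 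For $\|h\|_{n_2n_1\to n}$, fix $n$ and sum over $n_2$ (with $n_1=n-n_2$). This needs $\sum_{n_2}\jb{n_2}^{-2s}\jb{n-n_2}^{-2\s}\,\jb{n-n_2}^{2\s}|\phi_{n-n_2}|^2\jb{n}^{2(s_0-1)}$ to be bounded uniformly in $n$, i.e. $\jb{n}^{s_0-1}\les\jb{n_2}^{s}\jb{n_1}^{\s}$ again. The requirement $s+\s>0$ enters when $n$ is small and $n_1\approx -n_2$ is large. I expect this endpoint bookkeeping, together with the entries involving $\cos\cdot\cos$ that have no $\jb{n}^{-1}$ factor, to be the main obstacle. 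For those entries the target space $H^{s_0-1}$ restores the same weight, so the same exponent count should apply. This yields \eqref{YG1}.

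Then \eqref{YG2} and the existence of a Hölder version follow from Lemma~\ref{LEM:kolm} in the form \eqref{KCC3}–\eqref{KCC4}, with $\ta_1=\frac12$, applying \eqref{YG1} with $\al'\in(\al,\be)$ and $p$ large so that $\al<\al'-\frac1p$. For $\al\le\frac12$, note that \eqref{YG1} holds for some $\al'>\frac12$ and hence trivially gives the weaker exponent. Finally, Chen's relation \eqref{YG3} is immediate: $\XX_{t,r}$ is an integral over $[r,t]$, so additivity of the stochastic integral over intervals gives $\XX_{t_1,t_3}=\XX_{t_1,t_2}+\XX_{t_2,t_3}$ almost surely. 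To remove the exceptional null set, I would first restrict to rational times and then extend by continuity of the version.
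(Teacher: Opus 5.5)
You reduce each entry $\mathbf{X}^{(i,j)}_{t,r}$ to the weight $\langle n\rangle^{s_0-1}\langle n_2\rangle^{-s}$ and bound the two deterministic tensor norms via $\langle n_1+n_2\rangle^{s_0-1}\lesssim\langle n_2\rangle^{s}\langle n_1\rangle^{\sigma}$; that part is fine. The gap is in how you invoke Lemma~\ref{LEM:RTE}.

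\textbf{No $\theta=0$ version.} The lemma is stated only for $\theta>0$, and its implicit constant depends on $\theta$ through the number of $TT^*$ iterations. A $\theta=0$ version fails even for $k=1$. Take $\mathfrak{h}_{bcn_1}=\mathbf 1_{b=c=n_1,\,|b|\le N}$ and $\mathfrak{g}=\mathbf 1_{[0,1]}$ with $\beta=\frac12$. Then $\|\mathfrak{h}\|_{bn_1\to c}=\|\mathfrak{h}\|_{b\to cn_1}=1$. But $H$ is a diagonal matrix of $\sim N^d$ independent Gaussians, so its operator norm grows like $\sqrt{\log N}$.

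\textbf{The Hilbert--Schmidt factor is not finite.} So the factor $\|h\|_{\ell^2_{nn_1n_2}}^{\theta}$ cannot be dropped, and contrary to your claim it is \emph{not} finite under \eqref{YG0}. Take $s_0=s$ (the case used in Theorem~\ref{THM:1}) and $d\ge 2$. Already for fixed $n_1$,
\[
\sum_{n_2}\langle n_1+n_2\rangle^{2(s-1)}\langle n_2\rangle^{-2s}\sim\sum_{n_2\in\mathbb Z^d}\langle n_2\rangle^{-2}=\infty .
\]
In $d=1$ the same divergence occurs when $s_0$ is close to $1+\min(s,\sigma)$.

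\textbf{The missing step.} You need the dyadic localization \eqref{YG3a}--\eqref{EN1}. On a block $|n|\sim N$, $|n_j|\sim N_j$, the Hilbert--Schmidt norm is at most $N^{d/2}$ times the operator-norm bound $N^{s_0-1}N_1^{-\sigma}N_2^{-s}\|\Phi\|_{\HS(L^2;H^\sigma)}$. Hence Lemma~\ref{LEM:RTE} costs only a factor $N^{d\theta/2}$ per block. The blocks, which satisfy $N_{\max}\sim N_{\rm med}$, are then summed by the triangle inequality in $L^p(\Omega)$. This summation, absorbing the $N^{d\theta/2}$ loss with $\theta$ small, is exactly where the \emph{strict} inequalities in \eqref{YG0} are used. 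Your pointwise Schur bound already holds with non-strict inequalities, so your remark that strictness matters only when both conditions saturate misplaces its role.

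\textbf{The time factor.} Your decay claim $\|\mathbf 1_{[r,t]}e^{it'\lambda}\|_{\mathscr H^\beta}\lesssim\langle\lambda\rangle^{-(1-\beta)}$ is not correct in general. The interpolated version with $\langle\lambda\rangle^{-(1-(\beta-\alpha))+\varepsilon}$ fails as well. For $|t-r|\sim 1$ and large $\lambda$ the norm is of size $\langle\lambda\rangle^{\frac12-\beta}$, which already contradicts both when $\beta<\frac34$. You do not need either claim. The kernel $|u-v|^{2\beta-2}$ in \eqref{BM0} is positive, so every real $f$ with $|f|\le\mathbf 1_{[r,t]}$ satisfies
\[
\|f\|_{\mathscr H^\beta}\le\|\mathbf 1_{[r,t]}\|_{\mathscr H^\beta}=|t-r|^{\beta}\le(1\vee T)^{\beta-\alpha}|t-r|^{\alpha}.
\]
Applied to the real integrands $\mathfrak{g}^{(i,j)}_{nn_2}$, this gives \eqref{XX3az} uniformly in $n,n_2$, without the $\varepsilon$-loss. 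It is shorter than the paper's Fourier-side computation.

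Once the dyadic decomposition is added, the rest of your argument, including the Kolmogorov step and Chen's relation, goes through.
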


\begin{proof}

The identity \eqref{YG3} follows from a direct computation, 
using \eqref{XX0} and \eqref{XX0a}, and thus we omit details.
Furthermore, we note that the bound \eqref{YG2} follows from \eqref{YG1}
and Kolmogorov's continuity criterion (Lemma \ref{LEM:kolm}).
Hence, we focus on proving \eqref{YG1} in the following.

In view of \eqref{XX0}, 
the bound \eqref{YG1} follows once 
we 
 establish the following bound  for each component $\XX^{(i,j)}_{t,r}$:
\begin{align}
\label{XX1}
\begin{split}
\sum_{i, j = 1}^2
\big\| \|  \XX^{(i,j)}_{t,r}  \|_{\LOP(H^{s+1-j}; H^{s_0 +1-i })} \big\|_{L^p(\Om)} 
 \les
 p^\frac 12  (1\vee T)^{\be- \al + \eps}
\| \Phi\|_{\HS(L^2;H^\s)} 
  |t-r|^\al
\end{split}
\end{align}

\noi
for any 
 $(t, r) \in \Dl_{2, T}$.
From 
 \eqref{XX0a} (see also \eqref{XX0b}), we have 
\begin{align}
\Ft_x\big(\jb{\nabla}^{s_0+1-i}\XX^{(i,j)}_{t,r} \jb{\nabla}^{-s-1+j}f \big) (n)
& = \sum_{n_2 \in \Z^d} \ft f (n_2) I_1[ \hf_{nn_2}(n_1) \ff^{(i,j)}_{nn_2}(t') ] 
\label{XX1a}
\end{align}

\noi
for 
 $(t, r) \in \Dl_{2, T}$, 
where $I_1$ denotes the Wiener integral with respect to a family of fractional Brownian motions with 
the Hurst parameter $\be$ defined in \eqref{BM1}, and  $\hf_{nn_2}$ and $\ff^{(i,j)}_{nn_2}$, 
$i, j = 1, 2$,  
are given by
\begin{align}
\label{TensorR1}
\begin{split}
\hf_{nn_2}(n_1) & =  \ind_{n=n_{12}}\cdot \frac{\jb{n}^{s_0-1}}{\jb{n_2}^{s}} \phi_{n_1}, \\
\ff^{(i,j)}_{nn_2} (t')  &= (-1)^i \ind_{[r,t]}(t') \cdot
 F^{(i)}(t'\jb{n})  F^{(j+1)}(t'\jb{n_2})  ,
\end{split}
\end{align}

\noi
where $n_{12} = n_1 + n_2$ is as in \eqref{ord3}
and $F^{(k)} $, $k \in \Z$, is given by 
\begin{align}
 F^{(k)}(x) = 
\begin{cases}
\sin x, & \text{if  $k$ is odd},\\
\cos x, & \text{if  $k$ is even}.\\
\end{cases}
\label{ord4}
\end{align}

Hence, 
in view of \eqref{XX1a}, 
we see that 
the bound \eqref{XX1} follows once we prove
\begin{align}
\label{XX2a}
\begin{split}
\ \big\| \| I_1[ \hf_{nn_2}(n_1) \ff_{nn_2}^{(i,j)} (t')  ] \|_{n_2 \to n} \big\|_{L^p(\Om)}
 \les
 p^\frac 12  (1\vee T)^{\be- \al + \eps}
\| \Phi\|_{\HS(L^2;H^\s)} 
  |t-r|^\al
\end{split}
\end{align}
for $i,j=1,2$, where the norm $\|\cdot\|_{n_2 \to n}
= \|\cdot\|_{\l^2_{n_2} \to \l^2_n}$ is as in Definition~\ref{DEF:tensor}.

Given dyadic $N, N_1, N_2 \ge1$, we set 
\begin{align}
\hf^{\bf N}_{n n_2}(n_1) & = 
\hf^{N, N_1, N_2}_{n n_2}(n_1)
= \ind_{E_{\bf N}}
\cdot \hf_{n n_2}(n_1), 
\label{YG3a}
\end{align}

\noi
where $E_{\bf N}$ is defined by 
\begin{align}
\begin{split}
E_{\bf N} = \big\{(n, n_1, n_2) \in (\Z^d)^3:
\ &  n = n_1 + n_2, \\
& 
 |n|\sim N, \, |n_j|\sim N_j, \, j = 1, 2\big\}.
\end{split}
\label{EN1}
\end{align}

\noi
Then, 
given any $1\le p < \infty$ and $\ta > 0$, 
by applying  the random tensor estimate (Lemma~\ref{LEM:RTE}), we have
\begin{align}
\label{XX2b}
\begin{split}
& \big\| \| I_1[ \hf_{nn_2}(n_1) \ff_{nn_2}^{(i,j)} (t')  ] \|_{n_2 \to n} \big\|_{L^p(\Om)} \\
& \quad 
\le \sum_{\substack{N, N_1, N_2\ge 1\\ \text{ dyadic}\\N_{\max}\sim N_{\med}}} 
 \big\| \|I_1[ \hf^{\bf N}_{nn_2}(n_1) \ff^{(i,j)}_{nn_2}(t')] \|_{n_2 \to n} \big\|_{L^p(\Om)}\\
& \quad \les
p^\frac12
\sum_{\substack{N, N_1, N_2\ge 1\\ \text{ dyadic}\\N_{\max}\sim N_{\med}}} 
\| \hf^{\bf N}_{nn_2}(n_1)   \|_{\l^2_{nn_1n_2} }^{\ta}
 \|\ff_{nn_2}^{(i,j)} (t') \|_{\l^\infty_{nn_2} \scrH^\be_{t'}}\\
& \hphantom{XXXXXXXXX}
\times  
 \max\Big( \|  \hf^{\bf N}_{nn_2}(n_1)\|_{n_1n_2 \to n} , 
 \|  \hf^{\bf N}_{nn_2}(n_1) \|_{n_2 \to n n_1}  \Big)^{1- \ta}, 
\end{split}
\end{align}

\noi
where 
$N_{\max}$ and  $N_{\med}$ are as in \eqref{ord1} and 
the $\scrH^\be$-norm is as in \eqref{BM0}.

We first estimate the $\l^\infty_{nn_2} \scrH^\be_{t'}$-norm of  $\ff^{(i,j)}$  in \eqref{XX2b}. 
From \eqref{BM0} and \eqref{TensorR1}, we have
\begin{align}
\label{XX3}
\begin{split}
\| \ff_{nn_2}^{(i,j)}(t')\|_{\scrH^\be_{t'}}^2
& \sim  \int_\R
    	\bigg| \int_\R e^{-i\tau t'}  \ff^{(i,j)}_{nn_2}(t') dt' \bigg|^2
    	|\tau|^{1-2\be} d\tau\\
 & \les \int_\R \bigg\{ \bigg| \int_r^t  e^{- i t' (\tau + \theta_1)} dt' \bigg|^2 + \bigg| \int_r^t  e^{- i t' (\tau - \theta_1)} dt' \bigg|^2  \\
& \quad + \bigg| \int_r^t  e^{- i t' (\tau + \theta_2)} dt' \bigg|^2  + \bigg| \int_r^t  e^{- i t' (\tau -\theta_2)} dt' \bigg|^2  \bigg\} |\tau|^{1-2\be} d\tau, 
\end{split}
\end{align}

\noi
uniformly in $n, n_2 \in \Z^d$, 
where $\theta_1 = \jb{n}+\jb{n_2}$ and $\theta_2 = \jb{n}-\jb{n_2}$. 

For $a \in \R \setminus\{0\}$, we have
\begin{align}
\label{XX3aa}
\bigg| \int_r^t e^{-it'a} dt' \bigg| &= 2\frac{|\sin (\frac12(t-r)a)|}{|a|}
\les_\g |t-r|^{\g } |a|^{-(1-\g)}
\end{align}

\noi
for any $0 \le \g \le 1$.
Noting that $\ta_1 \ge 1$, 
it follows from 
 \eqref{XX3aa} with $\g = \al$ and  Lemma~\ref{LEM:conv} that 
\begin{align}
\label{XX3a}
\begin{split}
\int_\R  \bigg| \int_r^t  e^{- i t' (\tau \pm \theta_1)} dt' \bigg|^2 |\tau|^{1-2\be} d\tau 
& \les |t-r|^{2\al}  \int_\R  |\tau\pm\theta_1|^{-2(1-\al)}  |\tau|^{1-2\be} d\tau \\
& \les |t-r|^{2\al}
\end{split}
\end{align}

\noi
for any $t \ge r \ge 0$, 
uniformly in $n, n_2 \in \Z^d$, 
provided that 
 $\frac12 < \al  < \be < 1$, which in particular guarantees local integrability
 of each factor (at $\tau = 0, \mp \ta_1$).

In the following, we only study the contribution from the third term 
on the right-hand side of \eqref{XX3} since the fourth term can be treated
in an analogous manner.
Since $\ta_2=  \jb{n}-\jb{n_2}$ can be $0$ (and close to $0$), 
we need to proceed with care.
When $|\ta_2|\ge 1$, \eqref{XX3a} holds with $\ta_1$ replaced by $\ta_2$.
It remains to consider the case $|\ta_2 |< 1$.

Without loss of generality, assume that $0 \le \ta_2 < 1$.
Then, 
proceeding as in \eqref{XX3a}, we have 
\begin{align}
\begin{split}
\int_{\R\setminus [-2, 1]}  \bigg| \int_r^t  e^{- i t' (\tau + \theta_2)} dt' \bigg|^2 |\tau|^{1-2\be} d\tau 
& \les |t-r|^{2\al}
\end{split}
\label{XX3ax}
\end{align}

\noi
for any $t \ge r \ge 0$, 
uniformly in $n, n_2 \in \Z^d$ such that $0 \le \ta_2 < 1$, 
provided that 
 $\frac12 < \al  < \be < 1$.
By applying~\eqref{XX3aa} with $\g = \be+\eps \le1 $
for small $\eps > 0$, we have 
\begin{align}
\begin{split}
& \int_{[-2, 1]}  \bigg| \int_r^t  e^{- i t' (\tau + \theta_2)} dt' \bigg|^2 |\tau|^{1-2\be} d\tau \\
& \quad \les |t-r|^{2(\be + \eps)}  \int_{[-2, 1]}  |\tau+\theta_2|^{-2(1-\be-\eps)}  |\tau|^{1-2\be} d\tau \\
& \quad \les |t-r|^{2(\be + \eps)}  \int_{[-2, 1]}  
|\tau+\theta_2|^{-1+2\eps}+   |\tau|^{-1+2\eps} d\tau \\
& \quad \les (1\vee T)^{2(\be -\al + \eps )} |t-r|^{2\al}
\end{split}
\label{XX3ay}
\end{align}

\noi
for any $0 \le r \le t \le T$,
uniformly in $n, n_2 \in \Z^d$ such that $0 \le \ta_2 < 1$, 
 provided that $0 < \al \le \be < 1$.

Hence, from \eqref{XX3}, \eqref{XX3a}, \eqref{XX3ax}, and \eqref{XX3ay}, 
we obtain
\begin{align}
\| \ff_{nn_2}^{(i,j)}(t')\|_{\l^\infty_{n n_2}\scrH^\be_{t'}}
\les (1\vee T)^{\be -\al + \eps } |t-r|^{\al}
\label{XX3az}
\end{align}

\noi
for any $0 \le r \le t \le T$,
provided that 
 $\frac12 < \al  < \be < 1$.

\medskip

Next, we estimate the contributions 
from the factors  in \eqref{XX2b}, 
involving  $\hf^{\bf N}_{nn_2}(n_1)$. 
From~\eqref{YG3a} with \eqref{TensorR1}
and \eqref{phi1a},
 we have 
\begin{align}
\label{XX4a}
\begin{split}
\|  \hf^{\bf N}_{nn_2}(n_1)\|_{\l_{nn_1n_2}^2}
& \lesssim 
\frac{N^{s_0-1+ \frac d2}}{N_1^\s N_2^s} \|\Phi\|_{\HS(L^2;H^\s)}.
\end{split}
\end{align}

\noi
From \eqref{op-norm-dual} and Cauchy-Schwarz's inequality with \eqref{EN1}
and \eqref{phi1a}, we have
\begin{align}
\label{XX4b}
\begin{split}
\|  \hf^{\bf N}_{nn_2}(n_1) \|_{n_1n_2 \to n} 
& \sim 
\frac{N^{s_0-1}}{N_1^\s N_2^s}
 \sup_{\|f\|_{n_1n_2} = \|g\|_{n} =1 } 
\bigg| \sum_{n,n_1,n_2 \in \Z^d}
\ind_{E_{\bf N}}\cdot  \jb{n_1}^\s \phi_{n_1}f_{n_1n_2} g_{n} \bigg| \\
& \les \frac{N^{s_0-1}}{N_1^\s N_2^s} \| \Phi\|_{\HS(L^2;H^\s)}.
\end{split}
\end{align}

\noi
A similar computation yields
\begin{align}
\label{XX4c}
\| \hf^{\bf N}_{nn_2}(n_1) \|_{n_2 \to nn_1} 
& \les \frac{N^{s_0-1}}{N_1^\s N_2^s} \| \Phi\|_{\HS(L^2;H^\s)}.
\end{align}

Therefore, putting \eqref{XX2b}, \eqref{XX3az}, 
\eqref{XX4a}, \eqref{XX4b}, and \eqref{XX4c} together, 
we obtain
\begin{align}
\label{XX6a}
\begin{split}
&  \big\| \| I_1[ \hf_{nn_2}(n_1) \ff_{nn_2}^{(i,j)} (t')  ] \|_{n_2 \to n} \big\|_{L^p(\Om)}\\
& \quad \les
p^\frac12 
(1\vee T)^{\be -\al + \eps }
 \| \Phi\|_{\HS(L^2;H^\s)} 
  |t-r|^{\al}
 \sum_{\substack{N, N_1, N_2 \ge1\\\text{dyadic}\\N_{\max}\sim N_{\med}}} 
 \frac{N^{s_0-1+ \frac d2\ta}}{N_1^\s N_2^s} \\
& \quad \les
p^\frac12 
(1\vee T)^{\be -\al + \eps }
 \| \Phi\|_{\HS(L^2;H^\s)} 
  |t-r|^{\al}, 
\end{split}
\end{align}

\noi
provided that \eqref{YG0} holds
and $\ta > 0$ is sufficiently small.
Here, the second inequality in \eqref{XX6a} follows
from separately considering the cases:
 (a)~$N \sim N_1 \ges N_2$, 
(b)~$N \sim N_2 \ges N_1$, 
and 
(c)~$N_1 \sim N_2 \ges N$
(also depending on the sign of the exponent of $N_{\min}$).
This proves~\eqref{XX2a}
(and hence \eqref{XX1} and \eqref{YG1}).
\end{proof}

\subsection{Rough driver}  
\label{SUBSEC:driver2}

In this subsection, we study regularity properties of the rough driver 
 $(\vXX, \vbbX)$ 
 with the Hurst parameter   $\be= \frac 12$.
In this case, 
the $\scrH^\be_{t_A}$-norm, appearing in 
the random tensor estimate (Lemma \ref{LEM:RTE}), 
reduces to the $L^2_{t_A}$-norm, providing  a certain simplification.

\begin{proposition}
\label{PROP:driver2}

Let $ \s \in \R$.
Given $\Phi \in \HS(L^2(\T^d); H^\s(\T^d))$, 
satisfying \eqref{phi1}, 
let  $\vXX$ and $\vbbX$
be  the first and second order operators  
with the Hurst parameter $\be = \frac 12$, 
defined in~\eqref{XX0} and \eqref{bbX0}, respectively.

\smallskip

\noi
{\rm (i)}
Suppose that $s, s_0 \in \R$ satisfy 
\eqref{YG0}.
 Then, we have 
\begin{align}
\big\| \|  \vXX_{t,r}  \|_{\LOP(\H^s; \H^{s_0})} \big\|_{L^p(\Om)} 
 & \les   p^\frac 12  
\| \Phi\|_{\HS(L^2;H^\s)} 
  |t-r|^\frac 12 
\label{ZG1}
\end{align}

\noi
for any  
 finite $p \ge 1$,  $T > 0$, and 
 $(t, r) \in \Dl_{2, T}$.
Consequently, 
given any $0 < \al < \frac 12 $, 
we have 
\begin{align}
\big\| \| \vXX\|_{C^{\al}_{2,T} \LOP(\H^s;\H^{s_0})} \big\|_{L^p(\Om)} 
& \les_{\al, T} p^\frac 12 
 \| \Phi\|_{\HS(L^2;H^\s)}
 \label{ZG2}
\end{align}

\noi
for any finite $p \ge1$ and $T > 0$.

\smallskip

\noi
{\rm (ii)}
Suppose that $s, s_0 \in \R$ satisfy 
\begin{align}
s + \s  > 0,  
\qquad 
\s > - \frac 12,  
\label{RG0}
\end{align}

\noi
and 

\begin{align}
\begin{split}
s_0 
& <  \min\Big(\
  \s + 1, \, 
(s\wedge \s) 
+  (\s\wedge 0) + 2\Big).
\end{split}
\label{RG1}
\end{align}

\noi
 Then, we have 
\begin{align}
\big\| \|  \vbbX_{t,r}  \|_{\LOP(\H^s; \H^{s_0})} \big\|_{L^p(\Om)} 
 & \les   p  
\| \Phi\|_{\HS(L^2;H^\s)}^2 
  |t-r|
\label{ZG1a}
\end{align}

\noi
for any  
 finite $p \ge 1$,  $T > 0$, and 
 $(t, r) \in \Dl_{2, T}$.
Consequently, 
given any $0 < \al < \frac 12 $, 
we have 
\begin{align}
\big\| \| \vbbX\|_{C^{2\al}_{2,T} \LOP(\H^s;\H^{s_0})} \big\|_{L^p(\Om)} 
& \les_{\al, T} p
 \| \Phi\|_{\HS(L^2;H^\s)}^2
\label{ZG2a}
\end{align}

\noi
for any finite $p \ge1$ and $T > 0$.
In particular, under \eqref{YG0}, \eqref{RG0}, and \eqref{RG1},
there exist versions of~$\vXX$ and $\vbbX$
such that 
\begin{align}
(\vXX, \vbbX )  \in C^{\al}_{2,T} \LOP(\H^s(\T^d);\H^{s_0}(\T^d))
\times 
C^{2\al}_{2,T} \LOP(\H^s(\T^d);\H^{s_0}(\T^d)),
\label{ZG3}
\end{align}

\noi
almost surely.

\smallskip

\noi\textup{(iii)} 
In addition to 
\eqref{YG0}, \eqref{RG0}, and \eqref{RG1},
suppose that  $s_0 \ge s$
and $\frac 13  < \al < \frac12$.
Then,  the pair $(\vXX, \vbbX)$ is 
almost surely 
a $\frac1\al$-variational rough path with values in $\LOP(\H^{s}(\T^d); \H^{s_0}(\T^d))$ 
in the sense of Definition~\ref{DEF:RP}\,(i),
satisfying  Chen's relation  \eqref{Chen2}.

\end{proposition}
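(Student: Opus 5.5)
Part (i) follows the proof of Proposition~\ref{PROP:driver1} with $\be = \frac12$, where $\scrH^{\frac12}_{t'} = L^2_{t'}$ by \eqref{BM0x}. I would write the Fourier coefficients of $\jb{\nabla}^{s_0+1-i}\XX^{(i,j)}_{t,r}\jb{\nabla}^{-s-1+j}$ as $I_1[\hf_{nn_2}(n_1)\ff^{(i,j)}_{nn_2}(t')]$, with $\hf, \ff$ as in \eqref{TensorR1}. The only change is the temporal factor: since $|F^{(k)}|\le 1$, we get $\|\ff^{(i,j)}_{nn_2}\|_{L^2_{t'}} \le |t-r|^{\frac12}$ uniformly in $n,n_2$. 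This removes the case analysis \eqref{XX3}--\eqref{XX3az} and the $(1\vee T)$ loss. The dyadic decomposition, the random tensor estimate (Lemma~\ref{LEM:RTE}) with $k=1$, and the tensor bounds \eqref{XX4a}--\eqref{XX4c} are unchanged under \eqref{YG0}, so \eqref{ZG1} follows. Then \eqref{ZG2} comes from Lemma~\ref{LEM:kolm}, whose constant $p^{1/2}$ allows any $\al<\frac12$ by taking $p$ large.

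For (ii), I would expand \eqref{bbX0a} using \eqref{XX0b} and express each entry $\bbX^{(i,j)}_{t,r}$ as a kernel in $(n,n_3)$, where $n_3$ is the input frequency. The kernel is an iterated integral over $r<t_2<t_1<t$ with noise frequencies $n_1$ and $n_2$, and $n = n_1+n_2+n_3$. Its intermediate frequency is $m = n_2+n_3$, and it carries weights $\jb{n}^{-1}$ and $\jb{m}^{-1}$ from the two sines, together with $\phi_{n_1}\phi_{n_2}$. By the product formula (Lemma~\ref{LEM:prod}), this splits into a second chaos part $I_2[\cdots]$ and a zeroth chaos (contraction) part, which is nonzero only when $n_2=-n_1$, so $m = n_3-n_1$ and $n=n_3$.

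I would treat the $I_2$ part with Lemma~\ref{LEM:RTE}, $k=2$. The temporal factor is again bounded by $\|\ind_{r<t_2<t_1<t}\|_{L^2_{t_1t_2}}\les|t-r|$. After decomposing dyadically in $N,N_1,N_2,N_3$ and the intermediate $M$, it remains to bound the Hilbert--Schmidt norm and the operator norms $\|\cdot\|_{n_3 n_B\to n n_C}$ over the partitions $(B,C)$ of $\{1,2\}$, using Cauchy--Schwarz and Schur's test as in \eqref{XX4b}. The deterministic contraction term is a diagonal multiplier in $n=n_3$ with symbol of size roughly
\[
\jb{n}^{s_0-1-s}\,(t-r)\sum_{n_1}\frac{|\phi_{n_1}|^2}{\jb{n-n_1}}.
\]
This is bounded by $\|\Phi\|_{\HS(L^2;H^\s)}^2$, provided $\sup_{n_1}\jb{n_1}^{-2\s}\jb{n-n_1}^{-1}\jb{n}^{s_0-1-s}$ is finite; that condition is where \eqref{RG0} and the constraint $s_0<\s+1$ in \eqref{RG1} enter. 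In both parts the product structure turns $\|\Phi\|$ into $\|\Phi\|^2$, and the chaos estimate gives the factor $p$. Kolmogorov (Lemma~\ref{LEM:kolm}) with \eqref{KCC3} then gives \eqref{ZG2a} and \eqref{ZG3}.

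I expect the main obstacle to be the exponent bookkeeping in the $I_2$ sum. It requires summability over all dyadic configurations in which the three frequencies $n_1,n_2,n_3$ are linked through $m$ and $n$, including the high-high-to-low cases for $M$. This step must reproduce exactly the condition $s_0<(s\wedge\s)+(\s\wedge 0)+2$: each $\jb{\nabla}^{-1}$ gains one derivative, and the $(s\wedge\s)$ arises when the output is transferred onto either the input $n_3$ or $n_2$.

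For (iii), Chen's relation \eqref{Chen2} follows from \eqref{bX0} by splitting the inner integral $\int_{t_3}^{\bul} = \int_{t_3}^{t_2}+\int_{t_2}^{\bul}$; the first relation is \eqref{YG3}. Membership in the $\V$-spaces follows from \eqref{ZG3} and \eqref{con1x}, and the assumption $s_0\ge s$ gives $\H^{s_0}\subset\H^s$, so that $Y\subset X$.
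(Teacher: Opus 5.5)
Your treatment of (i) and (iii) matches the paper's proof. So does your plan for the second-chaos part of (ii): the simplex kernel with the $|t-r|$ time bound, the dyadic decomposition including the intermediate frequency $n_{23}$, Lemma~\ref{LEM:RTE} with $k=2$, and Kolmogorov.

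The one wrong step is the claimed zeroth-chaos ``contraction'' part of $\vbbX$.
\begin{itemize}
\item The entries $\bbX^{(i,j)}_{t,r}$ are iterated Wiener--Ito integrals, since the inner integrand $\XX_{t_1,r}$ is adapted. They are not products of two Wiener integrals, so Lemma~\ref{LEM:prod} does not apply.
\item An iterated Ito integral over the simplex is exactly $I_2$ of the kernel $\ind_{\Dl_{2,[r,t]}}(t_1,t_2)\cdots$, which is what the paper writes in \eqref{bbX0b}--\eqref{tensor2b}. There is no chaos-zero term, even when $n_2=-n_1$.
\item After combining the two summands in \eqref{bbX0a}, the kernel contains $\sin((t_1-t_2)\jb{n_{23}})$, which vanishes on the diagonal $t_1=t_2$. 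So even a Stratonovich trace correction would be zero; your bound $|\sin|\le 1$ hides this.
\end{itemize}
Bounding an extra deterministic term does not invalidate the final estimate, but the decomposition you assert is false.

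The more substantive consequence is that you place the hypotheses in the wrong term. You say $s_0<\s+1$ and \eqref{RG0} enter through the contraction term. Since that term does not exist, both must come out of the $I_2$ sum that you deferred as the ``main obstacle''. In the paper they arise as follows.
\begin{itemize}
\item $s_0<\s+1$ comes from the regime $N\sim N_1\sim N_{\max}$. There the output frequency is carried by the noise mode $n_1$ and the only decay is $N^{s_0-1}N_1^{-\s}\sim N^{s_0-1-\s}$, because $N_{23}$ may be small.
\item $\s>-\frac12$ comes from $N_1\sim N_{23}\sim N_{\max}$. That regime requires $(s\wedge\s)+\s+1>0$, via the identity \eqref{RGY4}.
\item The constraint $s_0<(s\wedge\s)+(\s\wedge 0)+2$ comes from $N\sim N_{23}\sim N_{\max}$.
\end{itemize}
To complete (ii), drop the contraction term and carry out this case analysis over the configurations \eqref{EN3}; that is where the stated conditions are actually forced.
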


\begin{remark}\label{REM:reg1}\rm 
Let $s \ge 0$.
Then, under \eqref{RG0}, 
the condition \eqref{RG1}
reduces to 
\begin{align}
s_0 < \begin{cases}
\s + 1, & \text{if } s\ge \s, \\
\min(s + 2, \s + 1), & \text{if } \s > s \ge 0.
\end{cases}
\label{RG1x}
\end{align}

\end{remark}

\begin{proof}[Proof of Proposition \ref{PROP:driver2}]

We first note that the bounds  \eqref{ZG2}
and \eqref{ZG2a}
 follow from 
  Kolmogorov's continuity criterion (Lemma \ref{LEM:kolm})
  with 
 \eqref{ZG1} and 
\eqref{ZG1a}, respectively.
A direct computation with 
\eqref{bbX0a}
shows that 
$(\vXX, \vbbX)$ satisfies Chen's relation \eqref{Chen2}.
(Here, we need the condition $s_0 \ge s$
such that the operator compositions in~\eqref{bbX0a}
are well defined.)
Moreover, from \eqref{ZG3} and \eqref{con1x}, we have 
\begin{align*}
(\vXX, \vbbX )  \in \V^{\frac 1\al}_{2,T} \LOP(\H^s(\T^d);\H^{s_0}(\T^d))
\times 
\V^{\frac 1 {2\al}}_{2,T} \LOP(\H^s(\T^d);\H^{s_0}(\T^d)),
\end{align*}

\noi
almost surely.  
Hence, we focus on proving \eqref{ZG1} and \eqref{ZG1a} in the following.

The first bound  \eqref{ZG1}
follows from proceeding as in the proof of Proposition \ref{PROP:driver1}.
From~\eqref{TensorR1}, we have 
\begin{align}
\|\ff_{nn_2}^{(i,j)} (t') \|_{\l^\infty_{nn_2} L^2_{t'}}
 \le |t-r|^\frac 12 
\label{ZG4}
\end{align}

\noi
for $i,j=1,2$.
Then, \eqref{ZG1} follows from 
\eqref{XX2b} (with $\be = \frac 12$), \eqref{XX4a}, 
\eqref{XX4b}, \eqref{XX4c}, 
and \eqref{ZG4}, provided that \eqref{YG0} holds.

In view of \eqref{bbX0}, 
the second bound \eqref{ZG1a} follows once 
we 
 establish the following bound  for 
 each component $\bbX^{(i,j)}_{t,r}$:
\begin{align}
\label{bbX0aa}
\begin{split}
\sum_{i, j= 1}^2
\big\| \|  \bbX^{(i,j)}_{t,r}  \|_{\LOP(H^{s+1-j}; H^{s_0+1-i})} \big\|_{L^p(\Om)} 
 & \les
p 
\| \Phi\|_{\HS(L^2;H^\s)}^2  |t-r| 
\end{split}
\end{align}

\noi
for any 
 $(t, r) \in \Dl_{2, T}$.
From  \eqref{bbX0a}
 with  \eqref{XX0a}, we have 
\begin{align}
\label{bbX0b}
\begin{split}
& \Ft_x( \jb{\nabla}^{s_0+1-i } \bbX^{(i,j)}_{t,r} \jb{\nabla}^{-s-1+j} f) (n)\\
& \quad  = \sum_{n_3} \ft{f}(n_3) I_2 [ \wt{\hf}_{nn_3}(n_A) \wt{\ff}^{(i,j)}_{nn_3}(t_A,n_A)  ], 
\end{split}
\end{align}

\noi
where $I_2$ denotes the second order multiple stochastic integral defined in Definition~\ref{DEF:MSI}, $A = \{1,2\}$, and
$\wt{\hf}_{n n_3}$ and  $\wt{\ff}_{n n_3}^{(i, j)}$, $i, j = 1, 2$,  are given by
\begin{align}
\label{tensor2b}
\begin{split}
\wt{\hf}_{nn_3} (n_A) &  =   \ind_{ n=n_{123} } \frac{  \jb{n}^{s_0-1}}{ \jb{n_{23}} \jb{n_3}^{s} }
 \phi_{n_1} \phi_{n_2} , \\
\wt{\ff}^{(i,j)}_{nn_3}(n_A,t_A) 
& =  (-1)^{i} \ind_{\Dl_{2, [r, t]}}(t_1, t_2)\\
& \quad \times   F^{(i)}(t_1\jb{n}) F^{(j+1)}(t_2\jb{n_{3}}) \sin((t_1-t_2)\jb{n_{23}}) , \\
\end{split}
\end{align}

\noi
where
$n_{123} = n_1 + n_2+n_3$ as in \eqref{ord3}, 
$\Dl_{2, [r, t]}$ is as in 
Definition \ref{DEF:V23}\,(iii), 
and $F^{(k)} $ is as in~\eqref{ord4}.
Hence, 
in view of~\eqref{bbX0b}, 
we see that 
the  bound \eqref{bbX0aa} follows once we prove
\begin{align}
\label{bbX1}
\big\| \|
I_2[ \wt{\hf}_{nn_3}(n_A) \wt{\ff}^{(i,j)}_{nn_3}(n_A,t_A)  ]
 \|_{n_3 \to n } \big\|_{L^p(\Om)}
\les
p
 \|\Phi\|_{\HS(L^2;H^\s)}^2|t-r|
\end{align}

\noi
for  $i,j=1,2$.

Given dyadic $N, N_1, N_2, N_3, N_{23} \ge1$, we set 
\begin{align}
\begin{split}
\wt \hf^{\bf N}_{n n_3}(n_A) & = 
\wt\hf^{N, N_1, N_2, N_3, N_{23}}_{n n_3}(n_A)
= \ind_{\wt E_{\bf N}}
\cdot 
\wt \hf_{n n_3}(n_A), 
\end{split}
\label{bbX1x}
\end{align}

\noi
where $\wt E_{\bf N}$ is defined by 
\begin{align*}
\wt E_{\bf N} = \big\{(n, n_1, n_2, n_3) \in \Z^4:
\ & n = n_{123},  \, |n|\sim N, \\
&  |n_j|\sim N_j, \, j = 1, 2, 3, \, |n_{23}|\sim N_{23}\big\}.
\end{align*}

\noi
Under $n = n_1 + n_{23}$ 
and $n_{23} = n_2 + n_3$, we have 
\begin{align}
\begin{split}
\max(N, N_1, N_{23})
& \sim \med(N, N_1, N_{23}),\\
\max(N_{23}, N_2, N_{3})
& \sim \med(N_{23}, N_2, N_{3}), 
\end{split}
\label{EN3}
\end{align}

\noi
where 
$\max(A, B, C)$ (and 
$\med(A, B, C)$) denotes
the largest element (and the second largest element, respectively)  in $A$, $B$, and $C$.
Then, 
given any $1\le p < \infty$ and $\ta > 0$, 
by applying  the random tensor estimate (Lemma~\ref{LEM:RTE}), we have
\begin{align}
\begin{split}
&
\big\| \| I_2[ \wt{\hf}_{nn_3}(n_A) \wt{\ff}^{(i,j)}_{nn_3}(n_A,t_A)  ] \|_{n_3 \to n} \big\|_{L^p(\Om)} \\
&\quad 
\les p  \sum_{\substack{N, N_1, N_2, N_3, N_{23} \ge1 \\\text{dyadic}\\\eqref{EN3}}}  
\| \wt{\hf}_{nn_3}^{\bf N}(n_A) \|_{\l^2_{nn_3 n_A}}^{\ta} \| \wt{\ff}_{nn_3}^{(i,j)} (n_A,t_A) \|_{\l^\infty_{nn_3 n_A} L^2_{t_A}}\\
& \hphantom{XXXXXXXXXX} \times 
 \Big( \max_{(B,C)} \| \wt{\hf}_{nn_3}^{\bf N}(n_A) \|_{n_3 n_B \to n n_C}   \Big)^{1-\ta} \\
\end{split}
\label{bbX1a}
\end{align}

\noi
where the maximum is taken over all partitions $(B,C)$ of $A = \{1, 2\}$.

From  \eqref{tensor2b}, we have 
\begin{align}
\| \wt{\ff}_{nn_3}^{(i,j)} (n_A,t_A) \|_{ \l^\infty_{nn_3 n_A }L^2_{t_A}}^2
& \le \int_r^t \int_r^{t_1} 1 \, dt_2 d t_1
 \sim |t-r|^2.
 \notag 
\end{align}

\noi
From \eqref{bbX1x} with  \eqref{tensor2b}
and \eqref{phi1a},
 we have 
\begin{align}
& \|  \wt{\hf}_{nn_3}^{\bf N}(n_A) \|_{\l^2_{nn_3 n_A}}
\sim  \frac{  N^{s_0-1+\frac d2}}{ N_{23}N_1^\s N_2^\s N_3^{s} }
\|\Phi \|_{\HS(L^2;H^\s)}^2.
\label{bbX1y}
\end{align}

\noi
From \eqref{op-norm-dual} and Cauchy-Schwarz's inequality with \eqref{EN3}
and \eqref{phi1a}, we have
\begin{align}
\label{bbX1c}
\begin{split}
& \| \wt{\hf}^{\bf N}_{nn_3}(n_A) \|_{n_3 n_B \to nn_C} \\
& \quad = 
\sup_{\|f\|_{\l^2_{n_3 n_B}} = \|g\|_{\l^2_{nn_C}} = 1} \Big| 
\sum_{n,n_3, n_B, n_C} \ind_{\wt E_{\bf N}}\cdot \wt\hf_{nn_3}(n_A) f_{n_3 n_B} g_{n n_C} \Big| \\
& \quad  \les 
\frac{  N^{s_0-1}}{ N_{23}N_1^\s N_2^\s N_3^{s} }\\
& \quad  \quad \times  \sup_{\|f\|_{\l^2_{n_3 n_B}} = \|g\|_{\l^2_{nn_C}} = 1} \sum_{n,n_3, n_A}
 \ind_{\wt E_{\bf N}}\cdot
\prod_{j= 1}^2 \jb{n_j}^\s |\phi_{n_j}|\cdot   |f_{n_3 n_B} g_{nn_C} | \\
& \quad  \le 
\frac{  N^{s_0-1}}{ N_{23}N_1^\s N_2^\s N_3^{s} }
\|\Phi \|_{\HS(L^2;H^\s)}^2.
\end{split}
\end{align}

Therefore, putting \eqref{bbX1a}, \eqref{bbX1a}
\eqref{bbX1y}, and \eqref{bbX1c} together, 
we obtain
\begin{align}
\begin{split}
&
\big\| \| I_2[ \wt{\hf}_{nn_3}(n_A) \wt{\ff}^{(i,j)}_{nn_3}(n_A,t_A)  ] \|_{n_3 \to n} \big\|_{L^p(\Om)} \\
&\quad 
\les p  
  \|\Phi\|_{\HS(L^2;H^\s)}^2 |t-r|
\sum_{\substack{N, N_1, N_2, N_3, N_{23} \ge1 \\\text{dyadic}\\\eqref{EN3}}}  
\frac{  N^{s_0-1+ \frac d2 \ta}}{ N_{23}N_1^\s N_2^\s N_3^{s} }\\
& \quad \les
p
 \|\Phi \|_{\HS(L^2;H^\s)}^2 |t-r|,
\end{split}
\label{bbX2}
\end{align}

\noi
provided that \eqref{RG0} and \eqref{RG1}  hold
and $\ta > 0$ is sufficiently small.
Here, the second inequality in~\eqref{bbX2} follows
from carrying out case-by-case analysis
(where we used  \eqref{EN3}).
In the following, we use 
\begin{align}
 s + \s > 0
 \qquad \text{and}\qquad  \s > - \frac 12
 \label{RGYx}
\end{align}

\noi
where the first bound follows from \eqref{RGY2}
and the second one follows from \eqref{RGY6}.

We also note the following identity:
\begin{align}
\begin{split}
\min \big( s + (\s \wedge 0), (s\wedge 0) + \s\big)
& =  (s\wedge \s)  + \big((s\vee \s) \wedge 0\big)\\
& =  (s\wedge \s),  
\end{split}
\label{RGY4}
\end{align}

\noi
where the second equality follows under $s+\s > 0$.

\smallskip
\begin{itemize}
\item[(a)] 
$N\sim N_1 \ges N_{23}$.

\smallskip

\begin{itemize}
\item[(a.i)] 
$N \sim N_1 \sim N_{\max}:= \max(N, N_1, N_2, N_3)$. 
In this case, 
we carry out case-by-case analysis, depending on 
the sizes of $N_2$, $N_3$, and $N_{23}$, using \eqref{EN3}, 
and conclude that 
\eqref{bbX2} holds if 
\begin{align}
\begin{split}
s_0 
& <  \s + 1 
+ \min\Big(\big((s\wedge 0) + \s   +1\big)\wedge 0, \,
\big(s +  (\s\wedge 0) +1\big)\wedge 0\Big)\\
& = \s + 1 + \big(((s\wedge \s) + 1)\wedge 0\big).
\end{split}
\label{RGY1}
\end{align}

\noi
Here, we used the condition $s+\s > 0$
in \eqref{RGYx} and the identity \eqref{RGY4}.

\smallskip

\item[(a.ii)]
 $N_2 \sim N_3\sim N_{\max} \gg  N \sim N_1$.
In this case, \eqref{bbX2} holds if 
\begin{align}
s+\s > 0
\qquad \text{and}\qquad
s_0 < s+ 2\s + 1.
\label{RGY2}
\end{align}

\end{itemize}

\smallskip
\item[(b)] 
$N\sim N_{23} \ges N_1$.

\smallskip
\begin{itemize}
\item[(b.i)] 
$N\sim N_{23} \sim N_{\max}$. 
In this case, we have $N_2 \vee N_3 \sim N_{\max}$, 
and \eqref{bbX2} holds if 
\begin{align}
s_0 < (s\wedge \s)  + (\s \wedge 0) + \big((s\vee \s) \wedge 0\big)+ 2
= (s\wedge \s)  + (\s \wedge 0) +  2, 
\label{RGY3}
\end{align}

\noi
where we used \eqref{RGY4}.

\smallskip
\item[(b.ii)]
 $N_2 \sim N_3\sim N_{\max} \gg N\sim N_{23}$.
In this case, \eqref{bbX2} holds if 
\begin{align}
s+\s > 0
\qquad \text{and}\qquad 
s_0 < s + \s +( \s\wedge 0) + 2.
\label{RGY5}
\end{align}
\end{itemize}

\smallskip
\item[(c)] 
$N_1\sim N_{23} \ges N$.
\smallskip
\begin{itemize}
\item[(c.i)] 
$N_1\sim N_{23}  \sim N_{\max}$.
In this case, we have $N_2 \vee N_3 \sim N_{\max}$, 
and \eqref{bbX2} holds if 
\begin{align*}
(s\wedge \s) + \s + 1 > 0 \vee (s_0 - 1).
\end{align*}

\noi
Here,  we  used the identity \eqref{RGY4}.
Under $s + \s > 0$, this simplifies to 
\begin{align}
\s > - \frac 12 \qquad \text{and}\qquad 
s_0 < (s\wedge \s) + \s + 2 .
\label{RGY6}
\end{align}

\smallskip
\item[(c.ii)]
 $N_2 \sim N_3\sim N_{\max} \gg N_1\sim N_{23}$.
In this case, 
\eqref{bbX2} holds if 
\begin{align*}
s + \s > 0
\qquad \text{and}
\qquad  
    1 + 2\s + s > 0\vee (s_0 - 1), 
\end{align*}

\noi
which simplifies to 
\begin{align}
s_0 < s + 2\s +2
\label{RGY7}
\end{align}

\noi
under \eqref{RGYx}.

\end{itemize}

\end{itemize}

\noi
Putting \eqref{RGY1}, \eqref{RGY2}, \eqref{RGY3}, 
\eqref{RGY5}, \eqref{RGY6}, and \eqref{RGY7}, 
we obtain the conditions \eqref{RG0} and~\eqref{RG1}.

This concludes the proof of~\eqref{bbX1}
(and hence \eqref{bbX0aa} and \eqref{ZG1a}).
\end{proof}

\section{Deterministic nonlinear estimates}
\label{SEC:nonlin}

In this section, we establish the deterministic nonlinear  estimates 
to handle the nonlinear Duhamel integral part of the Duhamel formulation 
\eqref{mild0} 
(more precisely, at the level of the interaction representation
in the vector-valued formulation; see \eqref{mild2}).
The material presented in this section is standard
for those in dispersive PDEs, 
but we include some details, 
in particular for readers
in stochastic analysis who may not be familiar with the subject.
Moreover, 
our presentation in Subsection \ref{SUBSEC:str}
(see Proposition \ref{PROP:Str2}), 
following \cite[Subsection 3.1]{GKO1}, 
provides a suitable wave admissible pair $(q, r)$
(and a dual admissible pair $(\wt q, \wt r)$)
in a concise manner, which is of  interest
in its own right.

In Subsection~\ref{SUBSEC:nonl1}, 
we cover the one-dimensional case.
In this case, the wave equation is non-dispersive
and thus we treat the nonlinearity simply via Sobolev's embedding.
In Subsection~\ref{SUBSEC:str}, 
we go over the Strichartz estimate, following 
the presentation in \cite[Subsection~3.1]{GKO1}.
In Subsection~\ref{SUBSEC:nonl2}, 
we treat the case  $d\ge2$ by employing  the Strichartz estimates.

\subsection{One-dimensional case}
\label{SUBSEC:nonl1}

In this subsection, we consider the case $d=1$.
As mentioned above,  the one-dimensional wave equation  is not dispersive
and, as a consequence,  
  there are no Strichartz estimates. 
In this case,  the critical regularity $s_\text{crit}$ is given by the Sobolev exponent as in \eqref{crit3}. 
See \cite{CCT} for a further discussion.

\begin{proposition}
\label{PROP:nonlin1}

Let $d = 1$.
Given an integer $k \ge 2$,  let  $s_{\textup{crit}} = \frac 12 - \frac 1k$ be as in \eqref{crit3}.
Then, given any  $s\ge s_{\textup{crit}}$ and  $1 <  p, q <\infty$, 
we have 
\begin{align}
\begin{split}
\bigg\| \int_0^t \S(-t')
\begin{pmatrix}
0 \\
u^k_{t'} - v^k_{t'}
\end{pmatrix}
dt'
\bigg\|_{U^p_{T} \H^s_x}
& \les
T \Big( \|u  \|_{L^\infty_T H^s_x}
+ \|v \|_{L^\infty_T H^s_x} \Big)^{k-1}
\| u - v \|_{L^\infty_T H^s_x}\\
& \les
T \Big( \|u  \|_{U^q_T H^s_x}
+ \|v \|_{U^q_T H^s_x} \Big)^{k-1}
\| u - v \|_{U^q_T H^s_x}
\end{split}
\label{nonlin1}
\end{align}

\noi
for any $T > 0$, 
where $\S(t)$ is the matrix-valued linear wave propagator
defined in \eqref{Sdef}.

\end{proposition}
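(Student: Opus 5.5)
The first inequality in \eqref{nonlin1} follows from the linear estimate \eqref{lin3} of Lemma \ref{LEM:lin}. Applied with $\vec F = (0, u^k - v^k)$, it gives
\begin{align*}
\bigg\| \int_0^t \S(-t')
\begin{pmatrix}
0 \\
u^k_{t'} - v^k_{t'}
\end{pmatrix}
dt'
\bigg\|_{U^p_{T} \H^s_x}
\les \| u^k - v^k \|_{L^1_T H^{s-1}_x}
\le T \| u^k - v^k\|_{L^\infty_T H^{s-1}_x}.
\end{align*}
Here we used that the $\H^s$-norm of $(0, F)$ is the $H^{s-1}$-norm of $F$, and Hölder's inequality in time. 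So it suffices to prove the fixed-time multilinear estimate
\begin{align*}
\| u^k - v^k\|_{H^{s-1}(\T)} \les \big(\|u\|_{H^s} + \|v\|_{H^s}\big)^{k-1} \|u - v\|_{H^s}.
\end{align*}
We write $u^k - v^k = (u-v)\sum_{j=0}^{k-1} u^j v^{k-1-j}$, which reduces this to showing $\|f_1 \cdots f_k\|_{H^{s-1}} \les \prod_j \|f_j\|_{H^s}$ for $s \ge \frac12 - \frac1k$.

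For the product estimate we split by the size of $s$.

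\emph{Case $s > \frac12$.} $H^s(\T)$ is an algebra, so $\|\prod f_j\|_{H^{s-1}} \le \|\prod f_j\|_{H^s} \les \prod \|f_j\|_{H^s}$.

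\emph{Case $\frac12 - \frac1k \le s \le \frac12$.} Note $s - 1 \le -\frac12 < 0$. By duality and Sobolev's embedding, $L^r(\T) \subset H^{s-1}(\T)$ whenever $H^{1-s} \subset L^{r'}$, that is, whenever $\frac1{r'} \ge \frac12 - (1-s)$. This always holds, and we may take $r = 1$ if $s < \frac12$ (since then $1 - s > \frac12$ and $H^{1-s} \subset L^\infty$). Then Hölder's inequality and $H^s \subset L^k$ (valid exactly when $s \ge s_{\rm sob} = \frac12 - \frac1k$) give $\|\prod f_j\|_{L^1} \le \prod \|f_j\|_{L^k} \les \prod \|f_j\|_{H^s}$.

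The endpoint $s = \frac12$ needs a small adjustment. $H^{1/2} \not\subset L^\infty$, but $H^{1/2} \subset L^q$ for every $q < \infty$. So we use $L^{1+} \subset H^{-1/2}$ together with Hölder's inequality in $L^{k(1+)}$. A mild care point is the range just above $\frac12$, where it is simplest to use the algebra property as in the first case.

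Finally, the second inequality in \eqref{nonlin1} follows from the embedding $U^q_T H^s_x \embeds L^\infty_T H^s_x$ in \eqref{Linfty}, applied to each factor. I expect no real obstacle here; the only delicate point is checking the Sobolev duality exponents at the endpoint $s = \frac12$ and at $s = s_{\rm crit}$. At $s = s_{\rm crit}$ the embedding $H^s \subset L^k$ is sharp but still true in one dimension, since $k < \infty$.
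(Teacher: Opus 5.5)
Your proof is correct. Its core is the same as the paper's:
\begin{itemize}
\item \eqref{lin3} reduces matters to $T\|u^k-v^k\|_{L^\infty_T H^{s-1}_x}$;
\item the dual Sobolev embedding $L^1(\T)\subset H^{-\sigma}(\T)$ for $\sigma>\frac12$, together with H\"older's inequality with every factor in $L^k$, controls the product;
\item the endpoint embedding $H^{s_{\textup{crit}}}(\T)\subset L^k(\T)$ closes the estimate;
\item \eqref{Linfty} gives the second inequality.
\end{itemize}

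The difference is in how you treat $s>s_{\textup{crit}}$. The paper makes no case split. It writes $\|u^k-v^k\|_{H^{s-1}}=\|\jb{\nabla}^{s-s_{\textup{crit}}}(u^k-v^k)\|_{H^{s_{\textup{crit}}-1}}$ and always uses $L^1\subset H^{s_{\textup{crit}}-1}$, which is legitimate since $1-s_{\textup{crit}}=\frac12+\frac1k>\frac12$. It then distributes $\jb{\nabla}^{s-s_{\textup{crit}}}$ over the factors with the fractional Leibniz rule (Lemma~\ref{LEM:leibniz}), bounding each factor in $W^{s-s_{\textup{crit}},k}\supset H^s$.

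You instead discard the surplus regularity:
\begin{itemize}
\item for $s<\frac12$ you use only $H^s\subset H^{s_{\textup{crit}}}$;
\item at $s=\frac12$ you pass to $L^{1+}\subset H^{-1/2}$;
\item for $s>\frac12$ you use the algebra property of $H^s(\T)$ and $\|\cdot\|_{H^{s-1}}\le\|\cdot\|_{H^s}$.
\end{itemize}

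Your route is more elementary. It avoids the fractional Leibniz rule, which for a $k$-fold product with $L^1$ as the target requires iterating the two-factor estimate. The price is a three-way case split, and in the last case you waste a full derivative. The paper's argument is a single chain, valid uniformly for all $s\ge s_{\textup{crit}}$, and always places exactly $s$ derivatives on the factors. Since only the qualitative bound with the factor $T$ is needed, either route suffices.
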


\begin{proof}

By duality and Sobolev's embedding, we have
\begin{align}
\begin{split}
\| f\|_{H^{-\s}}
& = \sup_{\|g\|_{H^{\s}}  \le 1}
\bigg|\int_\T fg dx\bigg|
\le \sup_{\|g\|_{H^{\s}}  \le 1} \|f \|_{L^1} \|g\|_{L^\infty}
\\
&
\les
   \|f \|_{L^1},
\end{split}
\label{1d1}
\end{align}

\noi
for $\s > \frac 12$.
Noting that $1 - s_{\text{crit}} > \frac 12$, 
it follows from 
 \eqref{lin3},  \eqref{1d1}, 
the fractional Leibniz rule (Lemma~\ref{LEM:leibniz}), and Sobolev's inequality
with \eqref{crit3}
that 
\begin{align}
\begin{split}
& \bigg\| \int_0^t \S(-t')
\begin{pmatrix}
0 \\
u^k_{t'} - v^k_{t'}
\end{pmatrix}
dt'
\bigg\|_{U^p_{T} \H^{s}_x}  \\
& \quad \les T \| \jb{\nabla}^{s-s_\textup{crit}}(u^k - v^k) \|_{L^\infty_T H^{s_\textup{crit}-1}_x } \\
& \quad \les T \| \jb{\nabla}^{s-s_\textup{crit}}(u^k - v^k) \|_{L^\infty_T L^1_x } \\
& \quad 
\les T \Big( \| \jb{\nabla}^{s-s_\textup{crit}} u\|_{L^\infty_T L^k_x}  + \| \jb{\nabla}^{s-s_\textup{crit}} v\|_{L^\infty_T L^k_x}  \Big)^{k-1} \| \jb{\nabla}^{s-s_\textup{crit}}(u-v) \|_{L^\infty_T L^k_x} \\
& \quad \les T \big(\| u \|_{L^\infty_T H^{s}_x} + \|v\|_{L^\infty_T H^{s}_x} \big)^{k-1} \| u-v \|_{L^\infty_T H^{s}_x}.
\end{split}
\label{1d2}
\end{align}

\noi
This proves the first inequality in \eqref{nonlin1}.
The second inequality in \eqref{nonlin1} follows from \eqref{Linfty}.
\end{proof}

\begin{remark}\label{REM:local}\rm
In proving the second inequality in \eqref{nonlin1}, 
we used \eqref{Linfty} 
to conclude 
\begin{align}
\|u\|_{L^\infty_T H^s_x}
\les 
\|u\|_{U^q_T H^s_x}.
\label{nonlin1x}
\end{align}

\noi
Strictly speaking, we need to use 
the definition 
\eqref{local} of the time restriction norm.
More precisely, 
given a function $u$  
on the time interval $[0, T]$, 
let $\wt u$ be an extension of $u$ on $\R$
such that $\wt u|_{[0, T]} = u$.
Then, by \eqref{Linfty}, we have 
\begin{align}
\|u\|_{L^\infty_T H^s_x}
\le  \|\wt u\|_{L^\infty_t (\R; H^s_x)}
\les 
\|\wt u\|_{U^q_t (\R;  H^s_x)}.
\notag 
\end{align}

\noi
By taking an infimum over all extensions $\wt u$ of $u$
with \eqref{local}, 
we  obtain \eqref{nonlin1x}.

In working with 
the time restriction norm in \eqref{local}, 
we need to consider  an extension, 
prove an estimate (while guaranteeing that the right-hand side is
independent of the choice of the extension), 
 and take an infimum over all extensions.
However, this procedure is standard
and we omit such details 
in the following.  

\end{remark}

\subsection{Strichartz estimates}
\label{SUBSEC:str}

In order to establish nonlinear estimates in the higher dimensional setting, 
we need to exploit dispersion in the form of the Strichartz estimates.
In this subsection, we recall the Strichartz estimates
and also establish 
a key proposition 
(Proposition~\ref{PROP:Str2})
which guarantees 
existence of suitable $s$-admissible and dual $s$-admissible pairs 
in studying the (deterministic) NLW \eqref{NLW}
via the Strichartz estimates.

We start by recalling the notion of wave admissible pairs.

\begin{definition}\label{DEF:admiss}
\rm

Let $d\ge2$.

\smallskip

\noi\textup{(i)} 
Let  $0<s < \frac d 2$.
We say that a pair $(q,r)$ is $s$-admissible if $2\le q \le \infty$ and $2 \le r <\infty$, 
\begin{align}
\begin{split}
&   \frac{1}{q} + \frac dr  = \frac d 2-  s,  \qquad \text{and} \qquad   \frac 2q + \frac{d-1}{r}   \leq \frac {d-1} 2.
\end{split}
\label{admis1a}
\end{align}

\smallskip

\noi\textup{(ii)} Let $0<s<1$. 
We say that a  pair $(\wt{q},\wt{r})$ is dual $s$-admissible\footnote{Here, we define
the notion of dual $s$-admissibility for the convenience of the presentation.
Note that $(\wt q, \wt r)$ is dual $s$-admissible
if and only if $(\wt q', \wt r')$ is $(1-s)$-admissible.}
if $1\le \wt{q} \le 2$, $1 < \wt{r} \le 2$, 
\begin{align}
\frac{1}{\wt{q}} + \frac{d}{\wt{r}} = \frac d 2 + 2 - s,
 \qquad \text{and} \qquad \frac{2}{\wt{q}} + \frac{d-1}{\wt{r}} \ge \frac{d+3}{2}.
\label{admis1aa}
\end{align}

\smallskip

\noi
We refer to the equalities in \eqref{admis1a}
and \eqref{admis1aa} as the scaling conditions, 
while we refer to 
 the inequalities 
 in  \eqref{admis1a} and \eqref{admis1aa} 
 as the admissibility conditions.

\end{definition}

The following lemma provides a more direct description of the 
(dual) wave admissible
 exponents; see \cite[Lemma~3.1]{GKO1} for the $d = 2$ case.

\begin{lemma}
\label{LEM:pair1}
Let $d \ge 2$.

\smallskip

\noi\textup{(i)} For $0<s < \frac d 2$, a pair $(q,r)$ is $s$-admissible 
if 
\begin{align}
\frac1q + \frac d r = \frac d 2 -s \quad \text{and} \quad 
\frac{2d}{d-s} \le r \le
\min\bigg\{ \frac{2d}{(d-1-2s)_+}, \frac{2(d+1)}{(d+1-4s)_+}  \bigg\},
\label{admis2aa}
\end{align}
and $r\ne\infty$,
where $x_+ := \max(x,0)$ with the understanding that $\frac10 = \infty$.

\smallskip

\noi\textup{(ii)} For
$0 < s <1$,
a pair $(\wt{q}, \wt{r})$ is dual $s$-admissible 
if 
\begin{align}
\begin{split}
& \frac1{\wt q} + \frac d{\wt r} = \frac d2 + 2  - s
\quad
\text{and} \quad\\
& \max\bigg\{1+,
\frac{2(d+1)}{d+5 - 4s}, \frac{2d}{d+3-2s}
 \bigg\}  \le   \wt r \le \frac{2d}{  d+ 2-2s}.
\end{split}
\label{admis3aa}
\end{align}

\end{lemma}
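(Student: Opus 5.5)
The plan is to solve the scaling equality for $\frac1q$ (respectively $\frac1{\wt q}$) in terms of $\frac1r$, substitute into the remaining constraints of Definition~\ref{DEF:admiss}, and read each constraint as a bound on $r$. The lemma is purely algebraic, so the proof reduces to checking that the range \eqref{admis2aa} (respectively \eqref{admis3aa}) for $r$ forces every condition in \eqref{admis1a} (respectively \eqref{admis1aa}).

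\emph{Part (i).} Set $\frac1q = \frac d2 - s - \frac dr$. We check the conditions in turn.
\begin{itemize}
\item The condition $q \le \infty$, i.e.\ $\frac1q \ge 0$, is equivalent to $r \ge \frac{2d}{d-2s}$. This is implied by $r \ge \frac{2d}{d-s}$, with room to spare.
\item The condition $q \ge 2$, i.e.\ $\frac1q \le \frac12$, is equivalent to $\frac dr \ge \frac{d-1-2s}{2}$, i.e.\ $r \le \frac{2d}{(d-1-2s)_+}$. When $d-1-2s \le 0$ this holds automatically, which explains the convention $\frac10 = \infty$.
\item The admissibility inequality becomes
\[
2\Big(\tfrac d2 - s - \tfrac dr\Big) + \tfrac{d-1}{r} \le \tfrac{d-1}{2},
\]
i.e.\ $\frac{d+1}{r} \ge \frac{d+1-4s}{2}$. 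This gives $r \le \frac{2(d+1)}{(d+1-4s)_+}$, again with the positive-part convention.
\item The requirement $2 \le r < \infty$ follows from $\frac{2d}{d-s} \ge 2$ together with the hypothesis $r \ne \infty$.
\end{itemize}
I will double-check whether the stated lower bound $\frac{2d}{d-s}$ is meant to be weaker or stronger than what is needed. Since it is at least $\frac{2d}{d-2s}$, any $r$ in the stated range is admissible, and "if" is all that is claimed, so the weaker-looking lower bound is harmless.

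\emph{Part (ii).} Set $\frac1{\wt q} = \frac d2 + 2 - s - \frac d{\wt r}$ and check the conditions in the same way.
\begin{itemize}
\item The condition $\wt q \ge 1$, i.e.\ $\frac1{\wt q} \le 1$, is equivalent to $\frac d{\wt r} \ge \frac d2 + 1 - s$, i.e.\ $\wt r \le \frac{2d}{d+2-2s}$.
\item The condition $\wt q \le 2$, i.e.\ $\frac1{\wt q} \ge \frac12$, gives $\frac d{\wt r} \le \frac{d+3-2s}{2}$, i.e.\ $\wt r \ge \frac{2d}{d+3-2s}$.
\item The admissibility inequality becomes
\[
2\Big(\tfrac d2 + 2 - s - \tfrac d{\wt r}\Big) + \tfrac{d-1}{\wt r} \ge \tfrac{d+3}{2},
\]
i.e.\ $\frac{d+1}{\wt r} \le \frac{d+5-4s}{2}$, so $\wt r \ge \frac{2(d+1)}{d+5-4s}$. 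Note that $d+5-4s > 0$ because $s < 1$.
\item The condition $1 < \wt r \le 2$: the lower bound is encoded by $1+$. The upper bound follows because $\frac{2d}{d+2-2s} \le 2$ is equivalent to $2 - 2s \ge 0$, which holds for $s < 1$.
\end{itemize}

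There is no real obstacle here; the only delicate points are bookkeeping ones. First, track the sign of the denominators, which is the reason for the positive-part convention. Second, confirm that the endpoint constraints ($q = 2$, $\wt q = 1$, $r \ne \infty$) are consistent with the claimed ranges, and that these ranges are nonempty where they are used later.
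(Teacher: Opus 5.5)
Your method is the same as the paper's: solve the scaling identity for $\frac1q$ (resp.\ $\frac1{\wt q}$) and turn each remaining constraint into a bound on $r$ (resp.\ $\wt r$). Part (ii) is correct as written. In part (i), your equivalences $q\ge 2\iff r\le\frac{2d}{(d-1-2s)_+}$ and $\frac2q+\frac{d-1}r\le\frac{d-1}2\iff r\le\frac{2(d+1)}{(d+1-4s)_+}$ are exactly the two computations in the paper's proof. The paper stops there and never checks $q\le\infty$.

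The step that fails is precisely that check. You correctly derived $\frac1q\ge 0\iff r\ge\frac{2d}{d-2s}$. You then asserted that this follows from $r\ge\frac{2d}{d-s}$ because $\frac{2d}{d-s}$ ``is at least $\frac{2d}{d-2s}$''. The inequality goes the other way: for $0<s<\frac d2$ we have $d-s>d-2s>0$, hence $\frac{2d}{d-s}<\frac{2d}{d-2s}$.

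Concretely, at the stated lower endpoint $r=\frac{2d}{d-s}$ the scaling condition forces $\frac1q=\frac d2-s-\frac{d-s}2=-\frac s2<0$. In fact $\frac1q<0$ for every $r\in\big[\frac{2d}{d-s},\frac{2d}{d-2s}\big)$; for example, $d=3$, $s=1$, $r=3$ gives $\frac1q=-\frac12$. So part (i) is false as printed, and no argument can prove it. The lower endpoint should be $\frac{2d}{d-2s}$, the endpoint corresponding to $q=\infty$; the same misprint reappears in \eqref{op2x}.

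With that correction your proof is complete. The bound $\frac{2d}{d-2s}\ge 2$ gives $r\ge 2$. The range is nonempty because $\frac{2d}{d-2s}\le\frac{2d}{(d-1-2s)_+}$ and $\frac{2d}{d-2s}\le\frac{2(d+1)}{(d+1-4s)_+}$; the latter reduces to $s(d-1)\ge 0$.
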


\begin{proof}
It suffices to show that 
the second conditions in \eqref{admis2aa}
and \eqref{admis3aa} imply
the admissibility conditions in \eqref{admis1a}
and \eqref{admis1aa}, respectively.

\smallskip

\noi
(i) 
From the scaling condition in \eqref{admis1a}, 
we have 
\begin{align*}
2 \le q
& \quad \iff
\quad 
r \le \frac{2d}{(d-1-2s)_+}, \\
\frac 2 q + \frac{d-1}{r} \le \frac{d-1}{2}
& \quad \iff
\quad   r \le \frac{2(d+1)}{(d+1-4s)_+}  .
\end{align*}

\noi
Hence, \eqref{admis2aa} implies \eqref{admis1a}.

\smallskip
\noi
(ii)
From the scaling condition in \eqref{admis1aa} with $0 < s < 1$, 
we have 
\begin{align*}
1 \le \wt{q}
& \quad \iff
\quad 
 \wt{r} \le \frac{2d}{d+2-2s}, \\
2 \ge \wt{q}
& \quad \iff
\quad  \wt{r} \ge \frac{2d}{d+3-2s} , \\
\frac{2}{\wt{q}} + \frac{d-1}{\wt{r}} \ge \frac{d+3}{2}
& \quad \iff
\quad  \wt{r} \ge \frac{2(d+1)}{d + 5 -4s}.
\end{align*}

\noi
Hence, \eqref{admis3aa} implies \eqref{admis1aa}.
\end{proof}

We now state the  Strichartz estimates for the wave equation.
The Strichartz estimates on $\R^d$ have been studied by many mathematicians. 
See, for example, \cite{GV95,LS95,KT98}.
See also \cite{TAO, KTV14}.
The first  estimate \eqref{Str1a}  for $\S(t)\vec{\phi}$ on $\T^d$ 
follows from 
\cite[Corollary~2.6]{KTV14} on $\R^d$   and the finite speed of propagation for the wave equation. 
The second estimate \eqref{Str2a} for $\S(-t)^\tf \vec{\psi}$ can be similarly obtained from 
those for the half wave equation in \cite[Corollary~2.5]{KTV14}.
The nonhomogeneous estimate
\eqref{Str3}
follows from 
\cite[Corollary~2.6]{KTV14} 
(with the finite speed of propagation)
and \eqref{Str1a};
see \cite[p.\,7348]{GKO1}.

\begin{lemma}[Strichartz estimates]
\label{LEM:Slin}
Let $d\ge2$.
Given  $0 < s_1 < \frac d2$ and $0<s_2<1$,
 let $(q_1,r_1)$ and $(q_2,r_2)$ be $s_1$-admissible and dual $s_2$-admissible, 
 respectively.
Then, we have
\begin{align}
\big\| \pi_1 [\S(t) \vec{\phi}\,]\big\|_{L^\infty_T  H^{s_1}_x } 
+ \big\| \pi_1 [ \S(t) \vec{\phi}\,]\big \|_{L^{q_1}_T L^{r_1}_x} &\les \|\vec{\phi} \|_{\H^{s_1}},
\label{Str1a}
\\
\big\| \pi_2 [\S(-t)^\tf \,\vec{\psi}\,] \big\|_{L^\infty_T  H^{1-s_2}_x} 
+ \big\| \pi_2  [\S(-t)^\tf \, \vec{\psi}\,] \big\|_{L^{q_2}_T L^{r_2}_x} &\les \|\vec{\psi} \|_{(\H^{s_2})^*}
\label{Str2a}
\end{align}

\noi
for any $0 < T \le 1$, 
where 
 $\pi_j \vec f $ denotes the projection onto the $j$th component of a vector $\vec f$
 as in \eqref{vec1}, 
$\S(t)^\tf$ denotes the matrix transpose of $\S(t)$  in \eqref{Sdef}, 
and $(\H^{s_2}(\T^d))^* = H^{-s_2}(\T^d) \times H^{1-s_2}(\T^d)$.

When $s= s_1 = s_2 \in (0, 1)$, 
we also have the following nonhomogeneous Strichartz estimates{\rm :}
\begin{align}
\| (\If(F), \dt \If(F))\|_{L^\infty_T \H^s_x}
+ \| \If(F)\|_{L^{q_1}_T L^{r_1}_x}
\les \|F\|_{L^{q_2}_TL^{r_2}_x}\wedge
\|F\|_{L^{1}_TH^{s-1}_x}
\label{Str3}
\end{align}

\noi
for any $0 < T \le 1$, 
where $\If(F)$ is given by 
\begin{align}
\If(F)(t) = \int_0^t S(t-t')F(t') dt'.
\label{If1}
\end{align}

\noi
Here, $S(t) = \frac{\sin(t\jb{\nb})}{\jb{\nb}}$ is as in \eqref{S0}.

\end{lemma}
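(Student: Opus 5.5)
The plan is to reduce everything to the Euclidean Strichartz estimates of \cite{KTV14} together with finite speed of propagation, and to handle each of \eqref{Str1a}, \eqref{Str2a}, and \eqref{Str3} in turn. Given $\vec\phi\in\H^{s_1}(\T^d)$, I view it as a $2\pi\Z^d$-periodic function on $\R^d$ and multiply by a smooth cutoff $\chi$ that equals $1$ on a ball containing $[-\pi,\pi]^d$ enlarged by a unit neighbourhood. Since $T\le 1$ and the Klein-Gordon propagator $\S(t)$ has unit propagation speed, the solution with data $\chi\vec\phi$ on $\R^d$ coincides with the periodic solution $\S(t)\vec\phi$ on $[0,T]\times[-\pi,\pi]^d$. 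The $\R^d$ estimate \cite[Corollary~2.6]{KTV14} then gives the $L^{q_1}_TL^{r_1}_x$ bound, and the $L^\infty_TH^{s_1}_x$ bound is just unitarity of $\S(t)$ on $\H^{s_1}$. I would then control $\|\chi\vec\phi\|_{\H^{s_1}(\R^d)}\les\|\vec\phi\|_{\H^{s_1}(\T^d)}$, which follows from standard localization estimates for fractional Sobolev norms of periodic functions. The one place needing care is the sharp admissibility $\frac2q+\frac{d-1}r\le\frac{d-1}2$ together with the homogeneous (wave) versus inhomogeneous (Klein-Gordon) symbol. For this I would work at low frequency with the trivial bound coming from Sobolev embedding on a compact time interval, and at high frequency use that $\jb{\nb}$ and $|\nb|$ are comparable, quoting the Klein-Gordon version in \cite{KTV14}.

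For \eqref{Str2a}, I would compute $\pi_2[\S(-t)^\tf\vec\psi]$ explicitly from \eqref{Sdef}. It equals a combination of $\frac{\sin(t\jb\nb)}{\jb\nb}\psi_1$ and $\cos(t\jb\nb)\psi_2$, i.e. a linear wave solution with data in $H^{-s_2}\times H^{1-s_2}$ for its time derivative. That is the same as saying the solution lies in $H^{1-s_2}$ at the level of position, so it is controlled by $\|\vec\psi\|_{(\H^{s_2})^*}$. Since $(\wt q',\wt r')$ is $(1-s_2)$-admissible, I would invoke the half-wave estimates \cite[Corollary~2.5]{KTV14}, splitting into $e^{\pm it\jb\nb}$ pieces, with the same localization argument as before. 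One point to settle is that the statement puts $(q_2,r_2)$ itself, not $(q_2',r_2')$, on the left. I would read the estimate as the dual form: the pairing against $F\in L^{q_2}_TL^{r_2}_x$ is what gets used later in \eqref{lin2}, and the $L^{q_2'}_TL^{r_2'}_x$ bound for the free solution is exactly the one the $TT^*$/duality argument requires. I would make this precise by stating which of the two forms is actually meant before proving it.

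For \eqref{Str3}, the bound by $\|F\|_{L^1_TH^{s-1}_x}$ comes from Minkowski's inequality in $t'$ combined with \eqref{Str1a} applied to the data $(0,F(t'))$ at each time, after a time translation. The bound by $\|F\|_{L^{q_2}_TL^{r_2}_x}$ is the genuinely inhomogeneous estimate. I would derive it from the $TT^*$ argument: combine \eqref{Str1a} with the dual of the homogeneous estimate for the pair $(q_2',r_2')$ at regularity $1-s$, and use the Christ-Kiselev lemma to pass to the retarded integral $\int_0^t$. Christ-Kiselev requires $q_2<q_1$, which holds except at the endpoint $q_1=q_2'=2$. That endpoint is the main obstacle, and there I would instead quote the Keel-Tao endpoint retarded estimate \cite{KT98} as packaged in \cite[Corollary~2.6]{KTV14}, transferred to the torus through finite speed of propagation as above; this is the route indicated in \cite[p.~7348]{GKO1}.
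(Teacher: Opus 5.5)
Your proposal is correct and follows essentially the paper's route: \eqref{Str1a} comes from the Euclidean estimates of \cite{KTV14} plus finite speed of propagation, \eqref{Str2a} from the corresponding half-wave estimates, and \eqref{Str3} from \cite{KTV14} combined with \eqref{Str1a} as in \cite{GKO1}; your $TT^*$/Christ--Kiselev argument simply unpacks that citation. Your reading of \eqref{Str2a} with $(q_2',r_2')$ on the left is the intended one (this is how Lemma~\ref{LEM:STR} is used in \eqref{nonl2ab}, and the literal version is trivial since $q_2,r_2\le 2$ and $T\le 1$), and the double endpoint $q_1=q_2=2$ you single out never occurs when $s_1=s_2=s$, since $q_1=2$ forces $s\ge\frac{d+1}{2(d-1)}$ while $q_2=2$ forces $s\le\frac{d-3}{2(d-1)}$.
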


From the transference principle 
(Lemma~\ref{LEM:trf} with $T_1 = (\pi_1 ,0)$ and $T_2= (0,\pi_2)$)
and   Lemma~\ref{LEM:Slin} with  \eqref{local}
(see also 
Remark \ref{REM:local}), 
we obtain the following 
version of the Strichartz estimates
on space-time functions.


\begin{lemma}\label{LEM:STR}

Let $d\ge2$.
Given  $0 < s_1< \frac d2$ and $0<s_2<1$,
 let $(q_1,r_1)$ and $(q_2,r_2)$ be $s_1$-admissible and dual $s_2$-admissible, 
 respectively.
Then, we have 
\begin{align}
\notag
\| \pi_1 \vec{u} \|_{L^{q_1}_T L^{r_1}_x} & \les \| \S(-t) \vec{u} \|_{U^{q_1}_T  \H^{s_1}_x},
 \\
\notag
\| \pi_2 \vec{v} \|_{L^{q_2}_T L^{r_2}_x} & \les \| \S(t)^\tf \,\vec{v} \|_{U^{q_2}_T (\H^{s_2}_x)^*}
\end{align}

\noi
for any $0<T \le 1$, 
where $\S(t)^\tf$ denotes the matrix transpose of $\S(t)$ in \eqref{Sdef}.

\end{lemma}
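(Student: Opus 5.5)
The two estimates follow from the transference principle, Lemma~\ref{LEM:trf}, applied to the Strichartz bounds of Lemma~\ref{LEM:Slin}. We then pass to the time-restricted norms through the extension argument of Remark~\ref{REM:local}.

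\emph{First estimate.} Take $T_1 = (\pi_1, 0)$, acting on $\H^{s_1}(\T^d)$ and viewed as a map into $L^1_{\rm loc}\times L^1_{\rm loc}$, and take $q = q_1$, $r = r_1$. The hypothesis of Lemma~\ref{LEM:trf} is a global-in-time bound
\[
\|\pi_1 \S(t)\vec\phi\|_{L^{q_1}_t(\R; L^{r_1}_x)} \les \|\vec\phi\|_{\H^{s_1}},
\]
whereas \eqref{Str1a} only holds on $[0,T]$ with $T\le 1$. I would handle this by a cutoff: replace $T_1$ by $\ind_{[0,T]}(t)\,\pi_1$. Since the transference proof works atom by atom on step functions, a time-localized operator is harmless. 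Equivalently, one extends $\vec u$ on $[0,T]$ to a function on $\R$ and measures only $[0,T]$. Note also that on $[-1,0]$ the same bound holds by time reversal, so each unit interval is covered.

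With $q_1<\infty$, Lemma~\ref{LEM:trf} then gives
\[
\|\ind_{[0,T]}\pi_1 \wt{\vec u}\|_{L^{q_1}_tL^{r_1}_x} \les \|\S(-t)\wt{\vec u}\|_{U^{q_1}(\R;\H^{s_1})}
\]
for every extension $\wt{\vec u}$ of $\vec u$. Taking the infimum over extensions, as in \eqref{local}, yields the first bound. For $q_1 = \infty$, I would instead use \eqref{Linfty} together with the $L^\infty$ part of \eqref{Str1a}.

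\emph{Second estimate.} The argument is the same with $T_2 = (0,\pi_2)$ on $(\H^{s_2})^*$, using \eqref{Str2a}. Here the dual admissibility gives $1\le q_2\le 2<\infty$, so Lemma~\ref{LEM:trf} applies directly. It produces the norm $\|\S(t)^\tf\,\vec v\|_{U^{q_2}}$.

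The sign convention needs checking. The hypothesis in Lemma~\ref{LEM:trf} involves $\S(-t)^\tf\vec\psi$, while its conclusion involves $\S(t)^\tf\vec v$. This matches \eqref{Str2a}, because writing $\vec v = \S(-t)^\tf\vec\psi$ on each atom piece gives $\S(t)^\tf\vec v = \vec\psi$, constant in time.

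The only delicate point is reconciling the local-in-time estimates \eqref{Str1a}--\eqref{Str2a} with the global hypothesis of Lemma~\ref{LEM:trf}. I expect the cutoff or extension argument above to resolve it routinely.
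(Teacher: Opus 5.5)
Your proposal is correct and follows the paper's own argument. The paper also applies transference (Lemma~\ref{LEM:trf} with $T_1=(\pi_1,0)$ and $T_2=(0,\pi_2)$) to the Strichartz estimates of Lemma~\ref{LEM:Slin}, and then passes to the time-restriction norms via \eqref{local} and Remark~\ref{REM:local}. Your atom-by-atom use of the cutoff $\ind_{[0,T]}$ is a valid way to make explicit a point the paper leaves implicit: \eqref{Str1a} and \eqref{Str2a} hold only locally in time, whereas Lemma~\ref{LEM:trf} is stated with global-in-time hypotheses.
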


\medskip

Suppose that we can find
an $s$-admissible pair $(q,r)$ and a dual $s$-admissible pair $(\wt q,\wt r)$ so that 
\begin{align}
q \ge k \wt q \qquad \text{and} \qquad  r\ge k \wt r. 
\label{HK1}
\end{align}

\noi
Then,  H\"older's inequality  yields
\begin{align}
\|  u^k \|_{L^{\wt q}_T L^{\wt r}_x}
\les T^{\frac{1}{\wt q}- \frac{k}q} \|  u \|_{L^q_T L^r_x}^k. 
\label{HK2}
\end{align}

\noi
In this case, 
Proposition \ref{PROP:LWP}
on local well-posedness in $\H^s(\T^d)$ of NLW \eqref{NLW} 
follows
from  the Strichartz estimates (Lemma \ref{LEM:Slin}), 
 \eqref{HK2}, and a standard contraction argument.\footnote{
 Proposition \ref{PROP:Str2}, giving \eqref{HK2},  is restricted to $s < 1$.
See the proof of  Proposition \ref{PROP:LWPx} for 
relevant nonlinear estimates for $s \ge 1$.
 }
When $q > k \wt q$, 
 we have a positive power of $T$ in \eqref{HK2}, 
 in which case
 we can take $T = T(\| (\phi_0, \phi_1) \|_{\H^s}) > 0$.
Indeed, this is the case when $s$ is greater than the scaling critical regularity $s_\text{scaling}$.

The following proposition shows that
the condition 
 \eqref{HK1}
can be satisfied
when $0\le s_{\text{crit}} \le s <1$, where $s_\text{crit} = s_{\text{crit}}(d,k)$ denotes the critical exponent in \eqref{crit1} associated with NLW~\eqref{NLW}.
See 
 \cite[Section 3]{GKO1}
for the $d = 2$ case.

\begin{proposition}\label{PROP:Str2}

Let $d\ge2$ and $k\ge2$ 
such that $0 \le s_{\textup{crit}} <1$, where 
$s_{\textup{crit}} = s_{\textup{crit}}(d, k)$ is as in \eqref{crit1}.
Namely, $(d, k)$ satisfies 
\begin{align}
(d,k) \in 
\mathcal{A}_1 := 
\big\{(2,k): \, k\ge 2\big\} \cup \big\{ (3, 2), (3,3), (3, 4), (4, 2), (5,2) \big\}.
\notag 
\end{align}

\noi
Suppose that one of the following conditions holds\textup{:}

\smallskip

\begin{itemize}
\item[\textup{(i)}]
$s_\textup{crit} <  s < 1$,  when $(d,k) \in\{(2,2), (2,3), (3,2)\}$, or

\smallskip

\item[\textup{(ii)}]
$s_\textup{crit} \le s < 1$,  otherwise.

\end{itemize}

\smallskip

\noi
Then,
there exist
an $s$-admissible pair $(q,r)$ and a dual $s$-admissible pair $(\wt q,\wt r)$
  such that
\begin{align}
q \ge k \wt q \qquad \text{and} \qquad  r\ge k \wt r.
\label{X1}
\end{align}

\noi
Here,  the equality
$q =  k \wt q $ holds when 
$s = s_\textup{crit}=s_\textup{scaling} \,( \ge \frac 12)$, where~$s_\textup{scaling}$ is as 
in~\eqref{scaling}.
Namely, the equality 
$q =  k \wt q $ holds when
\begin{align}
\label{dk1}
(d,k)
\in 
\mathcal{A}_2
:= 
\{ (2,k): \, k \ge 5  \} \cup \{(3,3), (3,4), (5,2)\}.
\end{align}

\end{proposition}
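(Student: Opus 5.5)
The plan is to reduce everything to linear constraints in the reciprocal variables $x = \frac 1q$, $y = \frac 1r$, $\wt x = \frac 1{\wt q}$, $\wt y = \frac 1{\wt r}$, and to exhibit explicit pairs. By Lemma~\ref{LEM:pair1}, an $s$-admissible pair is fixed by the single parameter $y$ through the scaling relation $x = \frac d2 - s - dy$, with $y$ ranging over
\[ \frac{(d-1-2s)_+}{2d} \vee \frac{(d+1-4s)_+}{2(d+1)} \le y \le \frac{d-s}{2d}, \qquad y>0. \]
Likewise a dual $s$-admissible pair is fixed by $\wt y$ through $\wt x = \frac d2 + 2 - s - d\wt y$, with
\[ \frac{d+2-2s}{2d} \le \wt y \le \min\Big(1-, \frac{d+5-4s}{2(d+1)}, \frac{d+3-2s}{2d}\Big). \]
The requirement \eqref{X1} then reads $x \le \frac{\wt x}{k}$ and $y \le \frac{\wt y}{k}$. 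Because these constraints are linear, the strategy is to take $\wt y$ as large as possible and $y$ as small as possible, provided the $q$-constraint still holds.

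Concretely, I would pick the two pairs so that both admissibility inequalities are sharp. This means choosing the sharp wave pairs with $\frac2q + \frac{d-1}r = \frac{d-1}2$ and $\frac2{\wt q} + \frac{d-1}{\wt r} = \frac{d+3}2$. Solving these together with the scaling conditions gives
\[ y = \frac{d+1-4s}{2(d+1)}, \quad x = \frac{(d-1)(2s-1)+ \ldots}{\ldots}, \]
and similarly for $\wt y$. If the endpoint forces $q<2$ or $\wt q>2$, I would instead take the corner $q=2$ (respectively $\wt q = 2$, i.e.\ $\wt y = \frac{d+3-2s}{2d}$). The first check is $y\le \wt y/k$; it should hold for all $s$ in the range. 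The second check is $x \le \wt x/k$. Substituting the scaling relations, this becomes $\frac d2 - s - dy \le \frac1k\big(\frac d2+2-s-d\wt y\big)$.

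When both sharp choices are used, I expect this to reduce to $s \ge s_{\rm conf}$. When $q=2$ or $\wt q=2$ is the active corner, it should reduce to $s\ge s_{\rm scaling}$. In the latter case, the scaling identity gives $x = \wt x/k$ exactly when $s = s_{\rm scaling}$, which yields the equality statement and the list $\mathcal A_2$ in \eqref{dk1}. Note that $q=k\wt q$ with $s_{\rm scaling}\ge\frac12$ corresponds to the corner regime.

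I would organize the proof as a case split over the finite or explicit set $\mathcal A_1$. First, for $d=2$ and $k\ge5$, and for $(3,3),(3,4),(5,2)$, the scaling condition dominates, so I use the corner pair and verify the inequalities with $s_{\rm crit}=s_{\rm scaling}$. Second, for $(2,4)$ and $(4,2)$, where $s_{\rm crit}=s_{\rm conf}$, I use the sharp pairs; at $s=s_{\rm conf}$ the relevant inequality is non-strict and is still attained with $r$ finite. Third, for $(2,2),(2,3),(3,2)$, where $s_{\rm crit} = s_{\rm conf}$ but at the endpoint the required exponent would force $r=\infty$ or $\wt r = 1$, which are excluded, I use $s>s_{\rm crit}$ and perturb slightly. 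Monotonicity in $s$ then extends each case from $s_{\rm crit}$ up to $s<1$, though I may need to switch between the sharp and corner choices as $s$ grows.

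The main obstacle is bookkeeping the endpoint constraints, namely $r<\infty$, $\wt r>1$, $q\ge2$, and $\wt q\le 2$, in the low-dimensional conformal cases. The clean approach is to treat the feasible regions as intervals in $y$ and $\wt y$ and check nonemptiness of the intersection with $\{y \le \wt y/k\}\cap\{x\le \wt x/k\}$. This comes down to comparing a handful of explicit rational functions of $s$, which I would do case by case.
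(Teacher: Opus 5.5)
There is a genuine gap. The concrete pairs you propose do not work in the scaling-dominated regime, and the two thresholds are attached to the wrong inequalities. For the pairs with both admissibility inequalities sharp one has
\[
x=\frac{(d-1)s}{d+1},\qquad y=\frac{d+1-4s}{2(d+1)},\qquad \wt x=\frac{2+(d-1)s}{d+1},\qquad \wt y=\frac{d+5-4s}{2(d+1)}.
\]
With these values, $y\le \wt y/k$ is equivalent to $s\ge s_{\rm conf}$. By contrast, $x\le \wt x/k$ is equivalent to $(d-1)(k-1)s\le 2$, which is an \emph{upper} bound on $s$. Hence the sharp pairs fail in several cases:
\begin{itemize}
\item for $(3,4)$ at every $s\ge s_{\rm crit}=\frac56$, since they need $s\le\frac13$;
\item for $(2,k)$ with $k\ge 6$ at every $s\ge s_{\rm crit}$;
\item for $(3,3)$ and $(5,2)$ as soon as $s>\frac12$.
\end{itemize}

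Your fallback corners do not repair this. The corner $\wt q=2$ violates $\frac2{\wt q}+\frac{d-1}{\wt r}\ge\frac{d+3}2$ as soon as $s>\frac{d-3}{2(d-1)}$. The corner $q=2$ is $s$-admissible only when $s\ge\frac{d+1}{2(d-1)}$, which never happens for $d\in\{2,3\}$ and $s<1$. So for $(3,4)$ and for $(2,k)$ with $k\ge 6$, which lie in $\mathcal A_2$ where the equality $q=k\wt q$ is claimed, your recipe yields no admissible pair at all. The concluding ``monotonicity in $s$'' step also has nothing to rest on, because for a fixed choice the $q$-condition gets worse as $s$ grows.

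The missing idea is to subtract $k$ times the $s$-admissible scaling relation from the dual one:
\[
(\wt x-kx)+d(\wt y-ky)=(k-1)(s-s_{\rm scaling}).
\]
Since \eqref{X1} requires both brackets to be nonnegative, this identity forces $s\ge s_{\rm scaling}$. At $s=s_{\rm scaling}$ it forces the homothetic choice $\wt x=kx$, $\wt y=ky$; this, not a corner, is the source of $q=k\wt q$.

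For $s>s_{\rm scaling}$, you only need a point of the convex admissible box with $0\le \wt y-ky\le \frac{(k-1)(s-s_{\rm scaling})}{d}$. At the admissible choice $q=\infty$, $\wt q=1$ one has $\wt x-kx=1$, so the upper bound is satisfied strictly there. By continuity, the only remaining condition is $k\,y_{\min}\le \wt y_{\max}$. With the sharp endpoints this is exactly $s\ge s_{\rm conf}$, and the strict exclusions $r<\infty$, $\wt r>1$ account for the three cases in (i). Carried out this way, your linear-feasibility framework gives a valid and short proof.

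The paper reaches the same conclusion, for $d\ge 3$, through Lemma~\ref{LEM:max1}. There $\max J_s$ equals $\frac{d+5-4s}{d+1-4s}$ for $s\le\frac12$, attained at the sharp pairs. For $s\ge\frac12$ it equals $\frac{d+4-2s}{d-2s}$, attained on the homothetic segment $\wt r=\frac{d-2s}{d+4-2s}\,r$. Both expressions are increasing in $s$, and that is what legitimately extends the result from $s_{\rm crit}$ up to $s<1$.
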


We note that 
\[ \mathcal A_2 = \mathcal A_1 \setminus \mathcal A_0. \]

\noi
Here, 
$\mathcal A_0$ 
is as in 
\eqref{crit2x}, where  we have  $s_\text{scaling} <  s_\text{conf}$, 
where $ s_\text{conf}$ is as in \eqref{conf}.

When $d = 2$, Proposition \ref{PROP:Str2}
was  already established in 
 \cite[Section 3]{GKO1}, 
 and thus 
we only consider  the case $d \ge 3$
in the following.
In view of \eqref{X1},  we would like to  maximize 
\[   \min\Big\{ \frac{q}{\wt q}, \frac{r}{\wt r}  \Big\} \]

\noi
under the constraints of  Lemma \ref{LEM:pair1}. 
In view of \eqref{admis1a},  \eqref{admis1aa}, 
and 
 Lemma \ref{LEM:pair1}, 
this is essentially\footnote{Here, we allow $\wt r =1$ that is not admissible for the Strichartz estimates.} equivalent 
to  the following maximization problem for 
$ J_s(r,\wt r)$ defined by
\begin{align}
 J_s(r,\wt r) =
  \min\Big\{ \frac{q}{\wt q}, \frac{r}{\wt r}  \Big\}
  =
 \frac{r}{\wt r} \min\bigg\{ 1, \frac{(d+4-2s)\wt r-2d }{(d-2s)r -2d } \bigg\}
\label{op1}
\end{align}

\noi
over the set
\begin{align}
K(s) & = [r_1,r_2] \times [\wt{r}_1, \wt{r_2}], 
\label{op2}
\end{align}

\noi
where
\begin{align}
\begin{split}
 [r_1,r_2]&= \bigg[ \frac {2d}{d-s},\, 
  \min\Big\{\frac{2d}{(d-1-2s)_+},\frac{2(d+1)}{(d+1-4s)_+} \Big\}\bigg] , \\
[\wt{r}_1, \wt{r}_2] &  = \bigg[\max\Big\{1,
  \frac{2(d+1)}{d+5 - 4s}, \frac{2d}{d+3-2s}\Big\},
\, \frac{2d}{  d+ 2-2s}\bigg].
\end{split}
\label{op2x}
\end{align}

The next lemma describes the maximum of $J_s(r,\wt{r})$ restricted to $K(s)$ 
as well as the values of the maximizers $(r,\wt{r})$.
See \cite[Lemma 3.3]{GKO1} for the $d = 2$ case.

\begin{lemma}\label{LEM:max1}

Let $d =  3, 4,   5$
and set $s_d=\frac{d-3}{2(d-1)} \in \big[0, \frac 12\big)$.
Given   $s_d\le s< 1$,
let
$ J_s(r,\wt r)$ and
$K(s)$ be as in \eqref{op1} and \eqref{op2}, respectively.
Then, we have
\begin{align}
  \sup_{(r, \wt r) \in K(s)}  J_s(r, \wt r)
= \begin{cases}
\rule[-3mm]{0pt}{0pt}
\frac{d+5 - 4s}{d+1-4s}, & \text{if } s_d \le s \le \frac12, \\
\rule[-3mm]{0pt}{0pt}
\frac{d+4 - 2s}{d-2s}, & \text{if }
   \frac12 \leq s< 1,
\end{cases}
\label{op2a}
\end{align}

\noi
where the supremum is indeed attained in each case\textup{:}

\smallskip
\begin{itemize}
\item[\textup{(i)}]
 when $s_d \le s \le \frac12$, it is attained at
$(r, \wt r) = \big( \frac{2(d+1)}{d+1-4s},   \frac{2(d+1)}{d+5 - 4s}\big)$,

\smallskip
\item[\textup{(ii)}]
when $ \frac 12 \leq s < 1$,  it is attained in the set{\rm :}
\begin{align*}
 \frac{d+4 - 2s}{d-2s}
 \cdot
   \frac{2(d+1)}{d+5 - 4s}
\le r & \le
\begin{cases}
\frac{2(d+1)}{d+1-4s}, & \text{if } \frac12\le s \le \frac{d+1}{2(d-1)}, \\
\frac{d+4-2s}{d-2s} \frac{2d}{d+2-2s}, & \text{if } \frac{d+1}{2(d-1)} \le s <1
\end{cases}
\quad \\
\text{and}
\quad \wt r & =
 \frac{d-2s}{d+4 - 2s}  r.
\end{align*}

\end{itemize}

\noi
Moreover, given an integer $k \ge 2$, we have
\begin{align}
\max _{(r, \wt r) \in K(s)}  J_s(r, \wt r) \ge k,
\label{op2b}
\end{align}

\noi
provided that
\begin{align}
s \ge 
  \begin{cases}
s_\textup{conf}(d, k) , & \text{if } s_d \le s \le \frac12, \\
s_\textup{scaling}(d, k), & \text{if }
   \frac12 \leq s < 1.
\end{cases}
\label{op2c}
\end{align}

\noi
The equality in \eqref{op2b} holds if and only if
the equality in \eqref{op2c} holds.

\end{lemma}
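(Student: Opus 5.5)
This is an elementary optimization of the piecewise-rational function $J_s$ on the rectangle $K(s)$; the plan is to reduce it to a one-variable problem.

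\emph{Reduction to a monotone one-variable problem.} Fix $r$ and regard $J_s$ as a function of $\wt r$. The first entry of the minimum in \eqref{op1} is $\frac{r}{\wt r}$, which decreases in $\wt r$. The second entry equals
\[
\frac{(d+4-2s)\wt r - 2d}{\wt r\,((d-2s)r - 2d)}\, r .
\]
The factor $\frac{(d+4-2s)\wt r - 2d}{\wt r} = (d+4-2s) - \frac{2d}{\wt r}$ increases in $\wt r$. Note that $(d-2s)r-2d>0$ because $r\ge \frac{2d}{d-s}>\frac{2d}{d-2s}$. The two entries coincide exactly when $\wt r=\frac{d-2s}{d+4-2s}\,r$; this is the scaling relation $q/\wt q = r/\wt r$. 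Hence, for fixed $r$, $J_s$ is maximized at $\wt r^*(r)=\frac{d-2s}{d+4-2s}r$ whenever this value lies in $[\wt r_1,\wt r_2]$, and the maximum value is $\frac{d+4-2s}{d-2s}$, independent of $r$. Otherwise $\wt r^*(r)<\wt r_1$, and the maximum is attained at $\wt r=\wt r_1$ with value $\frac{r}{\wt r_1}$, since the first entry is then the smaller one.

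\emph{Case analysis of the endpoints.} Next compute which of the terms in \eqref{op2x} is active for $d=3,4,5$ and $s_d\le s<1$. The threshold $s_d=\frac{d-3}{2(d-1)}$ is precisely where $\frac{2d}{(d-1-2s)_+}$ and $\frac{2(d+1)}{(d+1-4s)_+}$ cross. So for $s\ge s_d$ we have $r_2=\frac{2(d+1)}{d+1-4s}$ as long as $s\le\frac{d+1}{2(d-1)}$, and $r_2=\frac{2d}{d-1-2s}$ beyond that. For $\wt r_1$, compare $\frac{2(d+1)}{d+5-4s}$ with $\frac{2d}{d+3-2s}$: cross-multiplying gives $(d+1)(d+3-2s)$ versus $d(d+5-4s)$, i.e.\ $3-2s$ versus $d(1-2s)$. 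For $d\ge 3$ and $s\ge 0$ the former is at least the latter, so $\wt r_1=\frac{2(d+1)}{d+5-4s}$, which is also larger than $1$.

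\emph{The two regimes.} For $s\le\frac12$, check that $\wt r^*(r_2)\le \wt r_1$. Then every $r$ falls in the second alternative above, so the supremum is $\frac{r_2}{\wt r_1}=\frac{d+5-4s}{d+1-4s}$, attained as stated in (i). For $s\ge\frac12$, $\wt r^*(r)$ enters $[\wt r_1,\wt r_2]$ exactly when $r\ge \frac{d+4-2s}{d-2s}\wt r_1$ and $r\le \frac{d+4-2s}{d-2s}\wt r_2$, intersected with $r\le r_2$. Checking that this interval is nonempty gives the set described in (ii), with the switch at $s=\frac{d+1}{2(d-1)}$ between $r_2$ and $\frac{d+4-2s}{d-2s}\wt r_2$ being the binding upper bound. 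Continuity at $s=\frac12$ is immediate, since both formulas give $\frac{d+3}{d-1}$. This comparison of inequalities among $r_1,r_2,\wt r_1,\wt r_2$ and $\wt r^*$ is the only real obstacle. I would check each inequality by cross-multiplying into linear-in-$s$ inequalities with $d\in\{3,4,5\}$ and $s\in[s_d,1)$.

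\emph{The bound \eqref{op2b}.} Finally, $\frac{d+5-4s}{d+1-4s}\ge k$ is equivalent to $4\ge (k-1)(d+1-4s)$, i.e.\ $s\ge \frac{d+1}{4}-\frac1{k-1}=s_{\rm conf}$. Likewise, $\frac{d+4-2s}{d-2s}\ge k$ is equivalent to $4\ge (k-1)(d-2s)$, i.e.\ $s\ge s_{\rm scaling}$. Both expressions are strictly monotone in $s$, so equality in \eqref{op2b} holds exactly when equality holds in \eqref{op2c}.
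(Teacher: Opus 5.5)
Your route is essentially the paper's. The paper splits $K(s)$ into the regions where $\frac{r}{\wt r}\le\frac{d+4-2s}{d-2s}$ or $\ge\frac{d+4-2s}{d-2s}$ (its $J_1$ and $J_2$). That is the same as your observation that, for fixed $r$, $J_s$ is the minimum of a decreasing and an increasing function of $\wt r$ crossing on the line $\wt r=\frac{d-2s}{d+4-2s}\,r$. Your split at $s=\frac12$ via the comparison \eqref{op5c} of $r_2/\wt r_1$ with $\frac{d+4-2s}{d-2s}$, and your algebra for \eqref{op2b}--\eqref{op2c}, are also the paper's. The endpoint bookkeeping, however, contains errors you should fix.

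\emph{First, the role of $s_d$ is misplaced.}
\begin{itemize}
\item The two candidates for $r_2$ cross at $s=\frac{d+1}{2(d-1)}$, not at $s_d$: cross-multiplying, $\frac{2d}{d-1-2s}\le\frac{2(d+1)}{d+1-4s}$ iff $2s(d-1)\ge d+1$. This is in fact what your next sentence uses.
\item The threshold $s_d$ belongs to $\wt r_1$. Your comparison $3-2s\ge d(1-2s)$ is equivalent to $2s(d-1)\ge d-3$, i.e.\ to $s\ge s_d$. It is false for $d=4,5$ and $0\le s<s_d$; for $d=5$, $s=0$ it reads $3\ge 5$.
\item So ``for $d\ge3$ and $s\ge0$'' is wrong. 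This is exactly where the hypothesis $s\ge s_d$ is needed, and with it $\wt r_1=\frac{2(d+1)}{d+5-4s}$ is correct.
\end{itemize}

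\emph{Second, the positivity claim is backwards.} For $s>0$ we have $\frac{2d}{d-s}<\frac{2d}{d-2s}$, so $r\ge r_1$ does not give $(d-2s)r-2d>0$. With $K(s)$ as written in \eqref{op2x}, consider the strip $\frac{2d}{d-s}\le r<\frac{2d}{d-2s}$. There the scaling condition would force $\frac1q<0$; the natural lower endpoint is $\frac{2d}{d-2s}$, coming from $q\le\infty$. On this strip the second entry of the minimum is negative, so $J_s<0$. This does not affect the supremum, but your monotonicity claims fail there, so the strip must be discarded separately.

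\emph{Third, your dichotomy is incomplete.} ``Otherwise $\wt r^*(r)<\wt r_1$'' omits the case $\wt r^*(r)>\wt r_2$, i.e.\ $r>\frac{d+4-2s}{d-2s}\wt r_2$. There the maximum over $\wt r$ is at $\wt r_2$, with value below $\frac{d+4-2s}{d-2s}$. Your later description of the maximizers, $\frac{d+4-2s}{d-2s}\wt r_1\le r\le\min\big(r_2,\frac{d+4-2s}{d-2s}\wt r_2\big)$ with $\wt r=\frac{d-2s}{d+4-2s}\,r$, correctly accounts for this. However, the binding term in that minimum does not switch at $\frac{d+1}{2(d-1)}$ as you assert. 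For $d=4$ and $s=\frac34<\frac56$, one gets $\frac{d+4-2s}{d-2s}\wt r_2=\frac{208}{45}<5=r_2$. So the set in (ii) must be read as intersected with $K(s)$. Your intersection formula, not the claimed switch, is what the verification actually yields.

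None of these issues changes \eqref{op2a} or the attainment of the supremum.
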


 \begin{proof}

From \eqref{op1}, the maximum of $J_s(r, \wt r)$ on $K(s)$ is given by $\max\{J_1(s),J_2(s) \}$,
where $ J_1(s)$ and $ J_2(s)$ are given by 
\begin{align}
 J_1(s) &= \max \bigg\{ \frac{r}{\wt r} : \frac{r}{\wt r} \le \frac{d+4 - 2s}{d-2s}, \, (r,\wt r) \in K(s) \bigg\},
\label{op4} \\
J_2(s) &  = \max \bigg\{
\frac{d+4-2s-2d /\wt r}{d-2s -2d /r} :
 \frac{r}{\wt r} \ge \frac{d+4 - 2s}{d-2s}, \, (r,\wt r ) \in K(s) \bigg\}.
\label{op5}
\end{align}

\noi
Note that $J_1(s)$ corresponds to the case $\frac r{\wt r}\le \frac q{\wt q}$,
while $J_2(s)$ corresponds to the case $\frac r{\wt r}\ge \frac q{\wt q}$.

For $0 \le s < 1$, we have 
\begin{align}
\label{op5a}
r_2 &=
\begin{cases}
\frac{2(d+1)}{d+1-4s},  & \text{if } 0 < s \le \frac{d+1}{2(d-1)}, \\
\frac{2d}{d-1-2s},& \text{if } \frac{d+1}{2(d-1)} \le s < 1,
\end{cases}
\end{align}

\noi
while we have, for  $s\ge s_{d}$,
\begin{align}
\label{op5x}
\wt{r}_1  =
\frac{2(d+1)}{d+5-4s}.
\end{align}

\noi
We  note that $r_2 <\infty$ and 
that $\wt{r}_1 \ge  1$
with the equality if and only if $d= 3$ and $s = 0$.

From \eqref{op2}, \eqref{op2x},  \eqref{op5a}, and \eqref{op5x}, we have
\begin{align}
\label{op5b}
 \max \bigg\{ \frac{r}{\wt r} :  (r,\wt r) \in K(s) \bigg\} = \frac{r_2}{\wt{r}_1} =
\begin{cases}
\frac{d+5-4s}{d+1-4s}, & \text{if } s_d \le s \le \frac{d+1}{2(d-1)}, \\
\frac{d}{d+1} \cdot \frac{d+5-4s}{d-1-2s} ,& \text{if } \frac{d+1}{2(d-1)} \le s < 1.
\end{cases}
\end{align}
Moreover, the inequality
\begin{align}
\label{op5c}
\frac{r_2}{\wt{r_1}} \le \frac{d+4-2s}{d-2s}
\end{align}
holds if and only if $s_d \le s \le \frac12$ 
(recall $d \le 5$).
 Hence, for $s_d \le s \le \frac12$, we conclude from \eqref{op4}, \eqref{op5}, \eqref{op5b}, and \eqref{op5c} that
\begin{align}
\label{op7}
\begin{split}
\sup_{(r, \wt r) \in K(s)}  J_s(r, \wt r)
= \sup\bigg\{ \frac{r}{\wt r} : (r,\wt r) \in K(s)\bigg\}
= \frac{d+5 - 4s}{d+1-4s}.
\end{split}
\end{align}

Next, we consider the case  $\frac12 \le s < 1$.
On the one hand, from \eqref{op4}, we have  $J_1(s) \le \frac{d+4 - 2s}{d-2s}$.
On the other hand,
by minimizing $r$ and maximizing $\wt r$ under   $\frac{r}{\wt r} \ge \frac{d+4 - 2s}{d-2s}$,
we obtain
\begin{align}
\begin{split}
J_2(s)
& =  \max\bigg\{
\frac{d+4-2s-2d /\wt r}{d-2s -2d /r} :
\frac{r}{\wt r} =
\frac{d+4 - 2s}{d-2s} , \,  (r,\wt r) \in K(s) \bigg\}
\\
& = \frac{d+4 - 2s}{d-2s}.
\end{split}
\label{op8}
\end{align}

Therefore, \eqref{op2a} follows from \eqref{op7} and \eqref{op8}.
The claims in (i) and (ii) 
follow from \eqref{op5a}, 
\eqref{op5x}, and 
\eqref{op8}.
Lastly, 
we note that 
 \eqref{op2b} and \eqref{op2c}
follow from \eqref{op2a} with~\eqref{crit1}.
\end{proof}

We conclude this subsection by presenting a proof of
Proposition \ref{PROP:Str2}.

\begin{proof}
 [Proof of Proposition \ref{PROP:Str2}]

Let $J_s(r, \wt r)$ be as in 
\eqref{op1}.
Then, 
\eqref{X1} follows from
\eqref{op2b} in Lemma \ref{LEM:max1}
(see \cite[Lemma 3.3]{GKO1} for the $d = 2$ case)
with the fact that $s_\text{crit} \ge s_d$.
Moreover, 
it follows from the proof of Lemma \ref{LEM:max1} 
(see \cite[Lemma 3.3]{GKO1} for the $d = 2$ case)
that 
\begin{align*}
\max _{(r, \wt r) \in K(s)}  J_s(r, \wt r) = 
\begin{cases} 
\frac r{\wt r}, 
& \text{when $s_d \le s < \frac12$,}\\
\frac q{\wt q}, 
& \text{when $\frac 12 \le s <1$}.
\end{cases}
\end{align*}

\noi
By noting that 
a strict inequality holds in \eqref{op5c}
for $s < \frac 12$, 
we see that 
$\max _{(r, \wt r) \in K(s)}  J_s(r, \wt r)
= \frac r{\wt r} < \frac{q}{\wt q}$ in the first case above.
From this observation with 
\eqref{op2b} and \eqref{op2c} (also see \eqref{crit1}), 
we conclude that  
$q = k \wt q$ holds if and only if
$ s = s_\textup{crit}=s_\textup{scaling} \ge \frac 12$ .
\end{proof}

\begin{remark}\label{REM:q1}\rm
Fix $0 \le s_\text{crit} < 1$
and let $s < 1$ be as in 
Proposition \ref{PROP:Str2}.
Then, it follows from 
 from (the proof of) Lemma~\ref{LEM:max1}
(see \cite[Lemma 3.1]{GKO1} for $d = 2$)
with \eqref{admis1a} and \eqref{admis1aa}
that 
the $s$-admissible pair $(q,r)$ and the dual $s$-admissible pair $(\wt{q},\wt{r})$
from  Proposition~\ref{PROP:Str2}
satisfy
 $q>2$ and $\wt{q}<2$.

\end{remark}

\subsection{Higher dimensional case}
\label{SUBSEC:nonl2}

In this subsection, we establish nonlinear estimates
for $d \ge 2$, using  the Strichartz estimates
discussed in the previous subsection.

\begin{proposition}
\label{PROP:nonlin2}

Given $d\ge 2$ and an integer $k\ge2$, 
let $s_{\textup{crit}}$ be as in \eqref{crit1}.
Suppose that  $s>0$ satisfies one of the following conditions{\rm :}
\begin{align}
\begin{split}
\textup{(i) } & s>s_{\textup{crit}}, \  \text{ when }
(d,k) \in \{(2,2), (2,3), (3,2)\}, \\
\textup{(ii) } & s \ge s_{\textup{crit}},  \ \text{ otherwise.}
\end{split}
\label{nonlin2a}
\end{align}


\noi
\textup{(a)} Let $0\le s_{\textup{crit}} < 1$.
Then,   there exist $1 \le p_0 <2 < q_0 <\infty$  such that 
we have 
\begin{align}
\begin{split}
& \bigg\| \int_0^t \S(-t')
\begin{pmatrix}
0\\
(\pi_1[\S(t') \vec\ub_{t'}])^k - (\pi_1[\S(t') \vec\vb_{t'}])^k
\end{pmatrix}
dt'
\bigg\|_{U^p_{T }\H^s_x} \\
&\hphantom{XXXXXXXX}
  \les T^\theta \big( \| \vec\ub \|_{U^q_{T}\H^s_x} + \| \vec\vb \|_{U^q_{T} \H^s_x} \big)^{k-1} \| \vec\ub - \vec\vb\|_{U^q_{T}  \H^s_x}
\end{split}
\label{nonlin2}
\end{align}

\noi
for any $0 < T \le 1$, 
$ p_0 \le  p <\infty$, and $  1 \le q \le q_0$, 
where 
\begin{align}
\begin{split}
\textup{(a.i)} \ \ & \text{$\ta  = 0$ when $s=s_{\textup{crit}}
=  s_{\textup{scaling}}$}, 
\\ 
\textup{(a.ii)} \ \ & \text{$\ta > 0$ is sufficiently small, otherwise.}
\end{split}
\label{nonlin2y}
\end{align}

\noi
Here, 
 $\S(t)$ is the matrix-valued linear wave propagator
defined in \eqref{Sdef}, 
and $\pi_1 \vec f $ denotes the projection onto the first component of the vector $\vec f$.

\medskip

\noi
\textup{(b)} Let $ s_{\textup{crit}} \ge 1$.
Then, 
\eqref{nonlin2} holds
for any  $1 < p < \infty$, $1 \le q \le k+\eps_0$, and $\theta = \frac{\eps_0}{k+\eps_0}$, where
\begin{align}
\begin{split}
\textup{(b.i)} \ \ & \text{$\eps_0  = 0$ when $s=s_{\textup{crit}}=  s_{\textup{scaling}}$}, 
\\ 
\textup{(b.ii)} \ \ & \text{$\eps_0 > 0$ is sufficiently small when 
$s>s_{\textup{crit}}=  s_{\textup{scaling}}$.}
\end{split}
\label{nonlin2x}
\end{align}

\noi
In particular, 
it follows from \eqref{crit1} and \eqref{crit2x} that 
\eqref{nonlin2y} also holds in this case.

\end{proposition}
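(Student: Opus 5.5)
Part (a) is proved by duality followed by the Strichartz estimates with transference. Set $F = (\pi_1[\S(t)\vec\ub])^k - (\pi_1[\S(t)\vec\vb])^k$ and write $u = \pi_1[\S(t)\vec\ub]$, $v = \pi_1[\S(t)\vec\vb]$. Since the vector $(0,F)$ pairs only with the second component, the linear estimate \eqref{lin2} reduces the left-hand side of \eqref{nonlin2} to
\[
\sup_{\|\vec w\|_{V^{p'}_{\Box,T}(\H^s_x)^*}=1} \bigg|\int_0^T\!\!\int_{\T^d} F\, \pi_2\vec w \,dx\,dt\bigg|.
\]

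Next we fix the pairs from Proposition~\ref{PROP:Str2}: an $s$-admissible pair $(q,r)$ and a dual $s$-admissible pair $(\wt q,\wt r)$ with $q\ge k\wt q$ and $r\ge k\wt r$. For $s\ge1$ with $s_{\rm crit}<1$, we first lower $s$ to some $s'\in[s_{\rm crit},1)$ and handle the extra derivatives through the fractional Leibniz rule; if that fails, we fall back on the argument of part (b). We bound the integrand by Hölder's inequality, $|\int F\,\pi_2\vec w| \le \|F\|_{L^{\wt q}_TL^{\wt r}_x}\|\pi_2\vec w\|_{L^{\wt q'}_TL^{\wt r'}_x}$.

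For the second factor, note that $(\wt q',\wt r')$ is $(1-s)$-admissible, i.e.\ it is exactly the exponent pair in \eqref{Str2a}. Lemma~\ref{LEM:STR} then gives $\|\pi_2\vec w\|_{L^{\wt q'}L^{\wt r'}}\les \|\S(t)^\tf\vec w\|_{U^{\wt q'}_T(\H^s)^*}$. Since $\wt q<2$ by Remark~\ref{REM:q1}, we have $\wt q'>2$, so the embedding $V^{p'}\embeds U^{\wt q'}$ from \eqref{Linfty} applies whenever $p'<\wt q'$. This is what fixes $p_0$: take $p_0=\wt q\in[1,2)$, so that $p\ge p_0$ gives $p'\le \wt q'$ (using a small strict margin at the endpoint).

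For the first factor, we factor $u^k-v^k=(u-v)\sum_j u^jv^{k-1-j}$ and apply Hölder's inequality together with \eqref{HK2}, which gives the bound $T^{\frac1{\wt q}-\frac kq}\|u\|^{k-1}_{L^qL^r}\|u-v\|_{L^qL^r}$ (with $v$ in place of some copies of $u$). By Lemma~\ref{LEM:STR}, $\|u\|_{L^qL^r}\les\|\vec\ub\|_{U^q_T\H^s}$. Taking $q_0=q>2$, the embedding $U^{q'}\subset U^{q}$ for $q'\le q$ yields the stated range $1\le q\le q_0$. The power of $T$ is $\ta=\frac1{\wt q}-\frac kq$. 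By Proposition~\ref{PROP:Str2} this equals $0$ exactly when $s=s_{\rm crit}=s_{\rm scaling}$. Otherwise $q>k\wt q$ and $\ta>0$; if the chosen pairs happen to give equality, we perturb them slightly, which keeps $\ta$ small but positive.

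For part (b), with $s_{\rm crit}\ge1$ and hence $s\ge1$, we use \eqref{lin3} in place of duality and bound $\|F\|_{L^1_TH^{s-1}_x}$. Sobolev embedding gives $H^{s-1}\supset L^{\rho}$ with $\frac1\rho=\frac12+\frac{s-1}d$. We distribute $\jb{\nb}^{s-1}$ over the product with the fractional Leibniz rule (Lemma~\ref{LEM:leibniz}) and place each factor in a Strichartz space $L^{k+\eps_0}_TW^{s-1,r}_x$ with a suitable $1$-admissible-type pair. Concretely, each factor is estimated via Lemma~\ref{LEM:Slin} applied to $\jb{\nb}^{s-1}u$, which has regularity $1$, and then transferred to $U^{k+\eps_0}_T\H^s$ by Lemma~\ref{LEM:trf}. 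Hölder's inequality in time then gives the factor $T^{1-\frac{k}{k+\eps_0}}=T^{\frac{\eps_0}{k+\eps_0}}$. At $s=s_{\rm scaling}$ the exponents are forced to be $\eps_0=0$, i.e.\ the $L^k_T$ endpoint, which requires the wave-admissibility condition $\frac2k+\frac{d-1}{r}\le\frac{d-1}2$ and needs $d\ge6$ or a large $k$.

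I expect the main obstacle to be the bookkeeping of exponents in (b): finding the pair $(k,r)$ that is wave admissible at regularity $1$ with $\jb{\nb}^{s-1}$ placed on one factor, and checking the admissibility condition in all cases with $s_{\rm crit}\ge1$, including the case $k=2$, where the time exponent is $q=2$.
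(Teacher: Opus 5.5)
Your outline is essentially the paper's proof. In (a) it has the same steps: duality through \eqref{lin2}, H\"older with the pairs of Proposition~\ref{PROP:Str2}, Strichartz plus transference on both factors, $p_0=\wt q+$, $q_0=q$, and for $s\ge1$ a reduction to a regularity $s_0<1$ via the fractional Leibniz rule. In (b) it has the same strategy: the bound \eqref{lin3} in $L^1_TH^{s-1}_x$, with $\jb{\nb}^{s-1}$ landing on one factor and H\"older in time giving $T^{\eps_0/(k+\eps_0)}$. Two points need fixing.

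In (a), take $s_{\textup{crit}}<s_0<1$ strictly, not $s_0=s_{\textup{crit}}$. Proposition~\ref{PROP:Str2} requires this when $(d,k)\in\{(2,2),(2,3),(3,2)\}$. It also gives $q>k\wt q$, hence $\ta>0$, which (a.ii) demands since $s\ge1>s_{\textup{crit}}$. With this choice the Leibniz reduction always works, so no fallback to (b) is needed.

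In (b), the embedding you quote goes the wrong way. For $s>1$ one has $L^\rho\subset H^{1-s}$, not $L^\rho\subset H^{s-1}$, so passing to $L^\rho$ would lose derivatives; you must stay in $L^2_x$. The paper closes the bookkeeping, for $F=u^k$ (the difference is identical), by
\[
\|\jb{\nb}^{s-1}F\|_{L^1_TL^2_x}\les \|\jb{\nb}^{s-1}u\|_{L^{q_1}_TL^{r_1}_x}\,\|u\|_{L^{q_2}_TL^{r_2}_x}^{k-1}.
\]
The exponents are $q_1=k+\eps_0$, $r_1=\frac{2dq_1}{(d-2)q_1-2}$, $\frac1{q_1}+\frac{k-1}{q_2}=1$ and $\frac1{r_1}+\frac{k-1}{r_2}=\frac12$. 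Sobolev is used only on the undifferentiated factors, via $W^{s-1,r_1}\subset L^{r_2}$, together with $q_2\le q_1$.

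The Sobolev condition $\frac{s-1}{d}\ge\frac1{r_1}-\frac1{r_2}$ reduces at $\eps_0=0$ exactly to $s\ge s_{\textup{scaling}}$. It persists for small $\eps_0>0$ when $s>s_{\textup{scaling}}$. The admissibility you flagged as the main obstacle is automatic: $(q_1,r_1)$ is $1$-admissible as soon as $q_1\ge\frac{d+1}{d-1}$, which always holds for $d\ge3$. This includes $k=2$, where $(q_1,r_1)=(2,\frac{2d}{d-3})$, and $r_1<\infty$ throughout the range $s_{\textup{crit}}\ge1$.
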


\begin{proof}

For simplicity, we prove \eqref{nonlin2} when $\vec\vb \equiv0$, 
since the general case follows from a straightforward modification.

\smallskip

\noi
(a) Fix $0 \le s_{\textup{crit}}<1$
and 
let $s$ be as in \eqref{nonlin2a}.
We first consider the case $ s < 1$.
By Proposition~\ref{PROP:Str2}, there exist an $s$-admissible pair $(q,r)$ and a dual $s$-admissible pair $(\wt{q},\wt{r})$ such that
\begin{align}
q\ge k \wt{q} \quad \text{and} \quad  r \ge k \wt{r},
\label{nonl2a}
\end{align}

\noi
where the equality $q=k\wt{q}$ holds only when $s=s_{\text{crit}} = s_{\text{scaling}}$, 
namely only when  $(d,k)$ satisfies~\eqref{dk1}.
Moreover, from 
 Remark \ref{REM:q1}, 
we have $q>2$ and $\wt{q}<2$.
Consequently, 
from~\eqref{lin2} in Lemma~\ref{LEM:lin}, H\"older's inequality, \eqref{nonl2a}, 
the Strichartz estimates (Lemma~\ref{LEM:STR}), and 
the embedding \eqref{Linfty}, we have
\begin{align}
\begin{split}
& \bigg\| \int_0^t \S(-t' )
\begin{pmatrix}
0 \\
(\pi_1[\S(t') \vec\ub_{t'}])^k
\end{pmatrix}
dt'
\bigg\|_{U^p_{T} \H^s_x} \\
&\quad 
 \le \sup_{\substack{
\|\vec\vb  \|_{V^{p'}_{\Box, T}(\H^s_x)^*} =1}} 
\bigg| \int_0^T \int_{\T^d} (\pi_1[\S(t) \vec\ub_{t}])^k\cdot  \pi_2 \vec\vb_{t} \, dx  dt \bigg| \\
&\quad 
 \le  \sup_{\substack{
\|\vec\vb  \|_{V^{p'}_{\Box, T}(\H^s_x)^*} =1}} 
 \|\pi_1[\S(t) \vec\ub_{t}] \|^k_{L^{k \wt{q}}_T L^{k \wt{r}}_x} \| \pi_2 \vec\vb \|_{L^{\wt{q}'}_T L^{\wt{r}'}_x} \\
&\quad  \les T^\theta  \sup_{\substack{
\|\vec\vb 
\|_{V^{p'}_{\Box, T}(\H^s_x)^*} =1}} 
 \| \pi_1[\S(t) \vec\ub_{t}] \|^k_{L^{q}_T L^{r}_x} \| \pi_2 \vec\vb \|_{L^{\wt{q}'}_T L^{\wt{r}'}_x} \\
& \quad \les T^\theta   \sup_{\substack{
\|\vec\vb  
\|_{V^{p'}_{\Box, T}(\H^s_x)^*} =1}} 
\|  \vec\ub_{t} \|^k_{U^{q}_T \H^s_x} \| \S(t)^\tf \, \vec\vb \|_{U^{\wt{q}'}_T (\H^s_x)^*} \\
&\quad  \les  T^\theta \| \vec\ub \|^k_{U^q_{T}\H^s_x},
\end{split}
\label{nonl2ab}
\end{align}

\noi
where
$V^{p'}_{\Box, T}(\H^s_x)^*
=V^{p'}_{\Box, T}(H^{-s}(\T^d) \times H^{1-s}(\T^d))$ is as in 
\eqref{Dl1} (see also \eqref{local})
and 
\begin{align}
\theta = \frac{1}{\wt{q}} - \frac{k}{q} \ge 0 
\qquad \text{and} \qquad  p> \wt{q}.
\label{nonl2aa}
\end{align}

\noi
Here, the equality $\ta = 0$
holds when
$s=s_\text{crit} = s_{\text{scaling}}$.
In view of the embedding \eqref{Linfty},
by taking 
$p_0=\wt q + <2$ and $q_0 = q >2$, 
we see that the estimate  \eqref{nonlin2} holds 
under the hypothesis on $p$ and $q$.

Next, we consider the case
$s \ge 1$.
Fix $s_{\text{crit}} < s_0 < 1$. From Proposition~\ref{PROP:Str2}, there exist an $s_0$-admissible pair $(q,r)$ and a dual $s_0$-admissible pair $(\wt{q}, \wt{r})$ such that  \eqref{nonl2a} holds
with the strict inequality $q >  k \wt{q}$.
Proceeding as in \eqref{nonl2ab}
with \eqref{lin2} in Lemma~\ref{LEM:lin}, H\"older's inequality, 
the fractional Leibniz rule (Lemma~\ref{LEM:leibniz}), 
 \eqref{nonl2a} with $q >  k \wt{q}$, 
 the Strichartz estimates (Lemma~\ref{LEM:STR}), and 
the embedding~\eqref{Linfty}, we have 
\begin{align}
\begin{split}
& \bigg\| \int_0^t \S(-t' )
\begin{pmatrix}
0 \\
\jb{\nabla}^{s-s_0} (\pi_1[\S(t') \vec\ub_{t'}])^k
\end{pmatrix}
dt'
\bigg\|_{U^p_{T} \H^{s_0}_x} \\
&\quad  \le  \sup_{
\|\vec\vb  \|_{V^{p'}_{\Box, T}(\H^{s_0}_x)^*} =1}
\| \jb{\nabla}^{s-s_0} \pi_1[\S(t) \vec\ub_{t}] \|^k_{L^{k \wt{q}}_T L^{k \wt{r}}_x} \| \pi_2 \vec\vb \|_{L^{\wt{q}'}_T L^{\wt{r}'}_x} \\
&\quad  \les T^\theta  \sup_{\substack{
\|\vec\vb  \|_{V^{p'}_{\Box, T}(\H^{s_0}_x)^*} =1}}
\| \jb{\nabla}^{s-s_0} \pi_1[\S(t) \vec\ub_{t}] \|^k_{L^{q}_T L^{r}_x} \| \pi_2 \vec\vb \|_{L^{\wt{q}'}_T L^{\wt{r}'}_x} \\
&\quad  \les  T^\theta \| \vec\ub \|^k_{U^q_{T} \H^s_x},
\end{split}
\label{nonl2ax}
\end{align}
provided  that \eqref{nonl2aa} holds.
Note that $\ta > 0$ in this case.

\medskip

\noi
(b)
Let $s\ge s_{\text{crit}} \ge 1$.
In this case, it follows from \eqref{crit1}
that  
\begin{align}
s_{\text{crit}} = s_{\text{scaling}} = \frac d2 - \frac{2}{k-1}
\qquad  \text{and} \qquad d\ge3.
\label{nonlin3}
\end{align}

Given  $\eps_0 \ge 0$ as in \eqref{nonlin2x}, 
we 
define $(q_j, r_j)$, $j = 1, 2$,  and $\ta \ge 0$
by setting
\begin{align}
\begin{split}
q_1 & = k + \eps_0, \\
r_1 & = \frac{2d(k+\eps_0)}{(d-2)(k+\eps_0) - 2}, \\
q_2 & = \frac{(k-1)(k+\eps_0)}{k+\eps_0-1}, \\
r_2 & = \frac{d(k-1)(k+\eps_0)}{k+\eps_0+1}, \\
\theta & = \frac{\eps_0}{k+\eps_0} \ge 0.
\end{split}
\label{nonl3c}
\end{align}

\noi
Note from \eqref{nonlin3} that 
 $(q_1,r_1)$ is a $1$-admissible pair
 (see Definition \ref{DEF:admiss})
 and 
that  we have 
\begin{align}
\begin{split}
\frac{1}{q_1} + \frac{k-1}{q_2}  = 1 , & \qquad   (q_2 \vee 2) \le q_1 , \\
\frac{1}{r_1} + \frac{k-1}{r_2}   = \frac12, 
& \qquad 2 \le r_1 \le r_2 , \\
\theta = 1 - \frac{k}{q_1}, 
& \qquad  \frac{s-1}{d} \ge \frac{1}{r_1} - \frac{1}{r_2}.
\end{split}
\label{nonl3b}
\end{align}

\noi
Then, by applying  \eqref{lin3} in Lemma~\ref{LEM:lin},
 the fractional Leibniz rule (Lemma~\ref{LEM:leibniz}), 
Sobolev's inequality, 
H\"older's inequality in time with \eqref{nonl3c}, the Strichartz estimates (Lemma~\ref{LEM:STR})
with~\eqref{Linfty}, 
we have
\begin{align}
\begin{split}
& \bigg\| \int_0^t \S(-t')
\begin{pmatrix}
0 \\
( \pi_1[\S(t') \vec\ub_{t'}])^k
\end{pmatrix}
dt'
\bigg\|_{U^p_{T}\H^{s}_x}\\
&\quad
\les \| \jb{\nabla}^{s-1}  (\pi_1[\S(t) \vec\ub_{t}])^k \|_{L^1_T L^2_x}  \\
&\quad \les \| \jb{\nabla}^{s-1} \pi_1[\S(t) \vec\ub_{t}] \|_{L^{q_1}_T L^{r_1}_x} \| \pi_1[\S(t) \vec\ub_{t}] \|_{L^{q_2}_T L^{r_2}_x}^{k-1} \\
&\quad \les T^\theta \| \jb{\nabla}^{s-1} \pi_1[\S(t) \vec\ub_{t}]  \|^k_{L^{q_1}_T L^{r_1}_x}\\
 & \quad \les T^\theta \| \vec\ub \|^k_{U^{q}_{T} \H^s_x}
\end{split}
\label{nonl2b}
\end{align}

\noi
for any $q \le q_1 = k+ \eps_0$.
Note that when $s=s_{\text{crit}}$, 
we can use 
the last inequality in \eqref{nonl3b}
with the definitions of $r_1$ and $r_2$ in \eqref{nonl3c}
to conclude $\eps_0 = 0$, 
which in turn implies $\ta = 0$, using \eqref{nonl3c}.
\end{proof}

When $0 \le s=s_{\textup{crit}}
=  s_{\textup{scaling}} < 1$
or $s=s_{\textup{crit}} \ge 1 $, 
Proposition \ref{PROP:nonlin2}
does not provide
any small power of $T$ (namely, $\ta = 0$; see 
\eqref{nonlin2y}
and \eqref{nonlin2x}).
In the next proposition, we consider 
a $k$-linear version of \eqref{nonlin2}
at the critical regularity $s = s_\text{crit}$, where at least one of the $k$-factors is smoother in space, 
and show that we can gain a small power of $T$ 
under this additional assumption. 

\begin{proposition}\label{PROP:nonlin3}

Given $d\ge 2$ and an integer $k\ge2$
such that  $(d,k)\notin \{(2,2), (2,3), (3,2)\}$, 
let $s_{\textup{crit}}$ be as in \eqref{crit1}.
Suppose that  one of the following conditions
holds{\rm :}
\begin{align*}
\textup{(i)}\ \ & 0\le s_{\textup{crit}} < 1,\\
\textup{(ii)}\ \ &  s_{\textup{crit}} \ge  1\qquad \text{and} \qquad k \ge 3.
\end{align*}

\noi
Then, given any small $\eps > 0$, 
we have 
\begin{align}
\begin{split}
& \bigg\| \int_0^t \S(-t')
\begin{pmatrix}
0 \\
\prod_{j=1}^k \pi_1[\S(t') \vec\ub_{t'}^{(j)}]
\end{pmatrix}
dt' \bigg\|_{U^{p}_{T} \H^{s_\textup{crit}}_x}\\
& \hphantom{XXXXXXXX}
 \les T^\eps \| \vec\ub^{(1)} \|_{U^q_{T} \H^{s_\textup{crit}+\eps}_x} \prod_{j=2}^k \|\vec\ub^{(j)} \|_{U^{q}_{T} \H^{s_\textup{crit}}_x}
\end{split}
\label{nonl00}
\end{align}

\noi
for any $0 < T \le 1$, 
where $p$ and $q$ are as in Proposition \ref{PROP:nonlin2}.
In particular, \eqref{nonl00} holds for some  $q > 2$.

Next, we consider the case{\rm:}
\begin{align}
\textup{(iii)}\ \ &  s_{\textup{crit}} \ge  1\qquad \text{and} \qquad k = 2.
\notag 
\end{align}

\noi
Then, given any small $\eps > 0$, we have 
\begin{align}
\label{nonl000}
\begin{split}
& \bigg\| \int_0^t \S(-t')
\begin{pmatrix}
0 \\
\prod_{j=1}^2 \pi_1[\S(t') \vec\ub_{t'}^{(j)}]
\end{pmatrix}
dt' \bigg\|_{U^{p}_{T} \H^{s_\textup{crit}}_x}\\
& \hphantom{XX}
 \les 
 T^\eps \| \vec\ub^{(1)} \|_{U^{q_*}_{T} \H^{s_\textup{crit}+\eps}_x}
 \min \Big(  \|\vec\ub^{(2)} \|_{U^{2}_{T} \H^{s_\textup{crit}}_x}, 
  \| \vec\ub^{(2)} \|_{U^{q_*}_{T} \H^{s_\textup{crit}+\eps}_x}\Big)
\end{split}
\end{align}

\noi
for any  
 $0 < T \le 1$
and $1 < p < \infty$, 
where 
 $q_* > 2$ is given by $\frac 1{q_*} = \frac 12 - \eps$.

\end{proposition}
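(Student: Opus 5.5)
The plan is to reuse the proofs of Proposition~\ref{PROP:nonlin2}, replacing the scaling-critical Strichartz pairs by slightly subcritical ones for the smoother factor. In each case the gain of $\eps$ derivatives in one factor is traded for a strict inequality in the Hölder exponents in time, which yields $T^{\eps}$ (after shrinking $\eps$ if necessary, which is harmless).

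\emph{Case (i)} ($0\le s_\text{crit}<1$). If $s_\text{crit}>s_\text{scaling}$, then Proposition~\ref{PROP:nonlin2}(a) already gives $\ta>0$ at $s=s_\text{crit}$, and the claim follows from the multilinear version of \eqref{nonl2ab} together with $\|\cdot\|_{U^q\H^{s_\text{crit}}}\le\|\cdot\|_{U^q\H^{s_\text{crit}+\eps}}$. The genuine case is $s=s_\text{crit}=s_\text{scaling}\ge\frac12$, where $q=k\wt q$. I would fix the pairs $(q,r)$ and $(\wt q,\wt r)$ from Proposition~\ref{PROP:Str2} at $s_\text{crit}$. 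For $\vec\ub^{(1)}$ I would use an $(s_\text{crit}+\eps)$-admissible pair $(q^{(1)},r)$ with the same $r$. The scaling condition \eqref{admis1a} then gives $\frac1{q^{(1)}}=\frac1q-\eps$, so $q^{(1)}>q$, and admissibility persists for small $\eps$ since Lemma~\ref{LEM:max1} gives room in $r$. Hölder in time yields
\[
\Big\|\prod_j\pi_1[\S\vec\ub^{(j)}]\Big\|_{L^{\wt q}_TL^{\wt r}_x}\le T^{\eps}\,\|\pi_1\S\vec\ub^{(1)}\|_{L^{q^{(1)}}L^{r}}\prod_{j\ge2}\|\pi_1\S\vec\ub^{(j)}\|_{L^{q}L^{r}},
\]
since $\frac1{\wt q}=\frac kq$. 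Then \eqref{lin2}, Lemma~\ref{LEM:STR}, and \eqref{Linfty} (using $q^{(1)}>q$) close the argument as in \eqref{nonl2ab}.

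\emph{Case (ii)} ($s_\text{crit}\ge1$, $k\ge3$). I would follow \eqref{nonl2b} with $\eps_0=0$, applying the fractional Leibniz rule so that $\jb{\nabla}^{s-1}$ falls on a single factor. When it falls on $\vec\ub^{(1)}$, the extra $\eps$ derivative lets me use a $(1+\eps)$-admissible pair $(q_1^{(1)},r_1)$ with $q_1^{(1)}>q_1=k$, which gains $T^{\eps'}$. When it falls on $\vec\ub^{(j)}$ with $j\ge2$, I would instead estimate $\vec\ub^{(1)}$ in $L^{q_2'}L^{r_2}$ with $q_2'>q_2$. This is allowed because Sobolev now has $\eps$ spare derivatives: the last condition in \eqref{nonl3b} becomes strict, so $\vec\ub^{(1)}$ can be placed in a Strichartz space with a higher time exponent. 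Using $k\ge3$ (so $q_2<q_1$), all time exponents stay at most $q=k$ on the $U^q$ side.

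\emph{Case (iii)} ($k=2$, $d\ge6$). This is the main obstacle, because the only useful pair has $q=2$ and an $L^2_t$ Strichartz norm requires $U^2$. Starting from \eqref{lin3}, I would bound $\|\jb{\nabla}^{s-1}(u_1u_2)\|_{L^1_TL^2_x}$ by Leibniz. In each term I would put the $\vec\ub^{(2)}$-factor in the endpoint $1$-admissible space $L^2_tL^{r}_x$, controlled by the $U^2$-norm through Lemma~\ref{LEM:STR}. The $\vec\ub^{(1)}$-factor, with its extra $\eps$ derivatives, goes into $L^{q_*'}_tL^{\tilde r}_x$ via an $(s+\eps)$-type admissible pair with $\frac1{q_*'}<\frac12$, controlled by the $U^{q_*}$-norm. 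Hölder in time from $L^2\cdot L^{q_*'}$ to $L^1$ then yields $T^{\eps}$ because $\frac12+\frac1{q_*'}<1$. The alternative bound, with both factors in $U^{q_*}\H^{s+\eps}$, follows symmetrically by using the $\eps$ gain on each factor to move both off the $L^2_t$ endpoint. The delicate points I expect to check are two: that the endpoint pair $q=2$ is admissible for $d\ge6$ in \eqref{admis1a} (which holds since $\frac2q+\frac{d-1}r\le\frac{d-1}2$), and that the Sobolev bookkeeping in $r$ closes with exactly $\eps$ spare derivatives.
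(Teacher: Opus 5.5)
Your proposal is correct and follows the paper's proof: in each case you put the smoother factor $\vec\ub^{(1)}$ into the slightly subcritical Strichartz space with the same spatial exponent and with time exponent $q_*$ given by $\frac1{q_*}=\frac1q-\eps$ (resp.\ $\frac1k-\eps$, $\frac12-\eps$), and you recover $T^\eps$ from H\"older in time inside \eqref{nonl2ab} and \eqref{nonl2b}, exactly as the paper does. There are two harmless slips: the $(s_\textup{crit}+\eps)$-admissibility of $(q^{(1)},r)$ is automatic since $\frac2{q^{(1)}}<\frac2q$ (no extra room in $r$ is needed), and at $\eps_0=0$ one has $q_2=q_1=k$, so $k\ge3$ is used to make $q=k>2$, not to give $q_2<q_1$.
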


\begin{remark}\label{REM:nonlin5}
\rm

When  $d \ge 6$ and $k = 2$ (namely, $s_{\textup{crit}} \ge  1$), 
the estimate \eqref{nonlin2} for $s =s_{\textup{crit}}$ holds only for $q\le 2$
(see \eqref{nonl3c}), which is too restrictive
to treat the rough case.
The point of the estimate \eqref{nonl000}
is that it 
does not only provide a small power of $T$
but also allows
us to place some factors in $U^{q_*}_T$, $q_* > 2$,  with a weaker temporal regularity, 
at the expense of a slight loss in spatial regularity.
This  plays a crucial role in the rough case presented
in  Section \ref{SEC:rough}.

\end{remark}

\begin{proof}[Proof of Proposition \ref{PROP:nonlin3}]
The estimates \eqref{nonl00}
and \eqref{nonl000} follow from small modifications
of the proof of Proposition \ref{PROP:nonlin2} with $s = s_\text{crit}$, 
 which we describe below.

We first discuss \eqref{nonl00}.
When $0\le s_{\textup{crit}} < 1$, 
we modify  \eqref{nonl2ab} in the proof of Proposition~\ref{PROP:nonlin2}\,(a)
with $s=s_\text{crit}$ as follows.
Fix small $\eps > 0$.
 Given the $s_\text{crit}$-admissible pair $(q,r)$ 
with $q>2$, 
set
  $ q_*>q>2$ 
by  $\frac{1}{q_*} = \frac1q - \eps >0$
 such that 
   $(q_*, r)$ is $(s_\text{crit} + \eps)$-admissible.
Then, 
 by H\"older's inequality and the Strichartz estimates (Lemma~\ref{LEM:STR})
 with the embedding~\eqref{Linfty}, we have
\begin{align}
\begin{split}
\| \pi_1 [ \S(t) \vec\ub_t^{(1)}] \|_{L^q_T L^r_x} 
&  \les T^\eps \| \pi_1 [ \S(t) \vec\ub_t^{(1)}] \|_{L^{q_*}_T L^r_x} 
 \les T^\eps \| \vec\ub^{(1)} \|_{U^{q_*}_{T} \H^{s_\text{crit} + \eps }_x }\\
&  \les T^\eps \| \vec\ub^{(1)} \|_{U^{q}_{T} \H^{s_\text{crit} + \eps }_x }.
\end{split}
\label{nonlin4a}
\end{align}

\noi
Then, \eqref{nonl00} follows from \eqref{nonl2ab}
with $s = s_\text{crit}$
by replacing the estimate on the first factor 
$\pi_1 [ \S(t) \vec\ub_t^{(1)}] $  by~\eqref{nonlin4a}.

When $ s_{\textup{crit}} \ge  1$ and $k \ge 3$, 
we modify  \eqref{nonl2b} in the proof of Proposition~\ref{PROP:nonlin2}\,(b)
with $s=s_\text{crit}$ as follows.
Fix small $\eps > 0$.
Given 
 the $1$-admissible pair 
 $(q_1,r_1)$
 in \eqref{nonl3c} (with $\eps_0=0$), 
 set $q_* > q_1 = k$
 by 
 $\frac{1}{q_*} = \frac{1}{q_1} - \eps$
 such that $(q_*,r_1)$ is $(1+\eps)$-admissible.
 Then, 
  by H\"older's inequality and the Strichartz estimates (Lemma~\ref{LEM:STR})
 with the embedding~\eqref{Linfty}, we have
\begin{align}
\begin{split}
\| \jb{\nabla}^{s_\text{crit}-1} \pi_1[\S(t) \vec\ub_t^{(1)}] \|_{L^{q_1}_T L^{r_1}_x} 
& \les T^\eps \| \jb{\nabla}^{s_\text{crit}-1} \pi_1[\S(t) \vec\ub_t^{(1)}] \|_{L^{q_*}_T L^{r_1}_x} \\
& \les T^\eps \|\vec\ub^{(1)} \|_{U^{q_*}_{T} \H^{s_\text{crit}+\eps}}
\les T^\eps \|\vec\ub^{(1)} \|_{U^{q}_{T} \H^{s_\text{crit}+\eps}}.
\end{split}
\label{nonlin4b}
\end{align}

\noi
Then, \eqref{nonl00} follows from \eqref{nonl2b}
with $s = s_\text{crit}$
by replacing the estimate on the first factor 
$\pi_1 [ \S(t) \vec\ub_t^{(1)}] $  by~\eqref{nonlin4b}.

Next, we consider the first bound in \eqref{nonl000}
for 
$s_{\textup{crit}} \ge  1$ and $k = 2$.
Our goal is not only to gain a small power of $T$
but also place one of the factors in a weaker space $U^{q_*}_T$.
As before, 
the estimate \eqref{nonl000} follows
from a modification of  \eqref{nonl2b} in the proof of Proposition~\ref{PROP:nonlin2}\,(b)
with $s=s_\text{crit}$.
Fix small $\eps > 0$.
Given 
 the $1$-admissible pair 
 $(q_1,r_1) = (2, \frac{2d}{d-3})$
 in \eqref{nonl3c} (with $\eps_0=0$), 
 set $q_* > q_1 = 2$
 by 
 $\frac{1}{q_*} =  \frac{1}{q_1} - \eps = \frac{1}{2} - \eps$
 such that $(q_*,r_1)$ is $(1+\eps)$-admissible.
 Then, by proceeding as in \eqref{nonl2b}
 and then 
  by applying H\"older's inequality and the Strichartz estimates (Lemma~\ref{LEM:STR}), 
we have 
\begin{align*}
& \bigg\| \int_0^t \S(-t')
\begin{pmatrix}
0 \\
\prod_{j=1}^2 \pi_1[\S(t') \vec\ub_{t'}^{(j)}]
\end{pmatrix}
dt' \bigg\|_{U^{p}_{T} \H^{s_\textup{crit}}_x}\\
& \quad \les \| \jb{\nabla}^{s_\text{crit}-1} \pi_1[\S(t) 
\vec\ub_{t}^{(1)}] \|_{L^{2}_T L^{r_1}_x} \| \jb{\nabla}^{s_\text{crit}-1} \pi_1[\S(t)
\vec\ub_{t}^{(2)}] \|_{L^{2}_T L^{r_1}_x} \\
& \quad  \les T^\eps \| \jb{\nabla}^{s_\text{crit}-1} \pi_1[\S(t) 
\vec\ub_{t}^{(1)}] \|_{L^{q_1^*}_T L^{r_1}_x} \| \jb{\nabla}^{s_\text{crit}-1} \pi_1[\S(t) \vec\ub_{t}^{(2)}] \|_{L^{2}_T L^{r_1}_x}  \\
& \quad \les T^\eps \| \vec\ub^{(1)} \|_{U^{q_*}_{T} \H^{s_\text{crit}+\eps}_x} 
\| \vec\ub^{(j)}\|_{U^2_{T} \H^{s_\text{crit}}_x} ,
\end{align*}

\noi
which yields the first bound in \eqref{nonl000}.
The second bound in \eqref{nonl000} follows from a similar computation.
\end{proof}

\section{Pathwise local well-posedness: Young case}
\label{SEC:young}

In this section, we establish pathwise local well-posedness of SNLW \eqref{SNLW} 
forced by  a 
fractional-in-time noise with the Hurst parameter $\frac 12 < \be < 1$
(Theorem~\ref{THM:1}).

For this purpose, we first consider the following 
 deterministic formulation of the problem. 
Let $(\phi_0, \phi_1) \in \H^s(\T^d)$ and  $\vXX \in C^\al_{2,T} \LOP(\H^s(\T^d))$ 
for some $\frac 12 < \al<1$ such that $\updl\vXX =0$, 
namely, $\vXX$ is a Young driver.
Then, 
we consider the following (deterministic) YDE
for 
the interaction representation 
 $\vec\ub(t) = \S(-t) \vec u(t)$:
\begin{align}
\label{duhamelY}
\vec{\ub}_t &= (\phi_0,\phi_1) + \int_0^t \S(-t')
\begin{pmatrix}
0 \\ (\pi_1 [\S(t') \vec\ub_{t'}])^k
\end{pmatrix}
dt'
+  
\I^{\vXX}(\vec\ub)(t), 
\end{align}

\noi
where 
$\pi_1 \vec f $ denotes the projection onto the first component of the vector $\vec f$
and $\I^{\vXX}(\vec\ub)$ denotes the Young integral of $\vec \ub$
with the Young driver $\vXX$.
It follows 
 from 
the embedding
$\U^{2 }_T\H^s(\T^d) \subset \V_{T}^{2} \H^s(\T^d)$ 
and Lemma~\ref{LEM:intY}
(note that $\al + \frac 12  > 1$)
that the Young integral 
$\I^{\vXX} (\vec \ub) \in 
\V_{T}^{\frac1\al} \H^s(\T^d)\subset 
\U^{2 }_T\H^s(\T^d)$ 
is well defined for each $\vec \ub 
\in 
\U^{2 }_T\H^s(\T^d)$.

Given $\frac 12 < \al < 1$ and $s \ge 0$, 
we define the space of $\Xc^{\al, s}(\T^d)$ of enhanced data sets by setting
\begin{align*}
\Xc^{\al, s}(\T^d)= 
\big\{ \Xi = (\phi_0, \phi_1, \vXX )  \in
\H^s(\T^d) \times C^{\al}_{2,1} \LOP(\H^s(\T^d)):\, 
\updl \vXX =0
\big\}, 
\end{align*}

\noi
endowed 
with the norm:
\begin{align}
\| \Xi \|_{\Xc^{\al, s}} = \| (\phi_0, \phi_1) \|_{\H^s} + \| \vXX \|_{C^{\al}_{2,1} \LOP(\H^s)}.
\label{enh0}
\end{align}

\noi
Then, we have the following local well-posedness result for the YDE~\eqref{duhamelY}.

\begin{proposition}
\label{PROP:LWP1}

Let $d\ge 1$, $k\ge2$, $ \frac12 < \al < 1$, and $s \ge 0$, satisfying 
\eqref{reg1}{\rm :}
\begin{align*}
\textup{(i) } 
& s>s_{\textup{crit}} \text{ when } (d,k) \in\{(2,2), (2,3), (3,2)\}, \quad \text{or}\\
\textup{(ii) } &  s \ge s_{\textup{crit}}  \text{ otherwise},
\end{align*}
where $s_{\textup{crit}}$ is as in \eqref{crit3} for $d=1$ and as in \eqref{crit1} for $d\ge2$. Then, the YDE \eqref{duhamelY} is locally well-posed in $\mathcal{X}^{\al, s}(\T^d)$. More precisely, given an enhanced data set
\begin{align*}
\Xi = (\phi_0, \phi_1, \vXX ) \in \Xc^{\al, s}(\T^d),
\end{align*}
there exist $0<T=T(\Xi)\le 1 $ and a unique solution 
$\vec\ub$ to \eqref{duhamelY} in the class{\rm :}
\[\vec\ub \in
\U^{2 }([0,T]; \H^s(\T^d))
= 
U^{2 }([0,T]; \H^s(\T^d)) \cap C([0,T]; \H^s(\T^d)).\]

\noi
Furthermore, the
solution map{\rm :}
\begin{align}
\Xi = (\phi_0, \phi_1, \vXX )  \in \Xc^{\al, s}(\T^d)
\longmapsto \vec\ub \in \U^{2 }([0,T]; \H^s(\T^d))
\label{sol1}
\end{align}

\noi
 is locally Lipschitz continuous.

\end{proposition}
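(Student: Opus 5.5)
We will prove Proposition~\ref{PROP:LWP1} by a contraction argument for the map
\begin{align*}
\Gamma(\vec\ub)(t) = (\phi_0,\phi_1) + \vec\NN(\vec\ub)(t) + \I^{\vXX}(\vec\ub)(t)
\end{align*}
on a ball $B_R$ in $\U^{2}_T\H^s(\T^d)$, possibly intersected with a small-data ball (see below), with $R\sim 2\|(\phi_0,\phi_1)\|_{\H^s}$ and $T=T(\Xi)\le 1$ to be chosen.

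\textbf{Linear part and nonlinear term.} The data term is handled by \eqref{lin1} with $p=2$. For $\vec\NN(\vec\ub)$ we invoke Proposition~\ref{PROP:nonlin1} when $d=1$ and Proposition~\ref{PROP:nonlin2} when $d\ge 2$. We take the output exponent $p=2$: since $p_0<2$, this is admissible in case (a), and case (b) allows any $1<p<\infty$. We take the input exponent $q=2$, which is allowed because $q_0>2$ in case (a) and $k+\eps_0\ge 2$ in case (b). The embedding \eqref{Linfty} takes care of any mismatch of indices. This gives
\begin{align*}
\|\vec\NN(\vec\ub)-\vec\NN(\vec\vb)\|_{U^2_T\H^s}\les T^\ta (\|\vec\ub\|_{U^2_T\H^s}+\|\vec\vb\|_{U^2_T\H^s})^{k-1}\|\vec\ub-\vec\vb\|_{U^2_T\H^s}.
\end{align*}
Continuity in time of $\vec\NN(\vec\ub)$ follows because it is an $L^1_t$-type integral, by \eqref{lin3} and the proofs of those propositions.

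\textbf{Young integral.} For $\vec\ub\in\U^2_T\H^s$ we use $U^2\subset V^2$, so $\vec\ub\in\V^{2}_T\H^s$ with $\al_0=\frac12$ and $\al+\al_0>1$. By \eqref{intY1b} and \eqref{con1x},
\begin{align*}
\|\I^{\vXX}(\vec\ub)\|_{V^{1/\al}_T\H^s}\les T^\al\|\vXX\|_{C^\al_{2,1}\L(\H^s)}\|\vec\ub\|_{V^2_T\H^s}.
\end{align*}
Since $\frac1\al<2$, the embedding \eqref{Linfty} gives $V^{1/\al}_{\rm rc}\subset U^2$, and the integral is continuous, so it lies in $\U^2_T\H^s$ with a factor $T^\al$. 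The bound is linear in $\vec\ub$, so the difference estimate is immediate. The Lipschitz dependence on $\vXX$ comes from \eqref{intY1c}.

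\textbf{Closing the argument and the main obstacle.} When $\ta>0$ in \eqref{nonlin2y}, choosing $T$ small in terms of $R$ and $\|\vXX\|$ makes $\Gamma$ a contraction on $B_R$. The differences of two solutions with different data $\Xi,\Xi'$ give local Lipschitz continuity of \eqref{sol1}, and uniqueness in the full class follows by the usual continuity argument. The main obstacle is the critical case $s=s_{\rm crit}=s_{\rm scaling}$, where $\ta=0$ and the nonlinear bound gains no power of $T$. There I would split $\vec\ub=\vec\ub_{\rm lin}+\vec\wb$ with $\vec\ub_{\rm lin}=(\phi_0,\phi_1)$. The Young term is still small, thanks to $T^\al$. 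The nonlinearity is multilinear, and every factor other than the purely linear one contains either $\vec\wb$ (small in $U^2$) or the data. For the purely linear factor I would first try to show that $\|\pi_1\S(t)(\phi_0,\phi_1)\|_{L^q_TL^r_x}\to 0$ as $T\to0$, by monotone convergence since $q<\infty$. This would let us contract in a ball where the Strichartz norm of the solution is small, at the cost of $T$ depending on the profile of the data rather than only on its norm. If this fails, I would alternatively frequency-truncate the data, $\P_N(\phi_0,\phi_1)+\P_N^\perp(\phi_0,\phi_1)$ with a small tail, and use Proposition~\ref{PROP:nonlin3} to gain $T^\eps$ on the smooth piece $\P_N$.
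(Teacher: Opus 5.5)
Your treatment of the subcritical case matches the paper: the Young integral is handled via $U^2\subset V^2$, \eqref{intY1b}, \eqref{con1x} and $V^{1/\al}_{\rm rc}\embeds U^2$, and you take $p=q=2$ in Propositions~\ref{PROP:nonlin1} and~\ref{PROP:nonlin2}. The genuine difference is in the critical case $s=s_{\rm crit}=s_{\rm scaling}$, where $\theta=0$. Your main route there is correct, but it is not the paper's.

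The paper never leaves the $U^2$ level. It fixes $N$ with $\|\P_N^\perp(\phi_0,\phi_1)\|_{\H^s}\le\delta_1$ and contracts on $\{\|\vec\ub\|_{U^2_T\H^s}\le K,\ \|\P_N^\perp\vec\ub\|_{U^2_T\H^s}\le\delta_2\}$. The part of $u^k$ at least quadratic in $\P_N^\perp u$ is bounded by $\delta_2^2K^{k-2}$, using Proposition~\ref{PROP:nonlin2} with $\theta=0$ as a black box. The remaining terms are bounded crudely by $TN^\kappa K^k$ via \eqref{lin3} and Bernstein.

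You instead run the classical critical Strichartz argument. For this you need the intermediate lines of \eqref{nonl2ab} and \eqref{nonl2b}, not Proposition~\ref{PROP:nonlin2} as stated:
\[
\|\vec\NN(\vec\ub)-\vec\NN(\vec\vb)\|_{U^2_T\H^s}\les\big(\|\pi_1\S(t)\vec\ub\|_{L^q_TL^r_x}+\|\pi_1\S(t)\vec\vb\|_{L^q_TL^r_x}\big)^{k-1}\|\vec\ub-\vec\vb\|_{U^2_T\H^s},
\]
with $L^{q_1}_TW^{s-1,r_1}_x$ in place of $L^q_TL^r_x$ in case (b). In the critical case $q=k\wt q<\infty$ and $q_1=k$, so the time exponent is finite.

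The reason you need this form is that $U^2$ norms do not shrink as $T\to0$: the constant path $(\phi_0,\phi_1)$ has $U^2_T\H^s$-norm $\|(\phi_0,\phi_1)\|_{\H^s}$ for every $T$. Consequently the term $k(\pi_1\S(t)\vec\ub_{\mathrm{lin}})^{k-1}\pi_1\S(t)\vec\wb$ is contractive only if the data factors are measured in the Strichartz norm. That norm is small for small $T$ by absolute continuity of $\int_0^T\|\pi_1\S(t)(\phi_0,\phi_1)\|_{L^r_x}^q\,dt$. Your remark that every other term ``contains $\vec\wb$ or the data'' does not give this by itself.

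Concretely, contract either on a small $U^2_T\H^s$-ball for $\vec\wb$, or on $\{\|\vec\ub\|_{U^2_T\H^s}\le K,\ \|\pi_1\S(t)\vec\ub\|_{L^q_TL^r_x}\le\delta\}$. The latter set is closed in $U^2_T\H^s$ by Lemma~\ref{LEM:STR}.

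The two approaches trade off as follows.
\begin{itemize}
\item Yours avoids the frequency cutoff and Bernstein. It requires the multilinear bound to factor through a time-integrable norm and an extra constraint in the contraction set.
\item The paper's uses the $U^2$ estimate purely as a black box and gets smallness from the high-frequency tail of the data, propagated through the ball. The paper reuses this scheme in the proof of Proposition~\ref{PROP:LWP2}.
\end{itemize}
In both, $T$ depends on the profile of the data, not only on its norm.

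Your fallback with $\P_N$ is essentially the paper's argument. Proposition~\ref{PROP:nonlin3} is unnecessary there; the essential ingredient is the propagated constraint $\|\P_N^\perp\vec\ub\|_{U^2_T\H^s}\le\delta_2$.
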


We first present a proof of 
Theorem \ref{THM:1}, assuming Proposition \ref{PROP:LWP1}.

\begin{proof}[Proof of Theorem \ref{THM:1}]

Fix $s \ge 0$ and 
 $\s \in \R$, satisfying \eqref{reg1}  and \eqref{sigma0}.
Let  $(\phi_0, \phi_1) \in \H^s(\T^d)$ and  
$\Phi \in \HS(L^2(\T^d); H^\s(\T^d))$, 
satisfying \eqref{phi1}, and
$\vXX$
be a random driver as in~\eqref{introX}; see also \eqref{XX0} and \eqref{XX0a}.
We note that 
the condition \eqref{YG0} with $s_0 = s$
 is equivalent to \eqref{sigma0}.
Then, given $\frac 12 < \al < \be$, it follows from Proposition~\ref{PROP:driver1} that 
there exists $\Si \subset \Om$ with $\PP(\Si) = 1$
such that 
\begin{align}
\vXX^\o \in C^\al_{2,1} \LOP(\H^s(\T^d))  \qquad \text{and}\qquad
\updl \vXX^\o = 0
\notag 
\end{align}

\noi
for each $\o \in \Si$.
In particular, 
we have 
$(\phi_0, \phi_1, \vXX^\o) \in \Xc^{\al, s}(\T^d)$
for each $\o \in \Si$.

Fix $\o \in \Si$.
Then, from the discussion preceding Proposition \ref{PROP:LWP1}, 
we see that the Young integral 
$\I^{\vXX^\o} (\vec \ub) \in 
\V_{T}^{\frac1\al} \H^s(\T^d)\subset
\U^{2 }_T\H^s(\T^d)$ 
is well defined for each $\vec \ub 
\in 
\U^{2 }_T\H^s(\T^d)$.
Therefore, 
by interpreting 
the stochastic term $\vec{\pPsi} (\vec\ub)$ in \eqref{psi2}
as the Young integral
$\I^{\vXX^\o}(\vec \ub)$, 
pathwise local well-posedness of SNLW 
\eqref{mild2}, interpreted 
as the YDE \eqref{duhamelY}
with the enhanced data set
$(\phi_0, \phi_1, \vXX^\o) \in \Xc^{\al, s}(\T^d)$, 
follows from 
Proposition~\ref{PROP:LWP1}.
\end{proof}

We now present a  proof of Proposition~\ref{PROP:LWP1}.

\begin{proof}[Proof of Proposition~\ref{PROP:LWP1}]

Given 
$\Xi = (\phi_0, \phi_1, \vXX ) \in \mathcal{X}^{\al, s}(\T^d)$, 
we define the map $\vec{\Gamma} = \vec{\Gamma}_{\Xi}$  by
\begin{align}
\begin{split}
\vec{\Gamma}(\vec{\ub})(t)
& = (\phi_0, \phi_1)+ \int_0^t \S(-t')
\begin{pmatrix}
0\\
(\pi_1[\S(t') \vec\ub_{t'}])^k
\end{pmatrix}
dt'
+ 
\I^{\vXX}(\vec\ub)(t)\\
& = : (\phi_0, \phi_1)+ \vec{\NN}({\vec{\ub}})(t) + 
\I^{\vXX}(\vec\ub)(t).
\end{split}
\label{Gamma}
\end{align}

As for the last term in \eqref{Gamma}, 
it follows from 
\eqref{Linfty}, 
\eqref{intY1b} in Lemma~\ref{LEM:intY}, 
 and \eqref{con1x}
that 
\begin{align}
\begin{split}
\| \I^{\vXX}(\vec\ub) \|_{U^{2}_{T} \H^s_x} 
& \les \|   \I^{\vXX}(\vec\ub) \|_{V^{\frac1\al}_{T} \H^s_x} 
 \les \| \vXX\|_{\V^{\frac{1}{\al}}_{2,T} \LOP(\H^s)}
  \| \vec{\ub} \|_{{V}^{2}_{T} \H^s_x}
\\
& \les T^\al  \| \vXX\|_{{C}^{\al}_{2,T} \LOP(\H^s)} \| \vec{\ub} \|_{{U}^{2}_{T} \H^s_x} .
\end{split}
\label{nonlY1a}
\end{align}

\noi
Then, 
from \eqref{lin1} in Lemma \ref{LEM:lin}, 
 Proposition~\ref{PROP:nonlin2} with $p=q=2$ for $d\ge2$ 
 (and Proposition~\ref{PROP:nonlin1} for $d=1$), and \eqref{nonlY1a}, 
we have 
\begin{align}
\begin{split}
\| \vec{\Gamma}(\vec{\ub}) \|_{{U}^{2}_{T} \H^s_x} 
& \le \| (\phi_0, \phi_1) \|_{\H^s} 
+  \| \vec{\NN}(\vec{\ub}) \|_{U^{2}_{T} \H^s_x} 
+  \| \I^{\vXX}(\vec\ub)\|_{U^{2}_{T} \H^s_x}\\
& \les \| (\phi_0, \phi_1) \|_{\H^s} 
+ 
T^\theta \| \vec{\ub} \|^k_{U^{2}_{T} \H^s_x} 
+ T^\al  \| \vXX\|_{{C}^{\al}_{2,T} \LOP(\H^s)} \| \vec{\ub} \|_{{U}^{2}_{T} \H^s_x} 
\end{split}
\label{G2}
\end{align}

\noi
for some $\ta \ge 0$, 
satisfying \eqref{nonlin2y} when $d \ge 2$
(and $\ta > 0$ when $d = 1$).
A similar computation yields
the difference estimate:
\begin{align}
\begin{split}
\| \vec{\Gamma}(\vec{\ub}) - \vec{\Gamma}(\vec{\vb}) \|_{{U}^{2}_{T} \H^s_x} 
& \les 
T^\theta 
\Big(\|  \vec{\ub}  \|_{U^{2}_{T} \H^s_x} 
+ \|  \vec{\vb}  \|_{U^{2}_{T} \H^s_x} \Big)^{k-1}
\|  \vec{\ub} - \vec{\vb} \|_{U^{2}_{T} \H^s_x} \\
& \quad + T^\al  \| \vXX\|_{{C}^{\al}_{2,T} \LOP(\H^s)} \| \vec{\ub}  - \vec{\vb}\|_{{U}^{2}_{T} \H^s_x} .
\end{split}
\label{G3}
\end{align}

\noi
Hence, if $\ta > 0$, 
a contraction argument with \eqref{G2} and \eqref{G3}
yields local well-posedness of the YDE~\eqref{duhamelY}.
Moreover, 
local Lipschitz continuity of the solution map in \eqref{sol1}
follows from a slight modification of \eqref{G2}
and \eqref{G3} with 
\eqref{intY1c} in Lemma~\ref{LEM:intY}.

\begin{remark}\label{REM:conti}
\rm

The argument above showed that $\vec{\Gamma}$ is a contraction
on (a closed ball centered at the origin in) $U^2_T \H^s_x$
by choosing sufficiently small $T > 0$.
By noting that each term on the right-hand side of \eqref{duhamelY} is continuous
in time (recall Lemma \ref{LEM:intY}), 
we see that 
$\vec{\Gamma}$ is in fact a contraction
on (a closed ball centered at the origin in) 
the closed subspace 
$\U^2_T \H^s_x$ 
in \eqref{local2},  
consisting of continuous functions in time.
This proves continuity 
of the solution $\vec \ub$ to \eqref{duhamelY}.
A similar comment applies to 
the $\ta = 0$ case considered below
and 
the proof of Proposition~\ref{PROP:LWP2}
presented in Section \ref{SEC:rough}, 
and we omit this part of details.
See also Remark \ref{REM:conti2}.

\end{remark}

\medskip

It remains to consider the case $\ta = 0$, namely, 
when $d \ge 2$ and $s=s_{\text{crit}} = s_{\text{scaling}}$.
If initial data $(\phi_0, \phi_1)$ is sufficiently small, 
then a contraction argument, using 
\eqref{G2} and \eqref{G3} with $\ta = 0$, 
yields local well-posedness of the YDE \eqref{duhamelY}.
In the general case (namely, the large data case),  
we  proceed 
as in \cite[proof of Theorem 1.1]{HTT11}. 
While the required modifications are straightforward, we present 
some details for readers' convenience.

Fix $\Xi = (\phi_0, \phi_1, \vXX )  \in \Xc^{\al, s}(\T^d)$.
Given small $\dl_1 > 0$ (to be chosen later), it follows from the dominated convergence theorem
that there exists $N = N((\phi_0,\phi_1), \dl_1) \in \N$ such that 
\begin{align}
\label{X6aa}
\| \P^\perp_N(\phi_0,\phi_1) \|_{\H^s} \le \dl_1,
\end{align}

\noi
where $\P_N^\perp= \Id - \P_N$.
Given $K > 0$, small $\dl_2>0$, and $T > 0$,  
define the set $B_{K, \dl_2, T} = B_{K, \dl_2, T}(N)$ by 
\begin{align}
\begin{split}
B_{K, \dl_2, T}
& = \big\{ \vec \ub \in U^{2}([0,T]; \H^s(\T^d)): \,
\| \vec \ub \|_{U^{2}_{T} \H^s_x} \le K, \,
\| \P_N^\perp \vec \ub \|_{U^{2}_{T} \H^s_x} \le \dl_2 \big\}, 
\end{split}
\label{X6}
\end{align}

\noi
endowed with the metric $d(\vec \ub, \vec \vb) = \|\vec \ub - \vec \vb\|_{U^{2}_{T} \H^s_x}$.

With $u_t = \pi_1[\S(t) \vec\ub_t]$, we have
\begin{align}
\label{X6a}
\begin{split}
\P_N u_t  &= \P_N \pi_1 [ \S(t) \vec\ub_t] = \pi_1 [\S(t) \P_N \vec\ub_t ] , \\
 \P_N^\perp u_t &= \P_N^\perp \pi_1 [ \S(t) \vec\ub_t ] = \pi_1 [\S(t) \P_N^\perp \vec\ub_t].
\end{split}
\end{align}

\noi
By expanding $u^k = (\P_N u + \P^\perp_N u)^k$ with \eqref{X6a}, we write $\vec{\mathcal{N}}(\vec{\ub})$  in \eqref{Gamma} as 
\begin{align*}
\vec{\mathcal{N}}(\vec{\ub}) & = \vec{\mathcal{N}}^{(1)}(\vec{\ub}) + \vec{\mathcal{N}}^{(2)} (\vec{\ub}) ,
\end{align*}

\noi
where $\vec{\mathcal{N}}^{(1)}$ is at least quadratic in $\P_N^\perp u$ and 
$\vec{\mathcal{N}}^{(2)}$ is at most linear in $\P_N^\perp u$. 
Then, by Proposition \ref{PROP:nonlin2}
with $\theta=0$, \eqref{X6a}, and \eqref{X6}, we have
\begin{align}
\label{X7}
\| \vec \NN^{(1)}(\vec \ub) \|_{U^{2}_{T} \H^s_x} 
& \les \| \P_N^\perp \vec{\ub} \|_{U^{2}_{T} \H^s_x}^2 \| \vec{\ub} \|_{U^{2}_{T} \H^s_x}^{k-2}  \les   \dl_2^2 K^{k-2}
\end{align}

\noi 
for any $\vec{\ub} \in B_{K,\dl_2, T}$.
On the other hand, 
it follows from 
crudely estimating the contribution from 
$\vec \NN_2(\vec\ub)$ 
with 
 \eqref{lin3} in Lemma~\ref{LEM:lin}, 
the fractional Leibniz rule (Lemma~\ref{LEM:leibniz}),  Bernstein's inequality, 
and \eqref{Linfty}
that there exists  $\kk = \kk(d, k) > 0$ such that 
\begin{align}
\begin{split}
\| \vec \NN^{(2)}(\vec\ub)\|_{U^{2}_{T} \H^s_x}
& \les T \|u \|_{L^\infty_T H^s_x}
\|\P_N u \|_{L^\infty_{T}W^{s, \infty}_x}^{k-1}\\
& \les T N^\kk \| \S(t) \vec\ub \|_{L^\infty_T \H^s_x}^k\\
&
\les T N^\kk K^k
\end{split}
\label{X8}
\end{align}

\noi
for  any $\vec\ub \in B_{K, \dl_2, T}$.
%
%
%
%


Set 
\begin{align}
K = 2 \| \Xi \|_{\Xc^{\al, s}}
\qquad \text{and} \qquad
\dl_2 = 2\dl_1, 
\label{X8x}
\end{align}

\noi
where the $\Xc^{\al, s}$-norm is as in \eqref{enh0}.
Then, 
 from \eqref{Gamma}, 
\eqref{lin1}, 
 \eqref{X7}, \eqref{X8},
and \eqref{nonlY1a} with~\eqref{X8x},
we have 
\begin{align}
\begin{split}
\| \vec\G(\vec\ub) \|_{U^2_{T} \H^s_x}
& \le \| (\phi_0, \phi_1)\|_{\H^s}
+ C \dl^2_2 K^{k-2}\\
 &\quad  
 + C TN^\kk K^k 
+  C T^\al \|\vXX\|_{C^\al_{2,1} \LOP(\H^s)} K \\
& \le K, 
\end{split}
\label{X8a}
\end{align}

\noi
where the last inequality holds, provided that 
 $T = T(N, K) = T((\phi_0,\phi_1), \dl_1, K) > 0$
and $\dl_2 = \dl_2(K) >0$ are sufficiently small.
Similarly, with  \eqref{X6aa}, we have
\begin{align}
\begin{split}
\| \P_N^\perp \vec\G(\vec\ub) \|_{U^2_{T} \H^s_x}
& \le \dl_1 
+ C \dl^2_2 K^{k-2}\\
 &\quad   + C TN^\kk K^k 
+  C T^\al \|\vXX\|_{C^\al_{2,1} \LOP(\H^s)} K\\
& \le \dl_2, 
\end{split}
\label{X8b}
\end{align}

\noi
where the last inequality holds, provided that 
$T = T(N, K, \dl_2) = T((\phi_0,\phi_1), \dl_1, K) > 0$
and $\dl_2 = \dl_2(K) >0$ are sufficiently small.
A similar computation yields
the following difference estimate:
\begin{align}
\begin{split}
& \| \vec \G(\vec\ub) - \vec \G(\vec\vb)\|_{U^{2}_{T}\H^s_x}\\
& \quad \le C\Big(
  \dl_2 K^{k-2}
+ T N^\kk K^{k-1}
 + 
 T^\al \|\vXX\|_{C^\al_{2,1} \LOP(\H^s)} \Big)
 \|\vec\ub - \vec\vb\|_{U^{2}_{T } \H^s_x}\\
& \quad \le \frac 12 
 \|\vec\ub - \vec\vb\|_{U^{2}_{T } \H^s_x}, 
\end{split}
\label{X10}
\end{align}

\noi
where the last inequality holds, provided that 
$T = T(N, K) = T((\phi_0,\phi_1), \dl_1, K) > 0$
and $\dl_2 = \dl_2(K) >0$ are sufficiently small.

In view of \eqref{X8x}, 
we see that \eqref{X8a}, \eqref{X8b}, and \eqref{X10}
hold
by choosing 
$T = T\big((\phi_0,\phi_1),  \|\vXX\|_{C^\al_{2,1} \LOP(\H^s)}\big) > 0$
and $\dl_1 = \dl_1(K) >0$  sufficiently small.
In this case, 
it follows from 
\eqref{X8a}, \eqref{X8b}, and \eqref{X10}
that $\vec \G$ is a contraction on $B_{K, \dl_2, T}$
and thus local well-posedness of the YDE \eqref{duhamelY} follows
from Banach's fixed point theorem.
Local Lipschitz continuity of the solution map in \eqref{sol1}
follows from a slight modification of 
\cite[Part 3 of the proof of Theorem~1.1 on p.\,346]{HTT11} with \eqref{intY1c} in Lemma~\ref{LEM:intY}.
\end{proof}

\section{Pathwise local well-posedness: rough case}
\label{SEC:rough}

In this section, we establish pathwise local well-posedness of SNLW \eqref{SNLW} 
forced by  a 
white-in-time noise with the Hurst parameter $ \be = \frac 12$
(Theorem~\ref{THM:2}).

Following the presentation  in Section~\ref{SEC:young}  for the Young case, 
a natural approach is 
to  consider the following (deterministic) RDE
for 
the interaction representation 
 $\vec\ub(t) = \S(-t) \vec u(t)$:
\begin{align}
\vec{\ub}_t &= (\phi_0,\phi_1) + \int_0^t \S(-t')
\begin{pmatrix}
0 \\ (\pi_1 [\S(t') \vec\ub_{t'}])^k
\end{pmatrix}
dt'
+  
\I^{\vXX, \vbbX}(\vec\ub)(t), 
\label{duha2}
\end{align}

\noi
where 
$\I^{\vXX, \vbbX}(\vec\ub)$ denotes the rough integral of $\vec \ub$
with a given rough driver 
\begin{align}
(\vXX, \vbbX )  
& \in C^{\al}_{2,T} \LOP(\H^s(\T^d))
\times 
C^{2\al}_{2,T} \LOP(\H^s(\T^d))
\label{duha2a}
\end{align}

\noi
for  $\al < \frac12$ sufficiently close to $\frac 12$, 
satisfying Chen's relation \eqref{Chen2}.
Recall from Subsection~\ref{SUBSEC:sewR}
that  in order to construct
the rough integral $\I^{\vXX, \vbbX}(\vec\ub)$, 
we need to impose
an appropriate controlled structure on $\vec \ub$ 
as in Definition \ref{DEF:RP}\,(ii):
\begin{align}
(\updl \vec\ub)_{t,r} = \vXX_{t,r}( \vec\ub_r' )+ \vec{R}^{\vec\ub}_{t,r}
\label{duha3}
\end{align}

\noi
for some remainder term $\vec{R}^{\vec\ub} = \vec{R}^{\vXX, \vec\ub}$.
Thus, we are led to study the system \eqref{duha2} and \eqref{duha3}
of two equations.

Unfortunately, 
this approach fails in the 
 following case:
\begin{align}
\label{bad1}
d \ge 6, \quad
k=2, \quad
\text{and}
\quad
s=s_{\text{crit}} \ge 1 ,
\end{align}
where $s_{\text{crit}}$ is as in \eqref{crit1}.
On the one hand, 
as pointed out in Remark \ref{REM:nonlin5}, 
the nonlinear estimate \eqref{nonlin2} in Proposition \ref{PROP:nonlin2}
holds only for $q \le 2$.
On the other hand, 
assuming that 
the rough integral $\I^{\vXX, \vbbX}(\vec\ub)$ in \eqref{duha2}
makes sense, 
we expect 
$\vec\ub \in U^{q }_T \H^s(\T^d)\setminus U^2_T \H^s(\T^d)$
for $q= \frac 1\al >2$.
Namely, 
$\vec\ub$ does not have sufficient temporal regularity 
to apply the nonlinear estimate \eqref{nonlin2}.

We overcome  this issue by imposing a further structure on the unknown
$\vec \ub$. 
More precisely, we write $\vec \ub$ as  
\begin{align}
\vec\ub = \vec\vb + \vec\wb, 
\label{exp1}
\end{align} 

\noi
where $\vec\vb$,  $\vec\wb$, 
and 
the Gubinelli derivative $\vec\ub'$ (of $\vec \ub  =\vec\vb + \vec\wb$)
satisfy the following RDE-PDE system:
\begin{align}
\begin{split}
\vec\vb & = \I^{\vXX, \vbbX}(\vec\vb + \vec\wb),  \\
\vec\wb(t)& 
 = (\phi_0, \phi_1) + \vec\NN(\vec\vb + \vec\wb)(t),
\\
\big(\updl (\vec \vb + \vec \wb)\big )_{t,r} & = \vXX_{t,r} (\vec\ub'_r) + \vec{R}^{\vec\vb+ \vec \wb}_{t,r},
\qquad (t,r) \in \Dl_{2,T},
\end{split}
\label{RR1}
\end{align}

\noi
for some remainder term $\vec{R}^{\vec\vb+ \vec \wb}
= \vec{R}^{\vXX, \vec\vb+ \vec \wb}$, 
where 
$\vec\NN_t(\vec\ub)$ is given by 
\begin{align}
\vec\NN_t(\vec\ub) = 
\int_0^t \S(-t')
\begin{pmatrix}
0 \\
(\pi_1[\S(t') (\vec\ub_{t'})])^k
\end{pmatrix}
dt'.
\label{RR1x}
\end{align}

\noi
Here, 
$\I^{\vXX, \vbbX}(\vec\vb + \vec\wb)$
denotes the rough integral of $\vec\vb + \vec\wb$
with the rough  driver $(\vXX, \vbbX)$, given by 
\begin{align}
 \I^{\vXX, \vbbX}(\vec\vb + \vec\wb)(t) = \big[ (\Id - \Ld\updl)\vec\Taa \big]_{t,0},
\label{RR1a}
\end{align}

\noi
where 
$\vec\Taa$ is given by 
\begin{align}
\label{RTaa}
\vec\Taa_{t,r} = \vXX_{t,r} (\vec\vb_r + \vec\wb_r) + \vbbX_{t,r}( \vec\ub_r').
\end{align}

\noi
See \eqref{int2z} and \eqref{intRY0}.
From \eqref{RR1}, 
we see that 
 $\vec \vb$ is  
rough in time, 
while $\vec \wb$ is {\it smoother in time}.
A key observation is that, 
under an appropriate assumption, 
we can show that $\vec \vb$ is {\it smoother in space}
(see \eqref{RWP2} with \eqref{RRnorm})
such that we can apply Proposition \ref{PROP:nonlin3}.

Given $0 <\al <\frac12$, $s\ge0$, and small $\eps>0$, 
we define the space $\wt\Xc^{\al,s, \eps}(\T^d)$ 
of enhanced data sets by setting
\begin{align}
\begin{split}
 \wt\Xc^{\al,s, \eps}(\T^d)
&  = \Big\{
\Xi = (\phi_0, \phi_1, \vXX, \vbbX) \in
\H^s(\T^d) \times
\prod_{j = 1}^2 C^{j\al}_{2,1} \LOP(\H^s(\T^d); \H^{s+\eps}(\T^d))\\
& 
\hphantom{XXl} 
\text{  such that 
$(\vXX, \vbbX)$  satisfies Chen's relation \eqref{Chen2}}
\Big\}, 
\end{split}
\label{enh1}
\end{align}

\noi
endowed with the norm:
\begin{align}
\label{Xtilde}
\| \Xi \|_{\wt\Xc^{\al,s, \eps}}
=
\| (\phi_0, \phi_1) \|_{\H^s}
+ \|\vXX\|_{C^{\al}_{2,1} \LOP(\H^s;\H^{s+\eps})} + \| \vbbX\|_{C^{2\al }_{2,1} \LOP(\H^s;\H^{s+\eps})}.
\end{align}

\noi
In view of the embedding \eqref{con1x}, 
we see that 
 $(\vXX, \vbbX)$ is a $\frac 1\al$-variational rough path (see Definition~\ref{DEF:RP}\,(i)).

Next, we introduce a solution space.
Given $0 <\al <\frac12< \g < 1$, $s\ge0$, and small $\eps>0$, 
let $\vXX \in 
C^{\al}_{2,1} \LOP(\H^s(\T^d); \H^{s+\eps}(\T^d))$.
Fix  $0 < T \le 1$.
Let 
$\CRP^{\al, \g, s}_{\vXX, T} = 
\CRP^{\al, \g}_{\vXX, T}(\H^s(\T^d))$ 
be the space of paths controlled by $\vXX$
as defined in Definition~\ref{DEF:RP}\,(ii).
We then define 
 the ``subclass'' $\Yc^{\al,\g,s, \eps}_{\vXX, T} $ 
 by setting
 \begin{align}
 \begin{split}
 \Yc^{\al,\g,s, \eps}_{\vXX, T} 
&  = \big\{
 (\vec \vb, \vec \wb, \vec\ub') \in \V^{\frac1\al}_T \H^{s+\eps}(\T^d) \times \V^{\frac1\g}_{T} \H^s(\T^d) \times \V^{\frac1\al}_T \H^s(\T^d)\\ 
& \hphantom{XXll}\text{such that } \, (\vec\vb+\vec\wb, \vec\ub')\in \CRP^{ \al, \g, s}_{\vXX, T}
 \big\}, 
 \end{split}
 \label{YY1}
 \end{align}

 \noi
 endowed with the norm:
\begin{align}
\| (\vec\vb, \vec\wb, \vec\ub') \|_{\Yc^{\al,\g,s,\eps}_{\vXX, T}} 
& = \| \vec\vb \|_{{V}^{\frac1\al}_{T} \H^{s+\eps}_x} + \|\vec\wb \|_{V^{\frac1\g}_{T} \H^s_x} + \| (\vec\vb+\vec\wb, \vec\ub') \|_{\CRP^{\al, \g, s}_{\vXX, T}},
\label{RRnorm}
\end{align}

\noi
where the third norm is as in \eqref{CRPnorm}.

The main goal of this section is to prove the following
 local well-posedness for the RDE-PDE system \eqref{RR1}.

\begin{proposition}
\label{PROP:LWP2}

Let $d\ge 1$, $k\ge2$, 
$ 0 <  \al < \frac12 < \g < 1$ such that both $\al$ and $\g$ are sufficiently close to $\frac 12$ and 
\begin{align}
  \al + \g >1. 
  \label{RWP0}
\end{align}

 \noi
 Suppose that 
$s \ge 0$ satisfies~\eqref{reg1}{\rm :}
\begin{align*}
\textup{(i) } 
& s>s_{\textup{crit}} \text{ when } (d,k) \in\{(2,2), (2,3), (3,2)\},
 \quad \text{or}\\
\textup{(ii) } &  s \ge s_{\textup{crit}}  \text{ otherwise},
\end{align*}
where $s_{\textup{crit}}$ is as in \eqref{crit3} for $d=1$ and as in \eqref{crit1} for $d\ge2$. 
Then, given small $\eps > 0$, the RDE-PDE system~\eqref{RR1} is locally well-posed in $\wt{\mathcal{X}}^{\al,s, \eps}(\T^d)$.
 More precisely, given an enhanced data set
\begin{align*}
\Xi = (\phi_0, \phi_1, \vXX, \vec{\bbX}) &\in \wt{\mathcal{X}}^{\al,s, \eps}(\T^d),
\end{align*}

\noi
there exist $0< T  = T(\Xi) \le 1$ and a unique solution
$(\vec\vb, \vec\wb, \vec\ub')$ to \eqref{RR1} in the class{\rm :}
\begin{align}
(\vec\vb, \vec\wb, \vec\ub')
\in  \Yc^{\al,\g,s, \eps}_{\vXX, T},  
\label{RWP2}
\end{align}

\noi
satisfying
\begin{align}
\vec \ub' = \vec \vb + \vec \wb.
\label{RWP3}
\end{align}

\noi
Furthermore, the
solution map{\rm :}
\begin{align}
\begin{split}
& \Xi = (\phi_0, \phi_1, \vXX , \vbbX)  \in \wt \Xc^{\al, s, \eps}(\T^d)
\longmapsto
(\vec\vb, \vec\wb, \vec\ub')\in  \Yc^{\al,\g,s, \eps}_{\vXX, T} 
\end{split}
\label{sol2}
\end{align}

\noi
 is locally Lipschitz continuous.


\end{proposition}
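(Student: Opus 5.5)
Our plan is to run a contraction argument for the map $\vec\G = (\vec\G_1, \vec\G_2, \vec\G_3)$ on a closed ball of $\Yc^{\al,\g,s,\eps}_{\vXX, T}$. The three components are
\begin{align*}
\vec\G_1(\vec\vb,\vec\wb,\vec\ub') = \I^{\vXX,\vbbX}(\vec\vb+\vec\wb),
\qquad
\vec\G_2(\vec\vb,\vec\wb,\vec\ub') = (\phi_0,\phi_1) + \vec\NN(\vec\vb+\vec\wb),
\qquad
\vec\G_3(\vec\vb,\vec\wb,\vec\ub') = \vec\vb+\vec\wb.
\end{align*}
The choice of $\vec\G_3$ enforces the compatibility condition \eqref{RWP3}. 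To make $(\vec\G_1+\vec\G_2, \vec\G_3)$ a controlled path, we read off the remainder from \eqref{int2c} and \eqref{int2b}:
\begin{align*}
\vec R_{t,r} = \vbbX_{t,r}(\vec\ub'_r) - (\Ld\updl\vec\Taa)_{t,r} + (\updl \vec\NN(\vec\vb+\vec\wb))_{t,r}.
\end{align*}
Its $\V^{1/\g}_{2,T}\H^s$-norm is bounded by three contributions. The term $\vbbX$ contributes $T^{2\al-\g}\|\vbbX\|_{C^{2\al}}$, by \eqref{con1x} and since $2\al>\g$ when both are close to $\frac12$. The sewing term is controlled via \eqref{intR0aa} with $\kk>1\ge\g$, which follows from $\al+\g>1$ and $3\al>1$. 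The term $\vec\NN$ is smooth in time and is bounded in $V^1\subset V^{1/\g}$.

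For the $\vec\vb$-component we apply \eqref{intR0ab} with $Y=\H^{s+\eps}$. This is where the $\eps$-smoothing of the driver in \eqref{enh1} enters, and it gives $\vec\vb\in\V^{1/\al}_T\H^{s+\eps}$. We gain small powers of $T$ by replacing H\"older norms with $p$-variation norms as in \eqref{con1x}. The only non-decaying piece is the $\|\vec\ub'\|_{V^{1/\al}}$ contribution; the factor $\frac14$ in \eqref{CRPnorm} absorbs it (Remark~\ref{REM:CRP2}), since $\|\vec\G_3\|_{V^{1/\al}}\le \|\vec\vb\|+\|\vec\wb\|$.

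For the $\vec\wb$-component we use \eqref{lin1} and the nonlinear estimates in $U^{1/\g}_T\H^s$, noting $\frac1\g<2$. Since $U^{1/\g}\subset V^{1/\g}$, we need the nonlinear term bounded by norms of $\vec\vb+\vec\wb$. Here $\vec\wb\in V^{1/\g}\subset U^2$ and $\vec\vb\in V^{1/\al}\subset U^{q}$ for any $q>\frac1\al$, with $q$ close to $2$. When $\ta>0$, Proposition~\ref{PROP:nonlin2} (with $q_0>2$ and $\al$ close to $\frac12$) together with Proposition~\ref{PROP:nonlin1} for $d=1$ closes the estimate directly.

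At the critical regularity $s=s_\text{crit}$ we expand $(\vec\vb+\vec\wb)^k$ multilinearly. Every term containing a $\vec\vb$ factor is handled by Proposition~\ref{PROP:nonlin3}: \eqref{nonl00}, or \eqref{nonl000} in the bad case \eqref{bad1}. This places $\vec\vb$ in $U^{q_*}\H^{s+\eps}$ and yields a factor $T^\eps$. In the bad case the remaining factor is either $\vec\wb$ in $U^2$ or $\vec\vb$ in $U^{q_*}\H^{s+\eps}$. The pure $\vec\wb^k$ term has $\ta=0$ and is treated by the high/low frequency argument of the proof of Proposition~\ref{PROP:LWP1}: we work in the set where $\|\P_N^\perp\vec\wb\|_{U^2}\le\dl_2$ and bound the low-frequency part by $TN^\kk K^k$ via \eqref{lin3}. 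Difference estimates follow in the same way, using \eqref{intR0ac} and multilinearity. Lipschitz dependence on $\Xi$ follows from \eqref{intR0ac}, and continuity in time from Remark~\ref{REM:conti}.

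We expect the main obstacle to be the bookkeeping at critical regularity in the bad case \eqref{bad1}. There the estimate \eqref{nonl000} requires exactly one factor smoother in space, so the mixed $\vec\vb\vec\wb$ and $\vec\vb\vec\vb$ terms must be routed through it. Simultaneously, the large-data smallness device must be adapted so that only the $\vec\wb\vec\wb$ term lacks a power of $T$. We must also check that the $\vec\NN$ contribution to the remainder $\vec R$ closes in $V^{1/\g}$ in that case.
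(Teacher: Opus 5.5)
Your proposal is correct and follows the paper's proof essentially step by step. It uses the same map $\vec\bG=\big(\I^{\vXX,\vbbX}(\vec\vb+\vec\wb),\,(\phi_0,\phi_1)+\vec\NN(\vec\vb+\vec\wb),\,\vec\vb+\vec\wb\big)$ and the same remainder \eqref{RW1a}, absorbs the Gubinelli-derivative component through the factor $\frac14$ in \eqref{CRPnorm}, applies Proposition~\ref{PROP:nonlin3} to all terms containing $\vec\vb$, and uses the high/low frequency smallness device for the pure $\vec\wb$ term at $s=s_{\textup{crit}}$.

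One small correction: for $d\ge2$ and $s<1$, the Duhamel term is not in general bounded in $V^1_T\H^s$. That would require $u^k\in L^1_TH^{s-1}_x$, whereas the duality estimate \eqref{lin2} only gives $U^p$-bounds for $p>\wt q$. Its contribution to the remainder should instead be controlled, as in \eqref{RR1ae}, by $\|\vec\NN\|_{V^{1/\g}_T\H^s_x}\les\|\vec\NN\|_{U^{1/\g}_T\H^s_x}$, which your $\vec\wb$-estimate already provides.
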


\begin{remark}\label{REM:rough1}\rm

We point out that the decomposition \eqref{exp1}: $\vec \ub = \vec \vb + \vec \wb$
is needed only for the case~\eqref{bad1}.
In other situations, 
we can directly study the RDE \eqref{duha2}
coupled with \eqref{duha3} for two unknowns $\vec \ub$ and $\vec \ub'$.
See Remark \ref{REM:system2}.
For the simplicity of presentation, however, 
 we present a unified approach based on the decomposition \eqref{exp1}
 and study the RDE-PDE system~\eqref{RR1}
 for three unknowns
$\vec \vb$, $\vec \wb$, and $\vec \ub'$.

\end{remark}

We first present a proof of 
Theorem \ref{THM:2}.

\begin{proof}[Proof of Theorem \ref{THM:2}]

Fix $s \ge  0$ and 
 $\s \in \R$, satisfying \eqref{reg1}  and \eqref{sigma0}.
Fix   $(\phi_0, \phi_1) \in \H^s(\T^d)$ and  
$\Phi \in \HS(L^2(\T^d); H^\s(\T^d))$, 
satisfying \eqref{phi1}, and let
$\vXX$ and $\vbbX$
be random drivers as in~\eqref{introX}
and \eqref{bX0}; see also \eqref{XX0}, \eqref{XX0a}, 
 \eqref{bbX0}, and \eqref{bbX0a}.
We note that 
the conditions~\eqref{YG0}, \eqref{RG0}, and \eqref{RG1} with $s_0 = s + \eps$
(for small $\eps > 0$; see also \eqref{RG1x} in Remark \ref{REM:reg1})
 is equivalent to \eqref{sigma0}.\footnote{Note that the condition \eqref{sigma0}
 implies $\s > - \frac 12$.}
 Then, 
 given $0 < \al < \frac 12$, it follows from Proposition~\ref{PROP:driver2} that 
there exists $\Si \subset \Om$ with $\PP(\Si) = 1$
such that, for each $\o \in \Si$, we have  
\begin{align}
(\vXX^\o, \vbbX^\o )  
& \in C^{\al}_{2,1} \LOP(\H^s(\T^d);\H^{s+\eps}(\T^d))
\times 
C^{2\al}_{2,1} \LOP(\H^s(\T^d);\H^{s+\eps}(\T^d))
\notag 
\end{align}

\noi
and $(\vXX^\o, \vbbX^\o )$ satisfies Chen's relation \eqref{Chen2}.  
In particular, 
we have 
$(\phi_0, \phi_1, \vXX^\o , \vbbX^\o)  \in \wt \Xc^{\al, s, \eps}(\T^d)$
for each $\o \in \Si$.

Fix $\o \in \Si$.
Let  $ 0 <  \al < \frac12 < \g < 1$ such that both $\al$ and $\g$ are sufficiently close to $\frac 12$, 
satisfying \eqref{RWP0}.\footnote{Note that, by taking $\al$ and $\g$  sufficiently close to $\frac 12$, we
have 
$\al + \g =  (\al + \g) \wedge(3\al)$.}
Then, by taking $\eps > 0$ sufficiently small, 
it follows from Proposition \ref{PROP:LWP2}
that there exists 
a unique solution 
$(\vec\vb, \vec\wb, \vec\ub')
\in  \Yc^{\al,\g,s, \eps}_{\vXX^\o, T^\o}$ 
to the RDE-PDE system \eqref{RR1}
on the time interval $[0, T^\o]$, 
satisfying \eqref{RWP3}.
By setting 
\begin{align}
\vec \ub = \vec \vb + \vec \wb, 
\label{RWP4}
\end{align}
we see that 
$\vec \ub$ is a unique solution to the RDE
\eqref{duha2}.
Here, 
 the uniqueness for $\vec \ub$ refers
 to the uniqueness of 
 $\vec \ub' \, (= \vec \ub)$ given by Proposition \ref{PROP:LWP2}.\footnote{The uniqueness of   $\vec \vb$ and $\vec \wb$
  given by Proposition \ref{PROP:LWP2} also provides
certain uniqueness for $\vec \ub$   
 under the decomposition \eqref{RWP4}.
 }
Therefore, 
by interpreting 
the stochastic term $\vec{\pPsi} (\vec\ub)$ in \eqref{psi2}
as the rough integral
$\I^{\vXX, \vbbX}(\vec \ub)$, 
this proves pathwise local well-posedness of SNLW 
\eqref{mild2}, interpreted 
as the RDE \eqref{duha2}.
\end{proof}

We conclude this section by presenting a  proof of Proposition~\ref{PROP:LWP2}.

\begin{proof}[Proof of Proposition~\ref{PROP:LWP2}]

In the following, we only consider the case
$ d\ge 2$ and  $s=s_{\text{crit}}$.
 When  $d = 1$ or $s>s_{\text{crit}}$, 
the nonlinear estimates come with a small power of $T$
(see Propositions~\ref{PROP:nonlin1} and \ref{PROP:nonlin2})
and thus  
local well-posedness follows from a simpler argument.
More precisely, there is no need to exploit
smallness coming from the high frequency part of the initial data
(see~\eqref{RRBK2}) 
 and thus we can simply proceed with a contraction argument 
on the ball of radius $K$ in the space $\Yc^{\al,\g,s,\eps}_{\vXX, T}$ (see \eqref{YY1}) centered at the origin
(rather than working with a more complicated
set $B_{K,\dl_2,T}^{(\phi_0, \phi_1)}$ in~\eqref{RRBK}
which is designed to exploit
smallness  of the high frequency part of the initial data).

Let $s=s_{\text{crit}}$
and fix  $0 < T \le 1$.
Given $(\vec\vb, \vec\wb, \vec\ub') \in \Yc^{\al,\g,s,\eps}_{\vXX, T}$
(in particular,  $(\vec\vb+\vec\wb, \vec\ub') \in \CRP^{\al, \g, s}_{\vXX, T}$),
we write its  controlled structure 
as
\begin{align}
\big(\updl (\vec\vb+\vec\wb) \big)_{t,r} = \vXX_{t,r} (\vec\ub_r') + \vec R^{\vec\vb+\vec\wb}_{t,r}, 
\quad (t,r)\in\Dl_{2,T}, 
\label{YY2}
\end{align}

\noi
for some remainder term 
$\vec R^{\vec\vb+\vec\wb} 
= \vec R^{\vXX, \vec\vb+\vec\wb}
\in \V^{\frac1\g}_{2, T} \H^{s}(\T^d)$.
For simplicity of notation, we write
\begin{align}
\vec \zb = (\vec\vb, \vec\wb, \vec\ub')\quad \text{and}\quad 
\vec \zb^{(j)} = \big(\vec\vb^{(j)}, \vec\wb^{(j)}, (\vec\ub^{(j)})'\big), \ \ j = 1, 2, 
\label{YY3}
\end{align}

\noi
in the following.

Fix $\Xi = (\phi_0, \phi_1, \vXX, \vbbX) \in \wt\Xc^{\al,s,  \eps}(\T^d)$, 
where 
$\wt\Xc^{\al,s,  \eps}(\T^d)$ is as in \eqref{enh1}.
We 
define the map $\vec\bG = \vec\bG_\Xi$ on $\Yc^{\al,\g,s,\eps}_{\vXX, T}$ 
 by setting
\begin{align}
\label{RRG}
\begin{split}
\vec\bG (\vec \zb)
& = \big(\vec\G_{1}(\vec \zb), \vec\G_{2}(\vec \zb), \vec\vb+\vec\wb \big),
\\
\vec\G_{1}(\vec \zb)
& = \I^{\vXX, \vbbX}(\vec\vb + \vec\wb), 
\\
\vec\G_{2} (\vec \zb)(t) & =  (\phi_0, \phi_1) + \vec\NN(\vec\vb + \vec\wb)(t),
\end{split}
\end{align}

\noi
where $\vec\zb = 
(\vec\vb, \vec\wb, \vec\ub')$ is as in \eqref{YY3}
and $ \I^{\vXX, \vbbX}(\vec\vb + \vec\wb)$ and  $\vec\NN$ are as in 
\eqref{RR1a} and  \eqref{RR1x}, respectively.

Given small $\dl_1 > 0$ (to be chosen later), it follows from the dominated convergence theorem
that there exists $N = N((\phi_0,\phi_1), \dl_1) \in \N$ such that 
\begin{align}
\label{RRBK2}
\| \P^\perp_N(\phi_0, \phi_1) \|_{\H^s} \le \dl_1,
\end{align}

\noi
where $\P_N^\perp = \Id - \P_N$.
Given $K > 0$, small $\dl_2>0$, and $T > 0$,  
define the set $B_{K,\dl_2,T}^{(\phi_0, \phi_1)} = B_{K,\dl_2,T}^{(\phi_0, \phi_1)}(N)$ by setting
\begin{align}
\label{RRBK}
\begin{split}
B_{K,\dl_2,T}^{(\phi_0, \phi_1)} & = \big\{ (\vec\vb, \vec\wb, \vec\ub') \in \Yc^{\al, \g, s, \eps}_{\vXX, T}: \, \vec\vb_0 = 0, \ \vec\wb_0 = \vec\ub'_0= (\phi_0, \phi_1), \\
& \hphantom{XXl}   \|(\vec\vb, \vec\wb, \vec\ub') \|_{\Yc^{\al,\g,s,\eps}_{\vXX, T}} \le K,\  \| \P_N^\perp \vec\wb \|_{{V}^{\frac1\g}_{T} \H^{s}_x} \le \dl_2
   \big\},
\end{split}
\end{align}

\noi
endowed with the following metric: 
\[ d (\vec \zb^{(1)}, \vec \zb^{(2)})
= 
\big\|\big(\vec\vb^{(1)} - \vec\vb^{(2)}, \vec\wb^{(1)}- \vec\wb^{(2)}, 
(\vec\ub^{(1)})' - (\vec\ub^{(2)})'\big) \big\|_{\Yc^{\al,\g,s,\eps}_{\vXX, T}}, 
\]

\noi
where 
$\vec \zb^{(j)}$ is as in \eqref{YY3} and 
the $\Yc^{\al, \g, s, \eps}_{\vXX, T}$-norm is as  in \eqref{RRnorm}. 
In the following, we show that 
 $\vec\bG$ in \eqref{RRG} is a contraction on $B_{K,\dl_2,T}^{(\phi_0, \phi_1)}$
 by appropriately choosing the parameters.

\begin{remark}\label{REM:conti2}\rm

As in Remark \ref{REM:conti}, 
continuity in time of each component of 
$\vec\bG (\vec \zb)$ in~\eqref{RRG}
follows from 
Lemma \ref{LEM:intR}, 
the definition \eqref{RR1x} of $\vec \NN$, 
and the continuity of $\vec \vb$
and $\vec \wb$ (see \eqref{YY1} and \eqref{local2}).
We also note that 
the remainder term
$\vec R^{\vec\bG}_{t,r}$
defined in \eqref{RW1a}
is continuous in both $t$ and $r$, 
which follows from the continuity of 
$ \vec\G_j (\vec\zb)$, $j = 1, 2$, and 
the regularity of $\vXX$.
Moreover, 
as required in Definition \ref{DEF:V23}\,(iii), 
$\vec R^{\vec\bG}_{t,r}$
 vanishes on the diagonal (namely, when $t = r$), 
since 
$\vXX$
 vanishes on the diagonal
 (as a function in $C_{2, T}$).
Hence, 
we focus on establishing estimates
on various terms in the following.

\end{remark}

\medskip

\noi
$\bul$
{\bf Step 1: Controlled  structure of $\vec\bG (\vec \zb)$.}\\
\indent
Let $(\vec\vb, \vec\wb, \vec\ub') \in B^{(\phi_0, \phi_1)}_{K, \dl_2, T}$
with the controlled structure \eqref{YY2}.
We first show that $\vec\bG (\vec \zb) = 
\big(\vec\G_1(\vec\zb) ,  \vec\G_2(\vec\zb), \vec\vb+\vec\wb \big)$ 
in~\eqref{RRG}
is a controlled  path in $\CRP^{\al, \g, s}_{\vXX, T}$ 
in the sense of 
Definition~\ref{DEF:RP}\,(ii).
In view of 
 \eqref{RRG}, \eqref{RR1a}, and \eqref{RTaa}, 
we write   
\begin{align}
\label{RW1a}
\begin{split}
&
\big( \updl \vec\G_1 (\vec\zb) + \updl \vec\G_2 (\vec\zb)
\big)_{t,r} \\
&\quad 
=
\vXX_{t,r} (\vec\vb_r+\vec\wb_r) 
+ \Big\{ \vbbX_{t,r}(\vec\ub'_r) 
- ( \Ld\updl \vec\Taa)_{t,r}
+
 \big(\updl\vec\NN(\vec\vb+\vec\wb) \big)_{t,r} \Big\}
\\
 & \quad 
=: \vXX_{t,r} (\vec\vb_r+\vec\wb_r)  + \vec R^{\vec\bG}_{t,r},
\end{split}
\end{align}

\noi
where $\vec R^{\vec\bG}$ is a short-hand notation 
for 
$\vec R^{\vXX, \vec\G_1(\vec\zb)+\vec\G_2(\vec\zb)}$.

We need to show that $\vec R^{\vec\bG} \in \V^{\frac1\g}_{2,T} \H^s(\T^d)$. 
The continuity and the vanishing on the diagonal of 
$\vec R^{\vec\G}$ are clear from \eqref{RW1a}, 
\eqref{RRG}, 
\eqref{RR1x}, and the 
corresponding properties
of 
$\vXX$, $\vec\vb$, and $\vec\wb$.
See Remark \ref{REM:conti2}.


Let $\o^{(1)}_{X, p}$ and $\o^{(2)}_{X, p}$ be as in 
\eqref{ctrl0} and \eqref{ctrl1}, respectively.
Then,
it follows from 
 \eqref{con1a} with 
\begin{align}
 \kk := \al + \g >1
 \label{KK1}
\end{align}

\noi
and the fact that a sum of controls is a control
that 
\begin{align}
\begin{split}
\o: \!
& = \big(\o^{(2)}_{\L(\H^s), \frac1\al} (\vXX)\big)^\frac \al \kk
\big(\o^{(2)}_{\H^s, \frac1\g} (R^{\vec\vb+\vec\wb} )\big)^\frac \g \kk\\
& \quad  + \big(\o^{(2)}_{\L(\H^s), \frac1{2\al}} (\vbbX)\big)^\frac {2\al} \kk
\big(\o^{(1)}_{\H^s, \frac1\al} (\vec \ub')\big)^\frac \al\kk
\end{split}
\label{YY5}
\end{align}

\noi
is also a control.
It follows from 
\eqref{int2bb}
that 
\eqref{intR1ab}
holds for $\vec\Taa$ in \eqref{RTaa}
with $\o$ in \eqref{YY5}.
Then, 
from \eqref{RW1a}
and  \eqref{sew1} in the sewing lemma (Lemma \ref{LEM:sew}) with $\kk > 1$, 
 we have
\begin{align}
\label{Rw1b}
\| \vec R^{\vec\bG}_{t,r} \|_{  \H^s  }
\les \o(t, r)^\kk
+ \|\vbbX_{t, r}\|_{\LOP(\H^s)} 
\| \vec\ub'_r \|_{\H^s}
+
 \big\|  \big(\updl\vec\NN(\vec\vb+\vec\wb)\big)_{t,r} \big\|_{\H^s}
\end{align}

\noi
for any $(t, r) \in \Dl_{2, T}$.
Hence, from \eqref{Rw1b} and  \eqref{YY5} with \eqref{ctrl0a}, we obtain
\begin{align}
\label{Rw1c}
\begin{split}
\| \vec R^{\vec\bG} \|_{\V^\frac1\g_{2,T} \H^s_x}
&
\les \| \vXX\|_{\V^{\frac1\al}_{2,T} \LOP(\H^s)} 
\| \vec R^{\vec\vb+\vec\wb} \|_{\V^{\frac1\g}_{2, T} \H^s_x }
\\
&
\quad
+
\|\vbbX \|_{\V^{\frac{1}{2\al}}_{2, T}  \LOP(\H^s)} 
\| \vec\ub' \|_{V^{\frac1\al}_{T} \H^s_x}
+
\| \vec\NN(\vec\vb+\vec\wb) \|_{V^{\frac1\g}_{T} \H^s_x}.
\end{split}
\end{align}

\noi
Therefore, 
 $\big(\vec\G_1(\vec\zb) +   \vec\G_2(\vec\zb), \vec\vb+\vec\wb \big) 
\in \CRP^{\al, \g, s}_{\vXX, T}$ follows once 
we prove 
\begin{align}
\| \vec\NN(\vec\vb+\vec\wb) \|_{V^{\frac1\g}_{T} \H^s_x} < \infty.
\label{YY6}
\end{align}

\noi
We will prove \eqref{YY6} in  Step 2.

\medskip

\noi
$\bul$ {\bf Step 2: Nonlinear estimate.}\\ 
\indent 
In this step, we prove \eqref{YY6}.
By expanding the power $(\pi_1[\S(t')\vec\vb_{t'}]+\pi_1[\S(t')\vec\wb_{t'}])^k$ in~\eqref{RR1x}, we write
\begin{align}
\label{RR1add}
\vec\NN(\vec\vb + \vec\wb) = \vec\NN(\vec\wb)
+ \vec\NN^{(0)}(\vec\vb, \vec\wb),  
\end{align}
where $\vec\NN^{(0)}(\vec\vb, \vec\wb) $ is at least linear in $\vec\vb$.

Proceeding as in \eqref{X6a}-\eqref{X8} in the proof of Proposition~\ref{PROP:LWP1}
(recall $\g > \frac 12$ sufficiently close to $\frac 12$)
with \eqref{RR1x}, \eqref{RRBK},  and~\eqref{RRnorm}, 
we have 
\begin{align}
\label{RR1ae}
\| \vec\NN(\vec\wb) \|_{V^{\frac1\g}_T \H^s_x} 
\les \| \vec\NN(\vec\wb) \|_{U^\frac 1\g_T \H^s_x} 
\les 
 \dl_2^2 K^{k-2} + T N^\kk K^k
\end{align}
for some $\kk = \kk(d,k)>0$.
As for the second term on the right-hand side of \eqref{RR1add}, 
we apply 
Proposition \ref{PROP:nonlin3}
and obtain
\begin{align}
\label{RR1ad}
\| \vec\NN^{(0)}(\vec\vb, \vec\wb) \|_{V^{\frac1\g}_{T} \H^s_x} & \les T^\eps \|\vec\vb\|_{V^{\frac1\al}_{T} \H^{s+\eps}_x}
 \Big( \|\vec\vb\|_{V^{\frac1\al}_{T} \H^{s+\eps}_x} + \|\vec\wb\|_{V^{\frac1\g}_{T} \H^s_x} \Big)^{k-1}
\end{align}

\noi
by taking 
$\al$ sufficiently close to $\frac 12$.

Hence, 
from  \eqref{RR1add}, \eqref{RR1ae}, and \eqref{RR1ad} with \eqref{RRBK}, we obtain
\begin{align}
\label{RR1g}
\| \vec\NN(\vec\vb+ \vec\wb) \|_{V^{\frac1\g}_T \H^s_x}
 \les
 \dl_2^2 K^{k-2} + T^\ta N^\kk K^k < \infty
\end{align}
for some $\theta>0$.
This proves \eqref{YY6}.

\medskip

\noi
$\bul$ {\bf Step 3: Contraction.}\\
\indent
 From  \eqref{RRG}, \eqref{RRnorm},  and \eqref{CRPnorm},  we have
\begin{align}
\label{RR2a0}
\begin{split}
\| \vec\bG(\vec\zb) \|_{\Yc^{\al, \g, s, \eps}_{\vXX, T}}
&
\le
3
\| (\phi_0, \phi_1) \|_{\H^s}
+
 \|\I^{\vXX, \vbbX}(\vec\vb + \vec\wb) \|_{V^{\frac1\al}_{T} \H^{s+\eps}_x}\\
& \quad
+\| \vec\NN(\vec\vb+\vec\wb) \|_{{V}^{\frac1\g}_{T} \H^s_x} 
+ \frac14 \|\vec\vb+\vec\wb\|_{V^{\frac1\al}_T \H^s_x}\\
& \quad
+
\| \vec R^{\vec\bG} \|_{\V^{\frac1\g}_{2,T} \H^s_x}
\end{split}
\end{align}

\noi
for any  $\vec\zb= (\vec\vb, \vec\wb, \vec\ub') \in B_{K, \dl_2, T}^{(\phi_0, \phi_1)}$ defined in \eqref{RRBK}, 
where 
 $\vec R^{\vec\bG}$ is as is \eqref{RW1a}.

From \eqref{intR0ab} in Lemma~\ref{LEM:intR}
with 
\eqref{RRBK}, \eqref{RRnorm}, \eqref{YY2}, 
\eqref{CRPnorm}, 
\eqref{con1x}, 
 and 
\eqref{Xtilde}, 
we have
\begin{align}
\label{RR2aa}
\begin{split}
&  \|\I^{\vXX, \vbbX}(\vec\vb + \vec\wb) \|_{V^{\frac1\al}_{T} \H^{s+\eps}_x}\\
&\quad 
\les
\| \vXX\|_{\V^{\frac1\al}_{2,T} \LOP(\H^s;\H^{s+\eps})}
 \Big( \| \vec\vb+\vec\wb\|_{L^\infty_T \H^s_x}
+ \| \vec R^{\vec\vb+\vec\wb}\|_{\V^\frac1\g_{2,T} \H^s_x} \Big)
\\
&\quad 
\quad
+ \|\vbbX\|_{\V^{\frac{1}{2\al}}_{2,T} \LOP(\H^s;\H^{s+\eps})} \|\vec\ub'\|_{V^\frac1\al_{T} \H^s_x}
\\
&\quad 
\les T^\al
\| \Xi \|_{\wt\Xc^{\al,s, \eps}} K
\end{split}
\end{align}

\noi
for any  $\vec\zb=(\vec\vb, \vec\wb, \vec\ub') \in B_{K, \dl_2, T}^{(\phi_0, \phi_1)}$.

Set\begin{align}
K= 6 \| \Xi \|_{\wt\Xc^{\al, s , \eps}}\qquad \text{and}\qquad 
\dl_2 = 2\dl_1,
\label{RRr}
\end{align}

\noi
with the $\wt\Xc^{\al, s , \eps}$-norm as in \eqref{Xtilde}.
Then, 
from  \eqref{RR2a0}, \eqref{RR2aa}, \eqref{RR1g}, and \eqref{Rw1c} with \eqref{RRBK} and \eqref{RRr}, we have
\begin{align}
\begin{split}
\label{RR2ac}
\| \vec\bG(\vec\zb) \|_{\Yc^{\al, \g, s, \eps}_{\vXX, T}}
& \le 3 \| (\phi_0, \phi_1) \|_{\H^s}
+ CT^\al
\| \Xi \|_{\wt\Xc^{\al,s, \eps}} K\\
& \quad 
+ C \dl_2^2 K^{k-2}
+
CT^\theta N^\kk K^{k}
+
\frac12 K
\\
&  \le K, 
\end{split}
\end{align}

\noi
where the second inequality holds, provided that 
 $T = T(N, K) = T((\phi_0,\phi_1), \dl_1, K) > 0$
and $\dl_2 = \dl_2(K) >0$ are sufficiently small.
By 
  \eqref{RRG}, \eqref{RRBK2}, \eqref{RR1g}, 
and
\eqref{RRr}, 
we have 
\begin{align}
\label{RRF1}
\begin{split}
 \| \P^\perp_N \vec\G_2(\vec\zb) \|_{V^{\frac1\g}_{T} \H^s_x}  
 & \le \dl_1 + C \dl_2^2 K^{k-2} +C T^\theta N^\kk K^k  \\
 & \le \dl_2, 
\end{split}
\end{align}

\noi
where the second inequality holds, provided that 
$T = T(N, K, \dl_2) = T((\phi_0,\phi_1), \dl_1, K) > 0$
and $\dl_2 = \dl_2(K) >0$ are sufficiently small.

Given 
$\vec\zb^{(j)} = \big(\vec\vb^{(j)}, \vec\wb^{(j)}, (\vec\ub^{(j)})'\big) \in B_{K, \dl_2, T}^{(\phi_0, \phi_1)}$, $j=1,2$, 
let $\vec{R}^{(j)}$ be  the remainder term, 
defined by the following relation:
\begin{align}
\label{YY7}
\begin{split}
&\big( \updl \vec\G_1 (\vec\zb^{(j)}) + \updl \vec\G_2 (\vec\zb^{(j)})
\big)_{t,r} \\
&\quad 
=
\vXX_{t,r} (\vec\vb_r^{(j)}+\vec\wb_r^{(j)}) \\
& \quad \quad 
+ \Big\{ \vbbX_{t,r}\big((\vec\ub^{(j)})_r'\big) 
- ( \Ld\updl \vec\Taa^{(j)})_{t,r}
+
 \big(\updl\vec\NN(\vec\vb^{(j)}+\vec\wb^{(j)}) \big)_{t,r} \Big\}
\\
 & \quad 
= : \vXX_{t,r} (\vec\vb^{(j)}_r+\vec\wb^{(j)}_r)  + \vec R^{(j)}_{t,r}
\end{split}
\end{align}

\noi
for $(t, r) \in \Dl_{2, T}$, 
where 
$\vec\Taa^{(j)}$ is given by 
\begin{align}
\label{YY8}
\vec\Taa_{t,r}^{(j)} = \vXX_{t,r} (\vec\vb_r^{(j)} + \vec\wb_r^{(j)}) + \vbbX_{t,r}\big( (\vec\ub_r^{(j)})'\big).
\end{align}

\noi
From \eqref{YY7} and \eqref{YY8}, we have
\begin{align}
\begin{split}
\vec R^{(1)}_{t,r} - \vec R^{(2)}_{t,r}
& = \vbbX_{t,r}\big((\vec\ub^{(1)})'_r - (\vec\ub^{(2)})'_r\big) 
- ( \Ld\updl \vec\Taa^{(1, 2)})_{t,r}\\
& \quad +
 \big(\updl\vec\NN(\vec\vb^{(1)}+\vec\wb^{(1)}) \big)_{t,r} 
 -  \big(\updl\vec\NN(\vec\vb^{(2)}+\vec\wb^{(2)}) \big)_{t,r} , 
\end{split}
\label{YY9}
\end{align}

\noi
where 
$\vec\Taa^{(1, 2)}$ is given by 
\begin{align}
\label{YY10}
\begin{split}
\vec\Taa_{t,r}^{(1, 2)} 
& = \vec\Taa_{t,r}^{(1)} - \vec\Taa_{t,r}^{(2)}\\
& = \vXX_{t,r} \big((\vec\vb_r^{(1)} + \vec\wb_r^{(1)})
- (\vec\vb_r^{(2)} + \vec\wb_r^{(2)})\big)
 + \vbbX_{t,r}\big( (\vec\ub^{(1)})_r' - (\vec\ub^{(2)})_r'\big).
\end{split}
\end{align}

\noi
A direct computation with \eqref{YY10} and \eqref{YY7} (see \eqref{int2bb}) yields
\begin{align}
\begin{split}
(\updl \vec \Taa^{(1, 2)})_{t_1,t_2,t_3}
&
= - \vec \XX_{t_1,t_2} (R^{\vec\vb^{(1)} + \vec\wb^{(1)}}_{t_2,t_3} - R^{\vec\vb^{(2)} + \vec\wb^{(2)}}_{t_2,t_3})
 \\
& \quad -  \vec \bbX_{t_1,t_2} \big( (\updl [(\vec \ub^{(1)})'])_{t_2,t_3} - (\updl [(\vec \ub^{(2)})'])_{t_2,t_3} \big)
\end{split}
\label{YY11}
\end{align}

\noi
for $(t_1, t_2, t_3) \in \Dl_{3, T}$, 
where
$R^{\vec\vb^{(j)} + \vec\wb^{(j)}}$
denotes the remainder term
in the controlled structure of 
$\big(\vec\vb^{(j)}+ \vec\wb^{(j)}, (\vec\ub^{(j)})' \big) \in \CRP^{\al,\g, s}_{\vXX, T}$, 
$j = 1, 2$, 
as in \eqref{YY2}.

Define $\wt \o$ by setting
\begin{align}
\begin{split}
\wt \o
& = \big(\o^{(2)}_{\L(\H^s), \frac1\al} (\vXX)\big)^\frac \al \kk
\big(\o^{(2)}_{\H^s, \frac1\g} (R^{\vec\vb^{(1)}+\vec\wb^{(1)}} - R^{\vec\vb^{(2)}+\vec\wb^{(2)}} )\big)^\frac \g \kk\\
& \quad  + \big(\o^{(2)}_{\L(\H^s), \frac1{2\al}} (\vbbX)\big)^\frac {2\al} \kk
\big(\o^{(1)}_{\H^s, \frac1\al} ((\vec \ub^{(1)})'- (\vec \ub^{(2)})')\big)^\frac \al\kk, 
\end{split}
\label{YY11a}
\end{align}

\noi
where $\kk$ is as in \eqref{KK1}.
Then, $\wt \o$ is a control, just like 
$\o$ defined in 
\eqref{YY5}.
Then, 
by arguing as in \eqref{intR1ab} with \eqref{YY11}
and then applying 
 \eqref{sew1} in the sewing lemma (Lemma~\ref{LEM:sew}; recall $\kk > 1$), we have 
\begin{align}
\|\Ld\updl \vec\Taa^{(1, 2)}_{t, r}
\|_{ \H^s_x} \les \wt \o(t, r)^\kk
\label{YY12}
\end{align}

\noi
for any $(t, r) \in \Dl_{2, T}$.
Thus, from \eqref{YY12} with \eqref{YY11a}, \eqref{con1x}, 
 \eqref{RRnorm}, and
\eqref{CRPnorm} with \eqref{YY3}, 
we obtain
\begin{align}
\begin{split}
\|\Ld\updl \vec\Taa^{(1, 2)}
\|_{\V^{\frac1\g}_{2,T} \H^s_x}
& \les \| \vXX\|_{\V^{\frac1\al}_{2,T} \LOP(\H^s;\H^{s+\eps})}
\| R^{\vec\vb^{(1)}+\vec\wb^{(1)}} - R^{\vec\vb^{(2)}+\vec\wb^{(2)}}\|_{\V^{\frac1\g}_{2, T} \H^s_x }
\\
&
\quad
+
\|\vbbX \|_{\V^{\frac{1}{2\al}}_{2, T}  \LOP(\H^s;\H^{s+\eps}) } 
\big\| (\vec \ub^{(1)})'- (\vec \ub^{(2)})'\big\|_{V^{\frac1\al}_{T} \H^s_x}\\
& \les T^\al 
\| \Xi \|_{\wt\Xc^{\al,s, \eps}} 
\| \vec \zb^{(1)} - \vec \zb^{(2)} \|_{\Yc^{\al, \g, s, \eps}_{\vXX, T}} .
\end{split}
\label{YY12a}
\end{align}

Hence, putting 
\eqref{RRG}, 
\eqref{RR2aa}, 
\eqref{RR1g}, \eqref{YY9}, and \eqref{YY12a}
together, we obtain
\begin{align}
\label{RR2ad}
\begin{split}
& \| \vec\bG(\vec\zb^{(1)}) - \vec\bG(\vec\zb^{(2)}) \|_{\Yc^{\al, \g, s, \eps}_{\vXX, T}} \\
& \quad \le 
\bigg\{ C\Big( T^\al
\| \Xi \|_{\wt\Xc^{\al,s, \eps}} 
+ \dl_2 K^{k-2}
+ T^\ta N^\kk K^{k-1} \Big) + \frac 14\bigg\}
\| \vec \zb^{(1)} - \vec \zb^{(2)} \|_{\Yc^{\al, \g, s, \eps}_{\vXX, T}} 
\\
& \quad \le \frac 12 
\| \vec \zb^{(1)} - \vec \zb^{(2)} \|_{\Yc^{\al, \g, s, \eps}_{\vXX, T}} 
\end{split}
\end{align}

\noi
for any $\zb^{(j)}  \in B_{K, \dl_2, T}^{(\phi_0, \phi_1)}$, $j = 1, 2$, 
where the second inequality holds, provided that 
 $T = T(N, K) = T((\phi_0,\phi_1), \dl_1, K) > 0$
and $\dl_2 = \dl_2(K) >0$ are sufficiently small
(recall \eqref{RRr}).

In view of \eqref{RRr}, 
we see that 
 \eqref{RR2ac}, \eqref{RRF1}, and \eqref{RR2ad}
hold, 
by choosing 
$T = T\big((\phi_0,\phi_1),  \|\vXX\|_{C^\al_{2,1} \LOP(\H^s; \H^{s+\eps})}, 
 \|\vbbX\|_{C^{2\al}_{2,1} \LOP(\H^s;\H^{s+\eps})}
\big) > 0$
and $\dl_1 = \dl_1(K) >0$ are sufficiently small.
In this case, 
it follows from 
 \eqref{RR2ac}, \eqref{RRF1}, and \eqref{RR2ad}
that 
$\vec\bG$ defined in \eqref{RRG} is a contraction on $B_{K,\dl_2,T}^{(\phi_0, \phi_1)}$, 
and thus local well-posedness of \eqref{RR1} follows
from Banach's fixed point theorem.
From the definition \eqref{RRG}
of 
$\vec\bG$ with \eqref{YY3}, 
we conclude that 
$\vec \ub' = \vec \vb + \vec \wb$, yielding \eqref{RWP3}.
Local Lipschitz continuity of the solution map in \eqref{sol2}
follows from a slight modification of 
\cite[Part 3 of the proof of Theorem~1.1 on p.\,346]{HTT11} with \eqref{intR0ac} in Lemma~\ref{LEM:intR}.
We omit details.
\end{proof}

\begin{remark}\rm
\label{REM:CRPnorm}

We note that, in the second inequalities
of 
\eqref{RR2ac} and
\eqref{RR2ad}, 
the smallness of the 
constant (which is  $\frac 14$ in our case) in front of the $\|\vec{\ub}'\|_{V^{\frac1\al}_T \H^s_x}$-norm in 
the definition of the $\CRP^{\al,\g, s}_{\vXX, T}$-norm in \eqref{CRPnorm} 
played a crucial role
in showing that 
$\vec\bG$ is a contraction.

\end{remark}

\begin{remark}\label{REM:system2}\rm

Let us briefly discuss the case when \eqref{bad1} does {\it not} hold.
In this case, it suffices to study the system \eqref{duha2} and \eqref{duha3}, 
assuming the regularity \eqref{duha2a} of the driver $(\vXX, \vbbX)$.

We introduce a solution space in this setting.
Given $0 <\al <\frac12< \g < 1$, and $s\ge0$, 
let $\vXX \in 
C^{\al}_{2,1} \LOP(\H^s(\T^d))$.
Fix  $0 < T \le 1$.
Let 
$\CRP^{\al, \g, s}_{\vXX, T} = 
\CRP^{\al, \g}_{\vXX, T}(\H^s(\T^d))$ 
be the space of paths controlled by $\vXX$
as defined in Definition~\ref{DEF:RP}\,(ii).
We then define 
 the ``subclass'' $\wt \Yc^{\al,\g,s}_{\vXX, T} $ 
 by setting
 \begin{align}
 \begin{split}
\wt  \Yc^{\al,\g,s}_{\vXX, T} 
&  = \big\{
 (\vec \ub,  \vec\ub') \in \V^{\frac1\al}_T \H^{s}(\T^d) 
 \times \V^{\frac1\al}_T \H^s(\T^d)
\, \text{ such that } \, (\vec\ub, \vec\ub')\in \CRP^{ \al, \g, s}_{\vXX, T}
 \big\}, 
 \end{split}
 \label{SY1}
 \end{align}

 \noi
 endowed with the norm:
\begin{align}
\| (\vec\ub, \vec\ub') \|_{\wt \Yc^{\al,\g,s}_{\vXX, T}} 
& = \| \vec\ub \|_{{V}^{\frac1\al}_{T} \H^{s}_x} 
 + \| (\vec\ub, \vec\ub') \|_{\CRP^{\al, \g, s}_{\vXX, T}},
\label{SY2}
\end{align}

\noi
where the second norm is as in \eqref{CRPnorm}.
Then, under the assumption of
Proposition \ref{PROP:LWP2}, 
the system \eqref{duha2} and \eqref{duha3}
is locally well-posed in 
$ \wt \Xc^{\al, s, 0}(\T^d)$, 
where $\wt \Xc^{\al, s, 0}(\T^d)$ is as in \eqref{enh1} with $\eps = 0$.
 More precisely, given an enhanced data set
\begin{align*}
\Xi = (\phi_0, \phi_1, \vXX, \vec{\bbX}) &\in \wt{\mathcal{X}}^{\al,s, 0}(\T^d),
\end{align*}

\noi
there exist $0< T  = T(\Xi) \le 1$ and a unique solution
$(\vec\ub,  \vec\ub')
 \in \wt  \Yc^{\al,\g,s}_{\vXX, T}$ to 
the system \eqref{duha2} and~\eqref{duha3}, 
satisfying
\begin{align}
\vec \ub' = \vec \ub.
\label{SY3}
\end{align}

\noi
Furthermore, the
solution map{\rm :}
\begin{align*}
& \Xi = (\phi_0, \phi_1, \vXX , \vbbX)  \in \wt \Xc^{\al, s, 0}(\T^d)
\longmapsto
(\vec\ub,  \vec\ub')\in  \wt \Yc^{\al,\g,s}_{\vXX, T} 
\end{align*}

\noi
 is locally Lipschitz continuous.


This local well-posedness 
of the system \eqref{duha2} and~\eqref{duha3}
follows from 
proceeding as in the proof of Proposition \ref{PROP:LWP2}
(but with a simpler argument)
on the set 

\smallskip

\begin{itemize}
\item
 $\wt B_{K,T}^{(\phi_0, \phi_1)}$
when $s > s_\text{scaling}$, 

\smallskip

\item 
$\wt B_{K,\dl_2,T}^{(\phi_0, \phi_1)}$
when 
$s =  s_\text{scaling}$, 

\end{itemize}

\smallskip

\noi
for $K$ and $\dl_2$ as in \eqref{RRr}, 
where 
 $\wt B_{K,T}^{(\phi_0, \phi_1)}$ and 
$\wt B_{K,\dl_2,T}^{(\phi_0, \phi_1)}$
are defined below.
A required modification is straightforward
and thus we omit details.

Given $K > 0$ and $T > 0$,  
define the set $\wt B_{K,T}^{(\phi_0, \phi_1)}$ by setting
\begin{align*}
\wt B_{K,T}^{(\phi_0, \phi_1)} 
& = \big\{ (\vec\ub,  \vec\ub') \in \wt \Yc^{\al, \g, s}_{\vXX, T}: \,
 \vec\ub_0 = \vec\ub'_0= (\phi_0, \phi_1),
\,  \|(\vec\ub,  \vec\ub') \|_{\wt \Yc^{\al,\g,s}_{\vXX, T}} \le K
   \big\},
\end{align*}

\noi
endowed with the following metric: 
\begin{align}
\wt d \Big(\big(\vec\ub^{(1)},  (\vec\ub^{(1)})'\big), 
\big(\vec\ub^{(2)},  (\vec\ub^{(2)})'\big)\Big)
= 
\big\|\big(\vec\ub^{(1)} - \vec\ub^{(2)}, 
(\vec\ub^{(1)})' - (\vec\ub^{(2)})'\big) \big\|_{\wt \Yc^{\al,\g,s}_{\vXX, T}}, 
\label{SY6}
\end{align}

\noi
where 
the $\wt \Yc^{\al, \g, s}_{\vXX, T}$-norm is as  in \eqref{SY2}.

When $s = s_\text{scaling}$, the nonlinear estimate
in Proposition  \ref{PROP:nonlin2}
does not provide a small power of $T$ to run a contraction argument, 
and thus we need to make use of smallness of the high frequency part
(as in the proofs of Propositions \ref{PROP:LWP1}
and \ref{PROP:LWP2}).
Given small $\dl_1 > 0$, let 
 $N = N((\phi_0,\phi_1), \dl_1) \in \N$ be as in \eqref{RRBK2}.
Then, given $K > 0$, small $\dl_2>0$, and $T > 0$,  
define the set $\wt B_{K,\dl_2,T}^{(\phi_0, \phi_1)} = \wt B_{K,\dl_2,T}^{(\phi_0, \phi_1)}(N)$ by setting
\begin{align*}
\wt B_{K,\dl_2,T}^{(\phi_0, \phi_1)} 
 = \big\{ (\vec\ub,  \vec\ub') \in \wt \Yc^{\al, \g, s}_{\vXX, T}:
   \,&  \vec\ub_0 = \vec\ub'_0= (\phi_0, \phi_1), \\
&   \|(\vec\ub,  \vec\ub') \|_{\wt \Yc^{\al,\g,s}_{\vXX, T}} \le K,\ 
 \| \P_N^\perp \vec\ub \|_{{V}^{\frac1\al}_{T} \H^{s}_x} \le \dl_2
   \big\},
\end{align*}

\noi
endowed with the metric $\wt d$ in \eqref{SY6}.

\end{remark}

\appendix

\section{Pathwise solutions coincide with  Ito solutions}
\label{SEC:ito}

In this section, 
we focus on the rough case (= white-in-time case with $\be = \frac 12$)
and show that our pathwise solutions 
constructed in Theorem \ref{THM:2}
agree with classical Ito solutions.
In Subsection \ref{SUBSEC:ito1}, 
we first go over the Ito solution theory
for SNLW \eqref{SNLW} on $\T^d$
for readers' convenience, as it 
does not seem to be written  in the literature.
We then compare our pathwise solutions
with Ito solutions
in Subsection \ref{SUBSEC:ito2}.

\subsection{Review of the Ito solution theory}
\label{SUBSEC:ito1}

In this subsection, we briefly go over the Ito solution theory 
for SNLW \eqref{SNLW} on $\T^d$ (more precisely, 
its Duhamel formulation \eqref{mild0}).
Here, the Ito solution theory refers
to a solution theory
where we interpret the stochastic convolution $\Psi(u)$
in \eqref{psi1}
as an Ito integral.

\begin{proposition}\label{PROP:LWPx}
Under the assumption of Theorem \ref{THM:2}
\textup{(}see in particular \eqref{reg1} and \eqref{sigma0}\textup{)}, 
given  $\Phi \in \HS(L^2(\T^d); H^\s(\T^d))$, 
satisfying~\eqref{phi1}, 
 SNLW \eqref{SNLW}
 with a multiplicative white-in-time noise
is  locally well-posed in $\H^s(\T^d)$
in the Ito sense. 

\end{proposition}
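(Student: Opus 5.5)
We will prove Proposition~\ref{PROP:LWPx} by a fixed point argument for the Duhamel formulation \eqref{mild0} in a space of adapted processes. Concretely, we use $L^p(\Om; X_T)$ with $p\ge 2$ large. Here $X_T$ is the Strichartz space
\[
X_T = C([0,T];\H^s)\cap \{u:\ \|u\|_{L^q_TL^r_x}<\infty\}
\]
for $d\ge 2$, with $(q,r)$ the $s$-admissible pair from Proposition~\ref{PROP:Str2} when $s<1$ (or the pair \eqref{nonl3c} when $s\ge1$). For $d=1$ we simply take $X_T=C([0,T];\H^s)$.

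\textbf{Linear and nonlinear parts.} The linear part is handled by Lemma~\ref{LEM:Slin}. The nonlinear Duhamel term is estimated pathwise by the nonhomogeneous Strichartz estimate \eqref{Str3} together with \eqref{HK2} (Sobolev via \eqref{1d2} when $d=1$; fractional Leibniz as in \eqref{nonl2b} when $s\ge1$). This gives
\[
\Big\|\int_0^t S(t-t')u^k\,dt'\Big\|_{X_T}\les T^\theta\|u\|_{X_T}^k.
\]
These pathwise bounds transfer to $L^p(\Om)$ only after a truncation, since the $k$th power destroys integrability. We therefore insert a cutoff $\chi_R(\|u\|_{X_t})$ in front of $u^k$, solve the truncated equation globally on $[0,T]$, and remove the truncation by the stopping time $\tau_R=\inf\{t:\|u\|_{X_t}\ge R\}$, in the standard way for stochastic dispersive equations.

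\textbf{Stochastic convolution.} This is the main step. We need
\[
\Big\|\int_0^t S(t-t')\big[u\,\Phi\,dW(t')\big]\Big\|_{L^p(\Om;X_T)}\les C(T)\,\|\Phi\|_{\HS(L^2;H^\s)}\,\|u\|_{L^p(\Om;L^\infty_TH^s)},
\]
with $C(T)\to0$ as $T\to0$. For the $C_T\H^s$ part, write $S(t-t')=\pi_1\S(t)\S(-t')$ and work with the martingale $\int_0^t\S(-t')(0,u\Phi dW)$ in $\H^s$. Burkholder--Davis--Gundy in Hilbert spaces (Appendix~\ref{SEC:D}) reduces this to bounding $\|u\Phi\|_{\HS(L^2;H^{s-1})}$. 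We use
\[
\|u\Phi\|^2_{\HS(L^2;H^{s-1})}=\sum_{n_1}|\phi_{n_1}|^2\|u e_{n_1}\|_{H^{s-1}}^2,
\]
together with the product estimate
\[
\|u e_{n_1}\|_{H^{s-1}}\les \jb{n_1}^{\max(s-1,-s)+}\|u\|_{H^s},
\]
obtained by the same case analysis as in \eqref{XX6a}. The resulting sum converges exactly under \eqref{sigma0}: $\s>s-1$ handles the high $n_1$ frequency when $s>\frac12$, and $\s>-s$ handles the low-regularity case. The $L^q_TL^r_x$ part is more delicate, because $\S(t)$ sits outside the stochastic integral. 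Via the stochastic Fubini theorem and Minkowski, $p\ge q$, we write
\[
\Big\|\Big\|\int_0^t \cdots\Big\|_{L^q_TL^r_x}\Big\|_{L^p_\Om}\le \Big\|\Big\|\int_0^t \cdots\Big\|_{L^p_\Om}\Big\|_{L^q_TL^r_x}.
\]
For each fixed $(t,x)$ we then apply BDG. Since the noise term has only $\H^s$ regularity, we instead use the Sobolev embedding $H^{s'}\subset L^r$, with $s'-s$ compensated by the smoothing of $S(t-t')$ when $\s$ exceeds the threshold strictly. If this fails at the endpoint, we will instead control the $L^q_TL^r_x$ norm through BDG in the UMD space $L^r$ applied to $\int_0^t\S(-t')(\cdots)$ composed with the homogeneous Strichartz estimate \eqref{Str1a} via the factorization method of Da Prato--Kwapie\'n--Zabczyk. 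We expect this last point to be the main obstacle, and we would try the factorization method first.

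\textbf{Conclusion.} Combining the three estimates gives a contraction on a ball in $L^p(\Om;X_T)$ of adapted processes, for $T$ small depending on $R$. At critical $s=s_{\rm scaling}$, where $\theta=0$, smallness comes instead from the high-frequency part of the data and from $R$, as in \eqref{X6aa}--\eqref{X10}. Uniqueness and continuous dependence follow from the difference estimates. Finally, letting $R\to\infty$ gives a local solution up to the almost surely positive stopping time $\tau_R$.
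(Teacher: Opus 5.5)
The gap is the step you flag yourself: bounding $\Psi(u)$ in the Strichartz component of $X_T$. None of the routes you describe closes it as stated. The parts that do work match the paper. Your $C_T\H^s$ bound (Hilbert-space BDG for $\int_0^t\S(-t')(0,u\Phi\,dW)$ together with $\|ue_{n_1}\|_{H^{s-1}}\les\jb{n_1}^{\max(s-1,-s)}\|u\|_{H^s}$) is \eqref{it6a}, and your truncation and stopping-time scheme is the paper's.

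\textbf{Pointwise BDG route.} Moving $L^p_\Om$ inside the $L^q_tL^r_x$-norm requires $p\ge\max(q,r)$. After scalar BDG at fixed $(t,x)$ you must move $L^p_\Om$ back outside to reach $\|u\|_{L^p_\Om L^2_TH^s}$. Minkowski allows that only for $p\le\min(q,r)$, so the two swaps are compatible only when $p=q=r$.

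\textbf{Sobolev route.} For an $s$-admissible pair, $H^{s'}\subset L^r$ forces $s'=\frac d2-\frac dr=s+\frac1q$, so you need $\Psi(u)(t)\in H^{s+1/q}$. Already for $u\equiv1$ and generic $\Phi$, $\Psi(u)(t)$ is no better than $H^{\s+1}$. The available gain is therefore at most $\s+1-s$, which is arbitrarily small under \eqref{sigma0}. For example, for $(d,k)=(3,3)$ and $s=\frac12$, the pair of Proposition~\ref{PROP:Str2} is $(q,r)=(4,4)$, and this route fails for $\s\in(-\frac12,-\frac14]$.

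\textbf{Factorization route as phrased.} BDG in $L^r$ for the martingale $M_t=\int_0^t\S(-t')(\cdots)$ cannot be fed into \eqref{Str1a}. That estimate needs fixed $\H^s$ data, whereas $\Psi(u)(t)=\pi_1\S(t)M_t$ involves a $t$-dependent $M_t$. Factorization can be repaired as follows:
\emph{(i)} apply Hilbert-space BDG to $Y(\rho)=\int_0^\rho(\rho-t')^{-\al}\S(\rho-t')(0,u\Phi\,dW)$ with $\al<\frac12$;
\emph{(ii)} apply \eqref{Str1a} to $\pi_1\S(t-\rho)Y(\rho)$ at fixed $\rho$;
\emph{(iii)} use H\"older in $\rho$ with $\al>\frac1{p_0}$ and Minkowski with $p_0\le q$.
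This requires $q>2$. It breaks at the endpoint $q_1=2$ in \eqref{nonl3c}, which is forced when $k=2$ and $s=s_{\rm crit}\ge1$, unless you also spend the small spatial gain from the strict inequality in \eqref{sigma0}.

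\textbf{The missing idea.} Lemma~\ref{LEM:stoconv1} puts the output time $t$ inside the Banach space. Write $\jb{\nabla}^{s-s_0}\Psi(u)=\int_0^T\ind_{[0,t]}(t')\jb{\nabla}^{s-s_0}S(t-t')M_{u(t')}\Phi\,dW(t')$ as an Ito integral of an adapted $\g(L^2;L^q_TL^r_x)$-valued integrand. Then:
\emph{(i)} apply BDG in the M-type 2 space $L^q_TL^r_x$ (Lemmas~\ref{LEM:G2} and~\ref{LEM:BDG}, valid for all $2\le q,r<\infty$, including $q=2$);
\emph{(ii)} rewrite the $\g$-norm as the square function $\big\|\|S(e_{n_1})\|_{\ell^2_{n_1}}\big\|_{L^q_TL^r_x}$ (Lemma~\ref{LEM:G1}) and pull $\ell^2_{n_1}$ outside by Minkowski;
\emph{(iii)} apply \eqref{Str1a} to $t\mapsto\ind_{t\ge t'}S(t-t')(u(t')e_{n_1})$ for each frozen $(t',n_1)$.
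This gives \eqref{it6b} with no derivative gain, under the same condition as \eqref{it6a}.

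\textbf{Critical case.} At $s=s_{\rm scaling}$, smallness cannot come from $R$ if you truncate the full $X_t$-norm, because that norm is at least $\|(\phi_0,\phi_1)\|_{\H^s}$. The paper truncates only the Strichartz part $X_{\rm Str}(t)$ at a small level $\dl$. It uses that the free evolution has small $L^q_T$-norm for small $T$, so no high-frequency decomposition is needed.
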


Here, we stated Proposition 
\ref{PROP:LWPx} for SNLW on $\T^d$ but it also holds
on $\R^d$ (with the same proof).
See also Remark \ref{REM:white1}.

\begin{proof}[Proof of Proposition \ref{PROP:LWPx}]

Proposition \ref{PROP:LWPx} follows
from a standard contraction argument in $L^2_\text{ad}(\Om; X^s(T))$
(see~\eqref{it1}, \eqref{it3}, and \eqref{it4} below)
of processes adapted to the filtration generated by $W^\frac 12$
in \eqref{W0}, 
using 
the truncation method as in \cite{DD1, DD2} for stochastic NLS
with multiplicative noises, 
and we only indicate necessary ingredients.
Moreover, we restrict our attention to $d \ge 2$ in the following,
since the necessary nonlinear estimate for $d = 1$  follows more easily
without using the Strichartz estimate
(just by using~\eqref{1d2}).

We first establish   nonlinear estimates.

\smallskip

\noi
$\bul$ {\bf Case 1:}  $s_\text{crit} < 1$, 
where $s_\text{crit}$ is as in \eqref{crit1}.
\\
\indent
Let $s > 0$  be as in \eqref{reg1}.
We first consider the case $s < 1$.
Given $T > 0$, define the solution space $X^s(T)$
(= the Strichartz space) for $\vec u = (u, \dt u)$ via the norm:
\begin{align}
\|\vec u\|_{X^s(T)} = \|(u, \dt u) \|_{C_T\H^s_x}
+ \| u \|_{L^q_T L^r_x}, 
\label{it1}
\end{align}

\noi
where $(q, r)$ is the $s$-admissible pair
constructed in 
Proposition \ref{PROP:Str2}.
Then, from 
\eqref{Str3} in 
Lemma \ref{LEM:Slin} and \eqref{HK2}, we have
\begin{align}
\big\| \big(\If(u^k), \dt \If(u^k)\big) \big\|_{X^s(T)} \les  T^\ta \| \vec u\|_{X^s(T)}^k 
\label{it2}
\end{align}

\noi
for some $\ta \ge 0$, where $\If(F)$ is as in \eqref{If1}
and $\ta = 0$ if and only if $s = s_\text{crit} = s_\text{scaling}$.

Next, we consider the case $s \ge 1$.
Fix $s_0 \in (s_\text{crit}, 1)$
as in the second part of
the proof of Proposition \ref{PROP:nonlin2}\,(a).
In this case, we
define  $X^s(T)$
 via the norm:
\begin{align}
\|\vec u\|_{X^s(T)} = \|(u, \dt u) \|_{C_T\H^s_x}
+ \| u \|_{L^q_T W^{s- s_0, r}_x}, 
\label{it3}
\end{align}

\noi
where $(q, r)$ is the $s_0$-admissible pair
constructed in 
Proposition \ref{PROP:Str2}.
Then, 
the bound \eqref{it2} follows
 from 
\eqref{Str3} in 
Lemma \ref{LEM:Slin}, 
the fractional Leibniz rule (Lemma~\ref{LEM:leibniz}), 
Sobolev's inequality, and 
H\"older's inequality in time with \eqref{nonl3c};
see \eqref{nonl2ax}.

\smallskip

\noi
$\bul$ {\bf Case 2:}  $s_\text{crit} \ge  1$.
\\
\indent
In this case, we
define  $X^s(T)$
 via the norm:
\begin{align}
\|\vec u\|_{X^s(T)} = \|(u, \dt u) \|_{C_T\H^s_x}
+ \| u \|_{L^{q_1}_T W^{s- 1, r_1}_x}, 
\label{it4}
\end{align}

\noi
where $(q_1, r_1)$ is the $1$-admissible pair
from 
the proof of Proposition \ref{PROP:nonlin2}\,(b);
see \eqref{nonl3c}.
Then, 
the bound \eqref{it2} follows from 
\eqref{Str3} in 
Lemma \ref{LEM:Slin}, 
the fractional Leibniz rule (Lemma~\ref{LEM:leibniz}), 
and 
\eqref{X1};
see \eqref{nonl2b}.

\smallskip

Let $\eta \in C^\infty(\R_+; [0,1])$
be a smooth non-increasing function such that $\eta \equiv 1$ on $[0, 1]$
and $\eta \equiv 0 $ on $[2, \infty)$.
In the scaling subcritical case $s > s_\text{scaling}$, 
we 
consider the following truncated Duhamel formulation
(with  $R > 
 \|(\phi_0, \phi_1) \|_{\H^s_x}$):
\begin{align}
\begin{split}
u_R(t) & = \dt S(t) \phi_0 + S(t) \phi_1\\
& \quad  + \int_0^t S(t-t') 
\eta\bigg(\frac {\|\vec u_R\|_{X^s(t')}}{R}\bigg)
u_R^k(t') \, dt' + \Psi(u_R)(t)
\end{split}
\label{it5}
\end{align}

\noi
and the corresponding equation for $\dt u$, 
where $\Psi(u)$ is the stochastic convolution defined  in~\eqref{psi1}.
Then, from the nonlinear estimate \eqref{it2} (with $\ta > 0$), 
Lemma \ref{LEM:stoconv1} below
on the stochastic convolution $\Psi(u)$
in \eqref{psi1}
interpreted as an Ito integral, 
and
\cite[Lemma 3.3]{DD1} on the truncation operator, 
a standard contraction argument 
shows that \eqref{it5}
is locally well-posed in the Ito sense
with $\vec u_R = (u_R, \dt u_R) \in L^2_\text{ad}(\Om; X^s(T))$
for some (deterministic) $T  = T(R)> 0$.
Moreover, in view of the truncation, 
we see that \eqref{it5} is globally well-posed.

By setting a stopping time $\tau_R$ by 
\[ \tau_R = \inf\big\{t > 0: \|\vec u_R\|_{X^s(t)} \ge R\big\}, \]

\noi
we see that $u_R$ is an Ito solution to 
the (untruncated) Duhamel formulation \eqref{mild0}
on the time interval $[0, \tau_R]$.

In the scaling critical case $s =  s_\text{scaling}$, 
we have $\ta = 0$ in \eqref{it2}.
Define 
$X_\text{Str}(T)$
by setting
\begin{align*}
\| u \|_{X_\text{Str}(T)}
= \begin{cases}
\| u \|_{L^q_T L^r_x},  & \text{if $s_\text{crit} < 1$ and $s < 1$}, \\ 
\| u \|_{L^q_T W^{s- s_0, r}_x}, 
& \text{if $s_\text{crit} < 1 \le s$}, \\
\| u \|_{L^{q_1}_T W^{s- 1, r_1}_x}, & \text{if }s_\text{crit} \ge  1, 
\end{cases}
\end{align*}

\noi
where the parameters are as in \eqref{it1}, \eqref{it3}, and \eqref{it4}.
In this case, we consider the
following
 truncated Duhamel formulation:\footnote{Recall from Remark \ref{REM:foc}
 that we only consider the focusing case (with  $-u^k$ in \eqref{SNLW}.}
\begin{align}
\begin{split}
u_{\dl}(t) 
& = \dt S(t) \phi_0 + S(t) \phi_1\\
& \quad  + \int_0^t S(t-t') 
\eta\bigg(\frac {\|u_\dl\|_{X_\text{Str}(t')}}{\dl}\bigg)
u_{\dl}^k(t') \, dt' +  \Psi(u_{\dl})(t)
\end{split}
\label{it5b}
\end{align}

\noi
for small $\dl > 0$.
Proceeding as above
with the nonlinear estimate \eqref{it2} (where we replace
the $X^s(T)$-norm on the right-hand side by 
the $X_\text{Str}(T)$-norm), 
a contraction 
argument shows that 
\eqref{it5b} is locally well-posed
in the Ito sense
with 
$u_{\dl} \in L^2_\text{ad}(\Om; X_\text{Str}(T))$
by choosing $\dl > 0$ sufficiently small, 
where the local existence time
$T>0$ is chosen such that 
$\|\dt S(t) \phi_0 + S(t) \phi_1\|_{X_\text{Str}(T)}$
is sufficiently small.
We then use \eqref{it5b} to show that 
$(u_{\dl}, \dt u_{\dl}) \in L^2_\text{ad}(\Om; X^s(T))$.

For 
 small $\dl > 0$, 
 set  a stopping time $\tau_{\dl}$ by 
\[ \tau_{\dl} = \inf\big\{t \in (0, T]: 
\|u_\dl\|_{X_\text{Str}(t)} \ge \dl
\big\}, \]

\noi
we see that $u_{\dl}$ is an Ito solution to 
the (untruncated) Duhamel formulation \eqref{mild0}
on the time interval $[0, \tau_{\dl}]$.
\end{proof}

The next lemma establishes
basic bounds
for the stochastic convolution.
See Appendix~\ref{SEC:D}
for the
 Burkholder-Davis-Gundy inequality
 which plays a crucial role in the proof.

\begin{lemma}\label{LEM:stoconv1}

Let $s, \s \in \R$ satisfy \eqref{sigma0}{\rm :}
\begin{align}
\s \ge \max( - s, s-1).
\label{reg2}
\end{align}

\noi
Then, given any finite $p\ge2$, we have
\begin{align}
\big\|\| \vec \Psi(u)\|_{C_T \H^s_x} \big\|_{L^p(\Om)}
\les_p
\| \Phi\|_{\HS(L^2; H^\s)} \big\|\| u \|_{L^2_T H^s_x}\big\|_{L^p(\Om)}
\label{it6a}
\end{align}

\noi
for any adapted process $u \in L^p(\Om; L^2([0, T]; H^s(\T^d))$, 
where 
$\vec \Psi(u) = (\Psi(u), \dt \Psi(u))$ with $\Psi(u)$
 as in \eqref{psi1}.

In addition, given $0 < s_0\le 1$, let $(q, r)$ be an $s_0$-admissible pair
in the sense of 
Definition~\ref{DEF:admiss}.
\noi
Then, given any finite $p\ge2$, we have
\begin{align}
\big\|\| \Psi(u)\|_{L^q_T W^{s- s_0, r}_x} \big\|_{L^p(\Om)}
\les_p
\| \Phi\|_{\HS(L^2; H^\s)} \big\|\| u \|_{L^2_T H^s_x}\big\|_{L^p(\Om)}
\label{it6b}
\end{align}

\noi
for any adapted process $u \in L^p(\Om; L^2([0, T]; H^s(\T^d))$.

\end{lemma}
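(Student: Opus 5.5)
We would reduce both bounds to the Burkholder--Davis--Gundy inequality on Banach spaces from Appendix~\ref{SEC:D}, applied to the Ito integral written in interaction form. Using \eqref{Sdef}, we write $\vec\Psi(u)(t) = \S(t)\int_0^t \S(-t')\big(0, u(t')\Phi\, dW^{\frac12}(t')\big)$. Since $\S(t)$ is unitary on $\H^s$, the estimate for $\|\vec\Psi(u)(t)\|_{\H^s}$ reduces to one for the martingale $M_t = \int_0^t \S(-t')(0, u\Phi dW)$ in the Hilbert space $\H^s$. For the $C_T$ norm we take the supremum over $t$ of $M_t$, and continuity follows from the standard factorization or Doob argument applied to $M$. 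BDG in the Hilbert space $\H^s$ then gives
\[
\big\|\sup_{t\le T}\|M_t\|_{\H^s}\big\|_{L^p(\Om)} \les_p \Big\| \Big(\int_0^T \big\| \S(-t')\big(0, u(t')\Phi\,\cdot\,\big)\big\|^2_{\HS(L^2;\H^s)}dt'\Big)^{\frac12}\Big\|_{L^p(\Om)}.
\]
The $\HS$ norm equals $\|u(t')\Phi(\cdot)\|_{\HS(L^2;H^{s-1})}$ by unitarity.

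The key deterministic step is the bound $\|f\cdot\Phi(\cdot)\|_{\HS(L^2;H^{s-1})}\les \|\Phi\|_{\HS(L^2;H^\s)}\|f\|_{H^s}$. Using \eqref{phi1}, this HS norm squared equals $\sum_{n_1}\|f\,\phi_{n_1}e_{n_1}\|_{H^{s-1}}^2 = \sum_{n,n_1}\jb{n}^{2(s-1)}|\phi_{n_1}|^2|\ft f(n-n_1)|^2$. We then need $\sup_{n_1,n_2}\jb{n_1+n_2}^{2(s-1)}\jb{n_1}^{-2\s}\jb{n_2}^{-2s}\les 1$ with $n_2=n-n_1$. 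This holds by checking the three cases $|n|\sim |n_1|\gtrsim|n_2|$, $|n|\sim|n_2|\gtrsim|n_1|$, and $|n_1|\sim|n_2|\gg|n|$, which require respectively $s-1\le \s$ when $s\ge0$, $\s\ge -1$, and $\s+s\ge 0$ together with $s-1\le0$ or $s-1\le \s+s$. All of these are implied by \eqref{reg2}, with the mixed-sign subcases handled in the same way. Summation is direct because there is no frequency sum left beyond the HS norm. This gives \eqref{it6a}.

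For \eqref{it6b}, we write $\Psi(u)(t)=\int_0^t S(t-t')[u\Phi dW]$ and expand $S(t-t')$ through $\sin((t-t')\jb{\nb}) = \sin(t\jb\nb)\cos(t'\jb\nb)-\cos(t\jb\nb)\sin(t'\jb\nb)$. This gives $\Psi(u)(t)=\pi_1[\S(t)M_t]$, where $M$ is the martingale above. The obstacle is that $M_t$ appears inside $\S(t)$, so the homogeneous Strichartz estimate \eqref{Str1a} cannot be applied directly to a random, time-dependent datum. We would handle this as in Da Prato--Zabczyk-type arguments for dispersive equations. The first step is to take the $L^p(\Om)$ norm inside, using $p\ge \max(q,r)$ first and lowering $p$ at the end by Hölder in $\Om$, and to treat $\Psi(u)(t,x)$ for fixed $(t,x)$ as a stochastic integral in $t'$. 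The second step is to apply scalar BDG pointwise. The third step is Minkowski, which gives
\[
\Big\|\Big(\int_0^T \big\|\jb\nb^{s-s_0}\mathbf 1_{t'<t}S(t-t')[u(t')\Phi e_{n_1}]\big\|^2_{\l^2_{n_1}}dt'\Big)^{\frac12}\Big\|_{L^q_tL^r_x}.
\]
Using $q,r\ge2$, we would move $L^q_tL^r_x$ inside the $L^2_{t'}\l^2_{n_1}$ norm by Minkowski, and then apply \eqref{Str1a} for each fixed $t'$ and $n_1$ to the datum $(0,u(t')\Phi e_{n_1})$ at regularity $s-s_0+s_0=s$. This bounds the expression by $\|u(t')\Phi(\cdot)\|_{\HS(L^2;H^{s-1})}$, and the HS bound from the previous paragraph closes the argument. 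The main delicacy lies in this interchange: the pointwise BDG and the Minkowski steps require $p\ge q,r$. We expect this to be the hardest step to make rigorous, including the truncation $\mathbf 1_{t'<t}$, which the Strichartz estimate on $[t',T]$ handles by time translation.
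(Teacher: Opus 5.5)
Your proof of \eqref{it6a} is correct and is essentially the paper's argument. You apply BDG in the Hilbert space after removing the unitary propagator, and then use the pointwise bound $\langle n\rangle^{2s-2}\langle n_1\rangle^{-2\sigma}\lesssim\langle n-n_1\rangle^{2s}$, which holds under \eqref{reg2}.

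The argument for \eqref{it6b} has a genuine gap: the two interchanges of the $L^p(\Omega)$-norm that you need go in opposite directions.
\begin{itemize}
\item Moving $L^p(\Omega)$ inside $L^q_tL^r_x$ before the pointwise scalar BDG requires $p\ge\max(q,r)$.
\item After BDG you hold $\| \| (\int_0^T\sum_{n_1}|K|^2\,dt')^{1/2}\|_{L^p(\Omega)}\|_{L^q_tL^r_x}$, where $K(t,x,t',n_1)=\mathbf 1_{t'<t}\langle\nabla\rangle^{s-s_0}S(t-t')[u(t')\phi_{n_1}e_{n_1}](x)$. To apply \eqref{Str1a} pathwise to the random datum $u(t')\phi_{n_1}e_{n_1}$, your displayed square function needs $L^p(\Omega)$ \emph{outside}. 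By Minkowski, that step requires $p\le\min(q,r)$.
\item If you instead keep $L^p(\Omega)$ innermost through the $L^2_{t'}\ell^2_{n_1}$ and $L^q_tL^r_x$ interchanges, you are left with $\|\|K(\cdot,\cdot,t',n_1)\|_{L^p(\Omega)}\|_{L^q_tL^r_x}$. That is an $L^p(\Omega)$-valued Strichartz estimate, which does not follow from \eqref{Str1a}. Even granted, it would leave $L^p(\Omega)$ inside $L^2_{t'}$ on the right-hand side, which is weaker than \eqref{it6b}.
\end{itemize}
So your chain closes only when $p=q=r$.

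Lowering $p$ by H\"older in $\Omega$ does not repair this. It gives $\|\cdot\|_{L^{p_1}(\Omega)}\le\|\cdot\|_{L^{p}(\Omega)}\lesssim\|\|u\|_{L^2_TH^s_x}\|_{L^{p}(\Omega)}$, with the larger exponent on the right. That is not \eqref{it6b} for $p_1$. In particular you never reach $p=2$. This is exactly the case used for the contraction in $L^2_{\mathrm{ad}}(\Omega;X^s(T))$ in Proposition \ref{PROP:LWPx}, where $q>2$ by Remark \ref{REM:q1}.

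The missing idea is to avoid interchanging $\Omega$ with the space-time norm altogether. Apply BDG directly in the Banach space $B=L^q([0,T];L^r(\mathbb{T}^d))$, which has M-type $2$ for $2\le q,r<\infty$ (Lemma \ref{LEM:G2}).
\begin{itemize}
\item Regard $\langle\nabla\rangle^{s-s_0}\Psi(u)$, as a function of $(t,x)$, as the terminal value $\int_0^T\Gamma(t')\,dW^{\frac12}(t')$ of a $B$-valued stochastic integral. The integrand is adapted and $\gamma(L^2;B)$-valued, given by $\Gamma(t')e_{n_1}=\mathbf 1_{[0,t]}(t')\langle\nabla\rangle^{s-s_0}S(t-t')[u(t')\Phi e_{n_1}]$.
\item Lemma \ref{LEM:BDG} then bounds the left-hand side of \eqref{it6b} by $\|\|\Gamma\|_{L^2_{t'}\gamma(L^2;B)}\|_{L^p(\Omega)}$ for every finite $p\ge2$.
\item All remaining steps are deterministic for fixed $\omega$. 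Lemma \ref{LEM:G1} converts the $\gamma$-norm into $\|\|\Gamma(t')e_{n_1}\|_{\ell^2_{n_1}}\|_{L^q_tL^r_x}$. Minkowski, using $q,r\ge2$, moves $\ell^2_{n_1}$ outside.
\item Then apply \eqref{Str1a} for fixed $\omega$, $t'$, $n_1$ to what is now a deterministic datum, with the time translation you mention for the cutoff. The frequency bound from \eqref{it6a} finishes the proof.
\end{itemize}
This is what removes the ``random datum inside $\mathcal S(t)$'' obstacle you identified.
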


\begin{proof}

Given any finite $p \ge 2$, 
it follows from 
the Burkholder-Davis-Gundy inequality (Lemma~\ref{LEM:BDG})
 with \eqref{psi1} that 
\begin{align}
\E\big[\| \vec \Psi(u)\|_{C_T \H^s_x}^p\big]
\les_p
\sum_{\ta \in \{-1, 1\}}
 \E\Bigg[\bigg(\int_0^T\| S_{\ta} (-t')M_{u(t')} \Phi \|_{\HS(L^2; H^s)}^2dt'\bigg)^\frac p2\Bigg], 
\label{it7}
\end{align}

\noi
where $S_{\pm} (t) = e^{\pm i  t\jb{\nb}}{\jb{\nb}}^{-1}$
and $M_f$ denotes the multiplication operator by $f$.
From 
\eqref{phi1}, we have 
\begin{align}
\begin{split}
\| S_{\ta} (-t') & M_{u(t')} \Phi \|_{\HS(L^2; H^s)}^2
 = \sum_{n_1 \in \Z^d}
\| S_{\ta} (-t')M_{u(t')} \Phi (e_{n_1})\|_{H^s}^2\\
& \sim  \sum_{n_1 \in \Z^d}
\jb{n_1}^{2\s} |\phi_{n_1}|^2
\sum_{n \in \Z^d}
\jb{n}^{2s-2} \jb{n_1}^{-2\s}|\ft u(t', n - n_1)|^2\\
& \les 
\| \Phi\|_{\HS(L^2; H^\s)}^2 \| u(t') \|_{H^s_x}^2, 
\end{split}
\label{it8}
\end{align}

\noi
where, in the last step, 
we used
\begin{align}
\jb{n}^{2s-2} \jb{n_1}^{-2\s}
\les \jb{n - n_1}^{2s}
\label{it9}
\end{align}

\noi
 under the condition \eqref{reg2}.
Hence, the first bound \eqref{it6a}
follows from \eqref{it7} and \eqref{it8}.

Next, we prove \eqref{it6b}.
We  adapt the proof of  \cite[Lemma 2.1\,(ii)]{OPW}
(see also \cite{BLL24}), 
using the Strichartz estimates, to the current multiplicative noise setting.
From \eqref{psi1}
and the Burkholder-Davis-Gundy inequality 
(Lemma~\ref{LEM:BDG}), we have 
\begin{align}
\begin{split}
& \big\|\| \Psi(u)\|_{L^q_T W^{s- s_0, r}_x}\big\|_{L^p(\Om)}
= \big\| \|\jb{\nb}^{s- s_0} \Psi(u)\|_{L^q_T L^r_x}\big\|_{L^p(\Om)}\\
& \quad 
=  \Bigg\|\bigg\|\int_0^T
\ind_{[0, t]}(t')\cdot \jb{\nb}^{s- s_0}S(t-t') \big[ u(t') \Phi d  W^\frac 12 (t') \big]
\bigg \|_{L^q_t([0, T];   L^r_x)}\Bigg\|_{L^p(\Om)}\\
& \quad 
\les_p 
 \Big\|
 \| M_{\ind_{[0, t]}(t')}\jb{\nb}^{s- s_0}S(t-t') M_{u(t')} \Phi 
\|_{\g(L^2;  L^q_t([0, T]; L^r_x))} 
 \|_{  L^2_{t'}([0, T])}\Big\|_{L^p(\Om)}.
\end{split}
\label{it10}
\end{align}

\noi
By applying
Lemma \ref{LEM:G1}, 
Minkowski's integral inequality, 
and the Strichartz estimates (\eqref{Str1a} in Lemma~\ref{LEM:Slin})
and proceeding as in \eqref{it8}
with \eqref{it9} under \eqref{reg2}, we have
\begin{align}
\begin{split}
&  \| M_{\ind_{[0, t]}(t')}\jb{\nb}^{s- s_0}S(t-t') M_{u(t')} \Phi 
\|_{\g(L^2;  L^q_t([0, T]; L^r))} \\
& \quad \les 
\Bigg\|
\bigg\|
\sum_{n \in \Z^d}
e_n(x)  \frac{\sin((t-t')\jb{n})}{\jb{n}^{1-s+s_0}}
\ft u(t', n- n_1) \phi_{n_1} \bigg\|_{\l^2_{n_1} }
\Bigg\|_{L^q_t([0, T];  L^r_x)}\\
& \quad
\le
\Bigg\|
\bigg\|
\sum_{n \in \Z^d}
e_n(x)  \frac{\sin((t-t')\jb{n})}{\jb{n}^{1-s+s_0}}
\ft u(t', n- n_1) \phi_{n_1} 
\bigg\|_{L^q_t([0, T];  L^r_x)}
\Bigg\|_{\l^2_{n_1} }\\
& \quad \les
\|\jb{n}^{s-1}
\ft u(t', n- n_1) \phi_{n_1} \|_{\l^2_{n, n_1} }\\
& \quad \les \| \Phi\|_{\HS(L^2; H^\s)} \| u(t') \|_{H^s_x}.
\end{split}
\label{it11}
\end{align}

\noi
Hence, the second bound \eqref{it6b}
follows from \eqref{it10} and \eqref{it11}.
\end{proof}

\begin{remark}\label{REM:white1}\rm
Consider the case $\Phi = \jb{\nb}^\ld$ for $\ld \ge 0$.
In this case, instead of \eqref{it8}, we have
\begin{align}
\begin{split}
\| S_{\ta} (-t')  M_{u(t')} \Phi \|_{\HS(L^2; H^s)}^2
& \sim 
\sum_{n \in \Z^d}
\jb{n}^{2s-2}
 \sum_{n_1 \in \Z^d}
\jb{n_1}^{2\ld}|\ft u(t', n - n_1)|^2\\
& \les 
\| u(t') \|_{H^s_x}^2, 
\end{split}
\label{it12}
\end{align}

\noi
where the last inequality holds, provided that 
\begin{align}
 \ld \le s < 1 - \frac d2 - \ld. 
\label{it13}
\end{align}

\noi
Under the assumption  $\ld \ge 0$, 
\eqref{it12} holds only for $d = 1$, 
in which case \eqref{it6a} extends to 
$\Phi = \jb{\dx}^\ld$ 
for $0 \le \ld < \frac 14$
and $s$ satisfying \eqref{it13}.

Let $d = 1$, 
and set $X^s(T) = C([0, T]; \H^s(\T))$.
Then, from 
\eqref{1d2}, 
we see that 
the nonlinear estimate \eqref{it2}
holds for $s\ge s_{\rm sob}$, 
where $s_{\rm sob} = \frac 12 - \frac 1k$
is as in 
\eqref{crit3}.
Hence, from \eqref{it13} with~\eqref{crit3}, we conclude
that, 
when $\Phi = \jb{\dx}^\ld$,  the one-dimensional SNLW \eqref{SNLW} 
 with a multiplicative white-in-time noise
is locally well-posed in $\H^{s_\text{sob}}(\T)$
in the  Ito sense, 
provided that\footnote{When $d = 1$, 
the $\ld < 0$ case was already treated in Proposition \ref{PROP:LWPx}.} 
\begin{align*}
\ld \le \min \bigg(\frac 14 - , \frac 1k -, \frac 12 - \frac 1k \bigg).
\end{align*}

\noi
In particular, 
this includes the case of a space-time white noise 
(namely, $\Phi = \Id$).

\end{remark}

\begin{remark}\label{REM:system1}\rm

As mentioned in 
Remark \ref{REM:rough1}, 
our pathwise solution theory in the rough case
requires the decomposition \eqref{exp1}
if \eqref{bad1} hold.
In order to compare our pathwise solutions
with Ito solutions under \eqref{bad1}, 
we also need to proceed
with the decomposition 
$u = v+ w$ in the construction of Ito solution.
and 
study the following system:
\begin{align}
\begin{split}
v & = \Psi(v+w),  \\
w(t)& 
 = \dt S(t) \phi_0 + S(t)  \phi_1 
+ \int_0^t S(t-t') \big(v(t') + w(t')\big)^k \, dt'.
\end{split}
\label{Ito2}
\end{align}

\noi
Note that if $(v, w)$ is an Ito solution to \eqref{Ito2}, 
then $u= v+w$ satisfies
\eqref{mild0}
in the Ito sense.


Fix $s > 0$ and small $\eps > 0$.
Depending on the following three cases:
\[ \text{(i) ~$s_\text{crit} < 1$ and $s < 1$, \quad
(ii) ~$s_\text{crit} < 1 \le s$,
\quad and \quad
 (iii) $s_\text{crit} \ge 1$}, \]
let the relevant parameters $q$, $r$, $s_0$, $q_1$, and $r_1$
be 
are as in \eqref{it1}, \eqref{it3}, and~\eqref{it4}.

\smallskip

\begin{itemize}
\item
When (i)  holds, 
we set $q_*$ by $\frac 1{q_*} = \frac 1q - \eps$
such that 
$(q_*, r)$ is $(s + \eps)$-admissible. 

\smallskip
\item 
When (ii) holds, 
we set $q_*$ by $\frac 1{q_*} = \frac 1q - \eps$
such that $(q_*, r)$ is $(s_0 + \eps)$-admissible
(recall that 
$s_0 \in (s_\text{crit}, 1)$).

\smallskip
\item 
When (iii) holds, 
we set $q_*$ by $\frac 1{q_*} = \frac 1{q_1} - \eps$
such that $(q_*, r)$ is $(1 + \eps)$-admissible
(as in the proof of Proposition \ref{PROP:nonlin3}).

\end{itemize}

\smallskip

\noi
Given $T > 0$, 
define 
$Y^{s, \eps}(T)$
by setting
\begin{align*}
\|\vec u \|_{Y^{s, \eps}(T)}
= \begin{cases}
\|\vec u  \|_{C_T\H^{s+\eps}_x}
+ 
\| u \|_{L^{q_*}_T L^r_x},  & \text{if $s_\text{crit} < 1$ and $s < 1$}, \\ 
\|\vec u  \|_{C_T\H^{s+\eps}_x}
+ \| u \|_{L^{q_*}_T W^{s- s_0, r}_x}, 
& \text{if $s_\text{crit} < 1 \le s$}, \\
\|\vec u  \|_{C_T\H^{s+\eps}_x}
+ \| u \|_{L^{q_*}_T W^{s- 1, r_1}_x}, & \text{if }s_\text{crit} \ge  1.
\end{cases}
\end{align*}

\noi
Then, define $Z^{s, \eps}(T)$ via the norm:
\begin{align*}
\| (\vec v, \vec w)\|_{Z^{s, \eps}(T)}
= \| \vec v\|_{Y^{s, \eps}(T)}
+ \|  \vec w\|_{X^{s}(T)}, 
\end{align*}

\noi
where the $X^s(T)$-norm is as in \eqref{it1}, \eqref{it3}, and~\eqref{it4}.
With \eqref{it2} and suitable modifications as in the proof of Proposition \ref{PROP:nonlin3}
\begin{align}
\begin{split}
& \big\| \big(\If((v+w)^k), \dt \If((v+w)^k)\big) \big\|_{X^s(T)} \\
& \quad \les  T^\ta \| \vec w\|_{X^s(T)}^k 
+ T^\eps \Big(\big( \| \vec v\|_{Y^{s, \eps}(T)} +\| \vec w\|_{X^s(T)}\big)^k 
- \| \vec w\|_{X^s(T)}^k \Big)
\end{split}
\label{Ito2b}
\end{align}

\noi
for some $\ta \ge 0$, where $\If(F)$ is as in \eqref{If1}
and $\ta = 0$ if and only if $s = s_\text{crit} = s_\text{scaling}$.

Consider the following truncated system
(with  $R > 
 \|(\phi_0, \phi_1) \|_{\H^s_x}$):
\begin{align}
\begin{split}
v_R & = \Psi(v_R+w_R),  \\
w_R(t)& 
 = \dt S(t) \phi_0 + S(t)  \phi_1 \\
& \quad + \int_0^t S(t-t')
\eta\bigg(\frac {\|(\vec v_R, \vec w_R)\|_{Z^{s, \eps}(t')}}{R}\bigg)
 \big(v_R(t') + w_R(t')\big)^k \, dt'
\end{split}
\label{Ito3}
\end{align}

\noi
and the corresponding equations for $\dt v$ and $\dt w$. 
When $s > s_\text{scaling}$, 
it follows from 
a standard contraction argument with
the nonlinear estimate~\eqref{Ito2b} (with $\ta > 0$), 
Lemma \ref{LEM:stoconv1}, 
and
\cite[Lemma~3.3]{DD1} on the truncation operator
that 
the truncated system \eqref{Ito3}
is locally well-posed in the Ito sense
with $(\vec v_R, \vec w_R) \in L^2_\text{ad}(\Om; Z^{s, \eps}(T))$
for some (deterministic) $T  = T(R)> 0$.
Then, 
by setting a stopping time $\tau_R$ by 
\[ \tau_R = \inf\big\{t\in ( 0, T]: 
\|(\vec v_R, \vec w_R)\|_{Z^{s, \eps}(t)}
 \ge R\big\}, \]

\noi
we see that $(v_R, w_R)$ is an Ito solution to 
the (untruncated) system \eqref{Ito2}
on the time interval $[0, \tau_R]$.
Consequently, 
$u_R:= v_R + w_R$
 is an Ito solution to 
the (untruncated) Duhamel formulation \eqref{mild0}
on the time interval $[0, \tau_R]$.

A required modification for $s = s_\text{scaling}$
is straightforward as in the proof of Proposition~\ref{PROP:LWPx}, 
and thus we omit details.
\end{remark}

\subsection{Agreement with Ito solutions}
\label{SUBSEC:ito2}

The following proposition shows that our pathwise solutions
in the rough case (with $\be = \frac 12$)
agree with Ito solutions.

Let us introduce some notation.
Fix $s \ge  0$ and 
 $\s \in \R$, satisfying \eqref{reg1}  and \eqref{sigma0}.
Fix   $(\phi_0, \phi_1) \in \H^s(\T^d)$ and  
$\Phi \in \HS(L^2(\T^d); H^\s(\T^d))$, 
satisfying \eqref{phi1}, and let
$\vXX$ and $\vbbX$
be random drivers as in~\eqref{introX}
and \eqref{bX0}.
Given $0 < t \le 1$, let 
\begin{align}
\label{AC2}
\| \Xi \|_{\wt\Xc^{\al,s, \eps}(t)}
=
\| (\phi_0, \phi_1) \|_{\H^s}
+ \|\vXX\|_{C^{\al}_{2,t} \LOP(\H^s;\H^{s+\eps})} + \| \vbbX\|_{C^{2\al }_{2,t} \LOP(\H^s;\H^{s+\eps})}.
\end{align}

\noi
Namely, \eqref{AC2} is a local-in-time version of  \eqref{Xtilde}.
Then, 
given $K \gg \| (\phi_0, \phi_1) \|_{\H^s}$, 
 define a stopping time $\tau_1$ by 
\begin{align*}
\tau_1 = \inf\big\{ t > 0: 
6\| \Xi \|_{\wt\Xc^{\al,s, \eps}(t)} > K \big\}.
\end{align*}

\noi
See \eqref{RRr}.
Note that $\tau_1$ is almost surely positive.
Furthermore, 
  a straightforward modification of Proposition \ref{PROP:LWP2}
  (also see the proof of Theorem \ref{THM:2}), 
allows us to construct a solution $\vec u$ 
to~\eqref{SNLW} on the time interval $[0, \tau_1]$
in the pathwise manner.

\begin{proposition}
\label{PROP:ito}

Let  $d \ge 1$ and $\be = \frac 12$. 
  Given  an integer  $k \geq 2$, 
suppose that $s\ge 0$ and $\s \in \R$
satisfy
 \eqref{reg1} 
and \eqref{sigma0}, respectively.
Fix $(\phi_0, \phi_1) \in \H^s(\T^d)$
and 
  $\Phi \in \HS(L^2(\T^d); H^\s(\T^d))$, 
satisfying~\eqref{phi1}, 
and let 

\smallskip

\begin{itemize}
\item
$\vec u$
be the local-in-time pathwise  solution 
to \eqref{SNLW}
on the time interval $[0, \tau_1]$
from the discussion above, and

\smallskip
\item
 $\vec u_\ito$
 be the local-in-time Ito solution 
to \eqref{SNLW}
 on a time interval $[0, \tau_2]$, 
 constructed in Proposition~\ref{PROP:LWPx}, 
 where
 \noi
 $\tau_2$ is an 
almost surely positive stopping time.

\end{itemize}

\smallskip

\noi
Then, we have 
\begin{align}
\vec u = \vec u_\ito   \quad \text{in } \ C([0, \tau_1\wedge \tau_2]; \H^s(\T^d)), 
\label{ag1}
\end{align}

\noi
almost surely.

\end{proposition}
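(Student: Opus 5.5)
The plan is to show that the pathwise rough integral $\I^{\vXX,\vbbX}(\vec\ub)$ coincides almost surely with the Ito stochastic convolution (at the level of the interaction representation) whenever $\vec\ub$ is adapted and controlled. Once this is established, $\vec u$ is an Ito solution on $[0,\tau_1]$, and uniqueness of Ito solutions in Proposition \ref{PROP:LWPx} yields \eqref{ag1}.

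\textbf{Step 1: identification of the integral.} Let $(\vec\ub,\vec\ub')$ be the pathwise solution from Proposition \ref{PROP:LWP2}, with $\vec\ub'=\vec\vb+\vec\wb=\vec\ub$. It is obtained as the limit of Picard iterates of $\vec\bG$, and each iterate is a measurable functional of $(\phi_0,\phi_1)$ and of $(\vXX_{t,r},\vbbX_{t,r})$ for $t\le t_0$. Hence $\vec\ub$ is adapted on the stochastic interval $[0,\tau_1]$, since $\tau_1$ is a stopping time. By \eqref{int2d}, $\I^{\vXX,\vbbX}(\vec\ub)(t)$ is the limit of
\[
\sum_j \vXX_{t_j,t_{j+1}}(\vec\ub_{t_{j+1}})+\vbbX_{t_j,t_{j+1}}(\vec\ub_{t_{j+1}}).
\]
\begin{itemize}
\item The first sum is exactly the Ito–Riemann sum for the Ito integral of the adapted step approximation of $\vec\ub$. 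By the Ito isometry together with \eqref{it8}, this sum converges in $L^2(\Omega;\H^s)$ to the Ito integral $\vec\pPsi_{\ito}(\vec\ub)(t)$. To make the second moments finite, localize by stopping at $\tau_1\wedge\tau_2$.
\item The $\vbbX$-sum must go to $0$ in $L^2(\Omega)$. Since $\vbbX_{t_j,t_{j+1}}$ is an iterated Wiener–Ito integral (a second-chaos element) over $[t_{j+1},t_j]$, it is independent of $\mathcal F_{t_{j+1}}$. Also $\E\|\vbbX_{t_j,t_{j+1}}\|^2\lesssim|t_j-t_{j+1}|^2$ by \eqref{ZG1a}. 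The terms are therefore martingale-difference-like, and orthogonality gives a total of order $\sum_j|t_j-t_{j+1}|^2\to0$.
\end{itemize}
Combining almost sure pathwise convergence with $L^2$ convergence along a subsequence shows that the two integrals agree almost surely. This is the main obstacle: operator-valued second-chaos terms acting on $\H^s$-valued random inputs require care. I would handle it at a fixed Fourier truncation $\P_N$ and pass $N\to\infty$ using the uniform bounds of Propositions \ref{PROP:driver2} and \ref{LEM:stoconv1}.

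\textbf{Step 2: the pathwise solution is an Ito solution.} Transforming back via $\vec u=\S(t)\vec\ub$ and using Step 1, $\vec u$ satisfies \eqref{mild0} with $\Psi(u)$ interpreted in the Ito sense on $[0,\tau_1]$. Its regularity places $\vec u$ in the Strichartz-type space $X^s$: the transference principle (Lemma \ref{LEM:STR}) controls the $U^q_\Box$ norms. In case \eqref{bad1}, I would instead argue via the system \eqref{Ito2} of Remark \ref{REM:system1} with $(\vec v,\vec w)$.

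\textbf{Step 3: uniqueness.} Both $\vec u$ and $\vec u_\ito$ are adapted solutions in $X^s(\tau_1\wedge\tau_2)$. Uniqueness in the contraction argument of Proposition \ref{PROP:LWPx} applies to the truncated equation \eqref{it5} (respectively \eqref{it5b}) up to the stopping time at which the norms exceed $R$ (respectively $\delta$). Applying it on successive short intervals and letting $R\to\infty$ (respectively iterating the small-data argument) gives $\vec u=\vec u_\ito$ almost surely in $C([0,\tau_1\wedge\tau_2];\H^s)$.
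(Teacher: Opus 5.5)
Your proposal is correct and has the same architecture as the paper's proof. It identifies the rough integral of an adapted controlled path with the Ito convolution (Lemma~\ref{LEM:Ito}), and it uses that the pathwise solution is adapted as a limit of adapted Picard iterates. It then uses pathwise uniqueness of Ito solutions in the Strichartz class, into which $\V^{\frac1\al}_\tau\H^s$ embeds by transference once $\frac1\al$ is below the Strichartz exponent $q>2$. Under \eqref{bad1} it passes to the $(\vec v,\vec w)$ system of Remark~\ref{REM:system1}, as the paper does.

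The one genuine difference is how you match the first-order Riemann sum.
\begin{itemize}
\item The paper compares $\sum_j\vXX_{t_j,t_{j+1}}(\vec\ub_{t_{j+1}})$ with a fully discretized sum in which the propagators $\S(\pm t')$ are also frozen at $t_{j+1}$. It quotes a Riemann-sum convergence theorem for the latter. It controls the discrepancy $\vec A_1(\Pi)$ by the mean value theorem, paying a factor $(\jb{n}+\jb{n_2})^\ta$ that the strict inequality in \eqref{sigma0} absorbs.
\item You observe that $\sum_j\vXX_{t_j,t_{j+1}}(\vec\ub_{t_{j+1}})$ is itself the Ito integral of the adapted integrand obtained by freezing only $\vec\ub$ on each $[t_{j+1},t_j)$. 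The Ito isometry with \eqref{it8}, plus continuity of $t\mapsto\vec\ub_t$ in $\H^s$, then give the convergence directly. This avoids both the frequency loss and the discretization theorem.
\end{itemize}

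For the $\vbbX$-sum, your conditional-mean-zero and orthogonality argument uses \eqref{ZG1a} with $p=2$ and $s_0=s$, which is admissible under \eqref{sigma0}. This is the same mechanism as the paper's Ito-isometry computation with the explicit kernel. The Fourier truncation you mention is therefore unnecessary.

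When you write it out, make three points explicit.
\begin{itemize}
\item The pathwise action of $\vXX_{t_j,t_{j+1}}$ and $\vbbX_{t_j,t_{j+1}}$ on an $\F_{t_{j+1}}$-measurable input coincides a.s.\ with the corresponding Ito integral. Show this by approximating the input with simple random variables.
\item Equality is first obtained for each rational $t$, and then for all $t\in[0,\tau]$ by continuity of both sides.
\item The second moments are finite only after a norm-based localization, as in the paper's $\tau_N$, removed afterwards via Lemma~\ref{LEM:intR}\,(iii). Stopping at $\tau_1\wedge\tau_2$ alone suffices only if you use that the pathwise solution stays bounded by a deterministic constant up to $\tau_1$.
\end{itemize}
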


A key ingredient is the following lemma;
see
\cite[Proposition~5.1]{FH20}
in the scalar case.

\begin{lemma}
\label{LEM:Ito}
Let $d\ge 1$,  $\be = \frac 12$, 
$ 0 <  \al < \frac12 < \g < 1$ such that
$  (\al + \g) \wedge(3\al) >1$. 
Let $s \ge 0$, 
and  $\s \in \R$ satisfy
\eqref{sigma0}.
Given  
   $\Phi \in \HS(L^2(\T^d); H^\s(\T^d))$, 
satisfying~\eqref{phi1}, 
let $(\vXX, \vbbX)$ be the rough driver as in \eqref{introX}
and \eqref{bX0}, 
constructed in Proposition
  \ref{PROP:driver2}.

Let $ (\vec\ub, \vec\ub') \in 
\V^{\frac1\al}_\tau  \H^s(\T^d) \times \V^{\frac1\al}_{\tau} \H^s(\T^d)$
be a {\rm (}random{\rm )} process, 
adapted to the filtration
 generated by $W^\frac 12$ in \eqref{W0}, 
where $\tau$ denotes an almost surely positive stopping time.
Suppose that 
 $\vec \ub$ is almost surely controlled by $\vXX$
 on the time interval
 with the Gubinelli derivative $\vec \ub'$
such that 
$ (\vec\ub, \vec\ub')\in \CRP^{ \al, \g, s}_{\vXX, \tau}$, 
in the sense of Definition~\ref{DEF:RP}\,(ii).
Then, we have 
\begin{align}
\vec \pPsi (\ind_{[0, \tau]}\cdot  \vec \ub)
=  \I^{\vXX, \vbbX}(\vec \ub)
\quad 
\text{in \ $C([0, \tau]; \H^s(\T^d))$}, 
\label{ag2}
\end{align}

\noi
 almost surely, 
where 
$\vec{\pPsi}(\ind_{[0, \tau]}\cdot  \vec \ub) $ is the stochastic term
in \eqref{psi2} {\rm(}with $\b = \frac 12${\rm)} interpreted as an Ito integral
and 
$\I^{\vXX, \vbbX}(\vec \ub)$ denotes the rough integral of $\vec \ub$
with the rough driver $(\vXX, \vbbX)$.

\end{lemma}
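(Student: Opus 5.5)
We compare both sides through Riemann sums, following \cite[Proposition~5.1]{FH20}. By \eqref{int2d} in Lemma~\ref{LEM:intR}, for each fixed $t$ the rough integral $\I^{\vXX,\vbbX}(\vec\ub)(t\wedge\tau)$ is the almost sure limit in $\H^s$ of
\[
\sum_{j} \Big(\vXX_{t_j,t_{j+1}}(\vec\ub_{t_{j+1}}) + \vbbX_{t_j,t_{j+1}}(\vec\ub'_{t_{j+1}})\Big)
\]
along partitions of $[0,t\wedge\tau]$ with mesh going to $0$. On the other hand, the Ito integral $\vec\pPsi(\ind_{[0,\tau]}\vec\ub)(t)$ is the $L^2(\Om;\H^s)$-limit of the left-point sums $\sum_j \vXX_{t_j,t_{j+1}}(\vec\ub_{t_{j+1}})$. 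This is legitimate because $\vec\ub$ is adapted and, by \eqref{introX}, $\vXX_{t_j,t_{j+1}}$ is a stochastic integral over $[t_{j+1},t_j]$ of a deterministic operator kernel. The convergence itself follows from the isometry and the bound \eqref{it8}, since $\vec\ub$ is continuous in $\H^s$, once we localize with stopping times so that $\sup_t\|\vec\ub_t\|_{\H^s}$ and the controlled norm are bounded.

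It therefore suffices to show that $\sum_j \vbbX_{t_j,t_{j+1}}(\vec\ub'_{t_{j+1}})\to 0$ in probability. First localize: set $\tau_M=\inf\{t:\|\vec\ub'\|_{L^\infty_t\H^s}\ge M\}\wedge\tau$ and use the stopped processes. Since $\vec\ub'_{t_{j+1}}$ is $\F_{t_{j+1}}$-measurable and $\vbbX_{t_j,t_{j+1}}$ is an iterated Wiener--Ito integral over $[t_{j+1},t_j]$ (see \eqref{bbX0a}), the summands are martingale differences in $j$, so they are orthogonal in $L^2(\Om;\H^s)$. This gives
\[
\E\Big\|\sum_j \vbbX_{t_j,t_{j+1}}(\vec\ub'_{t_{j+1}})\Big\|_{\H^s}^2
=\sum_j \E\|\vbbX_{t_j,t_{j+1}}(\vec\ub'_{t_{j+1}})\|_{\H^s}^2
\le M^2 \sum_j \E\|\vbbX_{t_j,t_{j+1}}\|_{\L(\H^s)}^2 .
\]
By independence of $\vbbX_{t_j,t_{j+1}}$ from $\F_{t_{j+1}}$ and by \eqref{ZG1a} with $p=2$, the right-hand side is $\lesssim M^2\|\Phi\|^4_{\HS}\sum_j|t_j-t_{j+1}|^2\to0$. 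The operator-norm bound is needed here because $\vec\ub'_{t_{j+1}}$ is random; conditioning on $\F_{t_{j+1}}$ is what allows it.

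This identifies the two integrals at each fixed $t$ almost surely. Both sides are continuous in time: the rough integral by Lemma~\ref{LEM:intR}, and the Ito integral by \eqref{it6a}. Hence they agree on a countable dense set of times, then on all of $[0,\tau_M]$, and finally on $[0,\tau]$ by letting $M\to\infty$.

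The main obstacle is the rigorous justification that the adapted Riemann sums converge to the Ito stochastic convolution $\vec\pPsi$ in this infinite-dimensional, operator-valued setting. The driver $\vXX$ involves the oscillating factors $\S(-t')$, so we cannot simply freeze the integrand. I would handle this by writing
\[
\vec\pPsi(\vec\ub)(t)-\sum_j\vXX_{t_j,t_{j+1}}(\vec\ub_{t_{j+1}})
\]
as an Ito integral whose integrand is $\S(-t')[(0,\pi_1\S(t')(\vec\ub_{t'}-\vec\ub_{t_{j+1}}))\Phi]$. Its $L^2$ norm is then controlled by \eqref{it8} via $\E\int\|\vec\ub_{t'}-\vec\ub_{\lfloor t'\rfloor}\|^2_{\H^s}dt'$, which tends to $0$ by continuity of $\vec\ub$ and dominated convergence under the localization.
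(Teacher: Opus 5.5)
Your proof is correct. Its skeleton matches the paper's: Riemann sums on both sides, the $\vbbX$-sums vanishing in $L^2(\Om)$, identification at rational times, and localization. The Ito side, however, is handled by a genuinely different and somewhat leaner route.

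The paper compares with the \emph{fully} discretized sums $\sum_j \vXX^{\textup{disc}}_{t_j,t_{j+1}}(\ind_{[0,\tau]}(t_{j+1})\vec\ub_{t_{j+1}})$ of \eqref{J3}, in which the phases $\S(\mp t')$ are also frozen at $t_{j+1}$. It takes their convergence to the Ito integral from the classical Riemann-sum theorem. It then removes the kernel discrepancy $\vec A_1(\Pi)$ in \eqref{J2x} by the mean value theorem on the phases, paying a factor $|\Pi|^\ta(\jb{n}+\jb{n_2})^\ta$ that is absorbed by the strict inequality in \eqref{sigma0}.

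You never freeze the kernel. Since $\vXX_{t_j,t_{j+1}}(\vec\ub_{t_{j+1}})$ is already the Ito integral of the step integrand with only $\vec\ub$ frozen, the whole Ito-side error is a single Ito integral of $\vec\ub_{t'}-\vec\ub_{t_{j+1}}$. You control it by the isometry, \eqref{it8} (which needs only \eqref{reg2}), continuity of $\vec\ub$ in $\H^s$, and dominated convergence. This avoids the frequency loss and the transfer of a scalar Riemann-sum result to an operator-valued integrand. The paper's version, in exchange, stays closer to the scalar Friz--Hairer argument and isolates where the time dependence of the integrator enters.

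For $\vec A_2(\Pi)$ the two arguments use the same mechanism. Your martingale-difference orthogonality plus the moment bound \eqref{ZG1a} is the abstract form of the paper's explicit Ito-isometry computation with the frequency supremum $M_{s,\s}$.

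One detail should be stated more carefully. Partitions of the random interval $[0,t\wedge\tau_M]$ should be taken as $\{t_j\wedge\tau_M\}$ with deterministic $t_j$, i.e.\ you should work with stopped drivers as in \eqref{agg1}. Your appeal to independence from $\F_{t_{j+1}}$ and to \eqref{ZG1a} is valid only at deterministic times. With stopped times, the zero conditional mean of $\vbbX_{t_j\wedge\tau_M,t_{j+1}\wedge\tau_M}(\vec\ub'_{t_{j+1}\wedge\tau_M})$ given $\F_{t_{j+1}}$ comes from optional stopping. Its second moment is then bounded either by optional stopping combined with \eqref{ZG1a}, or directly by \eqref{ZG2a} using $4\al>1$.
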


We first prove Proposition \ref{PROP:ito}
by assuming Lemma \ref{LEM:Ito}.

\begin{proof}[Proof of Proposition \ref{PROP:ito}]
For simplicity of presentation, 
we consider the case when \eqref{bad1} does {\it not} hold;
see Remark \ref{REM:system3}.
Furthermore, we only consider the case $s > s_\text{scaling}$.
The case 
$s = s_\text{scaling}$ follows from a minor modification.

Under the current assumption,  Theorem \ref{THM:2} follows
from directly studying the system~\eqref{duha2} and~\eqref{duha3}
for 
the interaction representation $\vec \ub = \S(-t)  \vec u(t)$, 
using Proposition \ref{PROP:nonlin2}.
See Remarks~\ref{REM:rough1}
and  \ref{REM:system2}.
Namely, there exists $\Si_1 \subset \Om$ with $\PP(\Si_1) = 1$
such that,\footnote{For simplicity of notation, 
we often drop the $\o$-dependence in the following.} for each $\o \in \Si_1$, 
 $\vec \ub = \vec \ub(\o)$ is a solution to 
\begin{align}
\label{ag3}
\vec\ub(t)
=
\vec \phi_0
+
\vec{\NN}({\vec \ub})(t)
+
\I^{\vXX, \vbbX} (\vec\ub) (t)
\end{align}

\noi
on the time interval $[0, \tau_1]$, 
coupled with 
\begin{align}
(\updl \vec\ub)_{t,r} = \vXX_{t,r}( \vec\ub_r )+ \vec{R}^{\vXX, \vec\ub}_{t,r}
\notag 
\end{align}

\noi
for some remainder term $ \vec{R}^{\vXX, \vec\ub}$, 
where 
$\vec \phi_0 = (\phi_0, \phi_1)$, 
$\vec{\NN}({\vec \ub}) $ is the nonlinear term  in~\eqref{bd2}, 
and 
$\I^{\vXX, \vbbX} (\vec\ub)$
 denotes the rough integral of $\vec \ub$
with the rough driver $(\vXX, \vbbX)$.
Here, we used the fact that $\vec \ub' = \vec \ub$;
see \eqref{SY3}. 
Moreover, we have
\begin{align} 
(\vec \ub,  \vec\ub') \in \wt  \Yc^{\al,\g,s}_{\vXX, \tau_1} 
\subset
 \V^{\frac1\al}_{\tau_1} \H^{s}(\T^d) 
 \times \V^{\frac1\al}_{\tau_1} \H^s(\T^d)
\notag 
\end{align}

\noi
for $ 0 <  \al < \frac12 < \g < 1$
such that 
both $\al$ and $\g$ are sufficiently close to $\frac 12$, satisfying \eqref{RWP0}.
Here, 
$ \wt  \Yc^{\al,\g,s}_{\vXX, \tau_1} $ is as in \eqref{SY1}.

Let $\vec \ub_\ito = \S(-t)  \vec u_\ito(t)$
be the interaction representation of the Ito solution 
$\vec u_\ito$.
Then, 
there exists $\Si_2 \subset \Om$ with $\PP(\Si_2) = 1$
such that, for each $\o \in \Si_2$, 
$\vec \ub_\ito = \vec \ub_\ito(\o)$ is a solution to 
\begin{align}
\label{ag6}
\vec\ub_\ito(t)
=
\vec \phi_0
+
\vec{\NN}({\vec \ub_\ito })(t)
+
\vec{\pPsi} ( \vec\ub_\ito) (t), 
\end{align}

\noi
on the time interval $[0, \tau_2]$, 
where 
$\vec{\pPsi} ( \vec\ub_\ito)$ is the stochastic term
in \eqref{psi2} (with $\b = \frac 12$) interpreted as an Ito integral.
Under the assumption $s > s_\text{scaling}$, 
it follows from the proof of Proposition~\ref{PROP:LWPx} 
with a standard continuity argument that 
(pathwise) uniqueness of the solution $\vec \ub_\ito$  to \eqref{ag6}
holds
in the entire class:
\begin{align}
\S(-t) X^s(\tau_2) 
= \big\{\vec \vb : \, \S(t) \vec \vb \in X^s(\tau_2)\big\}, 
\label{ag7}
\end{align}

\noi
where
$X^s(\tau_2)$ is as in \eqref{it1}, \eqref{it3}, and \eqref{it4}.

The pathwise solution $\vec \ub$ 
was constructed via  a contraction argument.
Namely, $\vec \ub$  was constructed
as a limit of the Picard iterates.
Since each Picard iterate is 
adapted, 
we see that $\vec \ub$ is also adapted.
Then, from Lemma \ref{LEM:Ito}
with Lemma \ref{LEM:stoconv1}, 
there exists $\Si_3 \subset \Om$ with $\PP(\Si_3) = 1$
such that, for each $\o \in \Si_3$, we have
\begin{align}
\vec{\pPsi} (\ind_{[0, \tau_1]} \cdot \vec\ub)(\o)   = \I^{\vXX, \vbbX} (\vec\ub)(\o) 
\label{ag8}
\end{align}

\noi
in 
$\S(-t) X^s(\tau_1)$, 
where the left-hand side is interpreted as an Ito integral.

Let $\Si = \Si_1 \cap \Si_2 \cap \Si_3$.  Note that $\PP(\Si) = 1$.
Fix $\o \in \Si$.
Then, it follows from \eqref{ag3} and~\eqref{ag8}, 
that 
the pathwise solution $\vec \ub$ is also a solution to the following Ito formulation:
\begin{align}
\begin{split}
\vec\ub(t)
& =
\vec \phi_0
+
\vec{\NN}({\vec \ub })(t)
+
\vec{\pPsi} (\ind_{[0, \tau_1]}\cdot \vec\ub) (t)\\
& =
\vec \phi_0
+
\vec{\NN}({\vec \ub })(t)
+
\vec{\pPsi} ( \vec\ub) (t)
\end{split}
\notag 
\end{align}

\noi
on the time interval $[0, \tau_1]$. 
Hence, letting $\tau = \tau_1\wedge \tau_2$, 
we see that both $\vec \ub$
and $\vec \ub_\ito$
satisfy 
the same Ito formulation \eqref{ag6}
on the time interval $[0, \tau]$.

From 
Remark \ref{REM:q1}
and \eqref{nonl3c} (with $\eps_0 > 0$), 
we see that 
$q$ and $q_1$ appearing in 
\eqref{it1}, \eqref{it3}, and \eqref{it4}
satisfy $q, q_1 > 2$.
Hence, from 
\eqref{ag7}, 
Lemma \ref{LEM:STR}, and \eqref{Linfty}, 
we have
\begin{align*}
\vec \ub \in 
 \V^{\frac1\al}_{\tau} \H^{s}(\T^d) 
\subset \S(-t) X^s(\tau) 
\end{align*}

\noi
by choosing $\al < \frac 12$ sufficiently close to $\frac 12$.
Therefore, from the uniqueness
of the Ito solution in the class
$\S(-t) X^s(\tau)$, 
we conclude that  
\begin{align*}
\vec \ub \equiv \vec \ub_\ito
\end{align*}

\noi
in 
$\S(-t) X^s(\tau)$, which implies \eqref{ag1}.
\end{proof}

\begin{remark}\label{REM:system3}\rm
In the proof of Proposition \ref{PROP:ito}, 
we only considered the case when \eqref{bad1} does not hold.
When \eqref{bad1} holds, 
we needed to use the decomposition $\vec \ub
= \vec \vb + \vec \wb$ in 
\eqref{exp1} and study the RDE-PDE system \eqref{RR1}
in proving Theorem \ref{THM:2}.
In this case, 
by setting 
$\vec v = \S(t) \vec \vb$
and 
$\vec w = \S(t) \vec \wb$, 
we can use Lemma \ref{LEM:Ito}
to show that 
$(\vec v, \vec w)$
is an Ito solution to \eqref{Ito2}
(and the corresponding equations for $\dt v$ and $\dt w$), 
which can be then compared with the Ito solution 
$(\vec v_\ito, \vec w_\ito)$
to \eqref{Ito2}
(and the corresponding  equations for $\dt v$ and $\dt w$)
constructed in 
Remark \ref{REM:system1}.
After this reduction, 
the required analysis is analogous to that in the proof of Proposition 
\ref{PROP:ito} presented above, 
and thus 
we omit details.

\end{remark}

We now present  a
proof of  Lemma \ref{LEM:Ito}.

\begin{proof}[Proof of Lemma \ref{LEM:Ito}]

We follow the proof of \cite[Proposition~5.1]{FH20}.
There are, however,  several differences:

\smallskip

\begin{itemize}
\item[(i)]
Unlike the scalar case considered in 
\cite[Proposition~5.1]{FH20}, 
our integrator $\vXX$ is operator-valued, 
which depends on the variable of integration, not just endpoints;
see $\vec A_1(\Pi)$ in \eqref{J2x}
and \eqref{J3}.
See also \eqref{J5}.

\smallskip

\item[(ii)]
In  \cite[Proposition~5.1]{FH20}, 
almost sure agreement such as \eqref{ag2}
was shown for fixed $t \ge 0 $, 
where a good set of full probability may depend on $t$.
In order to prove \eqref{ag2}, 
we need to be able to find 
a good set of full probability
which works for all 
 $0 \le t \le \tau$.

\end{itemize}

\smallskip

In the following, we assume $0 \le \tau \le 1$.
Note that, when $s= s_0\ge 0$,  the conditions \eqref{YG0},  \eqref{RG0}, 
and \eqref{RG1} reduce to \eqref{sigma0}.
Hence, from Proposition \ref{PROP:driver2}, we have 
\begin{align*}
(\vXX, \vbbX )  \in C^{\al}_{2,1} \LOP(\H^s(\T^d))
\times 
C^{2\al}_{2,1} \LOP(\H^s(\T^d)), 
\end{align*}

\noi
almost surely, 
satisfying Chen's relation \eqref{Chen2}.
We define $\vXX^\tau$ and $\vbbX^\tau$
by setting
\begin{align}
\vXX^\tau_{t, r} = \vXX_{t\wedge \tau, r \wedge \tau}
\qquad \text{and}\qquad  
\vbbX^\tau_{t, r} = \vbbX_{t\wedge \tau, r \wedge \tau}
\label{agg1}
\end{align}

\noi
for $(t, r) \in \Dl_{2, 1}$.
Then, 
we have 
\begin{align}
\begin{split}
(\vXX^\tau, \vbbX^\tau )
&   \in C^{\al}_{2,1} \LOP(\H^s(\T^d))
\times 
C^{2\al}_{2,1} \LOP(\H^s(\T^d))\\
& \subset 
\V^{\frac 1{\al}}_{2,1} \LOP(\H^s(\T^d))
\times \V^{\frac 1{2\al}}_{2,1} \LOP(\H^s(\T^d)), 
\end{split}
\label{agg2}
\end{align}

\noi
almost surely, 
satisfying Chen's relation \eqref{Chen2}.
We also define $\vec \ub^\tau$ and $(\vec \ub')^\tau$ by setting
\begin{align}
\vec \ub^\tau(t) = \vec \ub(t\wedge \tau)
\qquad \text{and}\qquad  
(\vec \ub')^\tau(t)
=  \vec \ub'(t\wedge \tau)
\notag
\end{align}

\noi
for $0 \le t\le 1$.
Suppose that we have
\begin{align}
(\updl \vec\ub)_{t,r} = \vXX_{t,r}( \vec\ub_r ')+ \vec{R}^{\vXX, \vec\ub}_{t,r}, \quad (t, r) \in \Dl_{2, \tau}
\notag
\end{align}

\noi
for some remainder term $ \vec{R}^{\vXX, \vec\ub}$.
Then, by setting $\vec R^\tau$ by 
\begin{align}
\vec R^\tau_{t, r} =
 \vec R^{\vXX, \vec \ub}_{t\wedge \tau, r \wedge \tau}
\notag
\end{align}

\noi
\noi
for $(t, r) \in \Dl_{2, 1}$, 
we have 
\begin{align}
(\updl \vec\ub^\tau)_{t,r} = \vXX^\tau_{t,r}\big( (\vec\ub ')^\tau_r \big)+ \vec{R}^\tau_{t, r}, 
\quad (t, r) \in \Dl_{2, 1}.
\notag
\end{align}

\noi
Consequently, we have 
$ \big(\vec\ub^\tau, (\vec\ub ')^\tau \big)
\in \CRP^{ \al, \g, s}_{\vXX^\tau, 1}$
with 
$(\vec\ub^\tau)' = (\vec\ub ')^\tau$.
From these definitions with~\eqref{int2d}, 
we see that 
the rough integral 
$\I^{\vXX^\tau , \vbbX^\tau }(\vec \ub^\tau )$
of $\vec \ub^\tau $
with the rough driver  $(\vXX^\tau , \vbbX^\tau)$
exists on the time interval $[0,  1]$, almost surely, 
satisfying 
\begin{align}
\I^{\vXX^\tau , \vbbX^\tau }(\vec \ub^\tau )(t)
= \I^{\vXX, \vbbX}(\vec \ub)(t\wedge \tau)
\label{agg7}
\end{align}

\noi
for $0 \le t \le 1$.
On the other hand, we also have 
\begin{align}
\vec \pPsi (\ind_{[0, \tau]}\cdot  \vec \ub)(t)
= \vec \pPsi (\ind_{[0, \tau]}\cdot  \vec \ub)(t\wedge \tau)
\label{agg8}
\end{align}

\noi
for $0 \le t \le 1$.
Hence, in view of \eqref{agg7} and \eqref{agg8}, 
\eqref{ag2} follows 
once we prove
\begin{align*}
\vec \pPsi (\ind_{[0, \tau]}\cdot  \vec \ub)
=  \I^{\vXX^\tau, \vbbX^\tau}(\vec \ub^\tau)
\quad 
\text{in \ $C([0, 1]; \H^s(\T^d))$}, 
\end{align*}

\noi
almost surely.
In view of the continuity (in time)
of the Ito and rough integrals in \eqref{ag2}, 
it suffices to prove 
\begin{align}
\vec \pPsi (\ind_{[0, \tau]}\cdot  \vec \ub)(t)
=  \I^{\vXX^\tau, \vbbX^\tau}(\vec \ub^\tau)(t)
\label{agg9}
\end{align}

\noi
as elements in $\H^s(\T^d)$, 
almost surely, for each fixed rational time $t \in [0, 1]\cap \Q$.
This handles the point (ii) mentioned above.

In the following, we prove \eqref{ag2}, 
assuming 
\begin{align}
    \big\|\| \vec \ub\|_{C_\tau \H^s_x}\big\|_{L^2(\Om)}
 +\big\|\| \vec \ub'\|_{C_\tau \H^s_x}\big\|_{L^2(\Om)}
    <\infty.
\label{AC1}
\end{align}

\noi
For a general case (namely, if \eqref{AC1} does not hold), 
we can proceed with 
 a standard localization argument.
Given $N \in \N$, 
set 
\begin{align*}
\tau_N = 
\inf\big\{t>0: 
\|\vec \ub^\tau\|_{C_t \H^s} + \|(\vec \ub')^\tau\|_{C_t\H^s} \geq N\big\}.
\end{align*}

\noi
Note that  $\lim_{N\to\infty} (\tau_N \land \tau) = \tau$, almost surely.
Instead of working with the ``localized''
processes $\vec \ub^\tau$, etc.
defined above, 
we then work with  new localized processes:
\begin{align*}
   \vec\ub^{\tau, N}(t):= \vec\ub^\tau(t\wedge\tau_N)
= \vec \ub(t\wedge \tau\wedge \tau_N) 
\end{align*}

\noi
and 
$\vXX^{\tau, N}$, $\vbbX^{\tau, N}$, 
 $(\vec \ub')^{\tau, N}$, and $\vec R^{\tau, N}$
defined in an analogous manner.
Then, the argument presented below yields
\begin{align}
\vec \pPsi (\ind_{[0, \tau\wedge \tau_N]}\cdot  \vec \ub)
=  \I^{\vXX, \vbbX}(\vec \ub^{\tau, N}), 
\label{ag2x}
\end{align}

\noi
almost surely.
Then, using the continuity of 
$\I^{\vXX, \vbbX}(\vec \ub)$ in $\vec \ub$
(Lemma \ref{LEM:intR}\,(iii)), 
we conclude~\eqref{ag2}
from \eqref{ag2x} by sending $N \to \infty$.

\medskip

In the following, we assume \eqref{AC1}.
For notational simplicity, 
we drop
the superscript $\tau$ in 
$\vXX^\tau$, $\vbbX^\tau$, 
$\vec \ub^\tau$, $(\vec \ub')^\tau$, and $\vec R^\tau$.
Fix $t \in [0, 1]\cap \Q$.
Our goal is to prove \eqref{agg9}.
From 
Lemma~\ref{LEM:intR}
with \eqref{agg2}
(see also~\eqref{int2d}), 
 we have 
\begin{align}
\label{J2}
 \I^{\vXX, \vbbX}(\vec \ub)(t)
=
\lim_{|\Pi([0,t])| \to 0}
\sum_{j=0}^{k-1}\Big( \vXX_{t_{j},t_{j+1}} (\vec\ub_{t_{j+1}})
+ \vbbX_{t_{j},t_{j+1}}( \vec\ub_{t_{j+1}}')\Big)
\end{align}

\noi
in $\H^s(\T^d)$, 
almost surely, 
 where
 the limit is taken over a sequence of  partitions
 $\Pi([0,t])$
 of  $[0,t]$ of the form \eqref{part1}
 (with $r = 0$)
 whose mesh size 
  $|\Pi([0,t])| = \max_j |t_j - t_{j+1}|$
 tends to $0$.
  On the other hand, 
 it follows from 
\eqref{psi2} and 
\cite[Proposition 2.13 in Chapter IV]{RY}
that  
 the following limit in probability holds:
\begin{align}
\label{J3}
\begin{split}
& \vec \pPsi (\ind_{[0, \tau]}\cdot  \vec \ub)(t)\\
& \quad=
\lim_{|\Pi([0,t])|\to 0}
\sum_{j=0}^{k-1} \S(-t_{j+1})
\begin{pmatrix}
0 \\
\pi_1[ \S(t_{j+1}) \ind_{[0, \tau]}(t_{j+1})\cdot \vec\ub_{t_{j+1}}]
\end{pmatrix}
\Phi (\updl W^\frac12)_{t_{j}, t_{j+1}}\\
&\quad  =:
\lim_{|\Pi([0,t])| \to 0}
\sum_{j=0}^{k-1}\vXX_{t_{j},t_{j+1}}^\text{disc} (\ind_{[0, \tau]}(t_{j+1})\cdot \vec\ub_{t_{j+1}})
\end{split}
\end{align}

\noi
in $\H^s(\T^d)$, 
where $\vXX^\text{disc}$ denotes the discretized version 
of the driver $\vXX$ in \eqref{introX}.
Thus, by taking a subsequence, 
the convergence \eqref{J3} holds almost surely.
Hence, 
by setting  $\Pi = \Pi([0, t])$, 
from \eqref{J2} and 
\eqref{J3}, we have  
\begin{align}
\label{J2x}
\begin{split}
&  \I^{\vXX, \vbbX}(\vec \ub)(t)
 - \vec \pPsi (\ind_{[0, \tau]}\cdot  \vec \ub)(t)\\
& \quad =
\lim_{|\Pi| \to 0}
\sum_{j=0}^{k-1}
\Big( \vXX_{t_{j},t_{j+1}}
(\vec\ub_{t_{j+1}})
 - \vXX_{t_{j},t_{j+1}}^\text{disc}(\ind_{[0, \tau]}(t_{j+1})\cdot\vec\ub_{t_{j+1}})\Big)\\
& \quad  \quad 
+ \lim_{|\Pi| \to 0}
\sum_{j=0}^{k-1}
 \vbbX_{t_{j},t_{j+1}}( \vec\ub_{t_{j+1}}')\\
& \quad  = :  \lim_{|\Pi| \to 0}\vec A_1(\Pi) +  \lim_{|\Pi| \to 0}\vec A_2(\Pi), 
\end{split}
\end{align}

\noi
almost surely.

 From \eqref{bbX0} 
 with $\vec \ub' = (\ub', \vb')$, 
 we have
\begin{align}
\begin{aligned}
\vec A_2(\Pi)
& =
\sum_{j=0}^{k-1}
\begin{pmatrix}
\bbX_{t_{j},t_{j+1}}^{(1,1)} (\ub_{t_{j+1}}')
+ \bbX_{t_{j},t_{j+1}}^{(1,2)} (\vb_{t_{j+1}}')\rule[-3.5mm]{0pt}{0pt}\\ 
\bbX_{t_{j},t_{j+1}}^{(2,1)}( \ub_{t_{j+1}}')
+\bbX_{t_{j},t_{j+1}}^{(2,2)} (\vb_{t_{j+1}}')
\end{pmatrix}\\
& =:
\begin{pmatrix}
A_2^{(1,1)}(\Pi) + A_2^{(1,2)}(\Pi)
\\
A_2^{(2,1)}(\Pi) + A_2^{(2,2)}(\Pi)
\end{pmatrix}.
\end{aligned}
\label{J16}
\end{align}

\noi
In the following, we first  show the following convergence:
\begin{align}
\lim_{|\Pi|\to 0}
\big\|\|A_2^{(1,1)}(\Pi)\|_{H^s}\big\|_{L^2(\Om)} = 0.
\label{AZ1}
\end{align}

\noi
From 
\eqref{bbX0a} with \eqref{XX0a}, 
we have 
\begin{align*}
\F_x (A_2^{(1,1)} (\Pi) ) (n)
& = \sum_{j=0}^{k-1}
\sum_{n=n_{123}}
\frac{1}{\jb{n} \jb{n_{23}}}
 \ft{\ub'}(t_{j+1}, n_3)\phi_{n_1} \phi_{n_2}\\
& \quad \times  \int_{t_{j+1}}^{t_j}
\int_{t_{j+1}}^{t_1'}
\sin(t_1'\jb{n}) \cos(t_2'\jb{n_3})
\\
& \quad \hphantom{XXXXX} \times
\sin( (t_2' - t_1') \jb{n_{23}})
 dB_{n_{2}}(t_2') dB_{n_1}(t_1').
\end{align*}

\noi
Then, from the Ito isometry, we have 
\begin{align*}
&
\big\| \| A_2^{(1,1)} (\Pi) \|_{H^s} \big\|_{L^2(\Om)}^2
\\
&\quad 
\les
\sum_{j=0}^{k-1} (t_j - t_{j+1})^2 
\sum_{n\in \Z^d }\sum_{n=n_{123}}
\frac{\jb{n}^{2(s-1)}}{ \jb{n_{23}}^2  }
\|\ft{\ub'}(t_{j+1}, n_3) \phi_{n_1}  \phi_{n_2} \|_{L^2(\Om)}^2
\\
&\quad 
\les
M_{s, \s}\cdot 
|\Pi| \cdot 
 \|\Phi\|_{\HS(L^2;H^\s)}^4 
\big\|\| \ub'\|_{C_1 H^s_x}\big\|_{L^2(\Om)}^2 .
\end{align*}

\noi
Here, 
$M_{s, \s}$ is given by 
\begin{align*}
M_{s, \s} = \sup_{n,n_1,n_2,n_3 \in \Z^d} 
\frac{\ind_{n = n_{123}}\cdot \jb{n}^{2(s-1)}}{ \jb{n_1}^{2\s} \jb{n_2}^{2\s} \jb{n_3}^{2s} \jb{n_{23}}^{2}}, 
\end{align*}

\noi
which is finite, since 
 $\s > \max(-s, s-1)$; see also \eqref{bbX2}.
 This proves \eqref{AZ1}.
 
 By repeating similar computations
 for the other components in \eqref{J16}, we obtain
 \begin{align}
\label{J11b}
\lim_{|\Pi|\to 0} \big\| \| \vec{A}_2(\Pi) \|_{\H^s} \big\|_{L^2(\Om)} =0.
\end{align}

Next, we treat the contribution from 
$\vec A_1(\Pi) $ in \eqref{J2x}.
From \eqref{J2x}, \eqref{introX},  and \eqref{J3} with \eqref{agg1}, we have 
\begin{align}
 \pi_1[\vec A_1 (\Pi)]
 =  A_1^{(1,1)}(\Pi ) + A_1^{(1,2)}(\Pi ), 
\label{J5a}
\end{align}

\noi
where
$A_1^{(1,1)}(\Pi )$ and $A_1^{(1,2)}(\Pi )$ are given by 
\begin{align}
\begin{split}
\Ft_x \big( A_1^{(1,1)}(\Pi )  \big)(n)
&
=
\sum_{j=0}^{j_*} \int^{t_j\wedge \tau }_{t_{j+1}\wedge \tau }
\sum_{n=n_{12}} \frac{1}{\jb{n}} 
 F^{(1,1)}_{n,n_2}(r)\Big|_{t'}^{t_{j+1}}
 \ft{\ub}(t_{j+1}, n_2)
\phi_{n_1} dB_{n_1}(t')\\
&\quad  + \int_\tau^{t_{j_*}}
\sum_{n=n_{12}} \frac{1}{\jb{n}} 
 F^{(1,1)}_{n,n_2}(t_{j_*+1})
 \ft{\ub}(t_{j_*+1}, n_2)
\phi_{n_1} dB_{n_1}(t'),
\\
\Ft_x \big( A_1^{(1,2)}(\Pi )  \big)(n)
&
=
\sum_{j=0}^{j_*} \int^{t_j\wedge \tau }_{t_{j+1}\wedge \tau }
\sum_{n=n_{12}} \frac{1}{\jb{n}} 
 F^{(1,2)}_{n,n_2}(r)\Big|_{t'}^{t_{j+1}}
\frac{\ft{\vb}(t_{j+1}, n_2)}{\jb{n_2}}
\phi_{n_1} dB_{n_1}(t')\\
& \quad + \int_\tau^{t_{j_*}}
\sum_{n=n_{12}} \frac{1}{\jb{n}} 
 F^{(1,2)}_{n,n_2}(t_{j_*+1})
\frac{ \ft{\vb}(t_{j_*+1}, n_2)}{\jb{n_2}}
\phi_{n_1} dB_{n_1}(t'),
\end{split}
\label{J5}
\end{align}

\noi
Here,  $F^{(1,1)}$ and $F^{(1,2)}$ are given by
\begin{align}
F^{(1,1)}_{n,n_2}(t)
=
\sin(t\jb{n}) \cos(t\jb{n_2})
\qquad \text{and} \qquad
F^{(1,2)}_{n,n_2}(t)
=
 \sin(t\jb{n}) \sin(t\jb{n_2}), 
\notag
\end{align}

\noi
and $j_*$ is the largest integer such that
$t_{j_*} > \tau$.

From  the mean value theorem, we have 
\begin{align}
\notag 
\begin{split}
|F^{(1,k)}_{n,n_2} (t') - F^{(1,k)}_{n,n_2} (t_{j+1})|
&
\les |t' - t_{j+1} |^{\ta} (\jb{n} + \jb{n_2})^\ta\\
& \le |\Pi|^\ta (\jb{n} + \jb{n_2})^\ta ,
\end{split}
\end{align}

\noi
for any $t' \in [t_{j+1}, t_j]$, $0 \le \ta \le 1$, and $k = 1, 2$.
Then, from the Ito isometry, 
we have 
\begin{align}
\notag
\begin{split}
 \big\| \| A_1^{(1,1)}(\Pi) \|_{H^s}
\big\|_{L^2(\Om)}^2
\les
\wt M_{s, \s}
\cdot  |\Pi|^{2\ta} 
\cdot \| \Phi\|_{\HS(L^2;H^\s)}^2 \big\| \| \ub\|_{C_1 H^s_x} \big\|_{L^2(\Om)}^2, 
\end{split}
\end{align}

\noi
Here, $\wt M_{s, \s}$ is given by 
\begin{align*}
\wt M_{s, \s}
= \sup_{n,n_1,n_2 \in \Z^d } 
\frac{\ind_{n = n_{12}}\cdot \jb{n}^{2(s-1)} (\jb{n_1}+\jb{n_2})^{2\ta}}{ \jb{n_1}^{2\s} \jb{n_2}^{2s} } , 
\end{align*}

\noi
which is finite by taking $\ta > 0$ sufficiently small, 
 $\s > \max(-s, s-1)$;
see also \eqref{XX6a}.
This proves
\begin{align*}
\lim_{|\Pi|\to 0} \big\| \| A_1^{(1,1)}(\Pi) \|_{H^s} \big\|_{L^2(\Om)}= 0.
\end{align*}

\noi
 By repeating similar computations
 for $A_1^{(1,2)}(\Pi )$ in \eqref{J5a} 
 and $  \pi_2[\vec A_1 (\Pi)]$, 
we obtain 
 \begin{align}
\label{J10x}
\lim_{|\Pi|\to 0} \big\| \| \vec{A}_1(\Pi) \|_{\H^s} \big\|_{L^2(\Om)} =0.
\end{align}

Hence, from 
\eqref{J2x}, 
\eqref{J11b},  and 
 \eqref{J10x}, 
 we conclude that \eqref{agg9} holds, 
almost surely, 
 by taking a further subsequence of partitions $\Pi$ of $[0, t]$.
 This concludes the proof of Lemma~\ref{LEM:Ito}.
\end{proof}

\section{Optimizer for the time restriction $p$-variation norm}
\label{SEC:B}

\noi

Given a function $u \in V^p([r,t]; X )$ for some $-\infty < r < t < \infty$, 
 we
show that the extension~$\wt u$ defined in \eqref{ext1}
indeed attains the infimum in \eqref{local} with $I = [r, t]$.

We first note that given any  extension $v$ of $u$, 
we have 
\begin{align}
    \| v \|^p_{V^p(\R;X)}
     \ge \o_{X, p}^{(1)}(u;t, r) + \| u_t \|^p_X.
\label{JX2}
\end{align}

\noi
On the one hand, 
from \eqref{U2}, 
the left-hand side of \eqref{JX2} is
given under the supremum over
all finite partitions $\{t_k\}_{k=0}^K \in \Zc$ of the form 
\eqref{U1}.
On the other hand, 
it follows from \eqref{ctrl0}, \eqref{part1}, and 
the boundary condition $v_\infty = 0$
that the right-hand side of \eqref{JX2} is
given under the supremum
over finite partitions, 
containing $r$, $t$, and $\infty$
but no point in $(-\infty, r)$ and $(t,\infty)$.
Hence, \eqref{JX2} follows.

Let $\wt u$ be the extension in \eqref{ext1}:
\begin{align}
    \wt u = \ind_{(-\infty, r)} \cdot u_r + \ind_{[r,t)} \cdot u + \ind_{[t, \infty)} \cdot u_t.
\label{JX2a}
\end{align}

\noi
From \eqref{U2}, given any $\eps > 0$, 
 there 
exists a partition $\{t_k\}_{k=0}^K$ of $\R$
such that 
\begin{align}
    \| \wt u \|^p_{V^p(\R;X)}
    \le 
    \sum_{k=0}^{K-1} \| \wt u_{t_k} - \wt u_{t_{k+1}} \|^p_X + \eps. 
\label{JX3}
\end{align}

\noi
Noting that adding $\infty$  to the partition increases the sum 
on the right-hand side, 
we assume
$t_0 = \infty$.
Moreover, if $t_K > r$, 
then adding  $r$ to the 
the partition increases the sum 
on the right-hand side.
If $t_K < r$, 
we can drop those $t_k$'s with $t_k < r$ from the partition
and add $r$ to the partition (if $r$ is not already in the partition).
Thus, we assume $t_K = r$ in the following.

 Let $v$ be another extension of $u$.
We consider the following two cases.

\medskip

\noi
$\bul$ \textbf{Case 1}: $\{t_k\}_{k=0}^K\cap [t,\infty)
= \varnothing$.\\
\indent
In this case, we have $t_k\in [r,t]$ for any $1\le k\le K$.
Then, it follows from \eqref{JX3},  the fact that $\wt u =v$ on $[r,t]$, 
and the boundary condition $\wt u_\infty = v_\infty = 0$ 
that 
\begin{align}
\begin{aligned}
     \| \wt u \|^p_{V^p(\R;X)}
    & \le 
\sum_{k=1}^{K-1} \| \wt u_{t_k} - \wt u_{t_{k+1}} \|^p_X +   \| \wt u_{t_1}  \|^p_X + \eps \\
    & = \sum_{k=1}^{K-1} \| v_{t_k} - v_{t_{k+1}} \|^p_X +   \| v_{t_1}  \|^p_X + \eps \\
    & \le \| v \|^p_{V^p(\R;X)} + \eps.
\label{JX4}
\end{aligned}
\end{align}

\medskip

\noi
$\bul$ 
\textbf{Case 2}: 
$\{t_k\}_{k=0}^K\cap [t,\infty)
\neq \varnothing$.
 \\
\indent
Let $M \in \N$ be  the smallest integer such that 
$t_M < t \le t_{M-1}$. 
Then, from 
\eqref{JX2a}, 
\eqref{JX3},
\eqref{ctrl0},   and \eqref{JX2} we have 
\begin{align}
\begin{aligned}
    \| \wt u \|^p_{V^p(\R;X)}
    & \le \sum_{k = M-1}^{K-1} \| \wt u_{t_k} - \wt u_{t_{k+1}} \|^p_X +   \| \wt u_{t}  \|^p_X + \eps \\
    & \le 
    \o_{X, p}^{(1)}(u;t, r) + \| u_t \|^p_X 
    + \eps \\
    & \le \| v \|^p_{V^p(\R;X)} + \eps.
\end{aligned}
\label{JX5}
\end{align}

\noi
Since the choice of $\eps > 0$ was arbitrary, 
we conclude from \eqref{JX4} and \eqref{JX5} that 
\begin{align*}
 \| \wt u \|^p_{V^p(\R;X)}
 \le 
\| v \|^p_{V^p(\R;X)} 
\end{align*}

\noi
for any extension $v$ of $u$.
This shows that 
 the infimum in \eqref{local}
for the $V^p([r,t]; X )$-norm
is attained by $\wt u$ in \eqref{JX2a}.

\begin{remark}\rm
A similar argument shows that the extension
$\ind_{(-\infty, r)} \cdot u_r + \ind_{[r,t]} \cdot u$
of $u$
also attains the infimum in \eqref{local}
for the $V^p([r,t]; X )$-norm.

\end{remark}

\section{Proof of the sewing lemma}
\label{SEC:sew}

In this section, we present a sketch of a proof of the sewing lemma (Lemma \ref{LEM:sew}), 
by adapting the arguments from
 \cite[Proposition~2.3]{GT10}
and \cite[{\it second} argument in the proof of Lemma~4.2]{FH20}
to the current $p$-variation setting, 
with a particular attention 
in showing 
continuity and vanishing on the diagonal
(as required in Definition \ref{DEF:V23}\,(iii)).

\begin{proof}[Proof of Lemma \ref{LEM:sew}]

We first prove 
the claims (i) and (ii). 
Fix $0 < p = \frac 1\al< 1$, and  let $h\in \V^p_{3,T} X \cap \Im \updl \vert_{C_{2,T} X}$. 
From  \eqref{con2}, there exists a control $\o$ such that
\begin{align}\label{SS0}
\| h_{t_1,t_2,t_3} \|_X \le \o(t_1,t_3)^\frac 1p
\end{align}

\noi
for any $(t_1,t_2,t_3) \in \Dl_{3,T}$.
Our goal is to show that there exists  unique  
$M \in \V^p_{2,T} X$ such that $\updl M = h$ and 
\begin{align}
\| M_{t,r} \|_{X} \le C_p \, \o(t,r)^\frac 1p
\label{SS1}
\end{align}

\noi
for any $(t, r) \in \Dl_{2,T}$.

\smallskip 
\noi
$\bul$ {\bf Step 1:} Uniqueness of $M$.
\\
\indent
Given $M, \wt M \in \V^p_{2,T} X$ such that 
$\updl M = \updl \wt M = h$,  
let 
 $Q= M - \wt M$.
Then, it follows from  $\updl Q =0$
and
 the exactness of the cochain complex \eqref{chain1}
 that there exists  $f \in C_{1,T} X$ with $\updl f = Q$. 
Moreover, 
from 
\eqref{ctrl000a}
and  $Q \in \V^p_{2,T} X$, 
we have  $f \in \V^p_{T} X$.
Recalling $0 < p < 1$, 
we conclude from Remark \ref{REM:sew3}
that  $f$ is a constant function, 
 and thus $Q = \updl f = 0$.

\smallskip 
\noi
$\bul$ {\bf Step 2:} Construction of $M \in \V^p_{2,T}X$.
\\
\indent
From the exactness of the cochain complex \eqref{chain1}
with  $h \in \Im \updl \vert_{C_{2,T} X}$, there exists $B \in C_{2,T} X$ such that $\updl B = h$. 
However, there is no guarantee that $B \in \V^p_{2,T} X$.
In the following, we construct a suitable  ``increment'' $M$, based on $B$,
with the desired  regularity.

Given  $0\le r < t \le T$, 
let  $\Pi([r,t])$ be a partition of $[r, t]$
of the form  \eqref{part1}. 
 We  define the approximate increment $M^{\Pi([r, t])}_{t,r}$ by setting
\begin{align}
\label{Mpi}
M_{t,r}^{\Pi([r, t])}  = B_{t,r} - \sum_{j=0}^{k-1} B_{t_{j}, t_{j+1}} . 
\end{align}

\noi
We first prove the following maximal inequality:
\begin{align}
\| M_{t,r}^{\Pi([r, t])}  \|_X 
\le C_p \,\o(t, r)^{\frac1p}, 
\label{max2}
\end{align}

\noi
uniformly in  partitions $\Pi([r, t])$ of $[r, t]$
of the form  \eqref{part1}. 

Given  $k \in \N$, 
let  $\Pi_k = \Pi([r,t])$ be a partition of $[r, t]$
of the form  \eqref{part1}, consisting of $k$ intervals. 
When $ k = 1$, namely, 
 $\Pi_{1}$ consists of one interval $[r, t]$, 
 we have  $M^{\Pi_{1}} = 0$, from which~\eqref{max2} trivially follows.
Fix an integer $k \ge 2$.
Then, it follows from the super-additivity~\eqref{con1} of $\o$ in \eqref{SS0}
that  
\begin{align}\label{SS2}
\o (t_{\l-1}, t_{\l+1}) \le \frac{2 \o(t,r) }{k-1}
\end{align}

\noi
for some  $ \l = 1, \dots, k$.
Consider the partition $ \Pi_{k-1} := \Pi_k\setminus\{t_\l\}$
and  the corresponding approximate increment $M^{\Pi_{k-1}}$.
A direct computation with \eqref{Mpi} and  $\updl B = h$ yields
\begin{align}
M^{\Pi_{k-1}}_{t,r} -  M^{\Pi_k}_{t,r} 
= 
 - h_{t_{\l-1}, t_\l , t_{\l+1}}.
\label{Mdiff}
\end{align}

\noi
Then, from \eqref{Mdiff}, \eqref{SS0}, and \eqref{SS2},
we have  
\begin{align*}
\| M^{\Pi_{k-1}}_{t,r} - M^{\Pi_k}_{t,r} \|_X
&
= \| h_{t_{\l-1}, t_\l , t_{\l+1}} \|_X
\le 
\o(t_{\l-1}, t_{\l+1})^{\frac1p} 
\le \bigg( \frac{2\o(t,r)}{k-1} \bigg)^{\frac1p}. 
\end{align*}

We iterate this process.
Namely, with 
\begin{align*}
\Pi_j  = \big\{r = t_j^{(j)} < \dots < t_1^{(j)} <  t_0^{(j)} = t\big\}, 
\end{align*}

\noi
we set $\Pi_{j-1}  = \Pi_j \setminus \{t_\l^{(j)}\}$
for some  $ \l = 1, \dots, j $
such that 
\begin{align}
\o \big(t_{\l-1}^{(j)}, t_{\l+1}^{(j)}\big) \le \frac{2 \o(t,r) }{j-1}.
\label{SS3}
\end{align}

\noi
This leads to 
 a sequence
of partitions
$ \Pi_k \supset \Pi_{k-1} \supset \dots  \supset \Pi_1 = \{[r, t]\}$.
Noting from \eqref{Mpi} that $M^{\Pi_{1}} = 0$
and applying \eqref{Mdiff}
(for $j$ in place of $k$), \eqref{SS0}, 
and \eqref{SS3}
with $0 < p < 1$, we obtain
\begin{align}
\| M_{t,r}^{\Pi_k}  \|_X 
& = \| M_{t,r}^{\Pi_k} - M_{t,r}^{\Pi_1} \|_X 
\le \sum_{j=1}^{k-1} \| M_{t,r}^{\Pi_{j+1}} - M_{t,r}^{\Pi_{j}} \|_X 
\notag
\\
&
\le  \sum_{j=2}^k \| h_{t^{(j)}_{\l-1}, t^{(j)}_{\l}, t^{(j)}_{ \l+1}} \|_X
\le C_p \, \o(t, r)^{\frac1p}, 
\notag
\end{align}

\noi
which proves the maximal inequality \eqref{max2}.

Given  two  partitions 
$\Pi = \Pi([r, t])$ 
and $\wt \Pi = \wt \Pi([r, t])$
of $[r, t]$
as in \eqref{part1}, 
let 
$M_{t,r}^{\Pi}$
and $M_{t,r}^{\wt \Pi}$  
be as in \eqref{Mpi}.
Without loss of generality, 
we assume that 
$\Pi$ is a refinement of 
 $\wt  \Pi$ in the following computation.
Then, from \eqref{max2}, 
the super-additivity~\eqref{con1},  
and the uniform continuity on $\Dl_{2, T}$ of $\o$ 
with $\o_{u, u} = 0$
and $0 < p < 1$, 
 we have 
\begin{align}
\begin{split}
\| M^{\Pi}_{t,r} - M^{\wt \Pi}_{t,r} \|_X
&
=  \bigg\|\sum_{[u,v] \in \wt \Pi}  \bigg( B_{v,u} - \sum_{\substack{[u',v'] \in  \Pi \\ [u',v'] \subset [u,v]}} B_{v',u'} \bigg) \bigg\|_X
\\
&
\le
 \sum_{[u,v] \in \wt \Pi} \|M_{v,u}^{\Pi \cap [u,v]}\|_X
\les 
 \sum_{[u,v] \in \wt \Pi} \o(v, u)^{\frac1p}\\ 
& \le
\o(t, r)
\cdot 
\max_{[u,v] \in \wt \Pi}\o(v, u)^{\frac1p-1}
\too 0, 
\end{split}
\label{max3}
\end{align}

\noi
as the mesh size $|\wt \Pi|$ tends to $0$.
Here, the notation 
$[u,v] \in \wt \Pi$ means that $u, v  \in \wt \Pi$
and that they are consecutive elements in the partition 
$\wt \Pi$, while 
the notation $[u',v'] \subset [u,v]$ denotes the usual inclusion of intervals.
This allows us to define
\begin{align}
M_{t, r} = \lim_{n \to \infty} M^{\Pi_n}_{t, r}
\label{max4}
\end{align}

\noi
for  a given sequence $\{\Pi_n\}_{n \in \N}$
of partitions of $[r, t]$
whose mesh size tends to $0$ as $n\to \infty$.
Note from \eqref{max3} that 
 the limit in \eqref{max4} is independent 
of the choice of $\{\Pi_n\}_{n \in \N}$.
Moreover, it follows from \eqref{max2} that the limit satisfies
\eqref{SS1}
for any $(t, r) \in \Dl_{2,T}$.
Together with the 
super-additivity of $\o$, 
this  in particular implies
\begin{align*}
\o_{X, p}^{(2)}(M;T, 0) < \infty, 
\end{align*}

\noi
where  $\o_{X, p}^{(2)}$ is as in \eqref{ctrl1}.

\smallskip 
\noi
$\bul$ {\bf Step 3:}
 $\updl M = h$.
\\
\indent
Fix $(t_1, t_2, t_3) \in \Dl_{3, T}$.
Let 
$\big\{\Pi_n([t_3,t_2])\big\}_{n\in\N}$ and 
 $\big\{\Pi_n([t_2,t_1])\big\}_{n\in\N}$
 be sequences of partitions of $[t_3, t_2]$ and $[t_2, t_1]$, respectively, 
 whose mesh sizes tend to $0$ as $n\to \infty$.
Then, 
 $\wt \Pi_n = 
\Pi_n([t_3,t_2])\cup \Pi_n([t_2,t_1])$
is a partition of $[t_3, t_1]$
whose mesh size tends to $0$ as $n\to \infty$.
From~\eqref{Mpi} with $\updl B = h$, we have 
\begin{align*}
& 
M_{t_1,t_3}^{\wt \Pi_n} 
- M_{t_1,t_2}^{\Pi_n([t_2,t_1])}
- M_{t_2,t_3}^{\Pi_n([t_3,t_2])}
= (\updl B)_{t_1, t_2, t_3}
= h_{t_1, t_2, t_3}
\end{align*}

\noi
for any $n \in \N$.
Hence, by taking $n \to \infty$, we obtain $\updl M = h$.

\smallskip 
\noi
$\bul$ {\bf Step 4:}
Continuity and vanishing on the diagonal of $M$.
\\
\indent
From \eqref{SS1}
(with $r = t$)
and $\o(t, t) = 0$, 
we see that $M_{t, t} = 0$ for any $0 \le t \le T$.
Moreover, we have 
\begin{align}
\lim_{t \to r+}
M_{t, r} = 
\lim_{r \to t-}
M_{t, r} = 0. 
\label{SS5}
\end{align}


Let $0 \le r < t \le T$.
In the following, we only show continuity of $M_{t, r}$
in $t$.
From $\updl M= h$, 
 \eqref{SS5}, 
and \eqref{vani1},  
we have 
\begin{align*}
\lim_{t_0 \to 0+}\|M_{t+t_0, r}
-  M_{t, r}\|_X
& \le \lim_{t_0 \to 0+} \| M_{t+t_0, t}\|_X + 
\lim_{t_0 \to 0+}\|h_{t+t_0, t, r}\|_X\\
& = \|h_{t,  t, r}\|_X = 0. 
\end{align*}

\noi
A similar computation yields
$\lim_{t_0 \to 0-}\|M_{t+t_0, r}- M_{t, r}\|_X = 0$, 
from which we conclude continuity of $M_{t, r}$ in $t$.

\medskip

Next, we briefly go over (iii).
From (i), we have 
\begin{align*}
\updl (\Id - \Ld \updl) g 
= \updl g - \updl \Ld (\updl g) = 0.
\end{align*}

\noi
Then, from the exactness of the cochain complex \eqref{chain1}
 there exists a unique $f \in C_{T} X$ (up to an additive constant) such that
\begin{align}
\notag
\updl f = (\Id - \Ld \updl ) g . 
\end{align}

\noi
Lastly, we note that 
the identity \eqref{sew2} 
follows from 
\eqref{con3} in Remark \ref{REM:sew3}.
\end{proof}

\begin{remark}\label{REM:GT}\rm

We note that 
\cite[Step 3 in the proof of Proposition 2.3]{GT10}
and the proof of 
\cite[Lemma 2.2]{DGHT19}
only established 
(appropriate versions of) the maximal inequality \eqref{max2}.
While the convergence \eqref{max4}
easily follows from 
the maximal inequality \eqref{max2}, 
we decided to include the detail
for readers' convenience.

\end{remark}

\section{On the Burkholder-Davis-Gundy inequality}
\label{SEC:D}

In this section, we recall  the Burkholder-Davis-Gundy inequality
in a Banach space setting, 
which we used in the proof of Lemma \ref{LEM:stoconv1}.
For this purpose,  we first go over the basic definition
of $\g$-radonifying operators. 
These operators appear as  natural extensions of Hilbert-Schmidt operators
 to a Banach space setting. 
 A practical example is the theory of stochastic integrations 
in the Banach space setting,  where $\g$-radonifying operators suitably generalize
  the role played by Hilbert-Schmidt operators in the more ordinary Hilbert space setting 
  \cite{BS, VN3}. 
  See 
   \cite[Chapter 9]{HVNVW2} for a detailed discussion on $\g$-radonifying operators.

Let $H$ be a separable Hilbert space and 
$B$ be a Banach space.
An operator $S\in \L(H;B)$ is said to be of finite rank if it can be represented as 
\[ S(\cdot)=\sum_{n=1}^{N}\jb{\,\cdot\,, e_{n}}x_{n},\]

\noi
where $\jb{\,\cdot\,, \,\cdot\,}$ denotes the inner product in $H$, $\{e_{n}\}_{n=1}^{N}$ is orthonormal in $H$, and 
$\{ x_{n}\}_{n=1}^{N}\subset B$. 

\begin{definition} \rm
We define the space $\g(H;B)$ as the closure of the set of finite rank operators in $\L(H;B)$ under the norm: 
\begin{equation}
\| S\|_{\g(H;B)} =\sup_{N \in \N} \bigg( \E \bigg[ \Big\| \sum_{n=1}^{N} g_{n}
S(e_n) \Big\|_{B}^{2} \bigg] \bigg)^{\frac{1}{2}}, 
\label{g1}
\end{equation}

\noi
 where $\{e_{n}\}_{n\in \N}$ is an orthonormal basis for $H$ and 
 $\{g_{n}\}_{n\in \N}$ is  a sequence of independent standard real-valued Gaussian random variables. 
 An operator  $S\in \g(H;B)$ is called a $\g$-radonifying operator.
\end{definition}
 
The definition \eqref{g1} is  independent of a choice of orthonormal 
basis $\{e_{n}\}_{n\in \N}$ for $H$. 
In the following, it is understood that the results are independent of this choice. 
Note that  $(\g(H;B),\|\cdot\|_{\g(H;B)})$ is a Banach space and is separable whenever $B$ is separable.

\begin{remark}\rm 
The $L^{2}(\Om)$-norm appearing in \eqref{g1} can be replaced with $L^{p}(\Om)$ for any $1\leq p<\infty$. 
Strictly speaking,  this creates a family of spaces $\g_{p}(H;B)$, however,  
by the Kahane-Khintchine inequality
(Theorem 6.2.6 in \cite{HVNVW2}),  all these norms are equivalent. 
Hence, it suffices to consider the most natural choice  $p=2$ and set $\g(H;B) : = \g_{2}(H;B)$.
\end{remark}

\begin{lemma}[Theorem 9.1.17 in \cite{HVNVW2}] 
An operator $S\in \L(H;B)$ is $\g$-radonifying if and only if
  the sum $\sum_{n\in \N}g_{n}S(e_{n})$ converges in $L^{p}(\Om; B)$
  for some $1\leq p<\infty$  and  some orthonormal basis $\{ e_{n}\}_{n\in \N}$ for $H$.
If $S\in \g(H;B)$, 
then this sum also converges almost surely and defines a $B$-valued Gaussian random variable with covariance operator $SS^{*}$. Furthermore,  we have 
\begin{align}
\|S\|_{\g(H;B)}=\bigg(\E \bigg[\Big\| \sum_{n\in \N}g_{n}S(e_{n}) \Big\|^{2}_{B} \bigg] \bigg)^{\frac{1}{2}}
\label{g2}
\end{align}

\noi
for any orthonormal basis $\{ e_{n}\}_{n\in \N}$ for $H$.
\end{lemma}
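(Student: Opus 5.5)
The statement to prove is Theorem 9.1.17 of \cite{HVNVW2}: an operator $S\in\L(H;B)$ is $\g$-radonifying if and only if $\sum_n g_n S(e_n)$ converges in some $L^p(\Om;B)$, in which case the series also converges almost surely, defines a Gaussian variable with covariance $SS^*$, and \eqref{g2} holds. The plan is to reduce everything to finite-rank approximation combined with the Kahane--Khintchine inequality and the Itô--Nisio theorem.

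\textbf{Step 1: finite-rank operators.} Fix an orthonormal basis $\{e_n\}$ and set $P_N$ to be the orthogonal projection onto $\mathrm{span}\{e_1,\dots,e_N\}$. For a finite-rank $S$, the quantity $\E\|\sum_{n\le N} g_n S(e_n)\|_B^2$ is nondecreasing in $N$. This follows from a symmetry argument: conditioning on $g_1,\dots,g_N$ and applying Jensen's inequality to the centred independent tail shows that adding terms cannot decrease the norm. The supremum in \eqref{g1} is therefore a limit.

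\textbf{Step 2: the forward direction.} Let $S\in\g(H;B)$. Approximate $S$ in the $\g$-norm by finite-rank operators $S_k$, and show that the partial sums $\sum_{n\le N} g_n S(e_n)$ form a Cauchy sequence in $L^2(\Om;B)$. For this, note that $S\mapsto S P_N$ is contractive in $\g$. Since $S_k P_N\to S_k$ as $N\to\infty$ for finite-rank $S_k$, a standard $\eps/3$ argument gives convergence. The limit is then controlled by \eqref{g2}, which follows from Step 1. Kahane--Khintchine upgrades convergence in $L^2$ to convergence in every $L^p$. Almost sure convergence comes from the Itô--Nisio theorem, because the summands are independent and symmetric. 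The limit is Gaussian, and its covariance $SS^*$ is read off by computing $\E\langle X,x^*\rangle^2=\sum_n \langle S e_n,x^*\rangle^2=\|S^*x^*\|_H^2$.

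\textbf{Step 3: the converse.} Assume the series converges in some $L^p(\Om;B)$. Kahane--Khintchine gives convergence in $L^2$, so $\sup_N \E\|\sum_{n\le N}g_nSe_n\|^2<\infty$. The finite-rank operators $SP_N$ then converge to $S$ in the $\g$-norm, since $\|S-SP_N\|_\g^2$ equals the $L^2$-norm of the tail sum. This places $S$ in the closure $\g(H;B)$.

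\textbf{Basis independence and the main obstacle.} The remaining point is that none of this depends on the chosen basis. I would prove it by observing that the law of $\sum_n g_n S(e_n)$ is the centred Gaussian measure with covariance $SS^*$, and this operator does not involve the basis. I expect this step, together with the monotonicity claim in Step 1, to be the only delicate parts. Everything else is routine, and in context the whole result can simply be cited from \cite{HVNVW2}.
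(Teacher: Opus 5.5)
Your sketch is correct. The paper gives no argument for this lemma: it is imported from \cite[Theorem~9.1.17]{HVNVW2} and used only as background for the Burkholder--Davis--Gundy inequality in Appendix~\ref{SEC:D}. So what you have written replaces a citation rather than competing with a proof, and it follows the standard route:
\begin{itemize}
\item in the forward direction, finite-rank approximation together with $\|SP_N\|_{\gamma(H;B)}\le\|S\|_{\gamma(H;B)}$;
\item in the converse, the tail identity $\|S-SP_N\|_{\gamma(H;B)}^2=\sup_{M>N}\mathbb{E}\|\sum_{N<n\le M}g_nS(e_n)\|_B^2$;
\item Kahane--Khintchine to move between $L^p$ and $L^2$, and L\'evy/It\^o--Nisio for almost sure convergence.
\end{itemize}
The citation buys brevity; your version buys transparency, provided you make two small points explicit.

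First, the conditional-Jensen monotonicity of Step~1 holds for every $S\in\mathcal{L}(H;B)$, not only for finite-rank $S$. Step~3 uses it in that generality, to identify $\sup_{M>N}$ of the tail with its limit.

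Second, suppose you do not simply invoke the basis-independence of \eqref{g1} asserted after its definition. Your law-based argument (the series has law $N(0,SS^*)$) presupposes that the series converges. It therefore gives \eqref{g2} for every basis once the forward direction is known. It does not justify evaluating $\|S-SP_N\|_{\gamma(H;B)}$, or the contractivity $\|(S-S_k)P_N\|_{\gamma(H;B)}\le\|S-S_k\|_{\gamma(H;B)}$, in a basis other than the one fixed in \eqref{g1}. The cleanest fix in Step~3 runs as follows:
\begin{itemize}
\item $S(P_M-P_N)$ has finite rank, so its $\gamma$-norm is basis-free and equals $\big(\mathbb{E}\|\sum_{N<n\le M}g_nS(e_n)\|_B^2\big)^{1/2}$.
\item Hence $(SP_N)_N$ is Cauchy in the Banach space $\gamma(H;B)$ and converges there to some $T$.
\item $T=S$, because $\|\cdot\|_{\mathcal{L}(H;B)}\le\|\cdot\|_{\gamma(H;B)}$ while $SP_N\to S$ strongly.
\end{itemize}
Alternatively, prove directly that \eqref{g1} is basis-free for every $S\in\mathcal{L}(H;B)$. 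Use rotation invariance of the standard Gaussian on $\mathrm{span}\{e_1,\dots,e_N\}$, your monotonicity, and approximation of an arbitrary finite orthonormal system by one inside such a span.
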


Recall that $S\in \L(H;B)$ is Hilbert-Schmidt from $H$ to a Hilbert space $B$ if 
\begin{align*}
\|S\|_{\textup{HS}(H;B)}^{2}
:=\sum_{n\in \N}\|S(e_{n})\|_{B}^{2}<\infty
\end{align*}

\noi
for some (and hence any) orthonormal basis $\{ e_{n}\}_{n\in \N}$ for $H$. 
It is then clear from \eqref{g2} that  when $B$ is a Hilbert space, 
then we have  $\g(H;B)=\text{HS}(H;B)$ and $\|S\|_{\g(H;B)}=\|S\|_{\text{HS}(H;B)}.$
In particular,  when $B$ is a Hilbert space, 
we can express the $\g(H;B)$-norm  {\it without} the use of probability. 
When $B$ is a mixed Lebesgue space, 
we can also characterize the $\g(H;B)$-norm without the use of probability.
See  \cite[Theorem 9.3.6]{HVNVW2}.

\begin{lemma} \label{LEM:G1}

Let $1 \le q, r < \infty$ and  $T  > 0$.
Then, an operator $S$ is $\g$-radonifying from $L^2(\T^d)$
to $L^q([0, T]; L^r(\T^d))$
if and only if $\big(\sum_{n \in \Z^d} |S(e_n)|^2\big)^\frac 12$
belongs to $L^q([0, T]; L^r(\T^d))$, 
where 
$e_n(x) = e^{in\cdot x}$.
In this case, we have
\begin{align}
\| S\|_{\g(L^2; L^q_T L^r_x)}
\sim \big\|
\|S(e_n)\|_{\l^2_n}
\big\|_{L^q_T L^r_x}.
\label{g3}
\end{align}

\end{lemma}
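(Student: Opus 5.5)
This is the standard characterization of the $\gamma$-norm in $L^p$-type (Banach function) spaces. I will reduce it to the Kahane--Khintchine inequality together with Fubini's theorem and the Gaussian moment identity, applied pointwise in $(t,x)$.

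First I treat finite rank operators: say $S$ vanishes on $e_n$ for $|n|>N$, or more generally consider the partial sums. By the remark following \eqref{g1}, the $L^2(\Omega)$-norm may be replaced by $L^q(\Omega)$-norms; here the relevant equivalent norm is $\big(\E\|\sum_{|n|\le N} g_n S(e_n)\|_{L^q_TL^r_x}^q\big)^{1/q}$. To bring the moment inside, I use the Kahane--Khintchine equivalence $\|\cdot\|_{L^q(\Omega;L^q_TL^r_x)}\sim \|\cdot\|_{L^q_T L^r(\Omega; L^r_x)}$. This is justified as follows. Fubini gives the interchange $\E\|F\|_{L^q_TL^r_x}^q=\int_0^T\E\|F(t)\|_{L^r_x}^q\,dt$. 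Kahane--Khintchine in $L^r_x$ then lets me replace $\E\|F(t)\|_{L^r_x}^q$ by $(\E\|F(t)\|_{L^r_x}^r)^{q/r}$, and Fubini again gives $\E\|F(t)\|_{L^r_x}^r=\int \E|F(t,x)|^r dx$.

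Second, for fixed $(t,x)$, the quantity $\sum_n g_n S(e_n)(t,x)$ is a centered Gaussian. If $S(e_n)$ is complex-valued, I split into real and imaginary parts; this costs only constants. Its $r$-th moment is therefore $c_r\big(\sum_n|S(e_n)(t,x)|^2\big)^{r/2}$. Combining with the first step yields \eqref{g3} for finite sums, with constants depending only on $q,r$.

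Third, I pass to general $S$. By Lemma (Theorem 9.1.17), $S\in\gamma$ if and only if the Gaussian series converges in $L^q(\Omega;B)$. The finite-sum equivalence shows that the partial sums are Cauchy if and only if the square functions $\big(\sum_{M<|n|\le N}|S(e_n)|^2\big)^{1/2}$ tend to $0$ in $L^q_TL^r_x$. By dominated or monotone convergence, the latter is equivalent to $\|S(e_n)\|_{\ell^2_n}\in L^q_TL^r_x$, since $q,r<\infty$. Passing to the limit in the finite-sum equivalence then gives \eqref{g3}.

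The main subtlety is the measurability and Fubini bookkeeping, and in particular choosing representatives jointly measurable in $(\omega,t,x)$; otherwise the argument is routine.
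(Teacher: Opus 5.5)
Your proof is correct, but it is more self-contained than what the paper does and follows a somewhat different route.

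The paper gives no direct argument. It deduces \eqref{g3} from the general square-function characterization of $\gamma$-norms in Banach function spaces of finite cotype (Theorem 9.3.6 in \cite{HVNVW2}). There the assumption $q,r<\infty$ is used only to ensure that $L^q([0,T];L^r(\mathbb{T}^d))$ has finite cotype $q\vee r\vee 2$. It then remarks that for $2\le q,r<\infty$ one may instead combine \eqref{g2} with Minkowski's integral inequality and Kahane--Khintchine.

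You instead prove the $\gamma$-Fubini identity for this mixed-norm space by hand. You use Kahane--Khintchine to move the probabilistic exponent first to $q$, and then, for fixed $t$, to $r$. After that you only need Tonelli to interchange $\mathbb{E}$ with the $t$- and $x$-integrals. So both inequalities come out at once, and no restriction $q,r\ge 2$ is needed. With the $L^2(\Omega)$-norm of \eqref{g2}, the lower bound $\big\|\|S(e_n)\|_{\ell^2_n}\big\|_{L^q_TL^r_x}\lesssim\|S\|_{\gamma}$ via Minkowski's inequality requires $q,r\ge2$; that is presumably why the paper's elementary remark is limited to that range.

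Your third step also settles membership in the closure of the finite-rank operators directly. There $q,r<\infty$ enters through monotone and dominated convergence of the square functions. The paper's route absorbs this point into the cotype theorem.

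The trade-off is this. The cited theorem covers arbitrary Banach function spaces of finite cotype. Your argument is elementary and covers exactly the range $1\le q,r<\infty$ in the statement.
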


Here, we used the fact that 
 $L^q([0, T]; L^r(\T^d))$ has cotype $q\vee r\vee 2$
 which is finite for $q, r < \infty$;
 see  \cite[Proposition 7.14 with Example 7.12]{HVNVW2}.
We note that, for $2 \le q, r < \infty$, 
the equivalence~\eqref{g3} also  follows from 
\eqref{g2}, Minkowski's integral inequality, 
and 
the Kahane-Khintchine inequality
(Theorem 6.2.6 in \cite{HVNVW2}).

\medskip

Next, we recall the definition of M-type $p$ Banach spaces.
See \cite[Definition 10.41]{Pisier2}
and \cite[Definition 3.5.23]{HVNVW1}.

\begin{definition}\rm 
Let $ 1\le p \le 2$.
We say that a Banach space $B$
has martingale type $p$ (M-type $p$,  for short)
if there exists $C_p > 0$ such that 
\[ \E \big[\|f_N\|_B^p\big]
\le C_p \sum_{n = 1}^N \E \big[\|f_n - f_{n-1}\|_B^p\big]\]

\noi
for any $B$-valued $L^p(\Om)$-martingale $\{f_n\}_{n = 0}^N$ (with $f_{0} = 0$).

\end{definition}

Recall from 
\cite[Proposition 3.5.6 and line 15 on p.\,239]{HVNVW1}
that 
 every Hilbert space (in particular, $\R$) has M-type 2.
Moreover, 
given 
a measure space $A$, 
if $B$ has M-type $p$ for some $p \in [1, 2]$, 
then 
$L^q(A;B)$, $1 < q < \infty$,  has M-type $p\wedge q$;
see \cite[Proposition 3.5.30]{HVNVW1}.
In particular, we have the following lemma.

\begin{lemma}\label{LEM:G2}
Let $2 \le q , r< \infty$ and $T  > 0$.
Then, 
 $L^q([0, T]; L^r(\T^d;\R))$
 has M-type $2$.
\end{lemma}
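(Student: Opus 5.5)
This follows from combining the two facts just recalled, so the plan is short. First, $L^r(\T^d;\R)$ with $2\le r<\infty$ has M-type $2$. I will obtain this by applying the cited stability result \cite[Proposition 3.5.30]{HVNVW1} with $B=\R$ (M-type $2$, being a Hilbert space) and $A=\T^d$ with normalized Lebesgue measure. This gives that $L^r(\T^d;\R)$ has M-type $2\wedge r=2$, since $r\ge2$ and $1<r<\infty$.

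Second, I apply the same proposition again, now with $B=L^r(\T^d;\R)$ (M-type $2$ by the first step) and $A=[0,T]$ with Lebesgue measure. It yields that $L^q([0,T];L^r(\T^d;\R))$ has M-type $2\wedge q=2$, because $2\le q<\infty$. Here I should note that the Bochner space $L^q([0,T];L^r(\T^d))$ is the space to which the proposition applies. Measurability is unproblematic because $L^r(\T^d)$ is separable for $r<\infty$.

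No real obstacle is expected. The only point to check is that the hypotheses $1<q<\infty$ and $1<r<\infty$ of the cited proposition hold, and they do, since $q,r\ge2$ and both are finite. The restriction $q,r\ge2$ is exactly what keeps the M-type from dropping below $2$ at either step. An alternative direct route would be to use $2$-uniform smoothness of $L^r$ for $r\ge2$ (Pisier's theorem identifying M-type $2$ with renorming by a $2$-uniformly smooth norm) together with the fact that $L^q$ of a $2$-smooth space is $2$-smooth for $q\ge2$. I would keep the two-step iteration of \cite[Proposition 3.5.30]{HVNVW1} as the main argument, since the paper has already recalled it.
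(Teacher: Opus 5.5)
Your proposal is correct and is essentially the paper's argument: the paper recalls that $\R$ (a Hilbert space) has M-type $2$ and that $L^q(A;B)$, $1<q<\infty$, inherits M-type $p\wedge q$ from $B$. The lemma then follows from applying this stability fact twice, first to $L^r(\T^d;\R)$ and then to $L^q([0,T];L^r(\T^d;\R))$, exactly as you describe.
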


Finally, we state a version  of the Burkholder-Davis-Gundy inequality
for M-type 2 separable Banach spaces; see \cite{Pisier1,BS,  Ondre, Seidler}.
See also \cite{Br} for the same inequality with a slightly stronger assumption
(for type 2 UMD Banach spaces).

\begin{lemma}\label{LEM:BDG}
Let $2\le  p < \infty$ and $B$ be an M-type $2$ Banach space.
Then, there exists $ C = C(B) > 0$ such that 
\begin{align*}
\E\Bigg[\sup_{0 < t < \tau} \bigg\|\int_0^t \G (t') dW(t') \bigg\|_{B}^p\Bigg]
\leq Cp^\frac p2 \, \E \Bigg[\bigg(\int_0^\tau \| \G(t)\|_{\g(H; B)}^2 dt \bigg)^\frac p2 \Bigg]
\end{align*}

\noi
for any  stopping time $\tau$
and 
$\g(H;B)$-valued progressively measurable process $\Phi$, 
where $W$ is an $H$-cylindrical Wiener process.

\end{lemma}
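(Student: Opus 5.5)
We would follow the standard route of \cite{Pisier1, BS, Seidler}. First we reduce to elementary processes, then transfer the M-type $2$ property to a smooth renorming, and finally obtain the $p$-th moment bound with the sharp dependence $p^{\frac p2}$ from a sub-Gaussian tail estimate. Throughout, by replacing $\G$ with $\ind_{[0,\tau]}\G$ we may assume $\tau$ is a bounded deterministic time. Both sides are continuous in $\G$ with respect to the norm $\big(\E\big[(\int_0^\tau\|\G\|_{\g(H;B)}^2dt)^{\frac p2}\big]\big)^{\frac1p}$. Hence, by density, it suffices to treat elementary progressively measurable processes of the form
\[
\G=\sum_{j}\ind_{(s_j,s_{j+1}]}\sum_{m=1}^{M}\ind_{A_{jm}}S_{jm},
\]
with $A_{jm}\in\F_{s_j}$ and $S_{jm}$ finite rank. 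For such $\G$, the integral $M_t=\int_0^t\G\,dW$ is a continuous $B$-valued martingale. Its increments over $(s_j,s_{j+1}]$ are, conditionally on $\F_{s_j}$, centered Gaussian with
\[
\E\big[\|\Delta_j M\|_B^2\,\big|\,\F_{s_j}\big]=(s_{j+1}-s_j)\,\|\G(s_j)\|_{\g(H;B)}^2
\]
by \eqref{g2}.

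For $p=2$ the claim is almost immediate. The M-type $2$ inequality applied to the discrete martingale $\{M_{s_j}\}_j$, refined along finer partitions of each $(s_j,s_{j+1}]$ so as to reach arbitrary times, together with the Gaussian identity above, gives
\[
\E\big[\|M_t\|_B^2\big]\les \E\Big[\int_0^t\|\G\|_{\g(H;B)}^2\,dt'\Big].
\]
Doob's maximal inequality applied to the real submartingale $\|M_t\|_B$ then inserts the supremum in time. For general $p\ge2$, we would invoke Pisier's renorming theorem: an M-type $2$ space admits an equivalent norm $|\cdot|$ such that $\psi(x)=|x|^2$ is Fréchet differentiable with Lipschitz derivative, that is, $B$ is $2$-smooth.

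Applying It\^o's formula to $\psi(M_t)$ for the elementary process gives
\[
\psi(M_t)=\int_0^t\jb{\psi'(M_{t'}),\G\,dW}+\tfrac12\int_0^t\mathrm{tr}_{\G}\,\psi''(M_{t'})\,dt'.
\]
The $2$-smoothness yields two bounds. First, the trace term is bounded by $C\|\G\|_{\g(H;B)}^2$ (we will use the Gaussian representation \eqref{g2} and a Taylor expansion in place of $\psi''$ if needed). Second, the scalar martingale term has quadratic variation at most $C\int|M|^2\|\G\|_\g^2\,dt'$. With these in hand, the main step, which we also expect to be the main obstacle, is to extract the constant $p^{\frac p2}$ rather than the cruder $p^p$ given by a naive Burkholder argument. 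We would try Pinelis' exponential inequality for martingales in $2$-smooth spaces: on the event $\{\int_0^\tau\|\G\|_\g^2\le\ld^2\}$ one has
\[
\PP\Big(\sup_{t\le\tau}|M_t|\ge r\Big)\le 2e^{-r^2/(C\ld^2)}.
\]
This is a sub-Gaussian tail, which integrates to moments of order $p^{\frac p2}\ld^p$. A good-$\ld$ or stopping argument (stop when the quadratic-variation proxy exceeds dyadic levels) would then convert this conditional tail bound into the stated unconditional moment estimate.

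Finally, we return to the original norm, at the cost of a constant $C(B)$ coming from norm equivalence, and pass to the limit in the density argument. The supremum over $t$ survives this limit because the limit is taken in $L^p(\Om;C([0,\tau];B))$. We would also check the standard separability and progressive-measurability hypotheses needed for the It\^o integral to be well defined.
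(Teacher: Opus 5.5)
The paper gives no proof of Lemma~\ref{LEM:BDG}; it quotes it from \cite{Pisier1, BS, Ondre, Seidler}. Your outline is essentially the argument of those references, notably \cite{Seidler}: Pisier's renorming of an M-type~$2$ space to a $2$-smooth one, It\^o/Taylor for $|\cdot|^2$, a sub-Gaussian tail bound, then a good-$\ld$ step. The individual steps are sound, but your bookkeeping of constants is not, because in three places you count a constant raised to the power $p$ as a single constant.
\begin{itemize}
\item Returning from the renormed norm $|\cdot|$ to $\|\cdot\|_B$ costs $(c_1c_2)^p$ when $\|x\|_B\le c_1|x|\le c_1c_2\|x\|_B$, not a fixed $C(B)$.
\item Integrating the tail $2e^{-r^2/(C\ld^2)}$ against $pr^{p-1}dr$ gives $2(C\ld^2)^{p/2}\Gamma(\frac p2+1)$, in which $C$ enters to the power $p/2$.
\item The good-$\ld$ inequality $\PP(M^*>2\ld,\,A\le\dl\ld)\le 2e^{-1/(C\dl^2)}\,\PP(M^*>\ld)$ only closes if $2^{p+1}e^{-1/(C\dl^2)}<1$. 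This forces $\dl^{-1}\sim\sqrt p$ and produces a prefactor $(2/\dl)^p$.
\end{itemize}
What your argument actually proves is
\[
\E\Big[\sup_{0<t<\tau}\Big\|\int_0^t\G(t')\,dW(t')\Big\|_B^p\Big]\le\big(C(B)\sqrt p\,\big)^p\,\E\Big[\Big(\int_0^\tau\|\G(t)\|_{\g(H;B)}^2\,dt\Big)^{\frac p2}\Big].
\]

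This is not a weakness of your method: the displayed form with $C$ independent of $p$ is false. Take $B=H=\R$ and $\G\equiv1$, so the claim becomes $\E\sup_{t<\tau}|W_t|^p\le Cp^{p/2}\,\E\tau^{p/2}$ for all stopping times $\tau$. By Davis (1976), the optimal constant in $\E|W_\tau|^p\le C_p\E\tau^{p/2}$ for $p\ge2$ is $z_p^p$, where $z_p$ is the largest zero of the Hermite function of order $p$. Since $z_p\sim2\sqrt p$, we get $C_p/p^{p/2}\to\infty$.

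So you should state and prove the lemma with $C(B)^pp^{p/2}$, as in \cite{Seidler}. This costs nothing in the paper, which uses the lemma only through $\les_p$ in Lemma~\ref{LEM:stoconv1}.

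The exponential inequality and the good-$\ld$ step are also unnecessary for the $\sqrt p$ growth. Set $M^*=\sup_{t<\tau}|M_t|$ and $A^2=\int_0^\tau\|\G\|_\g^2\,dt$. Your It\^o/Taylor step gives $|M_t|^2\le N_t+L\int_0^t\|\G\|_\g^2\,dt'$ with $\langle N\rangle_\tau\le4(M^*)^2A^2$. The scalar BDG inequality in $L^{p/2}(\Om)$, whose constant is $O(\sqrt p)$, together with Cauchy--Schwarz, then yields $\|M^*\|_{L^p(\Om)}^2\le C\sqrt p\,\|M^*\|_{L^p(\Om)}\|A\|_{L^p(\Om)}+L\|A\|_{L^p(\Om)}^2$. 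Hence $\|M^*\|_{L^p(\Om)}\le C'\sqrt p\,\|A\|_{L^p(\Om)}$ for elementary $\G$, where $\|M^*\|_{L^p(\Om)}$ is finite a priori.
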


For our application in the proof of Lemma \ref{LEM:stoconv1}, we take $H = L^2(\T^d)$ 
and thus $W$ is an $L^2$-cylindrical Wiener process $W^\frac 12$ in \eqref{W0}.

\begin{ackno}\rm
A.C.~and~T.O.~were supported by the European Research Council (grant no.~864138 ``SingStochDispDyn").
A.C.~also received support from CNRS-INSMI through a grant ``PEPS Jeunes chercheurs et jeunes chercheuses 2025''
and from 
the European Research Council (grant no.~101219486 ``CritPDEsRand").
T.O.~was also supported by the EPSRC Mathematical Sciences Small Grant  (grant no.~EP/Y033507/1)
and
acknowledges support from  
the NSFC (grant no.~W2531005).

\end{ackno}

\end{document}